\documentclass[11pt]{amsart}
\usepackage[margin=0.9in]{geometry}





\usepackage[centertableaux]{ytableau}
\usepackage{graphicx, wrapfig, eso-pic, mathtools, float, enumerate}
\usepackage{amsmath, amsthm, amsfonts, amssymb, amscd, bm, accents}

\usepackage{tikz-cd} 
\usepackage{subfigure}

\usepackage{xcolor}
\definecolor{OxBlue}{HTML}{002147}

\usepackage[colorlinks]{hyperref}
\hypersetup{colorlinks = true, citecolor={OxBlue}, linkcolor={OxBlue}}

\usepackage[normalem]{ulem}
\usepackage{stmaryrd}
\usepackage[capitalise, noabbrev]{cleveref}
\usepackage[all]{xy}

\usepackage[utf8]{inputenc}
\usepackage[T1]{fontenc}

\usepackage{epigraph}


\setlength\epigraphwidth{10cm}
\setlength\epigraphrule{0pt}

\usepackage{etoolbox}

\makeatletter
\patchcmd{\epigraph}{\@epitext{#1}}{\itshape\@epitext{#1}}{}{}
\makeatother


\newtheorem{thm}{Theorem}[section]
\newtheorem{cor}[thm]{Corollary}

\newtheorem{prop}[thm]{Proposition}
\newtheorem{lm}[thm]{Lemma}

\newtheorem{que}[thm]{Question}

\theoremstyle{definition}
\newtheorem{de}[thm]{Definition}
\newtheorem{ex}[thm]{Example}

\theoremstyle{remark}
\newtheorem{rmk}[thm]{Remark}


\def\im{\text{im}}
\def\det{\text{det}}
\def\rk{\emph{rk}}
\def\sign{\text{sign}}
\def\Hom{\text{Hom}}
\def\codim{\text{codim}}
\def\ol{\overline}

\def\innt{\text{int}}
\def\th{\theta}
\def\z{\zeta}
\def\la{\langle}
\def\ra{\rangle}
 
\def\K{\mathbb{K}}

\def\R{\mathbb R}
\def\C{\mathbb C}
\def\N{\mathbb N}
\def\Z{\mathbb Z}

\def\H{\mathbb H}
\def\KH{K\"{a}hler }
\def\Kh{K\"{a}hler}
\def\P{{\mathbb P}}


\def\HKL{hyperk\"{a}hler}
\def\HK{hyperk\"{a}hler }

\def\AHK{almost hyperk\"{a}hler }
\def\eps{\varepsilon}
\def\a{\alpha}
\def\b{\beta}

\def\Om{\Omega}

\def\Fi{\varphi}


\def\fun{\rightarrow}
\newcommand{\fja}[1]{\xrightarrow{#1}}

\def\dejstvo{\curvearrowright}


\def\BB{Bia\l{}ynicki-Birula }

\def\M{\mathfrak{M}}
\def\Aff{\text{Aff}}
\def\CM0{\C[\M_0]}
\def\omC{\om_\C}
\def\L{\mathfrak{L}}
\def\D{\mathfrak{D}}

\def\F{\mathfrak{F}}
\def\Fmin{\F_\Fi}
\def\Fminn{\F_{\Fi^1}}
\def\Fminnn{\F_{\Fi^2}}

\def\Con1{\mathrm{Con}_1(\M,\Fi)}



\def\g{\mathfrak{g}}


\def\B{\mathcal{B}}


\def\O{\mathcal{O}}





































\def\MM0{\mathfrak{M}_{\tiny{(\zeta_{\mathbb{R}},0})}(Q,{\normalfont\textbf{v}},{\normalfont\textbf{w}})}

\def\MG0{\mathcal{M}_{0,\z_\C}(Q,{\normalfont\textbf{v}},{\normalfont\textbf{w}})}

\def\iso{\cong}



\def\Sym{\text{Sym}}
\def\Hilb{\text{Hilb}}


\def\om{\omega}

\def\k{\mathbb{K}}







\begin{document}


\title
[Exact Lagrangians from contracting $\C^*$-actions]
{Exact Lagrangians from contracting $\C^*$-actions}
\author{Filip Živanović}
\address{F. T. Živanović, School of Mathematics, University of Edinburgh, EH9 3FD, U.K.}
\email{f.zivanovic@ed.ac.uk}
\thanks{Author is supported by ERC Starting Grant 850713 – HMS} 




\begin{abstract}

We obtain families of non-isotopic closed exact Lagrangian submanifolds in quasi-projective holomorphic symplectic manifolds that admit contracting $\C^*$-actions. We show that the Floer cohomologies of these Lagrangians are topological in nature, recovering the ordinary cohomologies of their intersection.
Moreover, by using these Lagrangians and a version of Carrell-Goresky's integral decomposition theorem, we obtain degree-wise lower bounds on the symplectic cohomology of these spaces. 
\end{abstract}

\maketitle

\setcounter{secnumdepth}{3}
\setcounter{tocdepth}{1}

\tableofcontents            

\section{Introduction}\label{Intro}

\subsection{Motivation and Summary}
In this paper we consider quasi-projective holomorphic symplectic manifolds that admit contracting $\C^*$-actions.
This means that the whole manifold contracts to a compact subset, when acting by the group element $t\in \C^*$ which goes to zero. 
Moreover, we impose the condition that the $\C^*$-action acts on the holomorphic symplectic form by a positive weight. Such spaces we call Semiprojective Holomorphic Symplectic (SHS) manifolds.

The motivation to study these spaces is that they naturally occur in geometric representation theory, mathematical physics, differential and algebraic geometry.
To name a few, among such spaces are Nakajima quiver varieties, Springer resolutions, Hypertoric varieties, Moduli spaces of Higgs bundles, Hilbert schemes of points on cotangent bundles.
Moreover, the principal family of these spaces, namely Conical Symplectic Resolutions, are expected to have their partially wrapped Fukaya categories equivalent 
(at a derived level) to certain categories of quantisation modules, defined in \cite{BPW16,BLPW16}. 
In the case of Springer resolutions, the latter categories generalise the 
celebrated BGG category $\mathcal{O}$ of representations of semisimple Lie algebras.
An example of the aforementioned equivalence was given in \cite{MaS19}.

Altogether, the symplectic topology of these spaces is expected to have interesting features and relations to representation theory, which is our motivation to study it. 
A fundamental task of symplectic topology is finding Lagrangian submanifolds in a given symplectic manifold, and describing their Floer invariants. 
In this paper we do some first steps in that direction. Briefly speaking, we:
\begin{itemize}
	\item Provide a family of non isotopic exact Lagrangian submanifolds in SHS manifolds.
	\item Compute their mutual Floer cohomology groups, as graded vector spaces.
	\item Using these Lagrangian submanifolds, we infer lower bounds on the symplectic cohomology.
\end{itemize}

\subsection{Semiprojective varieties}

We consider first the ordinary quasi-projective varieties that have contracting $\C^*$-actions, and we prove statements about them that we will use later.
These varieties were already considered by Hausel and Rodriguez-Villegas \cite{HaR-V15}, who coined the term for them:

\begin{de}
	A \textbf{semiprojective variety} $(\M,\Fi)$ is a smooth quasi-projective variety $\M$ with an algebraic $\C^*$-action $\Fi$ satisfying two properties:
	\begin{enumerate}
		\item The set of fixed points $\M^{\Fi}$ is compact.
		\item Every point $x\in\M$ has a limit point $\displaystyle \lim_{\C^*\ni t\fun 0} t\cdot x.$ 
	\end{enumerate}
\end{de}
A semiprojective variety $(\M,\Fi)$ has a distinguished subvariety called the \textbf{core}:
$$\L_\Fi:=\{x \in \M \mid \lim_{\C^*\ni t\fun \infty} t \cdot x \text{ exists} \}.$$
The important feature of the core, proved in \cite{HaR-V15}, is that it is a compact 
variety and a deformation retract of $\M.$ 
In principle, two different contracting actions $\Fi_1$ and $\Fi_2,$ can have different cores. However, we prove that this is not the case, when $\Fi_1$ and $\Fi_2$ commute.
Apart from retracting to the core, the potential non-compactness of a semiprojective variety $(\M,\Fi)$ can also be solved by a completion: 

\begin{lm}\label{IntroCompletionLemma}
	Any semiprojective variety $\M$ has a smooth equivariant projective completion, that is, a smooth projective variety $Y$ with a $\C^*$-action
	which has a Zariski-open $\C^*$-invariant subvariety isomorphic to $\M.$
\end{lm}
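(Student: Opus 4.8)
The plan is to reduce the statement to two standard tools: Sumihiro's equivariant completion theorem and functorial (hence equivariant) resolution of singularities. It is worth noting at the outset that the argument uses only that $\M$ is smooth, quasi-projective, and carries an algebraic $\C^*$-action — the contracting hypotheses and the compactness of $\M^{\Fi}$ play no role in this particular lemma.

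First I would invoke Sumihiro's theorem: since $\M$ is normal (being smooth) and carries an algebraic action of the torus $\C^*$, there exist a finite-dimensional $\C^*$-representation $V$ and a $\C^*$-equivariant locally closed embedding $\M \hookrightarrow \P(V)$, where $\C^*$ acts on $\P(V)$ through its linear action on $V$. Taking the Zariski closure $Z := \overline{\M} \subseteq \P(V)$ then produces a projective variety carrying a $\C^*$-action, inside which $\M$ sits as a $\C^*$-invariant Zariski-open dense subvariety. The only defect of $Z$ is that it may be singular along $Z \setminus \M$. Next I would apply equivariant resolution of singularities to $Z$. Because Hironaka-style resolution can be performed functorially (in the sense of the algorithmic treatments of, e.g., Bierstone--Milman and W\l odarczyk), the resulting resolution $\pi \colon Y \to Z$ is automatically $\C^*$-equivariant, is a projective birational morphism — so $Y$ is smooth and projective — and is an isomorphism over the smooth locus $Z^{\mathrm{sm}}$. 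Since $\M$ is smooth and open in $Z$, we have $\M \subseteq Z^{\mathrm{sm}}$, hence $\pi^{-1}(\M) \xrightarrow{\sim} \M$; identifying $\M$ with $\pi^{-1}(\M)$ exhibits it as a $\C^*$-invariant Zariski-open subvariety of the smooth projective $\C^*$-variety $Y$, as desired.

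Each step is individually routine, and the only point requiring care — the "main obstacle" in the sense of what must not be glossed over — is equivariance at both stages: one must ensure that the initial completion can be chosen equivariant (this is precisely the content of Sumihiro's theorem, and is the reason one cannot simply take an arbitrary projective closure of $\M$), and that the resolution morphism respects the $\C^*$-action (guaranteed by functoriality, since an automorphism of $Z$ is transported to a compatible automorphism of $Y$). Once these are in place, the identification $\pi^{-1}(\M) \cong \M$ is immediate from the fact that resolution is an isomorphism over smooth loci, and openness and $\C^*$-invariance of $\pi^{-1}(\M)$ in $Y$ follow formally from continuity and equivariance of $\pi$.
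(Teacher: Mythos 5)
Your argument is correct and is essentially the paper's own proof: Sumihiro's equivariant embedding theorem to get a projective $\C^*$-equivariant closure, followed by functorial (hence equivariant) resolution of singularities, which is an isomorphism over the smooth locus and thus leaves $\M$ sitting inside $Y$ as a $\C^*$-invariant Zariski-open subvariety. The only difference is that you spell out the last identification $\pi^{-1}(\M)\cong\M$, which the paper leaves implicit.
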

This completion is not unique, and we will need its existence rather than its precise description. In particular, we do not know whether the
complement $Y \setminus \M$ can be made to be normal crossing divisor. 
Given the fixed locus decomposition into connected components $\M^{\Fi}=\bigsqcup_i \F_i,$ consider the $(t\fun \infty)$-attraction sets $\L_i:=\{x\in \M \bigm| \lim_{t\fun \infty} t\cdot x \in\F_i \}.$
The \BB decomposition theorem \cite{BB73} applied to the completion $Y$ says that the pieces $\L_i$ of the decomposition $\L=\bigsqcup_i \L_i$ are locally closed subsets 
and that the natural morphisms 
\begin{equation}\label{IntroBBbundles}
	p_i:\L_i\fun \F_i, \ x\mapsto \lim_{t\fun \infty} t\cdot x,
\end{equation}
are 
bundles with fibres being affine spaces. This allows us to prove that the core $\L$ inside the completion $Y$ satisfies the conditions of the Carrell-Goresky integral homology decomposition theorem \cite[Thm 1']{CaGo83}.

\begin{lm}\label{IntroHomologyDecompositionOfTheCore}
	Given a smooth semiprojective variety $(\M,\Fi),$ there is a decomposition of the homology of its core, i.e. an isomorphism
	$$\Phi= \oplus_i \eta_i: \bigoplus_i H_*(\F_i)[-\mu_i] \fun H_*(\L).$$ Here, $\mu_i$ are real dimensions of the fibres of attracting sets $p_i:\L_i\fun\F_i,$ and
	$\eta_i([C])=[\ol{p_i^{-1}(C)}],$ for a generic cycle $C,$ where the closure is taken in the core  $\L.$
\end{lm}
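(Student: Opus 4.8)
The plan is to verify that the core $\L$, equipped with the restricted $\C^*$-action $\Fi$, satisfies the hypotheses of the Carrell--Goresky integral homology decomposition theorem \cite[Thm 1']{CaGo83}, and then to read off the statement from its conclusion. That theorem applies to a projective (possibly singular) $\C^*$-variety $X$ whose \BB decomposition $X=\bigsqcup_i X_i$ is filtrable and whose strata are Zariski-locally-trivial affine bundles $p_i:X_i\fun F_i$ over smooth fixed components $F_i$; it then furnishes an isomorphism $\bigoplus_i H_*(F_i)[-\mu_i]\iso H_*(X)$ whose $i$-th component sends the class of a generic cycle $C\subseteq F_i$ to $[\,\overline{p_i^{-1}(C)}\,]$ (closure in $X$), $\mu_i$ being the real fibre dimension of $p_i$. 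So all the content lies in producing this structure on $\L$.

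Much of it is already available from the discussion preceding the statement. Applying the \BB theorem to the completion $Y$ of \Cref{IntroCompletionLemma}, one has $\L=\bigsqcup_i\L_i$ with each $\L_i$ a smooth locally closed subset of $Y$, each $p_i:\L_i\fun\F_i$ a Zariski-locally-trivial affine bundle, and each $\F_i$ smooth. First I would record that, $Y$ being projective, $\lim_{t\fun\infty}t\cdot x$ exists in $Y$ for every $x\in\M$, and lies in $\M$ --- equivalently in $\M^{\Fi}=\bigsqcup_i\F_i$ --- exactly when $x$ belongs to the core; hence $\L=\bigsqcup_i\L_i$ is precisely the \BB decomposition of $\L$ viewed as a $\C^*$-variety in its own right. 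Next, since $\L$ is a compact variety by \cite{HaR-V15} and is locally closed in $Y$, it is a closed --- hence projective --- subvariety of $Y$.

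The remaining point to establish is filtrability of this decomposition. The \BB decomposition of the smooth projective $Y$ carries a filtration by closed $\C^*$-invariant subvarieties $Y_{\le 1}\subseteq Y_{\le 2}\subseteq\cdots$ with single strata of $Y$ as successive differences; intersecting with the closed subvariety $\L$ gives a filtration of $\L$ by closed $\C^*$-invariant subsets, each successive difference being the disjoint union of those $\L_i$ whose base $\F_i$ lies in the corresponding stratum of $Y$. I would then argue that each such $\L_i$, being an affine bundle over the connected $\F_i$, is connected and irreducible, so that its closure $\overline{\L_i}$ (taken in $Y$, equivalently in $\L$) lies in $\L$ and meets the ambient stratum of $Y$ in $\L_i$ alone; this lets one refine the filtration so that its successive quotients are the individual $\L_i$. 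At that point $(\L,\Fi)$ meets the hypotheses of \cite[Thm 1']{CaGo83}, whose conclusion is exactly the asserted decomposition $\Phi=\bigoplus_i\eta_i$ with $\eta_i([C])=[\,\overline{p_i^{-1}(C)}\,]$ and $\mu_i$ the real dimension of the fibre of $p_i$.

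The hard part is conceptual rather than computational: the \BB decomposition, together with its affine-bundle and filtrability properties, is genuinely a theorem about complete varieties, so it cannot be applied to the quasi-projective $\M$ directly, and one must import it from the completion $Y$ while invoking semiprojectivity --- existence of $t\fun 0$ limits in $\M$, and compactness of the core from \cite{HaR-V15} --- to ensure both that the relevant strata of $Y$ actually lie inside $\M$ and that the core is closed in $Y$. A secondary subtlety is to reconcile the precise formulation of \cite[Thm 1']{CaGo83}, which permits a singular total space but insists on smooth fixed loci and affine-bundle strata, with the present setup, and to extract from its proof --- not merely its statement --- the explicit attracting-cycle description of the isomorphism; this is what the ``version'' of the theorem refers to.
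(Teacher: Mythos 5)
Your proposal is correct and takes essentially the same route as the paper: pass to the equivariant smooth projective completion $Y$ (Lemma \ref{IntroCompletionLemma}), observe that the core is a closed $\C^*$-invariant subvariety whose fixed components are smooth and whose $(t\fun\infty)$-attracting sets agree with those taken in $Y$, and then apply Carrell--Goresky's Theorem 1$'$ to obtain $\Phi=\oplus_i\eta_i$. The only (cosmetic) difference is that where you verify filtrability by intersecting a filtration of $Y$ with $\L$ and refining, the paper simply cites the remark in \cite{CaGo83} that the conditions (1b) and (1d) of a good decomposition hold automatically for $\C^*$-actions on closed invariant subvarieties of smooth projective varieties, while smoothness of the $\F_i$ disposes of (1c).
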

This decomposition will be crucial for getting our estimates on the symplectic cohomology.
Another use of the completion from Lemma \ref{IntroCompletionLemma} is that it gives us a $S^1$-invariant \KH structure on a semiprojective variety, with a moment map:

\begin{lm} \label{IntrothereisaNonExactstructure} Any semiprojective variety $(\M,\Fi)$ has a compatible $S^1$-invariant 
	\KH structure $\om_I$ that admits a moment map $H:\M \fun \R.$
\end{lm}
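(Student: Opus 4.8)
The strategy is to produce the Kähler structure by restriction from the equivariant projective completion $Y$ supplied by Lemma 1.2. On $Y$ we have a projective variety with a $\C^*$-action, which restricts to an $S^1$-action (the compact torus). We start from a projective embedding $Y \hookrightarrow \P^N$ and average the Fubini–Study metric over $S^1$ to obtain an $S^1$-invariant Kähler form $\om_Y$ on $Y$, whose restriction $\om_I:=\om_Y|_\M$ is an $S^1$-invariant Kähler form on $\M$ compatible with the complex structure $I$ inherited from $\M\subset Y$. The point of passing through the completion is precisely that compactness of $Y$ makes the averaging unproblematic and guarantees the form we get is genuinely Kähler (closed, positive) rather than merely having these properties up to a correction term.

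Next I would produce the moment map. Since $Y$ is compact Kähler and the $S^1$-action on $(Y,\om_Y)$ is Hamiltonian — one can arrange this by choosing the embedding $Y\hookrightarrow\P^N$ to be equivariant for a linear $\C^*$-action on $\C^{N+1}$, so that the $S^1$-action on $\P^N$ is Hamiltonian with the standard Fubini–Study moment map, and averaging preserves this — there is a moment map $H_Y:Y\fun\R$ for the $S^1$-action, normalised say so that $dH_Y = \iota_{X}\om_Y$ where $X$ is the vector field generating the $S^1$-rotation. Restricting, $H:=H_Y|_\M:\M\fun\R$ is then a moment map for the $S^1$-action on $(\M,\om_I)$: the identity $dH=\iota_X\om_I$ holds on $\M$ because it holds on $Y$ and restriction of forms commutes with $d$ and with contraction by the (tangent, since $\M$ is $\C^*$-invariant) vector field $X$.

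I would then check the two compatibility points that make this useful downstream: that $\om_I$ tames (indeed is compatible with) the complex structure — immediate, since it is a restriction of a Kähler form and $\M$ is a complex submanifold — and that the $S^1$-action is the one coming from the given $\C^*$-action $\Fi$, which holds by construction since the completion in Lemma 1.2 extends $\Fi$. The fact that $\M^{\Fi}$ is compact is not even needed here, only in identifying critical-point behaviour of $H$ later.

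The main obstacle is arranging that the embedding $Y\hookrightarrow\P^N$ can be chosen $\C^*$-equivariantly for a \emph{linear} action, so that the $S^1$-action is automatically Hamiltonian and the averaging of Fubini–Study stays within the Hamiltonian world. This is where one invokes equivariant projective-ness: for a normal projective variety with an action of a linearly reductive group (here $\C^*$), some power of an ample line bundle is equivariant, giving an equivariant embedding into $\P(V)$ for a $\C^*$-representation $V$; linearising the $\C^*$-action on $V$ is then automatic. After that, the remaining steps — averaging, restricting, checking the moment-map identity — are routine, so I would keep the exposition of those brief.
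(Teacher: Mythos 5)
Your proposal is correct and follows the same skeleton as the paper's proof: pass to the smooth $\C^*$-equivariant projective completion $Y$ of Lemma \ref{IntroCompletionLemma}, produce an $S^1$-invariant \KH form on $Y$, obtain a moment map there, and restrict everything to the open invariant subvariety $\M$. The one genuine difference is how the Hamiltonian property on $Y$ is obtained. The paper averages an arbitrary \KH metric on $Y$ over $S^1$ and then invokes Frankel's lemma, which says that a holomorphic isometric $S^1$-action on a compact \KH manifold with non-empty fixed locus admits a moment map; this is where the (automatic) non-emptiness $Y^{S^1}\supset \M^{\Fi}\neq\emptyset$ enters. You instead re-embed $Y$ equivariantly into $\P^N$ with a linearised $\C^*$-action (Sumihiro plus linearisation of a power of an ample bundle, which is legitimate since $Y$ is smooth, hence normal, and projective) and pull back the explicit Fubini--Study moment map; after diagonalising the linear action the averaging step is in fact unnecessary. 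Your route buys an explicit moment map and avoids any appeal to fixed points, at the cost of the linearisation input; the paper's route avoids re-embedding $Y$ but needs Frankel's argument and the fixed-point hypothesis. Both are complete proofs of the statement, and your restriction step (using that $\M$ is a $\C^*$-invariant open subvariety of $Y$, so forms, the vector field, and the identity $dH=\iota_X\om$ all restrict) is exactly as in the paper.
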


We are going to use this \KH structure in proofs of our statements below, however we underline that its symplectic structure will \textbf{not} be the one which we consider while searching for exact Lagrangians. In the first place, the structure from Lemma \ref{IntrothereisaNonExactstructure} is non-exact, unless $\M$ is equal to $\C^{2n},$ for some $n\geq 0.$
In the forthcoming work with Alexander Ritter \cite{RZ22}, we construct the symplectic cohomology for these \KH structures (in the case when the moment map $H$ is proper), and prove its vanishing, which should imply the non-existence of unobstructed Lagrangians with respect to these structures, as it does in the exact case (\cite[Sec. 5]{Sei08}).

\subsection{Semiprojective holomorphic symplectic manifolds}

Finally, we introduce the spaces that are the main subject of this paper:
\begin{de}\label{IntroDefSHS}
	A \textbf{Semiprojective Holomorphic Symplectic (SHS)} manifold $(\M,I,\om_\C,\Fi)$ is a smooth connected semiprojective variety $(\M,I,\Fi)$\footnote{Here the complex structure $I$ comes from the 
	structure of a smooth algebraic variety on $\M.$} with an $I$-holomorphic symplectic form $\om_\C$
	that has a positive weight with respect to the $\C^*$-action $\Fi,$ i.e. there is an integer $s>0$ such that 
	\begin{equation}\label{IntroSymplWeight}
		t \cdot \om_\C = t^s \om_\C, \forall t \in \C^*.
	\end{equation}
	Such $\C^*$-actions we will call \textbf{conical}. The integer $s$ we call the \textbf{weight} of an SHS $(\M,\om_\C,\Fi).$
	Therefore, by a \textbf{weight-1 SHS} we will mean an SHS manifold that admits a weight-1 conical action.
\end{de}

Although these conditions give a lot of restrictions to geometry of $\M,$ there are many examples of SHS manifolds. The principal family amongst them are Conical Symplectic Resolutions (CSRs). Those are symplectic
$\C^*$-equivariant projective resolutions $\pi:\M\fun \M_0,$ where $\M_0$ is a Poisson normal affine variety with a $\C^*$-action. The known examples of CSRs are Nakajima quiver varieties, Springer resolutions, Hypertoric varieties, Slices of affine Grassmannian, and moreover they all belong to two big classes of spaces arising from Supersymmetric Gauge Theories, called Higgs and Coulomb branch of 3-dimensional supersymmetric gauge theories. Apart from CSRs, further examples of SHS manifolds are celebrated Moduli spaces of Higgs bundles and Hilbert schemes of points on cotangent bundles of curves. 
In addition, there may be many more SHS manifolds, as we discuss further at the end of Section \ref{CSRs}. For now, we give a working example for this introduction:

\begin{ex}\label{IntroDuVal_TypeA}
	Consider the minimal resolution of Du Val singularity of type $A_{n}.$ It is given as the minimal resolution of the quotient singularity
	$$\pi: X_{\Z/(n+1)} \fun \C^2/\Z/(n+1).$$
	Here, $\Z/(n+1)$ acts on $\C^2$ via $\eps \cdot (z_1,z_2)= (\eps z_1, \eps^{-1} z_2),$ where $\eps$ is a primitive element. 
	Thus, we have that $\C^2/\Z/(n+1)$ is isomorphic to the subvariety $V(XY-Z^{n+1})$ of $\C^3,$ given by the categorical quotient map
	$\C^2 \fun \C^3, (z_1,z_2)\mapsto (z_1^{n+1},z_2^{n+1}, z_1z_2).$
	The natural holomorphic-symplectic structure on $X_{\Z/(n+1)}$ comes as the pull-back 
	of the quotient of the standard symplectic structure on $\C^2,$ defined on $(\C^2-\{0\})/\Gamma.$ Thus, $\om_\C$ is defined away from the central fibre but it can be proved that it extends symplectically on it.
	Dilation action in $\C^2$ passes to the quotient and lifts to a conical weight-2 action on $X_{\Z/(n+1)},$ making it an SHS manifold.
	The core of this action is exactly the central fibre $\pi^{-1}(0),$ and it is classically known that it consists of 2-spheres whose graph of intersections forms an $A_n$ Dynkin tree.
	It is not hard to see that the fixed loci $\F_i$ and their corresponding attracting sets $\L_i$ are:
	\begin{enumerate}
		\item When $2\nmid n$: The fixed loci $\F_i$ are the central sphere in the $A_n$ graph, the intersection points of adjacent spheres, and their opposite points on the 
		first and the last sphere in the $A_n$-chain.
		The corresponding attracting sets $\L_i$ are the central sphere itself, isomorphic to $\C P^1,$ and the flowlines going ``from the centre towards the ends'', all isomorphic to $\C.$ 
		\item When $2\mid n$: The fixed loci $\F_i$ are the central intersection point, other intersection points of adjacent spheres, and their opposite points on the first and the last sphere in the $A_n$-chain.
		The corresponding attracting sets are the central intersection point itself, and the flowlines going ``from the centre towards the ends,'' all isomorphic to $\C.$
	\end{enumerate}
	In particular, one can verify that the equality between the ranks of homologies of the total space and of the fixed loci (appropriately shifted), coming from Lemma \ref{IntroHomologyDecompositionOfTheCore}, is satisfied in this example. 
\end{ex}

Having the holomorphic symplectic structure $\om_\C$ and a conical $\C^*$-action, 
we can construct the Calabi-Yau Liouville structure on these spaces. Recall
that a real-symplectic manifold $(M,\om)$ has a \textbf{Liouville structure} $(M,\th)$ if the symplectic form $\om=d \th$ is exact and moreover, there is a 
compact submanifold $K\subset M$ with boundary, such that there is a symplectomorphism $(M\setminus int(K),\om)\iso (\Sigma \times [1,+\infty),d(R\a)),$
where $\Sigma = \partial K,\ \alpha=\th|_\Sigma,$ $R$ is the coordinate on $[1,\infty],$ and $R\alpha$ pulls back to $\theta$ via the symplectomorphism. 
The Liouville vector field $Z$ is defined by $i_Z \om=\th,$ and the Liouville skeleton is the set of points in $M$ which do not escape every compact set under the Liouville flow. 
Denoting the by $\om_J:=\mathbb{R}e(\om_\C)$ and $\om_K=\mathbb{I}m(\om_\C),$ we have:

\begin{prop} \label{IntrocanonicalLiouville} Any SHS manifold $(\M,\Fi,\om_\C)$ has a canonical family of isomorphic Calabi-Yau Liouville structures $(\M,a\th_J+b\th_K),$ 
	parametrised by $(a,b)\in \R^2\setminus \{0\}$ such that
		\begin{enumerate}
			\item The Liouville vector field $Z$ is the $1/s$-multiple of the vector field of the $\R_+$-action, where $s$ is the $\om_\C$-weight of $\Fi.$
			\item The Liouville 1-forms are $\th_J=i_Z\om_J, \th_K=i_Z \om_K.$ 
			\item \label{IntroCoreIsSkeleton} The Liouville skeleton is the core $\L$ itself.
		\end{enumerate}
\end{prop}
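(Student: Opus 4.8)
The plan is first to dispatch the formal part. Let $X$ be the vector field generating the $\R_{>0}$-part of the conical action $\Fi$, normalised so that its time-$t$ flow is $x\mapsto e^t\cdot x$; differentiating the weight condition \eqref{IntroSymplWeight} along $\R_{>0}$ gives $\mathcal{L}_X\om_\C=s\,\om_\C$, so $Z:=\tfrac{1}{s}X$ satisfies $\mathcal{L}_Z\om_\C=\om_\C$, hence $\mathcal{L}_Z\om_J=\om_J$ and $\mathcal{L}_Z\om_K=\om_K$. Since $\om_\C$ is holomorphic symplectic it is in particular closed, so $\om_J,\om_K$ are closed, and Cartan's formula gives $\om_J=d(i_Z\om_J)$ and $\om_K=d(i_Z\om_K)$; setting $\th_J:=i_Z\om_J$, $\th_K:=i_Z\om_K$, we obtain $a\om_J+b\om_K=d(a\th_J+b\th_K)$ for every $(a,b)$. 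Next I would observe that $a\om_J+b\om_K=\mathbb{R}e\big((a-ib)\om_\C\big)$ is the real part of a nonzero complex multiple of a holomorphic symplectic form, hence a genuine real symplectic form --- this is a pointwise linear-algebra fact about complex-symplectic vector spaces. Thus $(\M,a\th_J+b\th_K)$ is exact symplectic for $(a,b)\in\R^2\setminus\{0\}$, and $i_Z(a\om_J+b\om_K)=a\th_J+b\th_K$ shows $Z$ is its Liouville vector field, giving items (1) and (2); as the construction used only $(\om_\C,\Fi)$, the family is canonical.

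Second, $Z$ is complete, since it integrates to the genuine $\R_{>0}$-action $x\mapsto e^{t/s}\cdot x$. To upgrade $(\M,a\th_J+b\th_K)$ to a Liouville \emph{manifold} I must produce a contact-type hypersurface $\Sigma$ transverse to $Z$ bounding a cylindrical end, and here I would invoke the equivariant projective completion $\M\hookrightarrow Y$ of Lemma~\ref{IntroCompletionLemma}. Choosing an $S^1$-invariant \KH metric on $Y$ together with an $S^1$-invariant proper exhaustion $f:\M\fun\R_{\geq 0}$ adapted to the closed complement $Y\setminus\M$ --- for instance built from squared Hermitian norms of $\C^*$-equivariant sections of an ample line bundle vanishing on $Y\setminus\M$ --- I would check, using the \BB decomposition of $Y$, that every nontrivial $\Fi$-orbit in $\M\setminus\L$ limits into $Y\setminus\M$ as $t\fun\infty$, so that $f$ is strictly increasing along $Z$ outside a compact neighbourhood of the core; then a regular level set $\Sigma=f^{-1}(c)$, $c\gg 0$, is of contact type and the $Z$-flow identifies $\M\setminus\mathrm{int}\,f^{-1}([0,c])$ with $\Sigma\times[1,\infty)$. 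I expect this step --- forcing transversality of $Z$ to the level sets globally, rather than merely off the fixed locus $\M^{\Fi}$ --- to be the main obstacle.

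Third, granting the cylindrical end, item (3) follows by unwinding definitions. A point $x$ lies in the Liouville skeleton precisely when its forward $Z$-trajectory $\{e^{t/s}\cdot x:t\geq 0\}=\{r\cdot x:r\geq 1\}$ does not escape a compact subset of $\M$. In $Y$ the orbit map $r\mapsto r\cdot x$ extends across $r=\infty$, so $\lim_{t\fun\infty}t\cdot x$ exists in $Y$, and it lies in $\M$ if and only if $\{r\cdot x:r\geq 1\}$ is precompact in $\M$. Hence the skeleton equals $\{x\in\M:\lim_{t\fun\infty}t\cdot x\ \text{exists in}\ \M\}=\L$, which is (3).

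Finally, for the Calabi-Yau property, $\om_\C^{\wedge n}$ (with $2n=\dim_\C\M$) is a nowhere-vanishing holomorphic volume form trivialising the canonical bundle; since the space of compatible almost complex structures is contractible and the symplectic forms $a\om_J+b\om_K$ are mutually deformation equivalent, this yields a well-defined homotopy class of trivialisation of $\det_\C T\M$, so each $(\M,a\th_J+b\th_K)$ is Calabi-Yau. For the isomorphism claim, the rotation $e^{i\psi}\cdot(-)$ pulls $\om_\C$ back to $e^{is\psi}\om_\C$, hence rotates the pair $(\th_J,\th_K)$ by the angle $s\psi$, while the dilation $\lambda^{1/s}\cdot(-)$ commutes with $Z$ and scales $(\th_J,\th_K)$ by $\lambda$; composing these, $\Fi$ acts transitively on the parameter space $\R^2\setminus\{0\}$ by diffeomorphisms of $\M$ intertwining the corresponding Liouville one-forms, so all members of the family are isomorphic Liouville (indeed Calabi-Yau Liouville) structures.
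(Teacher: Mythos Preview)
Your formal derivation of items (1) and (2) matches the paper exactly, and the Calabi--Yau and skeleton arguments are essentially the same (the paper packages the Calabi--Yau step into Lemma~\ref{LemmaCSRsAreAlmostHK}, but the content is what you wrote).

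The two genuine divergences are the hypersurface and the isomorphism, and they cut in opposite directions.

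\textbf{Hypersurface.} Your proposed construction via a proper exhaustion $f$ built from equivariant sections on the completion $Y$ does not obviously yield $df(Z)>0$ on an entire level set: the fact that every orbit in $\M\setminus\L$ escapes to $Y\setminus\M$ only says $f\to\infty$ along the orbit, not that $f$ is eventually monotone. You correctly flag this as the main obstacle, and the paper sidesteps it entirely with a soft topological trick. Since $Z$ is complete and nonvanishing on $\M\setminus\L$, the $Z$-flow makes $\M\setminus\L\to(\M\setminus\L)/\R$ a principal $(\R,+)$-bundle; the base is compact (the boundary of any compact neighbourhood of $\L$ surjects onto it), and every principal $\R$-bundle is trivial because $B\R$ is contractible. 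A smooth section then gives the required $\Sigma$ transverse to $Z$, with no analytic estimate needed. The paper does use an explicit exhaustion of your kind, but only later (Proposition~\ref{CorProofThatPhiIsRegular}) and only in the CSR case, where homogeneous generators of $\C[\M_0]$ are available and one can verify $L_Z\Phi=2w\Phi$ directly.

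\textbf{Isomorphism.} Here your argument is sharper than the paper's. The paper invokes a Moser-type deformation (Lemma~\ref{DeformationLemma}) over the parameter space $\R^2\setminus\{0\}$. Your observation that the $\C^*$-action itself intertwines the one-forms --- the $S^1$-part rotates $(\th_J,\th_K)$ by angle $s\psi$ since $\Fi_{e^{i\psi}}^*\om_\C=e^{is\psi}\om_\C$ and $\Fi_{e^{i\psi}}$ commutes with $Z$, while the $\R_{>0}$-part rescales --- produces explicit Liouville isomorphisms with $\psi^*\th_2=\th_1$ on the nose, not merely up to a compactly supported exact term. As a bonus it identifies the monodromy the paper alludes to after the proposition: going once around the origin corresponds to $\Fi_{e^{2\pi i/s}}$.
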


Here the isomorphism between these Liouville structures is ensured by the standard Moser-type deformation argument. In particular, we get the monodromy map in $\text{Symp}(\M,\om_J)$ 
obtained by going around the origin in $\R^2\setminus \{0\}.$ Studying these monodromies for some families of SHS manifolds might be an interesting further avenue of research.
We get the Calabi-Yau condition in Proposition \ref{IntrocanonicalLiouville} by enhancing the holomorphic symplectic structure $(\om_\C,I)$ to a \textbf{compatible} almost \HK structure $(g,I,J,K),$
such that $\om_\C=\om_J+i \om_K,$ and $\om_S(\cdot,\cdot):=-g(\cdot,S\cdot),$ for $S=J,K.$ 
That gives us a smooth $S^2$-family of complex structures
$I_u=\{u_I I + u_J J + u_K K \mid (u_I,u_J,u_K)\in S^2\},$ and as an $I$-holomorphic volume form $\om_\C^{\wedge n}$ trivialises the canonical bundle, thus $c_1(T\M,I)=0,$ 
by deformation we have $c_1(T\M,I_u)=0$ as well, which in particular gives us the Calabi-Yau condition for the forms $\om_{J,K}:=\{a\om_J + b \om_K \mid (a,b)\in \R^2\setminus \{0\} \}.$

The importance of having a Liouville structure on SHS manifold $\M,$ is that we can do symplectic topology on it without the issues regarding its non-compactness of $\M.$ 
In particular, it has a well-defined symplectic cohomology $SH^*(\M):=SH^*(\M,\om_{J,K}),$ (partially) wrapped Fukaya categories, etc.
By construction, this Liouville structure in principle depends on $\Fi,$ but, at least for the case of Conical Symplectic Resolutions, we prove
that two commuting conical actions yield isomorphic Liouville structures.
\subsection[Exact Lagrangians]{Exact Lagrangians} \label{smoothcomp}

The compatibility \eqref{IntroSymplWeight} in Definition \ref{IntroDefSHS} gives the core of an SHS manifold more of importance comparing to the case of ordinary semiprojective varieties. Namely, we have the following lemma, whose proof goes back to \cite[Thm 5.8]{Nak94a}.

\begin{lm}\label{IntroCoreIsIsotropic}
	The core of an SHS manifold $(\M,\Fi,\om_\C)$ is $\om_\C$-isotropic. Furthermore, the core of a weight-1 SHS is a Lagrangian subvariety.
\end{lm}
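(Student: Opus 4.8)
The plan is to exploit the conical scaling property \eqref{IntroSymplWeight} together with the fact that the core $\L$ is compact, and then show that a closed holomorphic symplectic form which is scaled by a positive power of $t$ must vanish when restricted to a compact invariant subset. Concretely, fix a point $x$ in the core $\L$ and two tangent vectors $v,w \in T_x\L$. Since $\L$ is the set of points whose $t\fun\infty$ limit exists, $\L$ is $\C^*$-invariant and in fact is contracted by the $t\fun\infty$ flow to the fixed locus $\M^\Fi$. First I would observe that for $t\in\C^*$ the map $\Fi_t:\M\fun\M$ satisfies $\Fi_t^*\om_\C = t^s\om_\C$, so that for the tangent vectors $(d\Fi_t)v,(d\Fi_t)w$ at the point $t\cdot x$ we get
\[
\om_\C\big((d\Fi_t)v,(d\Fi_t)w\big) = t^s\,\om_\C(v,w).
\]
Now let $t\fun\infty$ along, say, the real positive ray: the points $t\cdot x$ converge to a fixed point $x_\infty\in\M^\Fi$, and because $\L$ is a smooth (or at worst the image of a smooth) $\C^*$-invariant subvariety, the derivatives $(d\Fi_t)v$, $(d\Fi_t)w$ stay in the tangent spaces $T_{t\cdot x}\L$ and converge (after no rescaling, since they are genuine pushforwards) to vectors in $T_{x_\infty}\L$; hence the left-hand side above stays bounded as $t\fun\infty$. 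But the right-hand side is $t^s\,\om_\C(v,w)$ with $s>0$, which blows up unless $\om_\C(v,w)=0$. This forces $\om_\C|_{T_x\L}=0$, i.e. the core is $\om_\C$-isotropic.

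The point requiring care — and the main obstacle — is that $\L$ need not be smooth, only compact and closed, so ``$T_x\L$'' must be interpreted correctly (e.g. as the Zariski tangent space, or one works on the smooth locus of $\L$ and uses that the smooth locus is dense and that isotropy is a closed condition). I would handle this by first proving the statement on $\L^{\mathrm{sm}}$, the smooth locus of each irreducible component of $\L$, where the boundedness argument above is literally valid since $\L^{\mathrm{sm}}$ is a $\C^*$-invariant complex submanifold away from finitely many strata and the pushforward vectors genuinely lie in its tangent bundle; then isotropy of $\om_\C$ on a dense subset of each component, combined with continuity of $\om_\C$, gives isotropy on a neighborhood of the smooth locus, and a dimension count plus the fact that $\om_\C$ restricted to any desingularization $\widetilde\L\fun\L$ is a pulled-back closed form which vanishes generically, hence everywhere, closes the gap. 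Alternatively, and perhaps more cleanly, one can pull $\om_\C$ back to a resolution of each component of $\L$ and run the scaling argument there, where smoothness is automatic and the $\C^*$-action lifts. This is essentially the argument of \cite[Thm 5.8]{Nak94a}.

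For the second assertion, suppose the weight is $s=1$. The core $\L$ is $\om_\C$-isotropic by the first part, so $\dim_\R \L \le \tfrac12\dim_\R\M = \dim_\C\M$; to get that it is Lagrangian it suffices to show $\dim_\R\L \ge \dim_\C\M$, equivalently that $\L$ has the dimension of a half-dimensional subvariety. Here I would invoke the homology decomposition: by Lemma \ref{IntroHomologyDecompositionOfTheCore}, or more directly by the \BB decomposition \eqref{IntroBBbundles} of $\M$ into the attracting cells $\L_i\fun\F_i$ of the \emph{opposite} action, the closure $\ol{\L_i}$ of the biggest cell is a subvariety of $\M$ of complex dimension $\mu_i/2 + \dim_\C\F_i$; one shows that for a weight-$1$ conical action the holomorphic symplectic form pairs the ``attracting'' and ``repelling'' directions perfectly (the weights of $\Fi$ on $T_x\M$ at a fixed point come in pairs summing to $s=1$, so exactly half are nonnegative — indeed positive — and half are negative, once one checks there are no weight-zero directions transverse to $\F_i$, which follows because $\om_\C$ being nondegenerate and having weight $1$ forbids a self-paired zero weight). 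Hence each attracting cell of the $t\fun\infty$ action has complex dimension exactly $\tfrac12\dim_\R\M$, so its closure — which lies in $\L$ — is a half-dimensional isotropic, therefore Lagrangian, subvariety; since this holds for the top-dimensional components and $\L$ is their union with lower-dimensional pieces, $\L$ itself is Lagrangian. The delicate point in this half is the weight-pairing claim at fixed points, which I would prove by noting that $\om_\C$ restricts to a nondegenerate pairing $T_x\M \times T_x\M \fun \C$ of $\C^*$-weight $1$, so it identifies the weight-$k$ subspace with the dual of the weight-$(1-k)$ subspace, whence dimensions of weight-$k$ and weight-$(1-k)$ eigenspaces agree and, summing over $k\ge 1$ versus $k\le 0$, the attracting and repelling subspaces have equal dimension.
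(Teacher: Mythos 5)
Your strategy is the same as the paper's: the scaling identity for $\om_\C$ plus a limit argument for isotropy, and the nondegenerate weight pairing $H_k\otimes H_{s-k}\fun\C$ for half-dimensionality when $s=1$. The gap is at the central step of the isotropy argument: you assert that the pushforwards $(d\Fi_t)v$ stay bounded (indeed converge) as $t\fun\infty$ ``since they are genuine pushforwards'' and since they remain tangent to the invariant set $\L$ while the points $t\cdot x$ converge. That principle, as justified, is false: on $\C^2$ with $t\cdot(a,b)=(t^{-1}a,tb)$, the point $x=(1,0)$ has a $t\fun\infty$ limit and $v=\partial_b$ is tangent to the invariant subvariety $\C^2$, yet $(d\Fi_t)v=t\,\partial_b$ blows up. Convergence of the base point plus invariance gives nothing; what makes the claim true is that the tangent directions to the attracting pieces carry only nonpositive weights near the fixed locus. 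This is exactly what the paper supplies: it works on the smooth \BB pieces $\L_i$ and uses that $p_i:\L_i\fun\F_i$ is a $\C^*$-equivariant vector bundle (Proposition \ref{LemmaBBAreComplexVectorBundles}) whose fibre has only negative weights, so in an equivariant trivialization $(d\Fi_t)v$ visibly converges to a vector tangent to $\F_i$; at points of $\F_i$ it instead notes that the scaling identity kills $\om_\C$ on $H_{k_1}\otimes H_{k_2}$ unless $k_1+k_2=s$, hence on $\oplus_{k<0}H_k$. Your proof needs this equivariant linearization (or an equivalent weight argument) inserted where you claim boundedness; restricting to the smooth locus or passing to a resolution does not supply it, and in fact the resolution detour is unnecessary, since the cells $\L_i$ are already smooth and dense in the components $\ol{\L_i}$, and vanishing of $\om_\C$ is a closed condition.

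The Lagrangian half is essentially the paper's pairing argument and is fine modulo a small slip: for $s=1$ the pairing matches $H_k$ with $H_{1-k}$, so the sets $\{k\le 0\}$ and $\{k\ge 1\}$ have equal total dimension (weight-zero directions are tangent to $\F_i$, not absent, so ``half are negative'' is not quite right); your later count ``$k\ge 1$ versus $k\le 0$'' is the correct one, and it shows that \emph{every} attracting cell, not just the largest, is half-dimensional, which is how the paper (Corollary \ref{CoreDecompositonWeight1}) concludes that the core has pure middle dimension.
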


Therefore, the core of a weight-1 SHS manifold is a $\om_\C$-Lagrangian variety, so its irreducible components are $\om_{J,K}$-Lagrangian submanifolds, when they are smooth. 
We prove that there is indeed a family of smooth ones, labelled by different conical actions:

\begin{thm}\label{IntroMinimalComponentsTheorem}
	Given a weight-1 SHS $(\M,\Fi),$ there are at least $N$ smooth irreducible components of its core, 
	where $N$ is the maximal number of commuting weight-1 conical actions on $\M.$
\end{thm}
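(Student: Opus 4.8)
The plan is to attach to each $\Fi_k$ a smooth irreducible component of the core and then to show that distinct $\Fi_k$ produce distinct components. Write $\dim_\C\M=2n$ and let $\Fi_1,\dots,\Fi_N$ be pairwise distinct, pairwise commuting weight-$1$ conical actions (maximality of $N$ lets us take them distinct). By the result on commuting contractions quoted in the introduction they all have the same core $\L$, which is a Lagrangian subvariety by Lemma~\ref{IntroCoreIsIsotropic}; in particular $\L$ is pure of complex dimension $n$, so any irreducible closed $n$-dimensional subvariety of $\L$ is an irreducible component of $\L$.

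Next I record a weight computation for a single weight-$1$ conical $\Fi$. Along a connected component $\F$ of $\M^{\Fi}$ the form $\om_\C$ restricts to zero, since $\om_\C$ has $\Fi$-weight $1$ while $T\F$ is the $\Fi$-weight-$0$ part of $T\M|_\F$; moreover nondegeneracy of $\om_\C$ together with the weight grading pairs the $\Fi$-weight-$a$ summand of the normal bundle $N_\F$ perfectly with the weight-$(1-a)$ summand (the weight-$1$ summand being paired with $(T\F)^*$). A short count then gives $\dim N_\F^{<0}=n-\dim\F$, where $N_\F^{<0}$, resp. $N_\F^{>0}$, denotes the sum of the negative-, resp. positive-, weight summands of $N_\F$. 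Since $N_\F^{<0}$ is exactly the part of the normal bundle contracting to $\F$ as $t\to\infty$, the attracting set $\{x:\lim_{t\to\infty}t\cdot x\in\F\}$ is a \BB stratum of the core, an affine bundle over $\F$ (by Lemma~\ref{IntroCompletionLemma} and the \BB theorem) of complex dimension $\dim\F+\dim N_\F^{<0}=n$; this stratum is closed, equivalently compact since the core is compact, iff the bundle has rank zero, iff $N_\F^{<0}=0$, iff $\dim\F=n$, i.e. iff $\F$ is an $n$-dimensional connected component of $\M^{\Fi}$. Such a component exists: by semiprojectivity $\M=\bigsqcup_\F\M_\F^{+}$ with $\M_\F^{+}=\{x:\lim_{t\to0}t\cdot x\in\F\}$ a locally closed affine bundle over $\F$ of rank $\dim N_\F^{>0}$, and since $\M$ is irreducible exactly one $\M_{\F_0}^{+}$ is dense, hence open, hence of dimension $2n$; then $\dim\F_0+\dim N_{\F_0}^{>0}=2n=\dim\F_0+\dim N_{\F_0}^{>0}+\dim N_{\F_0}^{<0}$, so $N_{\F_0}^{<0}=0$ and $\dim\F_0=n$. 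Any such $\F$ is smooth (a fixed locus of a $\C^*$-action on a smooth variety is smooth), connected, $n$-dimensional and contained in $\L$, hence an irreducible component of $\L$. Let $\mathcal Z_k$ be the nonempty set of $n$-dimensional connected components of $\M^{\Fi_k}$; its members are smooth irreducible (Lagrangian) components of the core.

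The remaining point, which I expect to be the main obstacle, is the disjointness $\mathcal Z_k\cap\mathcal Z_l=\varnothing$ for $k\neq l$. Suppose a Lagrangian $Z$ lay in both. Then $\Fi_k$ and $\Fi_l$ both fix $Z$ pointwise, so the $\C^*$-action $\psi$ acting by $t\mapsto\Fi_k(t)\Fi_l(t)^{-1}$ fixes $Z$ pointwise; and $\psi$ has $\om_\C$-weight $1-1=0$, hence preserves $\om_\C$. Fix $z\in Z$. Then the linear map $d\psi_t|_z$ on $T_z\M$ is the identity on $T_zZ$, preserves $\om_\C$, and is semisimple (it is an element of a $\C^*$-representation on $T_z\M$). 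In the $\psi$-weight decomposition of $T_z\M$ the weight-$0$ subspace is $\om_\C$-orthogonal to every nonzero weight space, hence is a symplectic subspace; it contains the $\om_\C$-isotropic subspace $T_zZ$ of dimension $n=\tfrac12\dim T_z\M$, and therefore equals $T_z\M$. Thus $d\psi_t|_z=\mathrm{id}$ for every $z\in Z$, so $\M^{\psi}$ contains an open neighbourhood of $Z$ while being closed; as $\M$ is connected, $\M^{\psi}=\M$, i.e. $\psi$ is trivial and $\Fi_k=\Fi_l$, a contradiction. Hence $\bigsqcup_{k=1}^N\mathcal Z_k$ is a disjoint union of $N$ nonempty families of irreducible components of $\L$, and choosing one component from each family exhibits at least $N$ distinct smooth irreducible components of the core.
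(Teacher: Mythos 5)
Your proof is correct, but it takes a genuinely different route from the paper in both halves, so let me compare. For \emph{existence}, the paper (Proposition~\ref{minimalcomp}) takes the minimum of the $S^1$-moment map of Lemma~\ref{thereisaNonExactstructure} and identifies the unique fixed component with $\mu_i=0$ via the Betti-number relation \eqref{Bettinumberequation}, which rests on the Carrell--Goresky homology decomposition of the core (Proposition~\ref{HomologyDecompositionOfTheCore}); you instead use only the \BB decomposition of $\M$ into $(t\to 0)$-attracting strata (Corollary~\ref{BBDecompositionCSRs}) together with irreducibility of $\M$: the unique dense stratum is open of full dimension, and the weight pairing \eqref{PairingByOmC} with $s=1$ forces its base $\F_0$ to have no negative weights, hence $\dim_\C\F_0=n$ and $\F_0$ is a smooth closed half-dimensional subvariety of the pure-dimensional Lagrangian core, i.e.\ a smooth irreducible component. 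This is more elementary (no moment map, no homology decomposition), at the cost of not producing the uniqueness of the minimal component and the perfect-Morse-type information that the paper's argument feeds into the later symplectic cohomology bounds. For \emph{distinctness}, the paper (Proposition~\ref{DifntActionsDiffntMinComp}) averages a \KH metric over $S^1\times S^1$, shows two actions with the same minimal component induce the same $S^1$-action on the normal bundle $H_1=H_0^{\perp}$, and concludes via the equivariant tubular neighbourhood theorem, analytic continuation, and unique holomorphic extension from $S^1$ to $\C^*$; you instead form the quotient action $\psi_t=\Fi_k(t)\Fi_l(t)^{-1}$ (legitimate by commutativity), observe it has $\om_\C$-weight $0$ and fixes the common component $Z$ pointwise, and use the symplectic linear algebra fact that the $\psi$-weight-zero subspace of $T_z\M$ is symplectic and contains the middle-dimensional isotropic $T_zZ$, hence is everything. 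This is a clean, purely algebraic alternative that bypasses the metric and tubular-neighbourhood machinery entirely.

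The one step you should make precise is the passage from ``$d\psi_t|_z=\mathrm{id}$ for all $z\in Z$'' to ``$\M^{\psi}$ contains an open neighbourhood of $Z$.'' As written this is asserted rather than proved; the correct justification is the standard fact that the fixed locus of a $\C^*$-action on a smooth variety is smooth with tangent space at a fixed point equal to the weight-zero subspace of the tangent representation (this is exactly the identification $H_0(x)=T_x\F_i$ recorded after \eqref{EqnWeightDecomp}, cf.\ Lemma~\ref{WeightDecompositionIntoBundles} and \cite[Lem.~5.11.1]{CGi97}). With that, the $\psi$-fixed component through $z$ is a closed smooth subvariety of dimension $\dim\M$, hence equals $\M$ by irreducibility, so $\psi$ is trivial and $\Fi_k=\Fi_l$, as you want. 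With this citation supplied, your argument is complete.
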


Given a weight-1 conical $\C^*$-action $\Fi,$ we get the smooth component $\F_{\Fi}$ as the minimum of the moment map of its $S^1$-part, where we take the the \KH structure and the moment map from 
Lemma \ref{IntrothereisaNonExactstructure}. There is a unique such minimum, due to Betti number relation coming from Lemma \ref{IntroHomologyDecompositionOfTheCore}. Moreover, we prove that two different commuting actions yield different minima, concluding the theorem. The core components obtained with this method we call \textbf{minimal components}. 

Given a weight-2 Conical Symplectic Resolution, we describe a method of getting a family of commuting weight-1 conical actions in practice.
In particular, applying this method and Theorem \ref{IntroMinimalComponentsTheorem} to Nakajima Quiver Varieties of type A, we get a combinatorially-computable formula for a lower bound for the number of smooth components of its core. Together with Maffei's isomorphism \cite{Maf05} which gives a correspondence between cores of Quiver Varieties of type A and Springer fibres of type A, 
this yields a family of $N(\lambda,p)>0$ smooth components in an arbitrary generalised Springer fibre $\B_p^{\lambda}$ of type A, where the number $N(\lambda,p)$ is readily computable, given the partition $\lambda$ and composition $p.$ This is written in our forthcoming paper \cite{FZ2}. This application is interesting as it was not known whether a generalised Springer fibre has even a single smooth component. The current literature regarding smoothness of components of Springer fibres deals with the ordinary cases only, when $p=(1,\dots,1)$, and for those a lot of partial results were given in the last two decades \cite{PaRe06, Fr09b, FrMe10, FrMe11, Fr11, FrMeS-O15, BaZi08, GrZi11}.

\begin{ex}\label{IntroDuVal_Weight1actions_TypeA}
	Consider again the Du Val minimal resolution $\pi: X_{\Z/(n+1)} \fun \C^2/\Z/(n+1).$
	As mentioned, the core $\pi^{-1}(0)$ consists of Dynkin $A_n$-tree of 2-spheres.
	We are going to ``find'' all these spheres by the method from Theorem \ref{IntroMinimalComponentsTheorem}. 
	Namely, one can easily find $n$ different conical actions 
	$$t\cdot_k (X,Y,Z):=(t^{k} X, t^{n+1-k} Y, t Z), \ k=1,\dots,n $$ 
	on the quotient $\C^2/\Z/(n+1)\iso V(XY-Z^{n+1}).$
	These actions are all weight-1. To give the idea of proof, recall that $X=z_1^{n+1}$ and $Y= z_2^{n+1},$ hence 
	$t \cdot z_1=``{t^{{k} \over {n+1}}}" z_1, t \cdot z_2=``{t^{{n+1-k} \over {n+1}}}" z_1,$
	hence plugging this in $\om_\C=d z_1 \wedge d z_2$ gives weight 1.
	Moreover, as $\C^2/\Z/(n+1)$ is a conical symplectic singularity, and resolution $\pi$ is symplectic, these $\C^*$-actions lift to $X_{\Z/(n+1)},$ \cite[Lemma 5.3]{GiKalPoissonDeformation}. 
	They obviously all commute and are different, hence produce exactly $n$ different core components on $X_{\Z/(n+1)}.$
	As there are precisely $n$ core components, we see that this method exhausts \textbf{all} core components in this case.
\end{ex}

Getting to the symplectic side, recall that a smooth core component is a $\om_\C$-Lagrangian, hence a $\om_{J,K}$-Lagrangian submanifold.
Moreover, we have that it is exact, with respect to the Liouville 1-forms $\th_{J,K}:= i_{X_{\R_+}} \om_{J,K}$ (where $X_{\R+}$ is the vector field of the $\R_+$-action).
In particular, smooth core components are well-defined objects of Fukaya categories of $(\M,\om_{J,K}).$ As their classes form a basis for the middle-dimensional homology of $\M$, they are also
non-isotopic, hence we have the following corollary of Theorem \ref{IntroMinimalComponentsTheorem}:

\begin{cor}
	Given a weight-1 SHS $(\M,\Fi),$ there are at least $N$ smooth closed exact non-isotopic Lagrangian submanifolds of $(\M,\om_{J,K}),$ where $N$ is the maximal number of commuting weight-1 conical actions.
\end{cor}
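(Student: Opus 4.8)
The plan is to deduce the Corollary from Theorem~\ref{IntroMinimalComponentsTheorem} together with the three structural facts already established: that smooth core components are $\om_{J,K}$-Lagrangian (Lemma~\ref{IntroCoreIsIsotropic} and the discussion following it), that the Liouville skeleton of $(\M,a\th_J+b\th_K)$ is precisely the core $\L$ (Proposition~\ref{IntrocanonicalLiouville}\eqref{IntroCoreIsSkeleton}), and that the homology of $\L$ decomposes as in Lemma~\ref{IntroHomologyDecompositionOfTheCore}. First I would recall that Theorem~\ref{IntroMinimalComponentsTheorem} produces $N$ distinct smooth irreducible components $\F_{\Fi_1},\dots,\F_{\Fi_N}$ of the core, one for each of the $N$ commuting weight-$1$ conical actions. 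Each $\F_{\Fi_j}$ is a smooth closed subvariety of $\M$ of complex dimension $n=\tfrac12\dim_\R\M$, and since $\om_\C$ restricts to zero on the core by Lemma~\ref{IntroCoreIsIsotropic} (weight-$1$ case), both $\om_J=\mathbb{R}e(\om_\C)$ and $\om_K=\mathbb{I}m(\om_\C)$, hence every $\om_{J,K}=a\om_J+b\om_K$, restrict to zero on $\F_{\Fi_j}$; as these are middle-dimensional, each $\F_{\Fi_j}$ is a Lagrangian submanifold of $(\M,\om_{J,K})$.

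Next I would check exactness. Write $\th_{J,K}=i_{X_{\R_+}}\om_{J,K}=a\th_J+b\th_K$ for the Liouville $1$-form, where $X_{\R_+}$ is the generator of the $\R_+$-action and $Z=\tfrac1s X_{\R_+}$ is the Liouville vector field (Proposition~\ref{IntrocanonicalLiouville}(1)--(2)). Since $\F_{\Fi_j}$ is a minimal component, it is pointwise fixed by the corresponding $\C^*$-action $\Fi_j$ and in particular by its real flow; but a cleaner argument is that $\F_{\Fi_j}$ lies in the Liouville skeleton $\L$ by Proposition~\ref{IntrocanonicalLiouville}\eqref{IntroCoreIsSkeleton}, so the Liouville flow does not push it off to infinity. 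To get honest exactness $\th_{J,K}|_{\F_{\Fi_j}}=df$ one argues that $\th_{J,K}|_{\F_{\Fi_j}}$ is closed (being the restriction of a primitive of a form vanishing on the Lagrangian) and that its cohomology class vanishes: since $\F_{\Fi_j}$ is the minimum locus of the moment map $H$ of the $S^1$-part of $\Fi_j$ from Lemma~\ref{IntrothereisaNonExactstructure} and $X_{\R_+}$ is tangent to $\F_{\Fi_j}$ along it (the action restricts to $\F_{\Fi_j}$), the contraction $i_{X_{\R_+}}\om_{J,K}$ vanishes identically on $\F_{\Fi_j}$, so in fact $\th_{J,K}|_{\F_{\Fi_j}}=0$, which is exactness with $f\equiv 0$. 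Thus each $\F_{\Fi_j}$ is a closed exact Lagrangian, and hence a genuine object of the Fukaya category of $(\M,\om_{J,K})$; being smooth closed submanifolds, they are also embedded, so no immersed-Lagrangian subtleties arise.

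Finally I would establish non-isotopy. By Lemma~\ref{IntroHomologyDecompositionOfTheCore} the fundamental classes of the distinct irreducible core components of top dimension are linearly independent in $H_n(\L)=H_n(\M)$ (the decomposition $\Phi=\oplus_i\eta_i$ is an isomorphism, and $\eta_i$ sends the fundamental class of the point-component $\F_i$, or more generally of $\F_i$, to $[\ol{p_i^{-1}(\F_i)}]=[\F_{\Fi_i}]$, so these classes form part of a basis). Two Lagrangian submanifolds that are Lagrangian-isotopic — indeed merely smoothly isotopic — represent the same homology class in $\M$; since $[\F_{\Fi_1}],\dots,[\F_{\Fi_N}]$ are distinct (even linearly independent) elements of $H_n(\M)$, the submanifolds $\F_{\Fi_1},\dots,\F_{\Fi_N}$ are pairwise non-isotopic. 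This yields at least $N$ smooth closed exact non-isotopic Lagrangians, proving the Corollary.

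The main obstacle is the exactness statement: one must be careful that $\th_{J,K}$ really restricts to zero (or an exact form) on each minimal component rather than merely to a closed form, which requires knowing that $X_{\R_+}$ is tangent to $\F_{\Fi_j}$ — this uses that $\F_{\Fi_j}$ is $\Fi_j$-invariant (in fact fixed) and that the chosen conical action commutes with the $\R_+$-scaling built into the Liouville structure, a point which for Conical Symplectic Resolutions follows from the compatibility of commuting conical actions asserted earlier in the introduction but which in general needs the relation $Z=\tfrac1s X_{\R_+}$ from Proposition~\ref{IntrocanonicalLiouville}(1) applied to the \emph{same} action $\Fi=\Fi_j$ that defines the component. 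Everything else is formal given the cited lemmas.
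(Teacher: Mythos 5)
Your proposal is correct and follows essentially the same route as the paper: Theorem \ref{IntroMinimalComponentsTheorem} supplies the $N$ smooth core components, exactness is obtained exactly as in the paper's Lemma \ref{SmoothIsExact} (the Liouville field $Z=\tfrac1s X_{\R_+}$ is tangent to the component, which is $\om_{J,K}$-Lagrangian, so $\th_{J,K}=i_Z\om_{J,K}$ vanishes on it — your commutation worry is resolved precisely because every smooth component of the core of $\Fi$ is the closure of a $\Fi$-invariant attracting piece, and the actions in $\Con1$ commute with $\Fi$), and non-isotopy comes from distinctness of the middle-dimensional homology classes, which the paper records as Lemma \ref{LemaNonIsotopic} and you rederive from the top-degree part of the Carrell--Goresky decomposition. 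These are only cosmetic variations on the paper's proof, which simply cites Theorem \ref{MinimalComponentsTheorem}, Lemma \ref{SmoothIsExact} and Lemma \ref{LemaNonIsotopic}.
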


\begin{rmk}
We remark that the weight-1 condition is \textit{essential} here and cannot be lifted. Looking at examples of the Higgs branch of gauge theories, such as quiver varieties and hypertoric varieties, 
in the case that one does not have a weight-1 conical action (for quiver varieties this happens iff the quiver has a loop edge), one can show that they are subcritical Stein manifolds, hence by Cieliebak \cite[p. 121]{Cie02} the symplectic cohomology vanishes, thus there are no exact Lagrangians. In particular, in these cases the core is less than half-dimensional so cannot contain Lagrangian submanifolds.
\end{rmk}
We now describe the Floer cohomologies of smooth core components, after giving remarks about grading and coefficients on their Floer cohomologies.

Recall that an SHS has a compatible almost \HK structure $(g,I,J,K).$ Now, as any $\Theta \in \{aJ+bK \mid a^2+b^2=1\}$ has a natural $\Theta$-complex volume form $\Om_{\Theta}:= {1 \over {(n/2)!}}(\om_I - i \om_{I \Theta})^{n/2},$ this gives a notion of $\Z$-graded $\om_{\Theta}$-Lagrangian submanifolds. Moreover, as smooth core components are $\om_\C$-holomorphic Lagrangians, they are also  $\Om_{\Theta}$-special Lagrangians with respect to any $\Theta.$ Altogether, they are canonically $\Z$-graded, hence their mutual Floer cohomologies are so as well. We will denote the
graded core component $L$ by $\widetilde{L}.$ Strictly speaking, this depends on the choice of \AHK metric, however most examples of SHS manifolds come with a \HK structure, which gives a canonical choice.

With regards to the coefficients, as we are in exact setup there is no need for Novikov field and we work over an ordinary field $\K.$ In general, we have to assume that $\text{char}(\K)=2,$ as the Lagrangians that we consider may not be spin.\footnote{E.g. $\C P^2$ in $T^*\C P^2.$} However, for cases of the spaces belonging to Higgs branch we can bypass this issue and still assume arbitrary coefficients, as explained in Remark \ref{OnCoefficients}. We prove:

\begin{thm}\label{IntroLagrFloerMinComps}
	Given a minimal $\F_{\Fi}$ and a smooth core component $L$ of a weight-1 SHS $\M,$ their intersection is clean and connected, hence
	\begin{equation*}\label{IntroLagrFLoerOfTwoMinima}
		HF^*(\widetilde{\F}_{\Fi},\widetilde{L}) \iso H^{*-\mu}({\F}_{\Fi} \cap  L), 
	\end{equation*}
	for certain shift $\mu.$ 
	Moreover, when the $S^1$-part of $\Fi$ 
	is isometric with respect to a compatible almost \HK metric we have that
		$\mu=\codim_\C({\F}_{\Fi} \cap L \subset {\F}_{\Fi}),$
	hence the grading is symmetric, giving
	$$HF^*(\widetilde{\F}_{\Fi},\widetilde{L})\iso HF^*(\widetilde{L},\widetilde{\F}_{\Fi})$$ as graded vector spaces.
	In particular, when $\M$ is also \HKL, and its conical actions have isometric $S^1$-parts, we have a graded isomorphism 
	\begin{equation*}\label{IntroSHKcaseFloerAlgebra}
		\bigoplus_{L_i,L_j \text{minimal}} HF^*(L_i,L_j) \iso \bigoplus_{L_i,L_j \text{minimal}} H^*(L_i \cap L_j)[-d_{i,j}],
	\end{equation*}
	where $d_{i,j}=codim_\C ((L_i \cap L_j) \subset L_i),$ 
	and the summands correspond to each other.
\end{thm}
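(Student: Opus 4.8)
The plan is to identify $N:=\F_{\Fi}\cap L$ and show it is a clean, connected (possibly empty) submanifold by equivariant Morse--Bott theory, then reduce $HF^*(\widetilde{\F}_{\Fi},\widetilde{L})$ to the Floer cohomology of a zero section and a conormal bundle inside a cotangent bundle, and finally pin down the grading shift via special-Lagrangian phases.

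\emph{Step 1: the intersection is clean and connected.} Since $\C^*$ is connected it preserves every irreducible component of the core, so $L$ is $\Fi$-invariant, hence invariant under its $S^1$-part; and $\F_{\Fi}$ is by construction a connected component of $\M^{S^1}$, namely the minimum of the moment map $H$ from \cref{IntrothereisaNonExactstructure}. The fixed locus $L^{S^1}$ of the smooth circle action on the manifold $L$ is a smooth submanifold with $T_xL^{S^1}=(T_xL)^{S^1}$, and $\F_{\Fi}\cap L=\{x\in L^{S^1}\mid H(x)=\min_{\M}H\}$ is a union of its connected components; for $x$ there, $T_x(\F_{\Fi}\cap L)=(T_xL)^{S^1}=T_xL\cap(T_x\M)^{S^1}=T_xL\cap T_x\F_{\Fi}$, which is cleanness. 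For connectedness, $L$ is $I$-complex and $S^1$-invariant, so the Kähler gradient $\nabla H=-I\xi$ ($\xi$ the generator of the $S^1$-action) is tangent to $L$; thus the gradient flow restricts to $L$, $H|_L$ is Morse--Bott with critical set $L^{S^1}$, and the \BB cell of $(L,\Fi)$ whose $(t\to 0)$-limit lands in the minimum of $H|_L$ is an affine bundle over that minimum. This cell is open in the irreducible variety $L$, hence irreducible, hence connected, so the minimum of $H|_L$ is connected. If $\F_{\Fi}\cap L=\emptyset$ both sides of the asserted isomorphism vanish and we are done; otherwise any $y\in\F_{\Fi}\cap L$ has $H(y)=\min_{\M}H=\min_{L}H$, whence $\F_{\Fi}\cap L$ \emph{equals} the connected minimum of $H|_L$. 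Write $N:=\F_{\Fi}\cap L$.

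\emph{Step 2: the local model.} By \cref{IntroCoreIsIsotropic}, $\F_{\Fi}$ and $L$ are $I$-complex $\om_\C$-Lagrangians, and by \cref{IntrocanonicalLiouville} both lie in the Liouville skeleton, hence are closed exact Lagrangians with vanishing primitive for $\th_{J,K}$. The key linear-algebra point along $N$: for $v\in T_xL$ and $w\in T_xN\subset T_x\F_{\Fi}$ one also has $w\in T_xL$, so $\om_\C(v,w)=0$; thus the image of $T_xL$ in $T_x\M/T_x\F_{\Fi}\iso T^*_x\F_{\Fi}$ annihilates $T_xN$, and a dimension count using that $\F_{\Fi},L$ are Lagrangian and meet cleanly along $N$ shows it is exactly the conormal space of $T_xN$. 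Hence $L$ is tangent to the conormal bundle $\nu^*_N\F_{\Fi}\subset T^*\F_{\Fi}$ all along $N$. A Weinstein neighbourhood of $\F_{\Fi}$ in $(\M,\om_{J,K})$ identifies $\F_{\Fi}$ with the zero section of $T^*\F_{\Fi}$, and a parametric Moser argument — using exactness of $L$ and its first-order agreement with $\nu^*_N\F_{\Fi}$ along $N$, so that $L$ is locally the graph of an exact $1$-form on $\nu^*_N\F_{\Fi}$ vanishing to first order on $N$ — deforms $L$, through exact Lagrangians, to coincide near $N$ with $\nu^*_N\F_{\Fi}$. Since $\F_{\Fi}$ and $L$ are closed, $HF^*(\widetilde{\F}_{\Fi},\widetilde{L})$ is computed by a $C^2$-small Hamiltonian supported near $N$, and Pozniak's clean-intersection theorem, together with the standard vanishing of nonconstant Floer strips between a zero section and a conormal bundle in a cotangent bundle (take an almost complex structure making the projection holomorphic), makes the associated spectral sequence degenerate, giving $HF^*(\widetilde{\F}_{\Fi},\widetilde{L})\iso H^{*-\mu}(N)$ for a grading shift $\mu$.

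\emph{Step 3: the grading, the symmetry, and the corollary.} The shift $\mu$ is the Maslov-type index of the graded pair $(\widetilde{\F}_{\Fi},\widetilde{L})$, i.e. the jump of the $\Om_\Theta$-special-Lagrangian phase when passing from $\F_{\Fi}$ to the conormal model of $L$. When the $S^1$-part of $\Fi$ is isometric for a compatible almost \HK metric, that metric — hence the phase — is $S^1$-invariant, the normal bundle of $N$ in $\F_{\Fi}$ carries the complex structure making $\nu^*_N\F_{\Fi}$ a \emph{complex} conormal, and the phase jumps by exactly the complex codimension, so $\mu=\codim_\C(N\subset\F_{\Fi})$ and $HF^*(\widetilde{\F}_{\Fi},\widetilde{L})$ is concentrated symmetrically about $\dim_\C L$. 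Since $\F_{\Fi}$ and $L$ are both Lagrangian, $\dim_\C\F_{\Fi}=\dim_\C L$, so $\codim_\C(N\subset L)=\mu$; combining Floer Poincaré duality $HF^{j}(\widetilde{L},\widetilde{\F}_{\Fi})\cong HF^{\dim_\R L-j}(\widetilde{\F}_{\Fi},\widetilde{L})^{\vee}$ with Poincaré duality on the compact manifold $N$ yields $HF^{j}(\widetilde{L},\widetilde{\F}_{\Fi})\cong H^{j-\mu}(N)\cong HF^{j}(\widetilde{\F}_{\Fi},\widetilde{L})$ as graded vector spaces. Finally, when $\M$ is \HKL and all its conical actions have isometric $S^1$-parts, each minimal component $L_i=\F_{\Fi_i}$ is of the above form, and applying Steps 1--3 to every pair $L_i,L_j$ (with $L_i$ in the role of $\F_{\Fi}$) gives $HF^*(\widetilde{L}_i,\widetilde{L}_j)\iso H^{*-d_{i,j}}(L_i\cap L_j)$ with $d_{i,j}=\codim_\C(L_i\cap L_j\subset L_i)$; summing over pairs, both sides being the tautological direct sums, proves the last displayed isomorphism with matching summands.

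\emph{Main obstacle.} The delicate parts are (i) making the clean-intersection computation genuinely local: upgrading the first-order agreement of $L$ with $\nu^*_N\F_{\Fi}$ to an honest identification in a Weinstein neighbourhood and verifying that no Floer strips escape that neighbourhood — exactness and the absence of disk bubbling for closed exact Lagrangians provide this, but the setup must be handled carefully — and (ii) extracting the \emph{complex} codimension rather than the real one from the continuous family of $\Om_\Theta$-phases, which is precisely where the isometry (equivalently, the \HK{}rotation property) of the circle action enters.
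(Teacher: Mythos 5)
Your Step 1 is correct (your irreducibility-of-the-open-attracting-cell argument for connectedness is a legitimate variant of the paper's Betti-number argument applied to $L$, and the cleanness check via $S^1$-fixed vectors matches the paper's), and Step 2 amounts to the Po\'zniak clean-intersection theorem, which is exactly what the paper cites; the symmetry via Floer duality plus Poincar\'e duality on the intersection is also a fine alternative to the paper's direct remark. The genuine gap is in Step 3, which is where the actual content of the second half of the theorem lies: you assert that ``the phase jumps by exactly the complex codimension, so $\mu=\codim_\C(N\subset \F_{\Fi})$'' but give no argument, and the phrasing suggests a misreading of the mechanism. With the canonical gradings both $\F_{\Fi}$ and $L$ are special Lagrangians of constant squared phase $1$, so there is no phase jump at all; in the Po\'zniak/Khovanov--Seidel formula the shift is $\mu=\mu(\lambda_1,\lambda_2)+\tfrac12(\dim_\C\M-\dim_\R S)$, and the whole point is to show that the Maslov index $\mu(\lambda_1,\lambda_2)$ of a path of Lagrangian subspaces \emph{compatible with the chosen gradings} vanishes, leaving the dimension term, which equals $\codim_\C(S\subset\F_{\Fi})$.

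Proving that vanishing is precisely where minimality and the isometric $S^1$-part enter, and your proposal never uses either in a substantive way. The paper's proof needs two ingredients you omit: (i) a linear-algebra lemma $JU_1=U_2$, where $U_1,U_2$ are the $g$-orthogonal complements of $T_xS$ inside $T_x\F_{\Fi}$ and $T_xL$; this uses that $\F_{\Fi}$ is fixed by a \emph{weight-1} conical action, so the ($g$-orthogonal, by isometry of the $S^1$-part) weight decomposition is simply $T_x\M=H_0\oplus H_1$ with $H_0=T_x\F_{\Fi}$, and that the normal directions of $S$ in $L$ have weight $1$, hence lie in $H_1=H_0^{\perp_g}$; and (ii) an explicit grading-compatible path from $T_x\F_{\Fi}$ to $T_xL$ obtained by rotating half of the $U_1$-directions by $e^{i\pi t/2}$ and the other half by $e^{-i\pi t/2}$ (possible because $\dim U_1$ is even), which keeps the squared phase identically $1$ and whose Robbin--Salamon index is computed to be $0$. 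Without these two steps the identification $\mu=\codim_\C$ — and hence the symmetry statement and the SHK corollary, which depend on it — is not established; your own ``main obstacle (ii)'' names the difficulty but the proposal does not resolve it.
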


\begin{rmk}
We remark that these symmetric gradings on Floer cohomologies that we obtain 
compares well with the gradings on the Lagrangians used in the work on Symplectic Khovanov Homology \cite[Sec. 4.4]{AbS19}, chosen in such way 
in order to obtain the purity of the obtained Floer $A_\infty$-algebra.\footnote{A graded $A_\infty$-algebra $\mathcal{A}$ is called \textit{pure} if it admits 
a \textit{pure \textbf{nc}-vector field}, i.e. a Hochschild cocycle $b\in CC^1(\mathcal{A},\mathcal{A})$ having $b^0=0$ and whose endomorphism $b^1:\mathcal{A} \fun \mathcal{A}$ 
on the homology level $A=H(\mathcal{A})$ agrees with the Euler vector field, which multiplies the graded piece $A^i \subset A$ of $A$ by $i.$ 
Purity is a one way to ensure the formality of an $A_\infty$ algebra.}
The Lagrangians $L_\rho$ used there are labelled by so-called crossingless matchings $\rho,$ and they are topologically same as the Lagrangians of the core of a certain SHS $\M^{n,n}.$ 
Their ambient manifold $\mathcal{Y}_n$ is the deformation of a singular Slodowy variety $\M_0^{n,n}$ whereas $\M^{n,n}$ is its resolution. However, these are known to be symplectomorphic.
Moreover, the only minimal Lagrangian in $\M^{n,n}$ is precisely the Lagrangian which corresponds to their Lagrangian $L_{\rho_{plait}}$ obtained from certain ``plait'' matching. 
They choose the gradings of other Lagrangians $L_\rho$ such that obtained Floer cohomologies $HF^*(L_{\rho_{plait}},L_{\rho})$ have symmetric gradings, whereas we 
prove this symmetry by using the existing \HK structure on $\M^{n,n}$ and the canonical gradings given to core components being special Lagrangians.
\end{rmk}
Regarding the Floer product and higher operations on the smooth core components of SHS manifolds, we prove the following encouraging statement (recall the almost \HK notation from above):

\begin{prop}\label{IntroFloerSolnsAreConstant}
	Consider a weight-1 SHS $\M$ and pick \textit{any} choice of complex structure $\Theta \in \{aJ+bK \mid a^2+b^2=1\}.$ 
	Given any smooth core components $L_1,\dots,L_n,$ and a Riemann surface $\Sigma$ of type-$n,$ any $\Theta$-holomorphic map $u:\Sigma\fun \M,\; u( \partial \Sigma_i) \subset L_i$ is constant.
\end{prop}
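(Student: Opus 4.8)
The plan is to run the standard energy--Stokes argument for pseudoholomorphic curves with exact Lagrangian boundary, the crucial point being that for $\Theta\in\{aJ+bK\mid a^2+b^2=1\}$ the smooth core components are $\om_\Theta$-Lagrangian and — better still — the primitive of the Liouville form that one wants to use \emph{vanishes identically} along each of them, so that every boundary term produced by Stokes' theorem is zero.

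First I would assemble the ambient symplectic data attached to $\Theta$. Since $\om_\C=\om_J+i\om_K$ is closed, so is $\om_\Theta:=a\om_J+b\om_K$; and since $\om_\Theta(v,\Theta v)=-g(v,\Theta^2 v)=g(v,v)>0$ for $v\neq 0$ (using $\Theta^2=-1$ and the convention $\om_S(\cdot,\cdot)=-g(\cdot,S\cdot)$), the pair $(\M,\om_\Theta)$ is symplectic with $\Theta$ an $\om_\Theta$-compatible almost complex structure whose associated metric is $g$. By Lemma~\ref{IntroCoreIsIsotropic} every smooth core component $L_i$ is a middle-dimensional $\om_\C$-Lagrangian, that is $\om_J|_{L_i}=\om_K|_{L_i}=0$, whence $\om_\Theta|_{L_i}=0$ and $L_i$ is $\om_\Theta$-Lagrangian. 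By Proposition~\ref{IntrocanonicalLiouville}(1), with $s=1$, the radial vector field $X_{\R_+}$ of the conical action is the Liouville vector field of $\om_\Theta$, so $\theta_\Theta:=i_{X_{\R_+}}\om_\Theta$ satisfies $d\theta_\Theta=\om_\Theta$. Finally, the core is cut out by a limit condition, hence is $\C^*$-stable, and the connected group $\C^*$ preserves each of its irreducible components; so $X_{\R_+}$ is tangent to $L_i$, and together with $L_i$ being $\om_\Theta$-Lagrangian this upgrades its exactness to the vanishing $\theta_\Theta|_{L_i}=i_{X_{\R_+}}\om_\Theta|_{L_i}=0$.

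Granting this, the conclusion follows at once. Writing a $\Theta$-holomorphic $u$ in a local conformal coordinate $z=s+it$ with $\pt u=\Theta\,\ps u$ gives $u^*\om_\Theta=\om_\Theta(\ps u,\Theta\,\ps u)\,ds\wedge dt=|\ps u|^2\,ds\wedge dt\geq 0$, so the energy of $u$ equals $\int_\Sigma u^*\om_\Theta$. By Stokes, $\int_\Sigma u^* d\theta_\Theta=\int_{\partial\Sigma}u^*\theta_\Theta=\sum_i\int_{\partial\Sigma_i}u^*(\theta_\Theta|_{L_i})=0$, hence $\int_\Sigma|\ps u|^2\,ds\wedge dt=0$; thus $\ps u\equiv 0$, and $\Sigma$ being connected, $u$ is constant.

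The step I expect to need the most care is Stokes' theorem when $\Sigma$ carries boundary punctures (strip-like ends), which is the situation for the surfaces of the $A_\infty$-operations one ultimately wants to feed this into: there one must also check that the contribution at each puncture vanishes. This, however, reduces to the same observation — a finite-energy $\Theta$-holomorphic map converges at such a puncture to a clean-intersection chord contained in some $L_i$, whose $\theta_\Theta$-action is the integral of $\theta_\Theta|_{L_i}=0$ along a path in $L_i$, hence zero — so every boundary term, at finite distance or at a puncture, still vanishes. (If $\Sigma$ is taken compact with boundary, the issue does not arise.) Apart from this bookkeeping the argument is elementary, and I would stress that the hypothesis $\Theta\in\{aJ+bK\}$ is exactly what is used: any $I$-component in $\Theta$ would reintroduce the nondegenerate Kähler form $\om_I|_{L_i}$ into $\om_\Theta|_{L_i}$ and the whole argument would collapse.
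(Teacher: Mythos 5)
Your proof is correct and follows essentially the same route as the paper: the energy identity for $\Theta$-holomorphic maps, Stokes' theorem for $\om_\Theta=d\th_\Theta$ with $\th_\Theta=i_{X_{\R_+}}\om_\Theta$, and the vanishing of $\th_\Theta$ along smooth core components (the Bohr--Sommerfeld property from Lemma \ref{SmoothIsExact}, via $\C^*$-invariance of the components). Your extra care about boundary punctures is a reasonable refinement, but the core argument coincides with the paper's.
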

	
Here, we assume that we have chosen a compatible almost \HK structure, giving almost complex structures $J$ and $K.$ Also, we say that a Riemann surface $\Sigma$ is of \textbf{type-$n$} if its boundary $\partial \Sigma$ has punctures, which decompose it into $n$ connected pieces, denoted by $\partial \Sigma_i, i=1,\dots,n.$ This proposition tells us that operations in the Fukaya category of an SHS $\M$ that involve smooth components of the core come only from considering constant solutions. This makes
believable that the Floer product of smooth components becomes just the convolution product on the ordinary cohomology, under the isomorphisms from Theorem \ref{IntroLagrFloerMinComps}. 
Considering the examples of resolutions of two-row Slodowy varieties, this compares well with combining the work of Mak--Smith \cite{MaS19} and Stroppel--Webster \cite{SW12}, on which we comment in detail at the end of Section \ref{FloerProductsOnSHS}.

\subsection{Estimates on symplectic cohomology}
We show that existence of minimal components yields lower bounds on the ranks of the symplectic cohomology $SH^*(\M)=SH^*(\M,\om_{J,K})$ of a weight-1 SHS $\M.$ 
Symplectic cohomology in general is notoriously hard to compute explicitly, so we usually have to make do with partial information. 
We will obtain the lower bounds of its \textbf{degree-wise} ranks, hence it is important to incorporate the $\Z$-grading on $SH^*(\M),$ coming from a compatible \AHK structure.
Thus, strictly speaking it depends on the choice of \AHK metric, however as previously mentioned, most examples of SHS manifolds come with a \HK structure, which then gives the canonical choice of grading.

In order to get our lower bounds, we start with the observation by Seidel \cite[Sec. (5a)]{Sei08}, saying that 
given an exact Lagrangian submanifold $i:L \hookrightarrow \M$ inside a Liouville $\M,$ one has a commuting triangle 
\begin{equation} \label{IntroSeidelTriangle}
	\begin{tikzcd}[column sep=small]
		H^*(\M) \arrow{r}{c^*}  \arrow{rd}{i^*} 
		& SH^*(\M) \arrow{d}{\mathcal{CO}^0} \\
		& HF^*(L,L)
	\end{tikzcd}
\end{equation}
where the diagonal map becomes the restriction map $i^*:H^*(\M)\fun H^*(L),$ under the isomorphism $H^*(L,L)\iso H^*(L).$ Here the $c^*$ map is a version of the PSS map for symplectic cohomology and 
$CO^0$ is the closed-open string map that basically counts cylinders satisfying the Floer equation near one boundary and being pseudoholomorphic with boundary on $L$ on the other. 
A corollary of the integral homology decomposition Lemma \ref{IntroHomologyDecompositionOfTheCore} is that, given a minimal component $\F$ in a weight-1 SHS $\M,$ the restriction map $i^*:H^*(\M)\fun H^*(\F)$ is surjective. Thus, together with the triangle  \eqref{IntroSeidelTriangle} above, we get 

\begin{prop} Given an arbitrary minimal component $\F$ of a weight-1 SHS $\M$, for all $k\in \N_0$, $$ \text{\rk}(SH^k(\M))\geq b_k(\F).$$ \end{prop}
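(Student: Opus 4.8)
The plan is to feed a minimal component into Seidel's triangle \eqref{IntroSeidelTriangle} and exploit the surjectivity, onto such a component, of the cohomological restriction map. Fix a minimal component $\F=\F_\Fi$. By Theorem \ref{IntroMinimalComponentsTheorem}, Lemma \ref{IntroCoreIsIsotropic} and Proposition \ref{IntrocanonicalLiouville}, $\F$ is a smooth closed exact Lagrangian submanifold of $(\M,\om_{J,K})$, so it is a legitimate substitute for $L$ in \eqref{IntroSeidelTriangle}; via the isomorphism $HF^*(\F,\F)\iso H^*(\F;\K)$ recalled in the discussion of \eqref{IntroSeidelTriangle} (the standard computation for a closed exact Lagrangian in a Liouville manifold), the triangle reads $i^*=\mathcal{CO}^0\circ c^*$ as maps $H^*(\M;\K)\fun H^*(\F;\K)$, a factorisation through $SH^*(\M)$ by degree-preserving maps.

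The substantive step is to check that $i^*\colon H^*(\M;\K)\fun H^*(\F;\K)$ is surjective, which is the corollary of Lemma \ref{IntroHomologyDecompositionOfTheCore} alluded to above. Here is the argument I would give. Since $\F$ is the minimum of the moment map of the $S^1$-part of $\Fi$, all weights of $\Fi$ on the normal bundle $N_{\F/\M}$ are positive; therefore, as $t\fun\infty$, every point near $\F$ but not on $\F$ leaves a fixed neighbourhood of $\F$, so the $(t\fun\infty)$-attracting set of $\F$ is $\F$ itself and the attracting-bundle fibre dimension is $\mu_\F=0$. As $\F$ is a connected component of $\M^\Fi$ it is closed in the core $\L$, so the map $\eta_\F$ of Lemma \ref{IntroHomologyDecompositionOfTheCore} is $[C]\mapsto[\ol{p_\F^{-1}(C)}]=[C]$; that is, $\eta_\F$ is exactly $(j_\F)_*\colon H_*(\F)\fun H_*(\L)$ for the inclusion $j_\F\colon\F\hookrightarrow\L$, occupying the $\mu=0$ summand. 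Because $\Phi=\bigoplus_i\eta_i$ is an isomorphism, $(j_\F)_*$ is the inclusion of a direct summand of $H_*(\L)$. Dualising over the field $\K$ — legitimate since $\L$ and all $\F_i$ are compact, so every group in sight is finite-dimensional — turns $(j_\F)_*$ into the projection $(j_\F)^*\colon H^*(\L)\fun H^*(\F)$ onto a direct summand, in particular a surjection. Finally $\L$ is a deformation retract of $\M$, so the restriction $H^*(\M)\fun H^*(\L)$ is an isomorphism compatible with further restriction to $\F$, whence $i^*=(j_\F)^*\circ(\text{iso})$ is surjective.

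The conclusion is then formal. In each degree $k$ the composite $H^k(\M)\xrightarrow{c^k}SH^k(\M)\xrightarrow{\mathcal{CO}^0}HF^k(\F,\F)\iso H^k(\F)$ equals the surjection $i^k$, so $\mathcal{CO}^0$ carries the subspace $\im(c^k)\subseteq SH^k(\M)$ onto $H^k(\F)$; hence $\text{\rk}(SH^k(\M))\geq\dim_\K\im(c^k)\geq\dim_\K H^k(\F;\K)=b_k(\F)$.

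The hard part is the middle paragraph: making sure that a minimal component genuinely sits in the $\mu=0$ slot of the Carrell--Goresky decomposition with $\eta_\F$ equal to the inclusion-induced map (this rests on the positivity of the normal weights at a moment-map minimum and on $\F$ being closed), and then that split injectivity of $(j_\F)_*$ dualises, over a field, to surjectivity of the cohomological restriction $i^*$. Everything else — Seidel's triangle \eqref{IntroSeidelTriangle}, the identification $HF^*(\F,\F)\iso H^*(\F)$, and the rank count — is routine once the coefficient field $\K$ and the $\Z$-grading coming from the compatible \AHK structure are fixed.
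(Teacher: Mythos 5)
Your proof is correct and follows essentially the same route as the paper: Seidel's triangle applied to the minimal component, surjectivity of the restriction $H^*(\M)\fun H^*(\F)$ deduced from the Carrell--Goresky decomposition of $H_*(\L)$ (using that the minimal component equals its $(t\fun\infty)$-attracting set, so $\mu_\F=0$ and $\eta_\F$ is the inclusion-induced map, then dualising over the field $\K$), together with the homotopy equivalence $\L\simeq\M$. The only cosmetic difference is that you justify $\mu_\F=0$ via positivity of normal weights at the moment-map minimum, whereas the paper records this as the identity $\F_{i_0}=\L_{i_0}=\ol{\L_{i_0}}$ established in its Proposition on minimal components; the content is the same.
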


More precisely, we show that a block of $H^*(\M),$ that is isomorphic to $H^*(\F)$ via the restriction map, injects in $SH^*(\M)$ with the $c^*$-map. 
Thinking carefully, we can do even more: fixing a weight-1 action $\Fi$ with the fixed locus $\F=\sqcup \F_i,$ we can inject the cohomologies of all $\F_i$ which lie in minimal components to $SH^*(\M)$ via the $c^*$-map. For this to work, we have to use both the homology decompositions for the action $\Fi$ and each of those minimal components,\footnote{Which exists due to \cite{CaSo79}, as it then a holomorphic $\C^*$-action on a closed \KH manifold.} and the homology decomposition for the core,
and to prove the compatibility of those decompositions. Proving this, we get the following theorem, in which $b_i(X)$ denotes the $i$-th Betti number of a topological space $X$ and $\mu_i$ denotes 
the real dimension of fibres of the $(t\fun \infty)$-attraction bundle $\L_i \fun \F_i$ of a fixed component $\F_i.$\footnote{Recall Equation \eqref{IntroBBbundles}.}

\begin{thm}\label{boundonSHIntro}
	Let $(\M,\varphi)$ be a weight-1 SHS manifold, and let $\displaystyle \M^\Fi=\sqcup_i \F_i$ be the fixed locus of $\varphi$ decomposed into connected components $\F_i.$ Then
	%
	%
	$$ {\rk}( SH^k(\M))\;\geq \sum_{\{\F_i\subset L\,|\,  L \text{ minimal}\}} b_{k-\mu_i}(\F_i)$$
	%
	%
	for all $k\geq 0$. In particular, $\rk (SH^{dim_{\C}\M}(\M))\geq  \# \{\text{minimal components}\}.$ 
\end{thm}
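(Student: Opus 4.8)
The plan is to combine Seidel's commuting triangle \eqref{IntroSeidelTriangle} for the minimal Lagrangians with the homology decomposition of Lemma \ref{IntroHomologyDecompositionOfTheCore}, thereby reducing the estimate to the injectivity of the PSS-type map $c^*\colon H^*(\M)\to SH^*(\M)$ on one explicitly described subspace.

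First I would pass to cohomology. As $\M$ deformation retracts onto its core $\L=\L_\varphi$ and $\K$ is a field, dualising Lemma \ref{IntroHomologyDecompositionOfTheCore} gives a graded splitting $H^*(\M)\cong H^*(\L)\cong\bigoplus_i V_i$, where $V_i\cong H^{*-\mu_i}(\F_i)$ (concretely, $V_i$ is the annihilator of $\bigoplus_{j\ne i}\eta_j(H_*(\F_j)[-\mu_j])$), so $\dim_\K V_i^k=b_{k-\mu_i}(\F_i)$. Put $W=\bigoplus_i V_i$ with $i$ ranging over those $\F_i$ that lie in some minimal component; then $\dim_\K W^k=\sum_{\{\F_i\subset L\,\mid\,L\text{ minimal}\}}b_{k-\mu_i}(\F_i)$, so it is enough to show $c^*|_W$ is injective. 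The last claim of the theorem is then free: a minimal component, being an irreducible component of $\L=\bigcup_i\overline{\L_i}$, equals $\overline{\L_{i_0}}$ for some $\F_{i_0}\subseteq\overline{\L_{i_0}}$, and since it is a half-dimensional holomorphic Lagrangian we have $\dim_\R\F_{i_0}+\mu_{i_0}=\dim_\R\overline{\L_{i_0}}=\dim_\C\M$, so the corresponding summand $b_{\dim_\C\M-\mu_{i_0}}(\F_{i_0})=b_{\dim_\R\F_{i_0}}(\F_{i_0})=1$; distinct minimal components produce distinct indices $i_0$, hence $\dim_\K W^{\dim_\C\M}\ge\#\{\text{minimal components}\}$.

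To prove $c^*|_W$ injective I would test against the closed--open maps $\mathcal{CO}^0_L$ of the minimal Lagrangians $L$ (closed and exact by the Corollary to Theorem \ref{IntroMinimalComponentsTheorem}). Each such $L$ is $\varphi$-invariant, being an irreducible component of the $\varphi$-invariant core with $\C^*$ connected, and, as a closed \KH submanifold of $\M$, it carries its own \BB homology decomposition $H^*(L)\cong\bigoplus_j W'_j$ for $\varphi|_L$ (\cite{CaSo79,CaGo83}), indexed by the components $G_j$ of $L^{\varphi}$; whenever $\F_i\subseteq L$ the component $\F_i$ itself occurs among the $G_j$, say $G_{j(i)}=\F_i$. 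The crux is the compatibility of the two decompositions under the restriction $i_L^*\colon H^*(\M)\cong H^*(\L)\to H^*(L)$ (which is surjective by the consequence of Lemma \ref{IntroHomologyDecompositionOfTheCore} recorded just before the Proposition): for $\F_i\subseteq L$ the $\varphi$-attracting cell of $\F_i$ computed inside $L$ coincides with the one computed in $\M$, so its fibre dimension is again $\mu_i$, and $i_L^*$ carries $V_i$ isomorphically onto $W'_{j(i)}$, while it carries every $V_{i'}$ into $\bigoplus_{G_j\subseteq\F_{i'}}W'_j$, which for $\F_{i'}\ne\F_i$ avoids $W'_{j(i)}$. Granting this, Seidel's triangle gives $\mathcal{CO}^0_L\circ c^*=i_L^*$; choosing for each relevant $\F_i$ a minimal $L(\F_i)\supseteq\F_i$, any $\sum_i\alpha_i\in W$ (with $\alpha_i\in V_i$) killed by $c^*$ satisfies $i_L^*(\sum_i\alpha_i)=0$ in $H^*(L)$ for every minimal $L$, and projecting the equation for $L=L(\F_i)$ onto the block $W'_{j(i)}$ leaves only $i^*_{L(\F_i)}\alpha_i$, forcing $\alpha_i=0$. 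Hence $c^*|_W$ is injective and $\rk SH^k(\M)\ge\dim_\K W^k$ for all $k$, which is the claimed bound.

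The hard part is this compatibility statement, and inside it the assertion that the $\varphi$-attracting cell of $\F_i\subseteq L$ has the same fibre dimension when computed in $L$ as in $\M$: a priori that cell could escape the irreducible component $L$. I expect to handle this using the weight-$1$ holomorphic symplectic structure together with minimality: for $x\in\F_i$ the subspace $T_xL\subset(T_x\M,\om_\C)$ is a $\varphi$-invariant Lagrangian containing the full weight-$0$ space $T_x\M^{(0)}$, and since $\om_\C$ has $\varphi$-weight $1$ the $\varphi$-weight spaces are paired by $\om_\C\colon T_x\M^{(w)}\times T_x\M^{(1-w)}\to\C$; feeding in that a minimal component is the minimum of the moment map of a \emph{commuting} weight-$1$ conical action, one should be able to conclude that $T_xL$ contains every negative-weight direction of $T_x\M$, equivalently $\overline{\L_i}\subseteq L$. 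The block assertions about $i_L^*$ would then follow from the naturality of the classes $\eta_i([C])=[\overline{p_i^{-1}(C)}]$ of Lemma \ref{IntroHomologyDecompositionOfTheCore} under $L\hookrightarrow\L$, the surjectivity of $i_L^*$, and a count of graded dimensions.
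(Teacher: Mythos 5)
Your overall skeleton (Seidel's triangle plus the homology decompositions, reducing everything to injectivity of $c^*$ on an explicit block of $H^*(\M)$) is the same as the paper's, but the step you yourself single out as the hard part is not merely unproven: the way you propose to settle it is false. You want that for \emph{every} $\Fi$-fixed component $\F_i$ contained in a minimal component $L$, the attracting set satisfies $\ol{\L_i}\subseteq L$, so that the attracting cell of $\F_i$ computed inside $L$ has the same fibre dimension $\mu_i$ and $i_L^*$ maps $V_i$ isomorphically onto the block $W'_{j(i)}$. But for a weight-1 action every piece $\L_i$ is half-dimensional and the closures $\ol{\L_i}$ are exactly the irreducible components of the core (\cref{CoreDecompositonWeight1}), so $\ol{\L_i}\subseteq L$ forces $\ol{\L_i}=L$; your compatibility can therefore hold at most for the single index $i(L)$ with $\ol{\L_{i(L)}}=L$. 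It genuinely fails for the other blocks: in the $A_3$ Du Val resolution with $\Fi=\cdot_1$ (\cref{DuVal_Weight1actions_TypeA}), the $\Fi$-fixed components are the sphere $S_1$ and the isolated points $S_2\cap S_3$ and $q\in S_3$, with $\ol{\L_{S_2\cap S_3}}=S_2$ and $\ol{\L_{q}}=S_3$; the sphere $S_3$ is minimal (for the commuting action $\cdot_3$) and contains the fixed point $S_2\cap S_3$, yet its $\Fi$-attracting set leaves $S_3$ and is dense in $S_2$, its shift is $\mu_i=2$ in $\M$ but $0$ inside $S_3$, and since $i_L^*$ preserves degree it cannot carry the degree-$2$ block $V_i$ isomorphically onto the degree-$0$ block of $H^*(S_3)$. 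So no weight-pairing argument at a point of $\F_i$ can rescue the statement in the generality you need, and with it falls the block-projection step that gives you injectivity of $c^*|_W$ for your $W$.

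The paper's proof (\cref{boundonSHBest}) uses precisely your ingredients but, for each minimal $L=\F_\phi$, only the one fixed component $\F_{i(\phi)}$ with $\ol{\L_{i(\phi)}}=\F_\phi$: there $\L_{i(\phi)}\subseteq\F_\phi$ holds by definition, so closures and shifts agree, $(i_\phi)_*\eta^\phi_{i(\phi)}=\eta_{i(\phi)}$, and injectivity of $\oplus_\phi i_\phi^*$ on the dual of the corresponding subspace is obtained from a small linear-algebra lemma (\cref{LemaGojko}) rather than from any block-triangularity of $i_L^*$, which you would otherwise still have to prove even after shrinking $W$. This is also how the indexing set in the statement is meant to be read: via the bijection $\F_i\leftrightarrow\ol{\L_i}$ between fixed components and core components (``bijectively distributed''), ``$\F_i\subset L$, $L$ minimal'' means $\ol{\L_i}$ is minimal, as in the body statement; your literal containment reading is an a priori stronger inequality which neither the paper nor your argument establishes. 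If you restrict $W$ to the blocks $V_{i(L)}$, $L$ minimal, and replace the projection step by the paper's dualization, your proof closes up and coincides with the paper's; note that your verification of the final clause is unaffected, since it only uses these dense components, for which $\dim_\R\F_{i(L)}+\mu_{i(L)}=\dim_\C\M$ gives the contribution $1$ in degree $\dim_\C\M$.
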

Let us explain the indexing set $\{\F_i\subset L\,|\,  L \text{ minimal}\}$ better. 
It is a general feature of weight-1 conical actions that the fixed sets $\F_i$ are bijectively distributed among components of the core. 
In the above sum, we use precisely those $\F_i$ that lie in \textit{minimal} components. Thus, when combined with the Betti numbers relation between $\F_i$ and $\M$ coming from Lemma \ref{IntroHomologyDecompositionOfTheCore} and deformation retraction $\M \simeq \L,$ we get the following corollary

\begin{cor} When all core components of a CSR $\M$ are minimal, $H^*(\M)\fja{c^*} SH^*(\M)$ is an injection, thus
	symplectic cohomology is degree-wise bounded from below by the singular cohomology, $$\rk(SH^k(\M))\geq \rk(H^k(\M)), \ \forall k.$$
\end{cor}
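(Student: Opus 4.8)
The plan is to obtain the corollary as a formal consequence of Theorem~\ref{boundonSHIntro} and the homology decomposition of the core (Lemma~\ref{IntroHomologyDecompositionOfTheCore}): the only real input is the observation that the hypothesis ``all core components minimal'' forces the lower bound of Theorem~\ref{boundonSHIntro} to become the full Betti numbers of $\M$, together with the fact that the proof of that theorem in fact produces injectivity of $c^*$ on the relevant block of $H^*(\M)$.

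First I would fix a weight-$1$ conical action $\Fi$ on $\M$ — the word ``minimal'' presupposes one, so Theorem~\ref{boundonSHIntro} applies — and write $\M^\Fi=\sqcup_i \F_i$. As noted just after Theorem~\ref{boundonSHIntro}, the components $\F_i$ are in bijective correspondence with the core components, each $\F_i$ sitting inside the corresponding one; hence if \emph{every} core component is minimal, the indexing set $\{\,\F_i\subset L\mid L\text{ minimal}\,\}$ of Theorem~\ref{boundonSHIntro} is the set of \emph{all} the $\F_i$. Theorem~\ref{boundonSHIntro} then reads
\[
\rk\bigl(SH^k(\M)\bigr)\;\geq\;\sum_i b_{k-\mu_i}(\F_i),
\]
with $\mu_i$ the real fibre dimension of the attracting bundle $\L_i\fun\F_i$.

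Next I would identify the right-hand side with $\rk H^k(\M)$. Since the core $\L$ is a deformation retract of $\M$ and we work over a field, $\rk H^k(\M)=b_k(\M)=b_k(\L)$. The isomorphism $\bigoplus_i H_*(\F_i)[-\mu_i]\fun H_*(\L)$ of Lemma~\ref{IntroHomologyDecompositionOfTheCore} uses exactly these shifts $\mu_i$, so taking degree-$k$ components gives $b_k(\L)=\sum_i b_{k-\mu_i}(\F_i)$. Combining this with the previous display yields $\rk(SH^k(\M))\geq\rk H^k(\M)$ for every $k$.

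For the sharper statement that $H^*(\M)\fja{c^*}SH^*(\M)$ is injective, I would not re-run any Floer theory but instead look inside the proof of Theorem~\ref{boundonSHIntro}: that proof does not merely estimate $\dim SH^k(\M)$, it exhibits a subspace of $H^*(\M)$ — the one corresponding, under the dualised core decomposition of Lemma~\ref{IntroHomologyDecompositionOfTheCore}, to $\bigoplus_{\{\F_i\subset L\mid L\text{ minimal}\}} H^*(\F_i)$ suitably shifted — on which $c^*$ is injective, via the Seidel triangle~\eqref{IntroSeidelTriangle} for each minimal component and the compatibility of the decompositions for $\Fi$, for the minimal components, and for the core. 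When all core components are minimal this subspace is all of $H^*(\M)$, so $c^*$ is injective, whence $\rk SH^k(\M)\geq\rk H^k(\M)$ once more. The hard part is the compatibility of those three homology decompositions, but that is internal to the proof of Theorem~\ref{boundonSHIntro}, which we are free to invoke; granting it, the corollary reduces to the index-set bookkeeping above, which is routine.
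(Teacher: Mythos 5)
Your proposal is correct and follows essentially the same route as the paper: the corollary is deduced from Theorem~\ref{boundonSHIntro} together with the bijection between $\Fi$-fixed components and core components and the Betti-number identity $b_k(\M)=b_k(\L)=\sum_i b_{k-\mu_i}(\F_i)$ from Lemma~\ref{IntroHomologyDecompositionOfTheCore}, while the injectivity of $c^*$ is read off from the proof of that theorem, which exhibits the block of $H^*(\M)$ corresponding to minimal components injecting into $SH^*(\M)$ and which equals all of $H^*(\M)$ under the hypothesis. This is exactly how the paper treats it (as an immediate corollary of Theorem~\ref{boundonSHBest}), so no further comment is needed.
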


\begin{ex}
	The last corollary applies in our standing example $X_{\Z/(n+1)}\fun \C^2/(\Z/(n+1)),$ as we have seen that all its core components are minimal.
	Thus, we get $$\text{rk}(SH^2(X_{\Z/(n+1)}))\geq \text{rk}(H^2(X_{\Z/(n+1)}))=n.$$
	It is known that $\text{rk}(SH^2(X_{\Z/(n+1)}))=n,$ due to \cite[Cor. 42]{EL17}, so in this case Proposition \ref{boundonSHIntro} gives the actual rank on the degree-2 part of symplectic cohomology.
\end{ex}

\subsection{Outline of the paper}

In Section \ref{SectionSemiprojective} we recall some features of semiprojective varieties, and prove some new ones, in particular the homology decomposition of the core 
(Lemma \ref{IntroHomologyDecompositionOfTheCore}). 
Along the way, we give a full proof that \BB pieces are smooth complex vector bundles.
In Section \ref{SectionSHS} we define SHS manifolds and prove their main features, in particular Lemma \ref{IntroCoreIsIsotropic}. We go through their examples, firstly mentioning Conical Symplectic Resolutions, where 
we provide some new statements. 
Section \ref{SectionSymplectic} describes symplectic structures on SHS manifolds.
In the first place we show that there is a $S^1$-invariant \KH structure with a moment map (Lemma \ref{IntrothereisaNonExactstructure}), coming from the variety structure, hence existing even on ordinary semiprojective varieties. 
Second, we show that there is a Liouville structure, coming from the $\C^*$-action and holomorphic symplectic structure (Proposition \ref{IntrocanonicalLiouville}).
Section \ref{SmoothCompCSRs} is the core, in which we prove the existence of smooth core components in weight-1 SHS manifolds (Theorem \ref{IntroMinimalComponentsTheorem}) 
and explain how one constructs them in practice, in the case of Conical Symplectic Resolutions. 
Along the way, we prove some useful statements regarding conical actions and their corresponding smooth core components.
In Section \ref{SymplTopMinCompns} we consider the symplectic topology of smooth core components, computing their Floer cohomologies as graded vector spaces (Theorem \ref{IntroLagrFloerMinComps}). Finally, in Section \ref{MinLagrasAndSH} we use the smooth core components obtained in Section \ref{SmoothCompCSRs} and the integral homology decomposition from Section \ref{SectionSemiprojective} in order to get lower bounds on symplectic cohomology of SHS manifolds (Theorem \ref{boundonSHIntro}).

\vspace{0.2cm}

\noindent \textbf{Acknowledgements.} This paper is a generalised and extended version of one part of my DPhil thesis, supervised by Alexander Ritter and Kevin McGerty, to both of whom I thank for many helpful conversations and suggestions. I am indebted to Dominic Joyce, Ivan Smith and Nick Sheridan for carefully reading the content of this paper and providing many useful suggestions.
I would also like to thank (by the order in the paper) to 
Ádám Gyenge for pointing out the paper \cite{CaGo83},
Nicholas Proudfoot for conversation that yielded Remark \ref{diffntdefnsOfCSR}, 
Dmitry Kaledin for the proof of Proposition \ref{ConnectedFibers}, 
Joel Kamnitzer for discussion regarding Example \ref{CSRaresmallinSHS}.
Paul Seidel and Nick Sheridan for conversations which coined parts of Proposition \ref{canonicalLiouville}, 
and Nigel Hitchin, for pointing our the argument that proves Lemma \ref{SmoothIsExact}.
\section{On semiprojective varieties}\label{SectionSemiprojective}

Semiprojective varieties were introduced by Hausel--Rodriguez-Villegas \cite{HaR-V15}, whereas they were first considered by Simpson \cite[Thm. 11.2]{Si96}.
In this section we will prove some facts about these varieties which we will need in the later text. 

\begin{de}
	A \textbf{semiprojective variety} $(\M,\Fi)$ is a smooth quasi-projective variety $\M$ with an algebraic $\C^*$-action $\Fi$ satisfying two properties:
	\begin{enumerate}
		\item The set of fixed points $\M^{\Fi}$ is compact.
		\item Every point $x\in\M$ has a limit point $\displaystyle \lim_{\C^*\ni t\fun 0} \Fi_t(x),$ that is automatically a fixed point.
	\end{enumerate}
Such a variety can have more such actions, which is why we specify $\Fi$ in the notation $(\M,\Fi).$\\\\
\noindent \textbf{ Notation.}
We will interchangeably use both notations $t\cdot x$ and $\Fi_t(x)$ for the $\C^*$-action, the former one in the case that it is clear which action we discuss.  
\end{de}
So, from the definition we see that although a semiprojective variety can be non-compact, it is ``tamed'' by the fact that all points converge to a compact set, when acting by an element $t\fun 0.$
Considering the points which converge when acting by $t\fun \infty,$ we come up with the following definition:

\begin{de}
	The \textbf{core} of a semiprojective variety $(\M,\Fi)$ is the set of points 
	$$\L_\Fi:=\{x \in \M \mid \lim_{\C^*\ni t\fun \infty} \Fi_t(x) \text{ exists} \}.$$
	For brevity, we will often use the notation $\L$ for the core, when the action $\Fi$ is fixed.
\end{de}

When a semiprojective variety $\M$ is compact, then it is equal to its core, so the notion seems redundant. 
However, when $\M$ is non-compact, the importance of the core arises as it is still compact, and completely controls the topology of $\M,$ being its deformation retract.

\begin{lm}\cite[Corollaries 1.2.2. and 1.3.6.]{HaR-V15} \label{CoreIsADefRetr}
	Given a semiprojective variety $(\M,\Fi),$ its core $\L$ is a proper variety and its deformation retract. 
\end{lm}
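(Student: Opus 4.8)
The plan is to follow the \BB stratification of $\M$ by the $(t\to 0)$-attracting sets and build an explicit deformation retraction via the $\C^*$-action. First I would recall that, since $\M^{\Fi}$ is compact and every point has a $(t\to 0)$-limit, the \BB decomposition of $\M$ into attracting cells $\M = \bigsqcup_i U_i$, where $U_i = \{x : \lim_{t\to 0} \Fi_t(x) \in \F_i\}$, is a decomposition into locally closed smooth subvarieties, with $p_i : U_i \to \F_i$ an affine-space bundle. Dually, the core $\L = \bigsqcup_i \L_i$ with $\L_i$ the $(t\to\infty)$-attracting set of $\F_i$. To set this up rigorously on a possibly non-proper $\M$, I would invoke Lemma~\ref{IntroCompletionLemma} to embed $\M$ as a $\C^*$-invariant Zariski-open in a smooth projective $Y$, where the classical \BB theorem applies directly, and then intersect the strata with $\M$.

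The key step is to show $\L$ is proper. Here I would argue that $\L$, being the union of the $(t\to\infty)$-attracting sets of the compact fixed locus, is closed in $Y$: a sequence in $\L$ has limits of its $(t\to\infty)$-flow in the compact $\M^\Fi$, and properness of the \BB bundles $p_i:\L_i\to\F_i$ over compact bases forces the $\L_i$ to have compact closure, with the closure relations among strata staying inside $\L$. (Alternatively, one identifies $\L$ with the points of $\M$ whose whole $\C^*$-orbit closure is contained in $\M$ — equivalently, bounded — and uses that $Y\setminus\M$ is $\C^*$-invariant and closed.) Since $\L$ is a closed subvariety of the projective $Y$, it is proper.

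For the deformation retraction, I would use the flow of the $\C^*$-action: for $x\in\M$, the map $t\mapsto \Fi_t(x)$ extends continuously to $t=0$ by semiprojectivity, landing in $\M^\Fi \subset \L$. Restricting the real positive flow $\Fi_{e^{-s}}$, $s\in[0,\infty]$, and reparametrising $s\in[0,\infty]$ to $[0,1]$ via $s = \tfrac{r}{1-r}$ (with $r=1$ sent to the limit point), gives a homotopy $F:\M\times[0,1]\to\M$ with $F(\cdot,0)=\mathrm{id}$ and $F(\cdot,1)$ mapping into $\M^\Fi$; but one wants the target to be $\L$, not just $\M^\Fi$, so more care is needed — the standard fix, which I would follow, is to retract not all the way to the fixed locus but to observe that $\L$ is preserved by the flow and is a neighbourhood deformation retract by a Morse-theoretic argument on $-H$ for the moment map $H$ of Lemma~\ref{IntrothereisaNonExactstructure}, whose critical set is $\M^\Fi$ and whose unstable manifolds sweep out exactly $\L$. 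Concretely: the gradient flow of $H$ (which agrees up to reparametrisation with the $\C^*$-flow) deformation retracts $\M$ onto the union of unstable manifolds of all critical points, i.e.\ onto $\L$.

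The main obstacle I expect is making the deformation-retraction argument clean without assuming properness of $\M$ or a normal-crossing boundary: one must ensure the flow lines stay in $\M$ (they do, since $\M$ is $\C^*$-invariant), that the retraction is continuous at the limit-time $t=0$ uniformly enough to give a genuine homotopy (this follows from properness of the \BB bundles and compactness of $\M^\Fi$, or from the Morse theory on the proper function $H$ once $H$ is arranged to be proper, or bounded below with proper sublevel sets), and that the image is $\L$ rather than merely $\M^\Fi$. Since both assertions (properness of $\L$ and the retraction) are exactly \cite[Corollaries 1.2.2 and 1.3.6]{HaR-V15}, I would present the above as a streamlined recollection of their argument, citing \cite{HaR-V15} for the technical continuity estimates and \cite{BB73} for the structure of the \BB strata.
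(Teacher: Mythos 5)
The paper does not prove this lemma at all: it is imported verbatim from \cite[Cor.~1.2.2 and 1.3.6]{HaR-V15}, so there is no internal argument to compare yours against, and your closing move -- deferring to \cite{HaR-V15} for the ``technical continuity estimates'' -- is in effect exactly what the paper does.

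Read as a standalone proof, however, your sketch has two genuine gaps, and they sit precisely at the two steps that carry the content. For properness: the phrase ``properness of the \BB bundles $p_i:\L_i\fun\F_i$ over compact bases forces the $\L_i$ to have compact closure'' is off target, since these are affine-space bundles and hence never proper (unless the fibre is a point), while compactness of the closures $\ol{\L_i}$ taken inside the projective completion $Y$ is automatic and says nothing. The real issue is whether those closures stay inside $\M$, equivalently whether $\L$ is closed in $Y$; that is exactly the ``closure relations among strata staying inside $\L$'' which you assert without argument, and it is the whole content of \cite[Cor.~1.2.2]{HaR-V15} (your parenthetical alternative via orbit closures avoiding $Y\setminus\M$ is likewise a restatement, not a proof, since the orbit-closure condition is not obviously closed). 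For the retraction: the limit map $x\mapsto\lim_{t\fun 0}\Fi_t(x)$ is a morphism on each downward \BB stratum but is not continuous across strata, so the naive homotopy fails -- you notice this, but the proposed fix (Morse--Bott theory for the moment map $H$ of Lemma \ref{IntrothereisaNonExactstructure}, retracting $\M$ onto the union of unstable manifolds, i.e.\ onto $\L$) needs $H$ to be proper, or some substitute controlling the gradient flow at infinity, which the paper explicitly says it cannot arrange for a general semiprojective variety (see the remark following Proposition \ref{minimalcomp}); and even granted properness, ``the gradient flow deformation retracts $\M$ onto the union of unstable manifolds'' is itself a claim of the same order of difficulty as the lemma, not a standard black box in this non-compact setting. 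So your proposal is an honest outline of the Hausel--Rodriguez-Villegas strategy, but as written it is a citation plus a sketch whose two key steps -- closedness of the core in the completion, and continuity of the retraction across strata -- are not supplied.
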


Here, recall the fact that an algebraic variety over $\C$ is \textbf{proper} if and only if it is compact in the analytic topology. Some authors use the term complete variety as well.

In the further sections, we will be interested in multiple $\C^*$-actions that act on the same semiprojective variety, so the natural question that arises is whether the corresponding cores are the same. We prove that it is indeed the case, at least when the actions commute.

\begin{lm}\label{CommutingActionsSameCore}
	Two commuting $\C^*$-actions on a semiprojective variety yield the same core.
\end{lm}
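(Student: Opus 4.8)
The plan is to use the one structural fact we already have about the core, namely that $\L_\Fi$ is a \emph{proper} (compact) variety by Lemma \ref{CoreIsADefRetr}, together with the commutativity, so that the limits we need are automatic. Write $\Psi$ for the second $\C^*$-action; it commutes with $\Fi$ and also exhibits $\M$ as semiprojective, so that $\L_\Psi$ is likewise a proper variety.

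\textbf{Step 1: the two actions preserve each other's cores.} Let $x\in\L_\Fi$, so that $p:=\lim_{\C^*\ni t\fun\infty}\Fi_t(x)$ exists in $\M$. For any $s\in\C^*$ the map $\Psi_s\colon\M\fun\M$ is an automorphism of varieties, hence continuous in the analytic topology, and commutativity gives $\Fi_t(\Psi_s(x))=\Psi_s(\Fi_t(x))$ for all $t$; letting $t\fun\infty$ and using continuity of $\Psi_s$ shows $\lim_{t\fun\infty}\Fi_t(\Psi_s(x))=\Psi_s(p)$ exists, i.e. $\Psi_s(x)\in\L_\Fi$. Thus $\Psi$ restricts to a $\C^*$-action on the subvariety $\L_\Fi$, and symmetrically $\Fi$ restricts to a $\C^*$-action on $\L_\Psi$.

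\textbf{Step 2: limits on a proper variety, and conclusion.} Since $\L_\Fi$ is proper, for $x\in\L_\Fi$ the orbit map $\C^*\fun\L_\Fi,\ s\mapsto\Psi_s(x),$ extends by the valuative criterion of properness to a morphism $\P^1\fun\L_\Fi$; equivalently, the orbit closure $\overline{\Psi(\C^*)\cdot x}\subset\L_\Fi$ is a complete curve (or a point). In particular $\lim_{s\fun\infty}\Psi_s(x)$ exists in $\M$, so $x\in\L_\Psi$; hence $\L_\Fi\subseteq\L_\Psi$. Running the same argument with the roles of $\Fi$ and $\Psi$ interchanged — this is where the semiprojectivity of $(\M,\Psi)$ is used, to know $\L_\Psi$ is proper — gives $\L_\Psi\subseteq\L_\Fi$, and therefore $\L_\Fi=\L_\Psi$.

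The only point needing a little care is Step 2: one must note that the classical statement ``a $\C^*$-action on a proper $\C$-variety has all orbit closures complete'' does apply to $\L_\Fi$ with its reduced variety structure, which it does since $\L_\Fi$ is a genuine proper variety by Lemma \ref{CoreIsADefRetr}, and that analytic limits pass through the continuous maps $\Psi_s$. I do not expect to need the projective completion of Lemma \ref{IntroCompletionLemma}; should one wish to avoid the valuative-criterion phrasing, one may instead normalise $\overline{\Psi(\C^*)\cdot x}$ and observe that the normalisation of a proper rational curve containing a dense copy of $\C^*$ must be $\P^1$, again producing the two limit points.
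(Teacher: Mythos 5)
Your proof is correct and follows essentially the same route as the paper: show via commutativity that the second action preserves the core of the first (the limit $\lim_{t\fun\infty}\Fi_t(\Psi_s(x))=\Psi_s(p)$ exists), then use properness of $\L_\Fi$ to extend the orbit map $\C^*\fun\L_\Fi$ to $\P^1$ and extract the $s\fun\infty$ limit, with the symmetric argument giving the reverse inclusion. The paper adds only an inessential remark that the limit points land in a fixed-locus component preserved by the second action, so there is no substantive difference.
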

\begin{proof}
	Consider a semiprojective variety $\M$ with two different actions $\Fi^1$ and $\Fi^2$ that commute.	It is enough to prove $\L_{\Fi^1}\subset \L_{\Fi^2}.$
	Pick a point $x\in \L_{\Fi^1},$ let $\displaystyle x_1:=\lim_{t\fun \infty} \Fi^1_t(x_1)$ and let $\F_1$ be the connected component of $\M^{\Fi^1}$ which contains $x_1.$
	As the two $\C^*$-actions $\Fi^1$ and $\Fi^2$ commute and $\C^*$ is a connected algebraic group, they preserve each other's fixed loci connected components. 
	Hence, $\Fi^2(\F_1)=\F_1,$ and in particular, $\Fi^2_s(x_1)\in \F_1.$
	Now, notice that $\Fi^2_s(x)$ belongs to the core $\L_{\Fi^1},$ for arbitrary $s\in\C^*.$ Indeed,
	$$\lim_{t\fun \infty} \Fi^1_t \Fi^2_s (x) =  \Fi^2_s \big(\lim_{t\fun \infty} \Fi^1_t (x) \big) =\Fi^2_s (x_1)  \in \F_1.$$
	Now, as the core $\L_{\Fi^1}$ is a proper variety, the map $\C^*\fun \L_{\Fi^1},\  s \mapsto \Fi^2_s (x)$ extends to a regular map
	on $\C P^1,$ thus there is a limit $\displaystyle \lim_{s\fun \infty} \Fi^2_s (x),$ which precisely says that $x \in \L_{\Fi^2}.$
	Similarly $\L_{\Fi^2} \subset \L_{\Fi^1},$ giving the desired equality.
\end{proof}

Another way to control potential non-compactness of a semiprojective variety is by completing it. In the case that semiprojective variety is smooth, one can do it $\C^*$-equivariantly. 

\begin{lm}\label{CompletionLemma}
	Semiprojective variety $\M$ has a smooth equivariant projective completion, that is, a smooth projective variety $Y$ with a $\C^*$-action
	which has a Zariski-open $\C^*$-invariant subvariety isomorphic to $\M.$
\end{lm}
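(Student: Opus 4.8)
The plan is to start from any quasi-projective embedding of $\M$ and successively repair the failure of projectivity and smoothness while keeping track of the $\C^*$-action, exploiting the fact that $\C^*$ is a linearly reductive group so that everything can be done equivariantly. First I would fix a $\C^*$-equivariant locally closed embedding $\M \hookrightarrow \P(V)$ for some rational $\C^*$-representation $V$; this exists because $\M$ is quasi-projective with an algebraic $\C^*$-action and $\C^*$ is reductive (one embeds into a projective space and then averages/linearises the action, cf.\ the standard equivariant embedding results). Taking the Zariski closure $\overline{\M} \subset \P(V)$ gives a projective variety with a $\C^*$-action containing $\M$ as a $\C^*$-invariant Zariski-open subset, but $\overline{\M}$ need not be smooth (the singularities can only lie in the boundary $\overline{\M}\setminus\M$, since $\M$ itself is smooth).

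Next I would invoke $\C^*$-equivariant resolution of singularities (equivariant Hironaka, valid in characteristic $0$ for the action of any linear algebraic group, in particular $\C^*$): there is a proper birational $\C^*$-equivariant morphism $Y \to \overline{\M}$ with $Y$ smooth and projective, which is an isomorphism over the smooth locus of $\overline{\M}$, in particular over $\M$. Then $Y$ is a smooth projective variety with a $\C^*$-action, and the preimage of $\M$ is a $\C^*$-invariant Zariski-open subvariety of $Y$ isomorphic to $\M$. This is exactly the assertion of the lemma.

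The main obstacle is purely a matter of citing the right equivariant input rather than a genuine difficulty: one needs (i) that a quasi-projective variety with a reductive group action admits an equivariant embedding into the projectivisation of a rational representation — the Sumihiro-type linearisation theorem — and (ii) that Hironaka's resolution can be performed $\C^*$-equivariantly and functorially, so that it is an isomorphism over the given smooth open $\M$. Both are classical; the only care required is to check that the resolution step does not disturb $\M$, which follows because functorial resolution is an isomorphism over the smooth locus and $\M$ is smooth and open in $\overline{\M}$. I would also remark, as the paper does, that this construction gives no control over the boundary divisor $Y\setminus\M$ — in particular one cannot conclude it is normal crossing — since the resolution only resolves the ambient closure and may leave the boundary arbitrarily singular.
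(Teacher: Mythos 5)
Your argument is essentially the same as the paper's proof: a $\C^*$-equivariant embedding of $\M$ into projective space via Sumihiro's theorem, taking the Zariski closure, and then applying functorial (hence $\C^*$-equivariant) resolution of singularities, which is an isomorphism over the smooth Zariski-open subset $\M.$ One small correction: the hypothesis that makes the equivariant embedding theorem apply is the normality of $\M$ (automatic since $\M$ is smooth), not the reductivity of $\C^*$ or an averaging argument.
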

\begin{proof}
	Being smooth, quasi-projective variety $\M$ is normal, hence by \cite[Thm. 1]{Su74} there is a $\C^*$-equivariant embedding of $\M$ into $\C P^N$, hence the 
	completion by the closure $\ol{\M}$ of its image. To make the completion smooth, one uses the canonical resolution of singularities 
	$Y\fun \ol{\M}$ which has a functorial property \cite[Thm. 1.1]{fuctiorialResolution}, thus, it is equivariant with respect to smooth group actions. 
\end{proof}

This completion will be useful to our purposes for many reasons, first of which is the \BB decomposition. This decomposition was developed for proper varieties by \BB in his seminal paper \cite{BB73}. Here we prove it for semiprojective varieties, as a consequence of the proper case.

\subsection{\BB decomposition}
Given a semiprojective variety $(\M,\Fi)$, denote its fixed locus by $\F:=\M^{\Fi}.$ 
It is a smooth subvariety, being the fixed locus of an algebraic action of reductive group on a smooth variety, \cite[Lem. 5.11.1]{CGi97}.
Being proper, it breaks into 
finitely many connected components $\displaystyle \F=\sqcup_{i }\F_i.$ 
Focusing on the $\C^*$-convergence of the points in $\M,$ we define the sets of points that flow to each fixed component $\F_i:$
\begin{equation}\label{BBdecompositionEquations}
	\L_i:=\bigg\{x\in \M \bigm| \lim_{t\fun \infty} t\cdot x \in\F_i \bigg\}, \ \
	\D_i:=\bigg\{x\in \M \bigm| \lim_{t\fun 0} t\cdot x \in\F_i \bigg\}.
\end{equation}
One can view these as the algebraic version of the upward and downward Morse flow. 
Recall now the original result from \BB.

\begin{thm}
	\cite[Sec. 4]{BB73} \label{BBDecompositionGeneral}
	Consider a smooth projective variety $Y$ with an algebraic $\C^*$-action on it. Denoting the decomposition of the fixed locus $\F=Y^{\C^*}=\bigsqcup_i \F_i,$ the upward and downward attracting sets
	$\L_i$ and $\D_i$ defined as in (\ref{BBdecompositionEquations}) are locally closed smooth subvarieties that decompose $Y,$ that is, $\bigsqcup_i \L_i= \bigsqcup_i \D_i=Y.$ Moreover, the natural morphisms 
	$$\L_i\fun \F_i, \ x\mapsto \lim_{t\fun \infty} t\cdot x,$$
	$$\D_i\fun \F_i, \ x\mapsto \lim_{t\fun 0} t\cdot x,$$ 
	are isomorphic to fibre bundles with affine fibres. 
\end{thm}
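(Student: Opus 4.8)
The statement to prove is Theorem~\ref{BBDecompositionGeneral}, the classical \BB decomposition for smooth projective varieties with a $\C^*$-action. Since the excerpt explicitly attributes this to \cite[Sec. 4]{BB73} and is using it as a cited input, a full proof is not expected; but here is how I would organise one.

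\medskip

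\noindent\textbf{Plan.} The natural approach is to localise the problem at the fixed locus and reduce to a linear model via equivariant local structure. First I would establish that $\F = Y^{\C^*}$ is smooth: this is the fixed locus of a reductive group acting on a smooth variety, so it is smooth by averaging a local complex structure / applying \cite[Lem. 5.11.1]{CGi97}, and it decomposes into finitely many connected components $\F_i$ since $Y$ is proper. Next, for each point $y \in \F_i$, I would analyse the $\C^*$-representation on the tangent space $T_yY$, which splits into weight spaces $T_yY = \bigoplus_{n\in\Z} T_yY^{(n)}$; the zero-weight part is $T_y\F_i$, the positive-weight part $T_y^+$ is the ``unstable'' direction (points flowing in from $t\to\infty$ after reparametrisation), and the negative-weight part $T_y^-$ the ``stable'' one. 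The key technical input is an equivariant version of the local normal form: in a Zariski (or \'etale, or formal) neighbourhood of $\F_i$, $Y$ is $\C^*$-equivariantly modelled on the total space of the bundle $T^+ \oplus T^- \to \F_i$ of positive/negative weight spaces of the normal bundle $N_{\F_i/Y}$. This can be obtained by embedding equivariantly into projective space (as in Lemma~\ref{CompletionLemma}, via \cite{Su74}) and using the linear action on $\C P^N$, or by a direct deformation-to-the-normal-cone argument.

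\medskip

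\noindent\textbf{Key steps in order.} (1) Smoothness and finiteness of $\F = \bigsqcup_i \F_i$. (2) Weight-space decomposition of the normal bundle $N_{\F_i/Y} = N_i^+ \oplus N_i^-$ into strictly positive and strictly negative $\C^*$-weights. (3) Show $\L_i = \{x : \lim_{t\to\infty} t\cdot x \in \F_i\}$ is exactly the ``inward'' locus and, using the local model, that the retraction map $p_i : \L_i \to \F_i$ is well-defined and algebraic, with fibre over $y$ identified with the affine space $N_i^+|_y$ (respectively $N_i^-|_y$ for $\D_i$). (4) Prove $\L_i$ is locally closed and smooth: locally near $\F_i$ it is the sub-bundle $N_i^+$, hence smooth there, and globally one shows it is a locally closed subvariety by a limit/properness argument (the closure of $\L_i$ picks up only lower strata, i.e.\ those $\F_j$ reachable in the limit). (5) Prove the decomposition $Y = \bigsqcup_i \L_i$ is exhaustive: every $x \in Y$ has $\lim_{t\to\infty} t\cdot x$ existing (properness of $Y$ forces the orbit map $\C^* \to Y$ to extend over $\C P^1$), and the limit lies in $\F$, so in exactly one $\F_i$. (6) Upgrade ``fibres are affine spaces'' to ``$p_i$ is a fibre bundle with affine fibres'': this follows from the local model giving local triviality, the fibres being the vector-space fibres of $N_i^+$; one can even note it is a vector bundle (an affine bundle in general, or genuinely linear in nice cases), but the statement only claims affine fibres.

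\medskip

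\noindent\textbf{Main obstacle.} The crux is Step (3)--(4): promoting the infinitesimal/formal weight decomposition of the tangent space to an honest $\C^*$-equivariant local normal form identifying a neighbourhood of $\F_i$ in $Y$ with a neighbourhood of the zero section in $N_i^+ \oplus N_i^-$, \emph{in the algebraic category}. Over $\C$ with the analytic topology this is a straightforward equivariant tubular neighbourhood argument (Bochner linearisation for the compact torus $S^1$, then complexify); algebraically one either appeals to Bia\l ynicki-Birula's original construction, or deduces it from the linear action on an equivariant projective embedding $Y \hookrightarrow \C P^N$ (Lemma~\ref{CompletionLemma}), where the strata of $\C P^N$ are explicit linear subspaces and one intersects with $Y$, using smoothness of $Y$ to control the intersection. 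Once the local model is in hand, local closedness, smoothness, the morphism structure of $p_i$, and local triviality of the bundle all follow formally. Establishing the Zariski-local (as opposed to merely analytic-local or formal-local) structure is what makes this a genuine theorem rather than a routine exercise, which is precisely why the paper cites \cite{BB73} rather than reproving it.
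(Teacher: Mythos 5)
The paper does not prove this statement at all: it is imported verbatim as a citation of \cite[Sec.~4]{BB73}, so there is no in-paper argument to measure your proposal against. Your outline is a faithful sketch of the standard Bia\l{}ynicki-Birula argument (smoothness of the fixed locus, weight decomposition of the normal bundle, reduction to a linear model via an equivariant embedding into $\C P^N$ as in \cite{Su74}, then exhaustiveness of the decomposition from properness of $Y$), and you correctly identify the genuine difficulty -- establishing the Zariski-local, rather than merely analytic or formal, product structure near $\F_i$ -- as the content that justifies citing \cite{BB73} instead of reproving it. One small remark for context: what the paper does prove on top of the cited theorem is Proposition~\ref{LemmaBBAreComplexVectorBundles}, namely that the maps $\L_i\fun\F_i$, $\D_i\fun\F_i$ are diffeomorphic to $\C^*$-equivariant complex vector bundles; there the key input extracted from \cite{BB73} is that the local trivialisations are $\C^*$-equivariant with structure group $\text{Aut}_{\C^*}(V)$, which is then retracted onto $\text{GL}_{\C^*}(V)$ -- a strengthening your sketch gestures at in step (6) but which is a separate statement from the one quoted here.
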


\begin{cor}\label{BBDecompositionCSRs} Given a semiprojective variety $(\M,\Fi),$ its fixed locus $\M^{\Fi}=\F=\bigsqcup_{i}\F_i$ induces the decompositions 
	$\L=\bigsqcup_i \L_i, \M=\bigsqcup_i \D_i$ into smooth locally closed subvarieties, defined by (\ref{BBdecompositionEquations}). The natural morphisms
	$$\L_i\fun \F_i, \ x\mapsto \lim_{t\fun \infty} t\cdot x,$$
	$$\D_i\fun \F_i, \ x\mapsto \lim_{t\fun 0} t\cdot x,$$ 
	are isomorphic to fibre bundles with affine fibres. 
\end{cor}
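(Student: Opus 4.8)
The plan is to deduce the semiprojective case from the projective case (Theorem \ref{BBDecompositionGeneral}) by passing to the equivariant completion $Y \supset \M$ provided by Lemma \ref{CompletionLemma}, and then checking that the \BB strata of $Y$ restrict well to the open piece $\M$. First I would fix an equivariant smooth projective completion $j:\M \hookrightarrow Y$ with $\M$ Zariski-open and $\C^*$-invariant, and write $Y^{\C^*} = \bigsqcup_\alpha \G_\alpha$ for the fixed-point decomposition of $Y$. Since $\M$ is open and $\C^*$-invariant, $\M^\Fi = Y^{\C^*}\cap \M$, so each connected component $\F_i$ of $\M^\Fi$ is a union of connected components $\G_\alpha$ of $Y^{\C^*}$ that happen to lie in $\M$; in fact, because $\M^\Fi$ is already compact (it is closed in the proper $Y^{\C^*}$) each $\F_i$ is exactly one of the $\G_\alpha$, namely those $\G_\alpha$ contained in $\M$. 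Then for the \BB stratum $\L_\alpha^Y = \{x\in Y \mid \lim_{t\to\infty} t\cdot x \in \G_\alpha\}$ of $Y$, I claim $\L_i = \L_\alpha^Y \cap \M$ when $\F_i = \G_\alpha$: the inclusion $\supseteq$ is clear, and for $\subseteq$, any $x\in\M$ with $\lim_{t\to\infty}t\cdot x$ existing and landing in $\F_i\subset\M$ is by definition in $\L_\alpha^Y$. The same argument applies to the $(t\to 0)$ strata $\D_i = \D_\alpha^Y \cap \M$, except here one must note that by the defining property (2) of a semiprojective variety \emph{every} point of $\M$ has a $(t\to 0)$-limit, and that limit lies in $\M^\Fi$, so $\M = \bigsqcup_{\G_\alpha\subset\M} \D_\alpha^Y\cap\M = \bigsqcup_i \D_i$; similarly $\L = \bigsqcup_i \L_i$ by the definition of the core. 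This gives the decompositions into locally closed subvarieties, since intersecting a locally closed subvariety of $Y$ with the open subvariety $\M$ stays locally closed, and smoothness is inherited from openness of $\M$ in $Y$ together with Theorem \ref{BBDecompositionGeneral}.

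Next I would address the bundle statement. Theorem \ref{BBDecompositionGeneral} gives that $p_\alpha^Y:\L_\alpha^Y \to \G_\alpha$ and $q_\alpha^Y:\D_\alpha^Y \to \G_\alpha$ are Zariski-locally trivial fibre bundles with affine-space fibres. For $\F_i = \G_\alpha \subset \M$, restricting over the (already proper, hence unchanged) base $\G_\alpha = \F_i$ we have $p_i = p_\alpha^Y|_{\L_i}$ and the source $\L_i = \L_\alpha^Y\cap\M = (p_\alpha^Y)^{-1}(\F_i)\cap\M$. The key observation is that $(p_\alpha^Y)^{-1}(\F_i) = \L_\alpha^Y$ already, because $\G_\alpha = \F_i$ is a \emph{single} connected fixed component and $\L_\alpha^Y$ is by construction the stratum attracting precisely to it; so $\L_i = \L_\alpha^Y \cap \M$. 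One then checks that $\L_\alpha^Y$, the attracting set of a fixed component lying inside the open invariant $\M$, is already contained in $\M$: if $x\in\L_\alpha^Y$ then $\lim_{t\to\infty}t\cdot x \in \F_i\subset\M$, and the whole orbit closure $\overline{\C^*\cdot x}$ in $Y$ (a rational curve) has both a $t\to 0$ and a $t\to\infty$ limit; since $\M$ is $\C^*$-invariant and open and contains the $t\to\infty$ limit point together with a Zariski-dense subset of the orbit, one argues the orbit and its other limit stay in $\M$ — more robustly, one uses that $\M = \bigsqcup_i \D_i$ forces every point with a $t\to\infty$-limit to also have a $t\to 0$-limit, placing it back in $\M$. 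Hence $\L_i = \L_\alpha^Y$ outright, and likewise the analogous identity lets $p_i$ inherit the bundle structure of $p_\alpha^Y$ verbatim; the $\D_i$-to-$\F_i$ maps are handled symmetrically since $q_\alpha^Y:\D_\alpha^Y\to\G_\alpha$ restricts to $\D_i = \D_\alpha^Y\cap\M \to \F_i$ and the affine-fibre local triviality passes to the open subset.

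I expect the main obstacle to be precisely the bookkeeping in the previous paragraph: verifying that the \BB strata of $Y$ associated to fixed components inside $\M$ are actually contained in $\M$ (so that no fibres are truncated by passing to the open subvariety), and hence that the restricted maps $p_i,q_i$ genuinely are the full affine-fibre bundles rather than open sub-bundles. The cleanest way around this is to avoid per-stratum orbit-closure arguments and instead use the global decomposition identities $\M = \bigsqcup_i \D_i$ and $\L = \bigsqcup_i \L_i$ — which follow directly from the semiprojective axioms and the definition of the core — together with the fact that the $Y$-strata $\D_\alpha^Y$ partition $Y$: a point of $\M$ lies in exactly one $\D_\alpha^Y$, its $t\to 0$-limit is in $\M$ (axiom (2)), so $\G_\alpha\subset\M$, and then $\D_i := \D_\alpha^Y\cap\M$; running this for $\L$ in place of $\M$ (the core is the set of points with a $t\to\infty$-limit, and that limit is automatically fixed, so lands in some $\F_i\subset\M$) yields the core decomposition. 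Once the strata are identified as honest restrictions of the $Y$-strata to an open invariant subvariety, local triviality of the bundle projections and affineness of fibres are immediate from Theorem \ref{BBDecompositionGeneral}, and smoothness and local closedness are formal consequences of $\M$ being open in the smooth $Y$.
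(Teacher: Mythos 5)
Your backbone is the same as the paper's: get $\M=\bigsqcup_i\D_i$ from the semiprojective axioms, pass to the equivariant completion $Y$ of Lemma \ref{CompletionLemma}, apply Theorem \ref{BBDecompositionGeneral} there, and then argue that the strata of $\M$ and $\L$ are honest \BB strata of $Y$, so that smoothness, local closedness and the affine-bundle structure are inherited. Your observation that each $\F_i$ is a full connected component of $Y^{\C^*}$ (being open and closed in $Y^{\C^*}$ by invariance and compactness of $\M^\Fi$) is a correct and useful point, and the argument you sketch in your second paragraph for the containment $\L_i'\subset\M$ (writing $\L_i',\D_i'$ for the $Y$-strata attached to $\F_i$) is, once stated crisply, exactly the paper's: the $(t\fun\infty)$-limit of $x$ lies in $\F_i\subset\M$, $\M$ is open, so $t\cdot x\in\M$ for $|t|$ large, and $\C^*$-invariance of $\M$ then forces $x\in\M$; the same argument at $t\fun 0$ handles $\D_i'$.

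The gap is that the two substitutes you propose for this step, and in particular the one you recommend as ``the cleanest way'', do not establish it. The observation that $\M=\bigsqcup_i\D_i$ ``forces every point with a $t\fun\infty$-limit to also have a $t\fun 0$-limit'' is vacuous here: in the proper $Y$ every point has both limits, and the decomposition of $\M$ only constrains points already known to lie in $\M$, which is precisely what must be proved, so the reasoning is circular. Likewise, your ``cleanest way'' only yields the intersection identities $\L_i=\L_i'\cap\M$ and $\D_i=\D_i'\cap\M$; an open $\C^*$-invariant subset of an affine-space bundle over $\F_i$ need not itself be a fibre bundle with affine fibres over $\F_i$ (the fibres could a priori be truncated to proper open subsets), so ``local triviality and affineness of fibres are immediate from Theorem \ref{BBDecompositionGeneral}'' does not follow from the restriction statement alone — indeed you flag exactly this as the main obstacle, and then the route you prefer leaves it open. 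The same shortcut is used for the $\D_i\fun\F_i$ maps at the end of your second paragraph (``the affine-fibre local triviality passes to the open subset''), so the $t\fun 0$ case is also unproved as written. The fix is simply to keep the openness-plus-invariance argument as the actual justification that $\L_i'\subset\M$ and $\D_i'\subset\M$ (equivalently, that every $Y$-stratum attached to a fixed component inside $\M$ lies entirely in $\M$); with that in place, the rest of your proposal goes through and coincides with the paper's proof.
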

\begin{proof}
	As every point $x\in\M$ has a limit $\lim\limits_{t\fun 0} t\cdot x$ and it must be a fixed point, the decomposition $\M=\sqcup_i \D_i$ is immediate. 
%
	Now, by Lemma \ref{CompletionLemma} one can extend the $\C^*$-action on it to a smooth projective variety $Y$ which compactifies it, such that $\M\subset Y$ is a $\C^*$-invariant Zariski open subvariety. As $Y$ is smooth projective, it has the decompositions with the properties as in Theorem \ref{BBDecompositionGeneral}, thus it is enough to show that the decompositions $\L=\sqcup_i \L_i, \M=\sqcup_i \D_i$ are sub-decompositions of those in $Y$ (i.e. every piece of decomposition in $Y$ is either entirely contained in $\M$ or in $Y\setminus \M$). This is immediate: given any fixed locus component $\F_i$ in $\M,$ if there is a point $x\in Y$ such that 
	$\lim\limits_{t\fun 0} t \cdot x \in \F_i / \lim\limits_{t\fun \infty} t \cdot x \in \F_i,$ there is a neighbourhood $U$ of $0/\infty$ such that $\{t\cdot x \mid t\in U\}$ is contained in $\M$ (as $\M$ is an open neighbourhood of $\F_i$ in $Y$). Thus, as $\M\subset Y$ is $\C^*$-invariant, $x$ also belongs to $\M.$
\end{proof}

Given a $\C^*$-fixed component $\F_i,$ there is an induced $\C^*$-action on the tangent space of its arbitrary point $x \in \F_i,$ which yields a weight decomposition
\begin{equation}\label{EqnWeightDecomp}
	T_{x} \M =\oplus_{k\in\Z} H_k(x),
\end{equation}
where the $H_k(x)$ consists of weight-$k$ vectors, i.e. vectors $v$ satisfying $t \cdot v = t^k v, t \in \C^*.$ In particular, $H_0(x)=T_x \F_i.$
In the following lemma we prove that this decomposition is uniform, forming a bundle decomposition of $T_{\F_i} \M.$
\begin{lm}\label{WeightDecompositionIntoBundles}
	The restriction of the tangent bundle of $\M$ to a $\C^*$-fixed connected component $\F_i$ decomposes into subbundles
	\begin{equation}\label{EqnWeightDecompBundles}
		T_{\F_i} \M =\oplus_{k\in\Z} H_k,
	\end{equation}
    where $H_k$ is the subbundle consisting of vectors on which $\C^*$ acts with weight $k.$  
\end{lm}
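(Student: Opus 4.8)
The plan is to realize the weight-$k$ eigenspaces $H_k(x)$, $x \in \F_i$, as the fibers of a genuine vector subbundle of $T_{\F_i}\M$ by exploiting the $\C^*$-action on the total space of the tangent bundle. First I would observe that $\C^*$ acts on $\M$ fixing $\F_i$ pointwise, hence by differentiating it acts linearly on each fiber $T_x\M$ for $x \in \F_i$, and this gives a fiberwise-linear $\C^*$-action on the restricted bundle $T_{\F_i}\M \to \F_i$ covering the trivial action on the base. The key point is that such an action is determined by an algebraic (or smooth) representation of $\C^*$ on the bundle, and representations of the reductive (indeed diagonalizable) group $\C^*$ decompose into isotypic pieces: concretely, the bundle endomorphism "act by $t$" is, for each fixed $t$, a bundle automorphism, and the simultaneous eigenbundle for the character $t \mapsto t^k$ is exactly what we want to call $H_k$.

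The cleanest way to make this precise is via an averaging/projection argument. Fix a point $x_0 \in \F_i$ and note the weights $k$ that occur in $T_{x_0}\M$ form a finite set $S$; by upper semicontinuity of eigenspace dimensions together with the fact that $\dim_\C T_x\M$ is constant along the connected $\F_i$ and $\sum_{k} \dim H_k(x)$ is this constant, the set of occurring weights and their multiplicities are locally constant, hence constant on the connected $\F_i$, so $S$ is the same at every point. Then I would define, for each $k \in S$, the projection operator
\[
\pi_k := \frac{1}{2\pi i}\oint_{|z|=1} z^{-k-1}\,(\text{action of } z)\; dz
\]
acting fiberwise on $T_{\F_i}\M$ (or, in the algebraic category, extract the $k$-th isotypic projector directly from the comodule structure); this is a smooth (algebraic) bundle endomorphism whose image over each $x$ is precisely $H_k(x)$. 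Since $\pi_k$ is idempotent with locally constant rank, its image $H_k := \im \pi_k$ is a subbundle of $T_{\F_i}\M$, and $\sum_k \pi_k = \mathrm{id}$ together with $\pi_k \pi_l = 0$ for $k \neq l$ yields the direct sum decomposition $T_{\F_i}\M = \bigoplus_{k \in \Z} H_k$ (with $H_k = 0$ for $k \notin S$). Finally $H_0 = T\F_i$ because the weight-$0$ part of $T_x\M$ is the tangent space to the fixed locus, which is standard (e.g. \cite[Lem. 5.11.1]{CGi97}).

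The main obstacle, such as it is, is purely bookkeeping: one must justify that the fiberwise $\C^*$-action assembles into a morphism of the structure one actually wants to work with — an algebraic $\C^*$-equivariant vector bundle structure on $T_{\F_i}\M$, or a smooth one if we prefer the analytic category — rather than just a family of unrelated linear actions. This follows because the $\C^*$-action on $\M$ is algebraic, so its derivative defines an action on the tangent bundle $T\M \to \M$ by bundle automorphisms, and restricting to the invariant subvariety $\F_i$ (on which the base action is trivial) gives exactly a $\C^*$-representation in vector bundles over $\F_i$; the isotypic decomposition of such representations into subbundles is then the standard structure theory of diagonalizable group actions on vector bundles. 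Everything after that — constancy of weights along $\F_i$, idempotency and constant rank of the isotypic projectors, and $H_0 = T\F_i$ — is routine.
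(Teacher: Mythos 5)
Your argument is correct, but it takes a genuinely different route from the paper. The paper fixes the linearised action $t_x$ on $T_x\M$, observes that the coefficients of its characteristic polynomial are holomorphic functions of $x\in\F_i$, and uses that $\F_i$ is a \emph{closed} connected complex manifold — so global holomorphic functions on it are constant — to conclude that the eigenvalues and their multiplicities do not vary, whence the weight spaces have constant dimension and assemble into subbundles. You instead build the isotypic projectors $\pi_k$ fiberwise by averaging the ($S^1$-restriction of the) action against the character $z^{-k}$, note that these are smooth (in fact holomorphic, since the integrand is) idempotent bundle endomorphisms, and take $H_k=\mathrm{im}\,\pi_k$; local constancy of rank (which you derive from upper semicontinuity of eigenspace dimensions plus constancy of the total dimension, and which in any case is automatic for a continuous family of idempotents, whose rank equals the integer-valued trace) then makes each image a subbundle, and $\sum_k\pi_k=\mathrm{id}$ with $\pi_k\pi_l=0$ gives the direct sum. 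The trade-off: the paper's argument is shorter but leans on compactness of $\F_i$ (available here, since the fixed locus of a semiprojective variety is proper) and leaves the smooth variation of the eigenspaces somewhat implicit, while your projector argument makes the bundle structure completely explicit, needs only connectedness of $\F_i$ (indeed not even that for the subbundle statement), and so would survive in settings where the fixed components are not compact. One small point to keep straight is the identification of the $S^1$-isotypic pieces with the $\C^*$-weight spaces, which follows since each $\C^*$-weight-$k$ space sits inside the corresponding $S^1$-isotypic piece and both families exhaust $T_x\M$; with that said, your proof is complete.
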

\begin{proof}
	For $x\in\F_i$ denote by $t_x$ the induced linearised action on $T_x \M$ by $t\in \C^*.$ 
	Now consider the characteristic polynomial $f(x)(X)=\det(X\cdot \mathrm{Id}- t_{x})$ (recall this is independent of choices of local coordinates since $t_x$ is an endomorphism of $T_x \M$).
	As the $\C^*$-action is algebraic and thus holomorphic, the coefficients of  $f(x)(X)$ are holomorphic functions with respect to $x\in \F_{i}$. Since $\F_i$ is a closed connected complex manifold, global holomorphic functions on $\F_i$ are constant.
	Thus the coefficients of  $f(x)(X)$ are constant. Therefore, the eigenvalues of $t_x$ and their multiplicities are independent of $x\in\F_i$. Thus, the 
	weight subspace $H_k(x):=Ker(t_x-t^k Id \mid t\in \C^*)$ varies smoothly with respect to $x\in \F_i$ and is equidimensional, and thus forms a subbundle $H_k$ of $T_{\F_i}M.$
\end{proof}

\BB decomposition \cite[Thm 4.3(iii)]{BB73} also describes the tangent spaces of bundles $\L_i$ and $\D_i$ at $\F_i,$ which in the notation of the previous lemma are
\begin{equation}\label{BBdecompWeightSpaces}
	T_{\F_i}\L_i=\oplus_{k\leq 0} H_k, \qquad T_{\F_i}\D_i=\oplus_{k\geq 0} H_k. 
\end{equation}
Literature usually cites the \BB paper \cite{BB73} for the fact that the morphisms  $\L_i\fun \F_i$ and $\D_i\fun \F_i$ are diffeomorphic to complex\footnote{Meaning: $\R$-smooth vector bundles with complex vector space as fibres and transition functions in $GL(n,\C),$ where $n$ is the complex dimension of the fibre.} vector bundles on $\F_i,$ but this fact is actually never proven in that paper or elsewhere, to the author's knowledge. We fix that gap in the following proposition: 

\begin{prop}\label{LemmaBBAreComplexVectorBundles} 
	In the setup of Theorem \ref{BBDecompositionGeneral}, the morphisms $\L_i\fun \F_i$ and $\D_i\fun \F_i$ are diffeomorphic to $\C^*$-equivariant complex vector bundles over $\F_i.$
\end{prop}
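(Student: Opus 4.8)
The plan is to show that the Bia\l{}ynicki-Birula morphism $\pi\colon\L_i\fun\F_i$ (the case of $\D_i$ being symmetric) is Zariski-locally trivial as an affine-space bundle, and then to upgrade this to a smooth complex vector bundle structure by averaging the linear coordinates against the $\C^*$-action. First I would recall from Theorem \ref{BBDecompositionGeneral} that $\pi$ is already a fibre bundle with affine-space fibres; the point is to pin down the transition functions. Cover $\F_i$ by affine $\C^*$-invariant opens $U_\alpha\subset\L_i$ over which $\pi$ is trivial, so $U_\alpha\cong\F_i\cap U_\alpha\times\A^{r}$ with $r=\dim H_{<0}$, and such that on $U_\alpha$ the $\C^*$-action is conjugate to an action contracting the fibres towards the zero section $\F_i$ as $t\fun\infty$. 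The classical argument (going back to \cite{BB73}) gives that on overlaps the affine-bundle transition maps $g_{\alpha\beta}\colon (\F_i\cap U_\alpha\cap U_\beta)\times\A^r\fun\A^r$ are morphisms; the key extra input is that they can be taken $\C^*$-equivariant, because one may choose the trivialisations to be $\C^*$-equivariant (the contracting $\C^*$-action on an affine bundle with proper fixed locus always admits such slices — this is essentially the statement that $\L_i$ deformation-retracts onto $\F_i$ equivariantly, together with the local structure of the action near $\F_i$).

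Next I would use the weight decomposition from Lemma \ref{WeightDecompositionIntoBundles} and \eqref{BBdecompWeightSpaces}: the normal bundle of $\F_i$ in $\L_i$ is $N:=\oplus_{k<0}H_k$, a genuine $\C^*$-equivariant complex vector bundle (each $H_k$ has $\C^*$ acting by the character $t^k$). The claim is that the affine bundle $\L_i$ is $\C^*$-equivariantly isomorphic to this vector bundle $N$. To produce the isomorphism, fix a $\C^*$-equivariant trivialisation over each $U_\alpha$; the fibrewise $\C^*$-action, being linearisable on an affine space with all weights negative and a unique fixed point, can be straightened to a linear action by a polynomial (in fact affine-linear after recentring) change of fibre coordinates — concretely one replaces the chosen coordinate $v$ by its ``highest-weight part'' extracted via the limit $\lim_{t\fun 0} t^{-k}(t\cdot v)$ for the appropriate weights $k$, which defines a morphism linearising the action. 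Doing this over each $U_\alpha$ and checking that the resulting linear trivialisations glue (the transition maps, being $\C^*$-equivariant morphisms between fibres carrying only negative weights, are forced to be linear — a morphism $\A^r\fun\A^r$ commuting with a contracting diagonalisable $\C^*$-action and fixing the origin is linear because each coordinate, being a weight vector for the action on functions, is a polynomial whose monomials all have matching weight, and negativity of all weights bounds the degrees to one) yields a $\C^*$-equivariant algebraic isomorphism $\L_i\cong\mathrm{Tot}(N)$ of schemes over $\F_i$; in particular a diffeomorphism onto a complex vector bundle, as claimed. The same argument with all weights positive handles $\D_i$.

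The main obstacle I anticipate is precisely the linearisation step: justifying that the affine-bundle transition functions can be chosen $\C^*$-equivariant and then that $\C^*$-equivariance forces them to be \emph{linear} (not merely polynomial), using that the relevant weights all have a fixed sign. This is where one genuinely needs the contracting hypothesis and the finiteness/compactness of $\F_i$, rather than just the abstract affine-bundle structure. A clean way to organise it is: (i) the sheaf of fibrewise functions on $\L_i$, pushed to $\F_i$, carries a $\C^*$-action and decomposes into weight subsheaves; (ii) the subsheaf of weight-$k$ functions for $k\le 0$ vanishes except for constants and the weight-$(-k)$ generators with $k<0$, which are exactly the linear coordinates dual to $H_k$; (iii) hence the coordinate ring is the symmetric algebra $\Sym^\bullet(\oplus_{k<0}H_k^\vee)$ with its grading, i.e. $\L_i=\mathrm{Spec}_{\F_i}\Sym^\bullet(N^\vee)=\mathrm{Tot}(N)$. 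I would present it in this sheaf-theoretic form to avoid fussing with explicit cocycles. Everything else — local triviality, smoothness of $H_k$ — is already available from Theorem \ref{BBDecompositionGeneral} and Lemma \ref{WeightDecompositionIntoBundles}.
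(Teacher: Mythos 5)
Your argument has a genuine gap at its central step: it is \emph{not} true that a $\C^*$-equivariant polynomial automorphism of an affine space on which all weights have the same sign is forced to be linear. Equivariance only forces each monomial appearing in the weight-$a$ output coordinate to have total weight $a$, and when the weights are of the same sign but unequal such monomials exist. Concretely, if $V=\C^2$ with $t\cdot(v_1,v_2)=(t v_1,t^2 v_2)$, the map $(v_1,v_2)\mapsto(v_1,\,v_2+v_1^2)$ is $\C^*$-equivariant, fixes the origin, and is a polynomial automorphism, but is not linear; the same example (with weights $-1,-2$) lives in the attracting situation you consider. So your claim that "negativity of all weights bounds the degrees to one" fails, and with it the assertion that the equivariant trivialisations glue linearly. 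Linearity of the transition functions is forced only when all weights on the fibre are \emph{equal} — which is exactly the special case the paper singles out. Your sheaf-theoretic reformulation fails for the same reason: the weight-$k$ fibrewise functions are not just the linear coordinates dual to $H_k$, since products of lower-weight coordinates can have the same total weight (e.g.\ $v_1^2$ versus $v_2$ above), so you cannot identify the pushforward algebra with $\Sym^\bullet\big(\oplus_{k<0}H_k^\vee\big)$. That identification would even prove that the \BB stratum is \emph{algebraically} the total space of the normal bundle, a much stronger statement than the proposition (which only claims a diffeomorphism) and one the paper deliberately does not assert.

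The paper's proof accepts the nonlinearity: from \cite{BB73} the trivialisations can be chosen $\C^*$-equivariant, so the transition functions land in the group $\text{Aut}_{\C^*}(V)$ of equivariant polynomial automorphisms, which have a block-triangular form $(x_1,\dots,x_r)\mapsto(L_1(x_1),L_2(x_2)+p_2(x_1),\dots)$ with invertible linear $L_i$ and arbitrary polynomial lower-order terms $p_i$. The key lemma is that $\text{GL}_{\C^*}(V)$ is a deformation retract of $\text{Aut}_{\C^*}(V)$ (scale the $p_i$ to zero), and then a standard reduction-of-structure-group argument for fibre bundles produces a fibre bundle with linear transition functions that is isomorphic to $\L_i\fun\F_i$ merely as a smooth fibre bundle — yielding the claimed diffeomorphism to a $\C^*$-equivariant complex vector bundle, but not an algebraic isomorphism. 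If you want to salvage your approach, you would need to replace the "equivariance forces linearity" step with some such homotopy-theoretic reduction; as written, the proof does not go through.
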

\begin{proof}
	Let us prove the statement for the morphism $f:\D_i\fun \F_i,$ as the other one follows verbatim. Firstly, by \cite{BB73},\footnote{More precisely: Proofs of \cite[Thm. 2.5]{BB73} and \cite[Thm. 4.1]{BB73}.} we actually have a more precise information on the fibre bundle $f$ than stated in Theorem \ref{BBDecompositionGeneral}. Namely, its local trivializations are $\C^*$-equivariant, where the $\C^*$-action on the fibre $V$ is linear and isomorphic to the $\C^*$-action on the tangent space $\oplus_{k> 0} H_k.$  
	Thus, given trivialisations over two open sets $Y_i, Y_j \subset \F_i$ that intersect, the transition functions 
	\begin{equation}\label{trfun}
		g_{ij}:  Y_i \cap Y_j \fun \text{Aut}_{\C^*}(V),
	\end{equation}
	land into the group $\text{Aut}_{\C^*}(V)$ of $\C^*$-equivariant algebraic\footnote{Recall that the setup of \cite{BB73} is algebraic.} automorphisms of the affine space $V.$ Thus, switching to the coordinate ring of $V$ and denoting $\dim V=n,$ those are graded isomorphisms of the polynomial ring $\C[x_1,\dots,x_n],$ where $deg(x_i)=a_i$ are the weights of the $\C^*$-action on $V.$
	In particular, when the weights on $V$ are all equal, we are left with linear automorphisms only, thus the transition functions of bundle $f$ indeed land in $GL(V)$. Hence, in that case $f$ has a structure of algebraic, thus holomorphic vector bundle, which is even stronger than what we need.
	Otherwise, one should expect non-linear transition functions in general, but the key point is that we can reduce the structure group $\text{Aut}_{\C^*}(V)$ to its linear subgroup $\text{GL}_{\C^*}(V),$ 
	by the following lemma:
	\begin{lm}
		Given a complex vector space $V$ with a linear $\C^*$-action with positive weights, the group of linear $\C^*$-invariant maps $\text{GL}_{\C^*}(V)$ is a deformation retract of the group 
		$\text{Aut}_{\C^*}(V)$ of $\C^*$-invariant polynomial automorphisms\footnote{Meaning: Polynomial maps whose inverses are also polynomial.} of $V.$
	\end{lm}
	\begin{proof}
		
		Split $V=\oplus_{i=1}^r V_i$ into weight spaces $\{V_i\}_{i=1\dots r}$ such that their weights increase with $i=1\dots r.$ It is clear that each map in $\text{Aut}_{\C^*}(V)$ or $\text{GL}_{\C^*}(V)$ has to preserve $V_i.$ Thus, denoting the coordinates in $V_i$ by $x_i^1,\dots, x_i^{s_i}$ and the vectors $x_i=(x_i^1,\dots,x_i^{s_i}),$ we have that an arbitrary map in $\text{Aut}_{\C^*}(V)$ is of type
		\begin{equation}\label{gradedpolyisos} \small
			(x_1,\dots,x_r) \mapsto (L_1(x_1), L_2(x_2)+ p_2(x_1),L_3(x_2) + p_3(x_1,x_2),\dots, L_r(x_r) + p_r(x_1,\dots,x_{r-1}))
		\end{equation}
		where $p_i$ are arbitrary polynomials and $L_i$ are invertible linear maps. Indeed, passing to the map of coordinate ring $\C[V]=\C[(x_i^j)_{i,j}]$ we see inductively by $i$ that $L_i$ need to be invertible in order to get all monomials $\{x_i^j\}_{j=1,\dots, s_i}$ in the image. Furthermore, the map \eqref{gradedpolyisos} is going to be invertible polynomial for arbitrary choices of $p_i,$ as one can show inductively as well (the inverse is $(x_1,\dots,x_r)\mapsto (L_1^{-1}(x_1),L_2^{-1}(x_2-p_2(L_1^{-1}(x_1))),\dots)$).
		Hence, the group $\text{Aut}_{\C^*}(V)$ deformation retracts (by letting all coefficients of $p_2,\dots,p_r$ in \eqref{gradedpolyisos} to go to zero) to its subgroup given by linear maps
		\begin{equation*}
			(x_1,\dots,x_r) \mapsto (L_1(x_1), L_2(x_2),L_3(x_2),\dots, L_r(x_r)),
		\end{equation*}
		which is precisely the group $\text{GL}_{\C^*}(V).$ Thus, the lemma is proved.
	\end{proof}
	Let us recall the following standard lemma from the theory of fibre bundles:
	\begin{lm}
		Given a $F$-fibre bundle $E$ whose transition functions land in Lie group $G\leq \text{Diff}(F)$ and a closed subgroup $H\leq G$ which is a deformation retract of $G,$ there is a $F$-fibre bundle $E'$ with transition functions landing in $H,$ which is isomorphic to $E$ as a $F$-fibre bundle.
	\end{lm}
	\begin{proof}
		We can associate a principal $G$-bundle $P$ to $E,$ constructed using the transition functions of $E.$ Then, the associated $F$-fibre bundle $P\times_G F$ is isomorphic to $E$ as a $F$-fibre bundle. Now, as $H\leq G$ is a deformation retract, $G/H$ is contractible, hence by \cite[Cor 2.4, Ch. VI]{Hu94} there is a $H$-reduction of $P,$ that is, a principal $H$-bundle $Q$ such that $P\iso Q\times_H G.$ Thus, by \cite[Thm. 3.1, Ch. VI]{Hu94} 
		we have isomorphism of $F$-fibre bundles $P\times_G F \iso Q\times_H F.$ By definition, transition functions of the associated bundle $E':= Q\times_H F$ are the same as the transition functions of $Q,$ hence they land in $H.$ As $E'\iso E,$ the lemma is proved. 
	\end{proof}
	Now, the proposition follows. Indeed, putting $G=\text{Aut}_{\C^*}(V)$ and $H=\text{GL}_{\C^*}(V)$ in the previous lemma, we get that $f:\D_i\fun \F_i$ is isomorphic to a $V$-fibre bundle with transition functions in $\text{GL}_{\C^*}(V),$ hence diffeomorphic to a $\C^*$-invariant complex vector bundle.
\end{proof}

\subsection{Homology decomposition of the core}\label{SectionHomologyDecompositionOfCore}
In this section, we will prove that the homology of semiprojective varieties and their core 
decomposes to the homology of their fixed loci, with certain shifts. This type of a result was proved by Carrell--Sommese \cite{CaSo79} for compact \KH manifolds, and later by Carrell--Goresky \cite{CaGo83} for singular complex varieties satisfying certain properties. We show that the core of a semiprojective variety satisfies these properties, inferring the integral decomposition of its homology.\\

\noindent \textbf{ Notation.} By $H_*(X)$ we will denote the singular homology of a topological space $X,$ with integer coefficients. However, all statements of this section work over arbitrary
coefficients.\\

Let us first recall the results from  \cite{CaGo83} that we need. Firstly, we will introduce the notion of a good decomposition.
\begin{de}
	Let $X$ be a proper algebraic variety. A decomposition $X=\bigsqcup_i \L_i$ into locally closed subvarieties is called \textbf{good} if the subvarieties $\L_i$ satisfy the following:
	\begin{itemize}
		\item[(1a)] For each $i$ there is a holomorphic map $p_i:\L_i\fun \F_i$ onto a proper complex variety $\F_i$ making $\L_i$ a fibre bundle with affine fibres. 
		\item[(1b)] Each $p_i$ extends meromorphically to $\ol{\L_i}$ in the sense that 
		$$\Gamma_i:= \ol{\{(x,p_i(x))\mid x\in \L_i\}} \subset X \times \F_i$$ is a closed subvariety of $X\times \F_i,$ containing the graph of $p_i$ as a Zariski open set.
		\item[(1c)] If $\F_i$ is singular, then it admits an analytic Whitney stratification such that if $A$ is a connected component of a stratum, then $g_i^{-1}(A)$ is irreducible, where
		$g_i:\Gamma_i \fun \F_i$ is the projection on the second factor
		\item[(1d)] There is a filtration of $X$ by closed subvarieties
		$$ \emptyset = Z_0 \subset \dots \subset Z_r = X$$
		such that (for some renumbering of $X_i$'s) $Z_i \setminus Z_{i-1} = \L_i,$ for $1 \leq i \leq r.$
	\end{itemize}
\end{de}

The technical condition (1c) allows one to define graded homomorphisms\footnote{Here we are using the standard shifting notation from homological algebra; Given a graded module $A_*,$ the graded module $A[k]_*$ is defined by $A[k]_i:=A_{i+k}.$}
\begin{equation}\label{ThomMaps}
	\eta_i:  H_*(\F_i)[-\mu_i] \fun H_*(X), \ [C]\mapsto [\ol{p_i^{-1}(C)}],
\end{equation}
for a generic cycle $C.$ Here the closure $\ol{p_i^{-1}(C)}$ is taken in $X,$ and $\mu_i$ is the real dimension of the affine fibre in $p_i.$
 
Then, Carrell--Goresky prove that, for any good decomposition of a proper variety, the map $\oplus \eta_i$ gives an isomorphism, hence a decomposition of homology of $X.$
In particular, they consider a case of a good decomposition that comes from a $\C^*$-action.
Namely, consider a smooth projective variety $Y$ with a $\C^*$-action having a $\C^*$-invariant subvariety $X.$ Denoting by $\F_1,\dots,\F_r$ the connected components of the $\C^*$-fixed locus in $X,$ define 
the decomposition $$X=\bigsqcup_i \L_i, \ \ \L_i:=\{x\in X \mid \lim_{t \fun \infty} t\cdot x \in X_j\}.$$ We say that the $\C^*$-action on $X$ is \textbf{good} if the associated decomposition $X=\bigsqcup_i \L_i$ is good. Here, the $p_i$ maps are given by $p_i:\L_i\fun \F_i, \ x\mapsto \lim_{t\fun \infty} t\cdot x.$
The integral decomposition theorem for this setup follows:

\begin{thm}\cite[Thm. 1', Sec. 2]{CaGo83}\label{HomologyDecompositionForSingularSubvariety}
	Let $Y$ be a smooth projective variety with a holomorphic $\C^*$-action.
	Suppose $X\subset Y$ is a closed $\C^*$-invariant subvariety, and let $X^{\C^*}=\bigsqcup_i \F_i$ be the decomposition of its fixed locus into connected components.
	Suppose that the action on $X$ is good. Then, there is a graded isomorphism
	$$\Phi= \oplus_i \eta_i: \bigoplus_i H_*(\F_i)[-\mu_i] \fun H_*(X),$$
	where maps $\eta_i$ are given by (\ref{ThomMaps}).
\end{thm}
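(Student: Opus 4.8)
The plan is to forget about verifying goodness (which is now a hypothesis) and instead extract the decomposition directly from the filtration $\emptyset = Z_0 \subset Z_1 \subset \dots \subset Z_r = X$ supplied by condition (1d), for which $Z_i\setminus Z_{i-1} = \L_i$. I would run an upward induction on $i$, proving that $\bigoplus_{j\le i}\eta_j$ is a graded isomorphism onto $H_*(Z_i)$; since $Z_r = X$ this is exactly the theorem. The two ingredients are (i) a Thom-type computation of the relative groups $H_*(Z_i,Z_{i-1})$, and (ii) the fact that each $\eta_i$ splits the relevant portion of the long exact sequence of the pair $(Z_i,Z_{i-1})$.

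For ingredient (i), I would first use that, with $Z_i$ compact and $\L_i = Z_i\setminus Z_{i-1}$ open, there is a canonical identification $H_*(Z_i,Z_{i-1})\iso H^{BM}_*(\L_i)$ of relative singular homology with the Borel--Moore homology of the open stratum (valid for compact pairs of complex varieties). Condition (1a) makes $p_i\colon\L_i\fun\F_i$ a bundle with affine fibre $\C^{d_i}$; such a fibre is contractible and complex, hence oriented, so the Thom isomorphism in Borel--Moore homology gives $H^{BM}_*(\L_i)\iso H^{BM}_{*-\mu_i}(\F_i) = H_{*-\mu_i}(\F_i)$, where $\mu_i = 2d_i$ and the last equality uses compactness of $\F_i$. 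Composing, $H_*(Z_i,Z_{i-1})\iso H_{*-\mu_i}(\F_i)$, and by tracing the fibrewise fundamental class through the Thom isomorphism this map is realised by the pullback $[C]\mapsto[p_i^{-1}(C)]$.

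For ingredient (ii), the geometric description of $\eta_i$ is decisive. The class $\eta_i([C]) = [\ol{p_i^{-1}(C)}]$ lies in $H_*(Z_i)$, since $\ol{p_i^{-1}(C)}\subset\ol{\L_i}\subset Z_i$ (the latter closed). Its image under the quotient $q\colon H_*(Z_i)\fun H_*(Z_i,Z_{i-1})$ is represented by the part of $\ol{p_i^{-1}(C)}$ lying in the open stratum $\L_i$; but $p_i^{-1}(C)$ is already closed in $\L_i$, so $\ol{p_i^{-1}(C)}\cap\L_i = p_i^{-1}(C)$ and the remainder of the closure sits in $Z_{i-1}$. Hence, under the identification of (i), $q\circ\eta_i$ is exactly the pullback isomorphism $[C]\mapsto[p_i^{-1}(C)]$; in particular $q$ is already surjective on $\im\eta_i$. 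Surjectivity of $q$ forces the connecting map $\partial\colon H_*(Z_i,Z_{i-1})\fun H_{*-1}(Z_{i-1})$ to vanish, so the long exact sequence degenerates into short exact sequences $0\fun H_*(Z_{i-1})\fun H_*(Z_i)\xrightarrow{q} H_*(Z_i,Z_{i-1})\fun 0$ split canonically by $\eta_i$. Since for $j<i$ the closures defining $\eta_j$ already lie in $Z_j\subset Z_{i-1}$, so that $\eta_j$ computed in $X$, in $Z_{i-1}$, and in $Z_i$ all agree, the induction hypothesis feeds in compatibly and yields $H_*(Z_i)\iso\bigoplus_{j\le i}H_{*-\mu_j}(\F_j)$ via $\bigoplus_{j\le i}\eta_j$; taking $i=r$ finishes the proof.

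The main obstacle is the control of the closures $\ol{p_i^{-1}(C)}$ underlying both the well-definedness of $\eta_i$ on homology classes and the clean identity $\ol{p_i^{-1}(C)}\cap\L_i = p_i^{-1}(C)$. This is precisely where the meromorphic-extension condition (1b) (so that $\Gamma_i$, and hence the closure of each preimage, is a genuine subvariety) and the irreducibility condition (1c) (so that for a \emph{generic} representing cycle $C$ the pullback $p_i^{-1}(C)$ and its closure carry the expected fundamental class and depend only on $[C]$) are indispensable; verifying that $\eta_i$ descends to homology and that homologous generic cycles give homologous closures — by running the same closure construction over $\F_i\times[0,1]$ — is the technical heart of the argument.
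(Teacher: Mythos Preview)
The paper does not supply its own proof of this theorem: it is quoted verbatim from Carrell--Goresky \cite[Thm.~1', Sec.~2]{CaGo83} and used as a black box (the paper's contribution is Lemma~\ref{CarellGoreskyBBversion} and Proposition~\ref{HomologyDecompositionOfTheCore}, which verify the \emph{hypotheses} of this theorem for the core of a semiprojective variety). So there is nothing in the paper to compare your argument against.

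That said, your outline is the standard one and matches the strategy in \cite{CaGo83}: filter by the $Z_i$, identify $H_*(Z_i,Z_{i-1})$ with $H_{*-\mu_i}(\F_i)$ via a Thom-type isomorphism for the affine bundle $p_i$, and then observe that the geometric cycle maps $\eta_i$ split the long exact sequences of the pairs, giving the direct-sum decomposition by induction. Your identification of where conditions (1b) and (1c) enter --- namely in making $\eta_i$ well defined on homology classes by controlling the closures $\ol{p_i^{-1}(C)}$ for generic cycles $C$ --- is exactly the point Carrell--Goresky isolate as the delicate step, and is the reason those conditions are part of the definition of a good decomposition. The one place you are a bit quick is the passage from ``$q\circ\eta_i$ is the Thom pullback'' to ``$q$ is surjective'': you need that the Thom pullback is an \emph{isomorphism} $H_{*-\mu_i}(\F_i)\to H_*(Z_i,Z_{i-1})$, which you do state, so this is fine once made explicit.
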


We prove a lemma that gives a criterion for the action to be good on a subvariety $X.$

\begin{lm}\label{CarellGoreskyBBversion} 
	Let $Y$ be a smooth projective variety with a holomorphic $\C^*$-action.
	Suppose $X\subset Y$ is a closed $\C^*$-invariant subvariety, and let $X^{\C^*}=\bigsqcup_i \F_i$ be the decomposition of its fixed locus into connected components.
	Assume further that:   
	\begin{enumerate}[(i)]
		\item \label{prvahipoteza} The fixed components $\F_i$ 
		are smooth.
		\item \label{drugahipoteza} Their $(t\fun \infty)$-attracting sets 
		in $X$ are the same as their attracting sets in $Y.$
	\end{enumerate}
	Then, the $\C^*$-action on $X$ is good. 
\end{lm}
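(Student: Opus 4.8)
The plan is to verify the four conditions (1a)--(1d) in the definition of a good decomposition for the decomposition $X = \bigsqcup_i \L_i$, where $\L_i = \{x \in X \mid \lim_{t\fun\infty} t\cdot x \in \F_i\}$ and $p_i: \L_i \fun \F_i$ is $x \mapsto \lim_{t\fun\infty} t\cdot x$.

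\emph{Condition (1a).} By hypothesis \eqref{drugahipoteza}, the attracting set of $\F_i$ in $X$ coincides with its attracting set in $Y$. Since $Y$ is smooth projective, the \BB decomposition theorem (Theorem \ref{BBDecompositionGeneral}) applies to $Y$: the attracting sets in $Y$ are locally closed smooth subvarieties and the maps to $\F_i$ are affine-fibre bundles. Transporting this along the equality of attracting sets gives that each $p_i: \L_i \fun \F_i$ is a fibre bundle with affine fibres over the proper variety $\F_i$ (proper because it is closed in $X$, which is closed in the projective $Y$). This is (1a).

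\emph{Conditions (1b) and (1d).} For (1d), take the filtration of $X$ obtained by ordering the $\F_i$ according to the $\C^*$-weight (or more precisely the standard Morse-theoretic ordering from \BB): setting $Z_j = \bigcup_{i \le j} \L_i$ after renumbering so that $\ol{\L_i} \subseteq \bigcup_{k\le i}\L_k$, each $Z_j$ is closed and $Z_j \setminus Z_{j-1} = \L_j$. This works inside $X$ exactly as in $Y$ because, by \eqref{drugahipoteza}, the pieces $\L_i \subset X$ are obtained by intersecting the \BB pieces of $Y$ with $X$ and the closure relations are inherited. For (1b), the graph $\Gamma_i = \ol{\{(x, p_i(x)) \mid x \in \L_i\}}$ inside $X \times \F_i$ is a closed subvariety by definition of the Zariski closure, and it contains the graph of $p_i$ (which is isomorphic to $\L_i$ via the first projection) as a Zariski-open subset. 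One should note that $\Gamma_i$ is irreducible when $\F_i$ is, since it is the closure of the irreducible $\L_i$ (a bundle over the smooth connected $\F_i$).

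\emph{Condition (1c).} This is the only condition that uses smoothness of $\F_i$ (hypothesis \eqref{prvahipoteza}) in an essential way, and I expect it to be the main point. Since $\F_i$ is smooth, the trivial stratification (one stratum, $\F_i$ itself) is an analytic Whitney stratification, and its unique connected component is the irreducible variety $\F_i$. So it suffices to check that $g_i^{-1}(\F_i) = \Gamma_i$ is irreducible, where $g_i: \Gamma_i \fun \F_i$ is the second projection; but this is exactly the irreducibility of $\Gamma_i$ noted above, which follows from $\L_i$ being a fibre bundle with (irreducible) affine fibres over the smooth connected $\F_i$, hence irreducible, and $\Gamma_i = \ol{\L_i}$ (image of $\L_i$). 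Therefore (1c) holds.

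Having verified (1a)--(1d), the decomposition $X = \bigsqcup_i \L_i$ is good, which by definition means the $\C^*$-action on $X$ is good, completing the proof. The main subtlety to handle carefully is ensuring that all the \BB-theoretic statements for $Y$ (local closedness, closure relations giving the filtration, affine-bundle structure) genuinely descend to $X$; this is precisely what hypothesis \eqref{drugahipoteza} is designed to guarantee, and one should spell out that each $\L_i \subset X$ equals $\L_i^Y \cap X$ where $\L_i^Y$ is the corresponding \BB piece of $Y$, so that all the needed structure is simply restricted.
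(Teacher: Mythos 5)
Your proposal is correct and follows essentially the same route as the paper: (1a) comes from hypothesis (\ref{drugahipoteza}) together with the \BB theorem applied to $Y$, smoothness of the $\F_i$ takes care of (1c), and (1b), (1d) hold for decompositions arising from $\C^*$-actions on projective varieties. The only difference is cosmetic — the paper disposes of (1b) and (1d) by citing the remark in \cite[Sec. 2]{CaGo83} and treats (1c) as vacuous rather than verifying it via the trivial stratification, whereas you argue directly; in your (1d) sketch just note that a \BB piece of $Y$ intersected with $X$ may split into several $X$-pieces (each closed in that intersection), so the filtration inherited from $Y$ has to be refined accordingly.
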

\begin{proof}
	Consider a fixed component $\F_i$ in $X.$ It belongs to a fixed component $\F_i'$ in $Y.$ As $X\subset Y$ is a closed subvariety, $\F_i \subset \F_i'$ is so as well. 
	Due to assumption (\ref{drugahipoteza}) we have that the map $\displaystyle p_i:\L_i\fun \F_i$ is a restriction of the map $\displaystyle p_i':\L_i'\fun \F_i', \ x\mapsto \lim_{t\fun \infty} t\cdot x$ where 
	$\L_i':=\{x \in Y \mid \lim\limits_{t\fun \infty} t\cdot x \in \F_i'\}$ is the attraction set of $\F_i'$ in $Y.$ Now, from Theorem \ref{BBDecompositionGeneral} applied to $Y,$ we know that the $p_i'$ is a fibre bundle with affine fibres, so the same holds to its restriction $p_i,$ which proves the property (1a). 
	As mentioned in \cite[Rmk, Sec. 2]{CaGo83}, the conditions (1b) and (1d) are always satisfied in this setup. 
	Finally, the condition (1c) only makes sense if there are singular $\F_i,$ thus is vacuous due to our assumption (\ref{prvahipoteza}). 
\end{proof}

Using the previous lemma, we finally reach the main point of this section - the homology decomposition of the core of a semiprojective variety.
\begin{prop}\label{HomologyDecompositionOfTheCore}
	Given a smooth semiprojective variety $(\M,\Fi)$ and its fixed locus $\M^{\Fi}=\bigsqcup_i \F_i,$ there is a decomposition of the homology of its core,
	$$\Phi= \oplus_i \eta_i: \bigoplus_i H_*(\F_i)[-\mu_i] \fun H_*(\L).$$ Here $\mu_i$ are real dimensions of the fibres of attracting sets $\L_i\fun\F_i,$ and
	$\eta_i$ are given by \eqref{ThomMaps}.
	In particular, this gives us the relation between the Betti numbers of $\M$ and the fixed loci:
	\begin{equation}\label{BettiNumbers} 
		b_k(\M)=b_k(\L) = \sum_{i} b_{k-\mu_i}(\F_i).
	\end{equation}
\end{prop}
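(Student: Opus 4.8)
The plan is to reduce Proposition~\ref{HomologyDecompositionOfTheCore} to Theorem~\ref{HomologyDecompositionForSingularSubvariety} via Lemma~\ref{CarellGoreskyBBversion}, so the whole argument is a matter of producing the right ambient smooth projective variety together with a $\C^*$-invariant subvariety playing the role of $X$, and then verifying hypotheses \eqref{prvahipoteza} and \eqref{drugahipoteza} of that lemma. First I would invoke Lemma~\ref{CompletionLemma} to obtain a smooth equivariant projective completion $Y\supset \M$ with $\M$ Zariski-open and $\C^*$-invariant. Inside $Y$ one takes $X:=\L$, the core of $\M$; one must first check $\L$ is closed in $Y$. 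This holds because $\L=\{x\in\M\mid \lim_{t\to\infty}t\cdot x \text{ exists}\}$ is exactly the union of those \BB strata $\L_i$ of $Y$ (for the $(t\to\infty)$-attraction) whose fixed component $\F_i$ lies in $\M$; since $\M$ is open and $\C^*$-invariant, the argument from the proof of Corollary~\ref{BBDecompositionCSRs} shows each such stratum is entirely contained in $\M$, and by Lemma~\ref{CoreIsADefRetr} the core is already a proper (hence compact, hence closed in $Y$) variety. So $\L\subset Y$ is a closed $\C^*$-invariant subvariety.

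Next I would identify the fixed locus of the $\C^*$-action restricted to $\L$. A point of $\M$ fixed by $\C^*$ lies in $\L$ (it is its own $t\to\infty$ limit), so $\L^{\C^*}=\M^{\Fi}=\bigsqcup_i \F_i$, and there are no extra fixed points in $\L\setminus\M$ since $\L\subset\M$. Now I verify the two hypotheses of Lemma~\ref{CarellGoreskyBBversion}. Hypothesis \eqref{prvahipoteza}: the $\F_i$ are smooth — this is the standard fact (cited in the excerpt, \cite[Lem.~5.11.1]{CGi97}) that fixed loci of reductive group actions on smooth varieties are smooth, applied to $\M$. Hypothesis \eqref{drugahipoteza}: the $(t\to\infty)$-attracting set of $\F_i$ inside $\L$ equals its attracting set inside $Y$. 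One inclusion is trivial ($\L\subset Y$). For the other, if $x\in Y$ has $\lim_{t\to\infty}t\cdot x\in\F_i\subset\M$, then as in the proof of Corollary~\ref{BBDecompositionCSRs} a punctured neighbourhood of $\infty$ in the orbit closure lies in $\M$, hence by $\C^*$-invariance $x\in\M$; and $x$ then has a $t\to\infty$ limit so $x\in\L$ by definition of the core. Thus the attracting set of $\F_i$ in $Y$ is contained in $\L$, giving equality. Therefore the $\C^*$-action on $X=\L$ is good, and Theorem~\ref{HomologyDecompositionForSingularSubvariety} yields the graded isomorphism $\Phi=\oplus_i\eta_i:\bigoplus_i H_*(\F_i)[-\mu_i]\to H_*(\L)$, with the $\mu_i$ being the real fibre dimensions of $\L_i\to\F_i$ and $\eta_i$ the Thom-type maps of \eqref{ThomMaps}.

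Finally, the Betti number relation \eqref{BettiNumbers} is a direct consequence: taking ranks (over $\q$, or noting the statement holds over arbitrary coefficients as remarked) on both sides of the graded isomorphism gives $b_k(\L)=\sum_i b_{k-\mu_i}(\F_i)$, and $b_k(\M)=b_k(\L)$ because $\L$ is a deformation retract of $\M$ by Lemma~\ref{CoreIsADefRetr}.

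The only genuinely delicate point is the verification that $\L$ is closed in $Y$ and that hypothesis \eqref{drugahipoteza} holds — i.e.\ that the core computed inside the open piece $\M$ agrees with the relevant union of \BB strata of the compactification $Y$. This is precisely the kind of openness/invariance argument already carried out in the proof of Corollary~\ref{BBDecompositionCSRs}, so I expect it to go through cleanly; the main thing to be careful about is that one is using the $(t\to\infty)$-\BB decomposition of $Y$ (whose strata $\L_i'$ are the attracting sets $\lim_{t\to\infty}t\cdot x\in\F_i'$), and that the core of $\M$ is by Lemma~\ref{CoreIsADefRetr} already proper so no points of $\L$ escape to $Y\setminus\M$. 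Everything else is bookkeeping with the cited Carrell--Goresky machinery.
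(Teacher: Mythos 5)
Your proposal is correct and follows essentially the same route as the paper: complete $\M$ equivariantly via Lemma \ref{CompletionLemma}, observe that the core is a closed $\C^*$-invariant subvariety of $Y$ (properness from Lemma \ref{CoreIsADefRetr}), check the hypotheses of Lemma \ref{CarellGoreskyBBversion} using smoothness of the $\F_i$ and the equality of $(t\to\infty)$-attracting sets in $\L$, $\M$ and $Y$ as in Corollary \ref{BBDecompositionCSRs}, and then apply Theorem \ref{HomologyDecompositionForSingularSubvariety}, with the Betti relation coming from the deformation retraction. The only difference is that you spell out in more detail the closedness of $\L$ in $Y$ and the attracting-set comparison, which the paper cites more briefly.
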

\begin{proof}
	By Lemma \ref{CompletionLemma} there is a smooth $\C^*$-equivariant completion $Y$ of $\M,$ and thus the core $\L_\Fi$ is a closed $\C^*$-invariant subvariety of smooth projective $Y.$
	The fixed set $\L^{\Fi}=\M^{\Fi}=\sqcup_i \F_i$ is smooth and their ($t\fun \infty$)-attracting sets in the core are the same as ones in $\M$ (by definition), and the latter are the same as in $Y,$ as explained in the proof of Corollary \ref{BBDecompositionCSRs}. Thus we have that the core satisfies the conditions of Lemma \ref{CarellGoreskyBBversion}, hence the $\C^*$-action on it is good, and we have the homology decomposition of Theorem \ref{HomologyDecompositionForSingularSubvariety}.
\end{proof}

\begin{rmk}\label{CounterexampleByJoyce}
	We remark that the assumptions in Lemma \ref{CarellGoreskyBBversion} are indeed needed, or at least assumption (\ref{drugahipoteza}), as the following example shows. Namely, consider a $\C^*$-action on $Y=\P^2$ given by
	$$t\cdot [z_0,z_1,z_2]=[z_0,tz_1,t^2z_2].$$ Its fixed locus are points $\F_0=[1:0:0], \ \F_1=[0:1:0], \ \F_2=[0:0:1].$ Choose $X$ to be the union of three complex lines which these points form.
	We see that assumption (\ref{prvahipoteza}) from the theorem holds, but notice that assumption (\ref{drugahipoteza}) does not.
	Namely, the attracting set $\L_2'=\{x \in Y \mid \lim\limits_{t\fun \infty} t\cdot x \in \F_2\}$ attached to the point $\F_2$ is equal to $[z_0, z_2, 1]\iso \C^2$ whereas the attracting set $\L_2$ in $X$ attached to the same point is equal to the union of two lines, $[z_0:0:1],$ and $[0:z_1:1],$ thus indeed $L_2\neq L_2'$. 
	It is immediate that $\L_2\fun \F_2$ is not an affine bundle, and also that the homology isomorphism between fixed loci $\F_i$ and $X$ cannot hold, as $H_*(X)=\k[0]\oplus \k[-1] \oplus \k^3[-2],$ 
	thus has a bigger rank than the total rank of all $H_*(\F_i).$
\end{rmk}

\begin{rmk} We remark that the consequence of previous corollary, i.e. the correspondence between Betti numbers of the fixed loci and of the core of a semiprojective variety (\ref{BettiNumbers}) 
	was also proved in \cite[Theorem 1.3.7]{HaR-V15}, by different means.
\end{rmk}
\section{Semiprojective Holomorphic Symplectic manifolds}\label{SectionSHS}

In this section we will restrict our attention to smooth semiprojective varieties that also have holomorphic symplectic structure, which has positive weight with respect to the $\C^*$-action.
Those are the main objects of interest in this paper. We will prove some further properties that these objects satisfy, in addition to the general ones that hold for semiprojective varieties mentioned in the previous section. In the end of the section we will list examples of these objects that have appeared in the literature so far. We start with the definition:

\begin{de}\label{DefinitionSHS}
	A \textbf{Semiprojective Holomorphic Symplectic (SHS)} manifold $(\M,\Fi,I,\om_\C)$ is a smooth connected semiprojective variety $(\M,I,\Fi)$\footnote{Here the complex structure $I$ comes from the 
    structure of a smooth algebraic variety on $\M.$} with an $I$-holomorphic symplectic form $\om_\C$
	that has a positive weight with respect to the $\C^*$-action $\Fi,$ i.e. there is an integer $s>0$ such that $$t \cdot \om_\C = t^s \om_\C, \forall t \in \C^*.$$
	Such $\C^*$-actions we will call \textbf{conical}. The integer $s$ we call the \textbf{weight} of an SHS $(\M,\Fi).$
	Therefore, by a \textbf{weight-1 SHS} we will call an SHS manifold that admits a weight-1 conical action. 
	We will often refer to an SHS as any of its substructures (e.g. $(\M,\Fi),$ $(\M,I),$ $(\M,\Fi,I)$ or $(\M,\om_\C)$) for brevity, 
	depending on which part of its structure we want to emphasize.
\end{de}

\begin{rmk}
	We remark that similar class of semiprojective varieties were considered in \cite{HaR-V15}, called \textit{hyper-compact} varieties.
	However, their notion is more restrictive, asking for \HK structure and a weight-1 action, whereas we want to keep the generality given by the definition above, 
	hence we have chosen to give it another name.
\end{rmk}
Having the symplectic structure tuned in, the core of an SHS manifold has an additional important feature given in the next lemma. The sketch of the proof goes back to Nakajima \cite[Thm. 5.8]{Nak94a}, whereas we write it here in full details for completeness. Recall that a subvariety in a smooth symplectic variety $(M,\om)$ is called \textbf{$\om$-isotropic} if the symplectic form vanishes on its smooth locus, 
and \textbf{Lagrangian} if in addition it is half-dimensional.
\begin{lm}\label{CoreIsIsotropic}
	The core of an SHS manifold $(\M,\Fi,\om_\C)$ is $\om_\C$-isotropic. Furthermore, the core of a weight-1 SHS is a Lagrangian subvariety.
\end{lm}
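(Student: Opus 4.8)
The plan is to exploit the weight condition \eqref{IntroSymplWeight} together with the attracting-set structure of the core. Recall from Corollary~\ref{BBDecompositionCSRs} that the core decomposes as $\L=\bigsqcup_i\L_i$ into the $(t\fun\infty)$-attracting cells $\L_i\fun\F_i$, which are affine bundles over the smooth fixed components $\F_i$. The key observation is that on any $\L_i$, every tangent vector can be moved by the flow $\Fi_t$ into the fixed locus $\F_i$ as $t\fun\infty$: if $x\in\L_i$ and $v\in T_x\L_i$, then by \eqref{BBdecompWeightSpaces} the vector $v$ lies in the span of the non-positive weight spaces, so $(\Fi_t)_*v$ stays bounded (in fact converges) as $t\fun\infty$, while the base point $\Fi_t(x)$ converges to a point of $\F_i$. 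Since $\om_\C$ has weight $s>0$, we have $\om_\C((\Fi_t)_*v,(\Fi_t)_*w)=t^s\,\om_\C(v,w)$ for all $v,w\in T_x\M$.

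The main step is then: take $x$ in the smooth locus of an irreducible component of the core, lying in some cell $\L_i$, and take $v,w\in T_x\L_i$. The left side $\om_\C((\Fi_t)_*v,(\Fi_t)_*w)$ converges to $\om_\C$ evaluated on two vectors tangent to (or along) $\F_i$ as $t\fun\infty$; but $\F_i\subset\M^\Fi$ is pointwise fixed, so $\om_\C$ is $\Fi$-invariant there while simultaneously scaling by $t^s$, forcing $\om_\C|_{\F_i}=0$ — alternatively, one argues directly that the limit of $t^s\om_\C(v,w)$ exists and is finite, which for $s>0$ is only possible if $\om_\C(v,w)=0$. Hence $\om_\C$ vanishes on $T_x\L_i$. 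Since the smooth locus of a component of the core meets the open dense cell of that component, and a closed form vanishing on a dense open subset of a submanifold vanishes on it, we conclude $\om_\C$ vanishes on the smooth locus of each irreducible component of $\L$; that is, $\L$ is $\om_\C$-isotropic. (One must be slightly careful that the smooth locus of a component need not be contained in a single $\L_i$; but the cells $\L_i$ are locally closed, the component is irreducible, so it has a dense intersection with exactly one $\L_i$, which suffices by density and continuity of $\om_\C$.)

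For the weight-$1$ statement, it remains to check that an isotropic core of a weight-$1$ SHS is in fact half-dimensional, i.e.\ Lagrangian. Here I would use Lemma~\ref{HomologyDecompositionOfTheCore} (or its Betti-number consequence \eqref{BettiNumbers}) together with \eqref{BBdecompWeightSpaces}: the dimension of the fibre of $\L_i\fun\F_i$ equals $\dim_\C\bigoplus_{k<0}H_k$ at a point of $\F_i$. When $s=1$, the symplectic form pairs $H_k$ nondegenerately with $H_{1-k}$ (since $\om_\C$ has weight $1$, it pairs weight-$k$ with weight-$(1-k)$ vectors), so the weights occurring in $T_{\F_i}\M$ come in pairs $(k,1-k)$ with no zero weight appearing nontrivially beyond $T\F_i$ — more precisely $H_0=T\F_i$ is paired with $H_1$, and for $k\le 0$ the space $H_k$ is paired with $H_{1-k}$ where $1-k\ge 1>0$. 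It follows that $\dim_\C T_{\F_i}\L_i=\dim_\C\bigoplus_{k\le 0}H_k=\tfrac12\dim_\C\M$, since the non-positive weight spaces are exactly dual to the positive weight spaces under $\om_\C$. Therefore $\dim_\C\L_i=\tfrac12\dim_\C\M$ for every top-dimensional cell, and since the cells are locally closed and cover $\L$, each irreducible component of $\L$ has dimension $\tfrac12\dim_\C\M$; combined with isotropy this makes $\L$ Lagrangian.

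The main obstacle I anticipate is the bookkeeping around singular points of the core and the interaction between the stratification by the cells $\L_i$ and the decomposition into irreducible components — one needs that a generic (hence smooth) point of each component lies in a single cell and that the symplectic form's vanishing propagates from there by density and continuity. The weight-pairing argument for the dimension count is clean but should be stated carefully, checking that $\om_\C$ restricted to $T_{\F_i}\M$ pairs $H_k$ with $H_{s-k}$; for $s=1$ this gives exactly the balance between $\bigoplus_{k\le 0}H_k$ and $\bigoplus_{k\ge 1}H_k$, which is the Lagrangian condition on the attracting cells and hence on the core.
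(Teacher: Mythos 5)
You follow the paper's own route: the \BB attracting cells $\L_i\fun\F_i$, the weight identity $\om_\C(u_1,u_2)=t^{-s}\om_\C(t\cdot u_1,t\cdot u_2)$ pushed to the limit $t\fun\infty$ at non-fixed points, the vanishing $\om_\C(H_{k_1},H_{k_2})=0$ for $k_1+k_2\neq s$ along $\F_i$, and, for $s=1$, the nondegenerate pairing $H_k\otimes H_{1-k}\fun\C$ giving $\dim_\C\L_i=\tfrac12\dim_\C\M$. The fixed-point weight argument, the dimension count, and your bookkeeping about smooth loci of irreducible components versus cells are all fine and match the paper.

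The one genuine gap is the claim that $(\Fi_t)_*v$ stays bounded (indeed converges) as $t\fun\infty$ for $v\in T_x\L_i$ at a \emph{non-fixed} point $x$. You justify this by \eqref{BBdecompWeightSpaces}, but that weight decomposition only exists along the fixed locus: at a non-fixed point the action moves the base point, so there is no intrinsic sense in which ``$v$ lies in the span of the non-positive weight spaces,'' and the boundedness of $(\Fi_t)_*v$ is exactly what has to be proved. The paper supplies it via Proposition~\ref{LemmaBBAreComplexVectorBundles}: $p_i\colon\L_i\fun\F_i$ is (diffeomorphic to) a $\C^*$-equivariant vector bundle, so one can trivialise equivariantly over a neighbourhood $U$ of $y_\infty=\lim_{t\fun\infty}t\cdot x$; in the trivial bundle $\big(\oplus_{k<0}H_k\big)\times U$ a tangent vector $(\eta,\sigma)$ satisfies $t\cdot(\eta,\sigma)=(t\cdot\eta,\sigma)\fun(0,\sigma)$, so the pushed-forward vectors converge, to vectors tangent to $\F_i$, and your finite-limit-versus-$t^{s}$ argument then closes. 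Note that this equivariant-bundle statement is itself nontrivial — the paper proves it separately precisely because it is not available off the shelf — so it must be invoked explicitly rather than absorbed into \eqref{BBdecompWeightSpaces}.
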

\begin{proof}
Denoting the fixed locus $\M^{\Fi}=\F=\bigsqcup_{i}\F_i,$ recall that by Corollary \ref{BBDecompositionCSRs} the core decomposes into $(t\fun\infty)$-attracting pieces $\L=\bigsqcup \L_i.$
Also, recall that by Proposition \ref{LemmaBBAreComplexVectorBundles} each $$p_i: \L_i\fun \F_i, \ x\mapsto \lim_{t \fun \infty} t \cdot x,$$ is a $\C^*$-equivariant vector bundle, with the fibre being $\C^*$-equivariantly isomorphic to the vector space $\oplus_{k<0} H_k = T_{\F_i} \L_i.$
Now, let us prove that the arbitrary piece $\L_i$ is $\om_\C$-isotropic. Pick a non-fixed point $y\in\L_i,$ and two vectors $u_1,u_2 \in T_y \L_i.$
Then, 
\begin{equation}\label{ActionOnSymplForm}
	\om_\C(u_1,u_2)=t^{-s} \om_\C(t\cdot u_1, t\cdot u_2), 
\end{equation}
where $s$ is the 
weight of $(\M,\Fi).$
Now, letting $t\fun \infty,$ we get $\om_\C(u_1,u_2)=0,$ as long as the limit of vectors $\lim_{t\fun \infty} t \cdot u_l,\ l\in\{1,2\}$ exist in the first place. Denote the limit point $y_\infty:=\lim_{t \fun \infty} t \cdot y.$
By Proposition \ref{LemmaBBAreComplexVectorBundles}, there is a $\C^*$-equivariant bundle isomorphism 
$$ \Phi: p_i^{-1}(U) \fun \big(\bigoplus_{k<0} H_k\big) \times U,$$ for some neighbourhood $U$ of $y_{\infty},$ thus we can pass the limit problem to the trivial bundle. So, having an arbitrary vector $\xi=(\eta,\sigma)$ at the point $(v,y_{\infty}),$ we see that there is a limit 
$\lim_{t\fun \infty} t \cdot \xi=\lim_{t\fun \infty} t \cdot (\eta,\sigma)=\lim_{t\fun \infty} (t\cdot \eta,\sigma)=(0,\sigma),$ at point $(0,y_{\infty}).$ Here $\lim_{t \fun \infty} t\cdot \eta=0$ as $\eta\in \oplus_{k<0} H_k.$
Pulling back via $\Phi,$ we get that limits $\lim_{t\fun \infty} t \cdot u_l, \ l\in\{1,2\}$ exist and moreover, are tangent to $\F_i.$ 
Now, consider a fixed point $y\in\F_i,$ and two vectors $u_1,u_2\in T_{\F_i} \L_i = \oplus_{k<0} H_k.$ 
The equation \eqref{ActionOnSymplForm} implies that $\om_\C$ 
evaluated at two homogeneous vectors $w_1\in H_{k_1}, w_2\in H_{k_2}$ gives zero if $k_1+k_2 \neq s.$ 
In particular, $\om_\C|_{\oplus_{k<0} H_k }= 0,$ which proves that $\L$ is $\om_\C$-isotropic.
Moreover, as $\om_\C$ is symplectic, it induces a non-degenerate pairing 
\begin{equation}\label{PairingByOmC}
	\om_\C: H_k \otimes H_{s-k}\fun \C, 
\end{equation}
thus $H_k \iso H_{s-k}^*$ and $\dim(H_k)=\dim(H_{s-k}).$ In particular, when $s=1$ we get that the vector space $\oplus_{k<0} H_k$ is half-dimensional in $T_y \M.$ As $T_y \L_i =\oplus_{k<0} H_k,$ the proposition follows.
\end{proof}

We have an important corollary of this Lemma, which shows how symplectic geometry can constrain (algebraic) geometry of the core.

\begin{cor}\label{CoreDecompositonWeight1}
Given a weight-1 SHS manifold $(\M,\Fi),$ its core $\L=\bigsqcup_i \L_i$ has pure dimension ${1 \over 2} \dim \M$ and its irreducible components are precisely the closures $\overline{\L_i}.$
\end{cor}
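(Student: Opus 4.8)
The plan is to compute the dimension of each stratum $\L_i$ directly from the \BB data, show each is top-dimensional, and then read off the irreducible components from the disjointness of the strata. Throughout I assume $\Fi$ is a weight-$1$ conical action, so $s=1$ in the notation of Lemma \ref{CoreIsIsotropic}.

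First I would invoke Corollary \ref{BBDecompositionCSRs} and Proposition \ref{LemmaBBAreComplexVectorBundles} to regard each piece as a $\C^*$-equivariant complex vector bundle $p_i\colon\L_i\fun\F_i$. Picking $x\in\F_i$, the tangent space to the total space $\L_i$ along the zero section is $T_{\F_i}\L_i=\bigoplus_{k\le 0}H_k$ by \eqref{BBdecompWeightSpaces}, and hence $\dim_\C\L_i=\dim_\C\bigoplus_{k\le 0}H_k(x)$. Now since $\om_\C$ has weight $1$, the pairing \eqref{PairingByOmC} reads $\om_\C\colon H_k\otimes H_{1-k}\fun\C$ and is nondegenerate, so $H_k\iso H_{1-k}^{*}$ for all $k$. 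As $k\mapsto 1-k$ is a bijection between $\{k\le 0\}$ and $\{k\ge 1\}$, we get $\dim_\C\bigoplus_{k\le 0}H_k=\dim_\C\bigoplus_{k\ge 1}H_k$; since these two subspaces are complementary in $T_x\M=\bigoplus_{k\in\Z}H_k$, each has dimension $\tfrac12\dim_\C\M$. Therefore $\dim_\C\L_i=\tfrac12\dim_\C\M$ for every $i$ (this is the same computation that underlies the Lagrangian claim of Lemma \ref{CoreIsIsotropic}).

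Next I would record the irreducibility bookkeeping. Each $\F_i$ is a connected component of the smooth variety $\M^{\Fi}$, hence irreducible, so the vector bundle total space $\L_i$ is irreducible, and thus $\ol{\L_i}$ is irreducible of dimension $\tfrac12\dim_\C\M$; moreover $\F_i\subseteq\L_i$, so each $\L_i$ is nonempty. Since $\L$ is proper, hence closed in $\M$, and $\L=\bigsqcup_i\L_i$, we get $\L=\bigcup_i\ol{\L_i}$. To conclude that these are precisely the irreducible components: each $\L_i$ is locally closed, hence open and dense in $\ol{\L_i}$; if $\ol{\L_i}\subseteq\ol{\L_j}$ for some $i\ne j$, then equality of dimensions and irreducibility of $\ol{\L_j}$ force $\ol{\L_i}=\ol{\L_j}$, whence the dense open subsets $\L_i$ and $\L_j$ would meet, contradicting $\L_i\cap\L_j=\emptyset$. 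So no $\ol{\L_i}$ is contained in another, $\{\ol{\L_i}\}$ is the irredundant list of irreducible components of $\L$, and in particular $\L$ has pure dimension $\tfrac12\dim_\C\M$.

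Essentially everything here is forced by the results already in hand; the only step needing genuine (if routine) care is the last one, namely checking that the family $\{\ol{\L_i}\}$ is irredundant rather than merely covering $\L$ — this is exactly where disjointness of the \BB strata, together with their density in their own closures, is used. I do not anticipate any other obstacle.
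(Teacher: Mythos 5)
Your proposal is correct and follows exactly the route the paper intends: the paper states this as an immediate corollary of Lemma \ref{CoreIsIsotropic} together with the \BB decomposition (Corollary \ref{BBDecompositionCSRs} and \eqref{BBdecompWeightSpaces}), with the half-dimensionality of each $\L_i$ coming from the weight-$1$ pairing \eqref{PairingByOmC}, which is precisely your computation. Your final paragraph merely makes explicit the routine bookkeeping (disjoint locally closed strata, irreducible of equal dimension, hence irredundant closures) that the paper leaves implicit, so there is nothing to correct.
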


\begin{rmk}
In particular, this gives an elegant proof of pure-dimensionality of Springer fibres of type A, classically proved by Spaltenstein in \cite{Spa76}.
These varieties are fibres of the Springer resolution, one of the central objects in Geometric Representation Theory. 
They can be seen as cores of Resolutions of Slodowy varieties, which are examples of weight-1 Conical Symplectic Resolutions, hence weight-1 SHS manifolds as well, 
thus the previous corollary indeed shows the pure-dimensionality of Springer fibres.
\end{rmk}

\begin{ex}\label{DuValResolutionsExampleSHS} 
	In the lowest dimension $(dim_\C =2)$ we have a nice family of examples of SHS manifolds called resolutions of Du Val singularities.\footnote{In the literature they are also called simple surface singularities, Kleinian singularities, ADE singularities or rational double points.}
	These are the minimal resolutions of quotient singularities
	$$\pi_{\Gamma}:X_\Gamma\fun \C^2/\Gamma$$
	for all finite subgroups $\Gamma\leq SL(2,\C).$ Any such group $\Gamma$ (up to conjugation) can be labelled bijectively via an ADE Dynkin graph $Q_\Gamma$ which is called its \textit{McKay graph}.\footnote{Which comes from representation theory of the group $\Gamma.$}
	The Dynkin graph $A_n$ corresponds to the cyclic group $\Z/(n+1),$ the graph $D_n$ corresponds to the binary dihedral group $BD_{4(n-2)}$, whereas the graphs $E_6,E_7$ and $E_8$ correspond to the binary tetrahedral, octahedral and icosahedral groups, respectively.
	Du Val proved in \cite{DuVal34} that the central fibre $\pi_{\Gamma}^{-1}(0)$ consists of a union of 2-spheres, whose dual graph of intersections is exactly the
	McKay graph $Q_\Gamma.$
	We will also include the case of a trivial group $\Gamma=\{e\}$ which corresponds to the resolution of $\C^2/{e}= \C^2$ hence $\C^2$ itself, and we will consider it as a $A_0$ surface singularity.
	
	Varieties $X_\Gamma$ are quasi-projective, being obtained via finite set of blow-ups of affine varieties $\C^2/\Gamma.$ 
	The natural holomorphic-symplectic structure on $X_\Gamma$ comes as the pull-back 
	of the quotient of the standard symplectic structure on $\C^2$ defined on $(\C^2-\{0\})/\Gamma.$ Thus, $\om_\C$ is defined away from the central fibre but it can be proved that it extends symplectically on it.\footnote{E.g. by work of Brieskorn and Slodowy that makes the isomorphism of $\pi_{\Gamma}$ with the restriction of Springer resolution to the intersection of the Slodowy slice with the nilpotent cone, \cite{SloBook}.}
	 
	The standard conical weight-2 action on these spaces comes from the dilation action 
	\begin{equation}\label{ActionOnC2}
		\C^*\dejstvo \C^2, \ t\cdot (z_1,z_2)=(t z_1,t z_2),
	\end{equation}
	making $X_\Gamma$ indeed a weight-2 SHS.
	
	The core of this action is exactly the central fibre $\L_\Gamma:=\pi_{\Gamma}^{-1}(0),$ and its irreducible components are 2-spheres forming an ADE Dynkin tree.
	$\L_\Gamma$ is half-dimensional, hence Lagrangian, and indeed one can show that in types D and E the standard action is actually a square of a weight-1 conical action. In type A, this depends on the parity of $n,$ but nevertheless there is a weight-1 action that commutes with the standard one, hence they yield the same core (Lemma \ref{CommutingActionsSameCore}). We will construct all such weight-1 commuting actions in Example \ref{DuVal_Weight1actions_TypeA}.
\end{ex}

\begin{ex}\label{CotangentBundlesAsSHS} Another simple family of SHS manifolds are cotangent bundles $T^*X,$ where $X$ is any projective variety. 
	As a vector bundle over projective variety is always quasi-projective,\footnote{Given such a bundle $E\fun X,$ we can embed it as an open subset $E \fja{\iso} \P(E \oplus 1) \subset \P(E \oplus \mathcal{O}_X)=\bar{E}$ of its projective completion $\bar{E},$ which is a $\P^n$-bundle over projective $X,$ hence a projective variety itself.}
	so is $T^*X$ in particular. It has a canonical holomorphic symplectic structure, and a natural conical weight-1 $\C^*$-action that 
	contracts the fibres with weight one. Thus, it is a weight-1 SHS manifold. Its Lagrangian core is $X$, the zero section. 
	In particular, in the lowest dimension we obtain SHS manifolds $T^*\Sigma_g,$ where $\Sigma_g$ is a smooth projective curve of an arbitrary degree $g \geq 0.$
\end{ex}

Having in mind the two last families of examples, which overlap in the example $T^*\C P^1$, we propose a question of independent interest.
\begin{que}\label{ClassificationOfSHSInDim2}
	Are there any more SHS manifolds of complex dimension 2, beyond Du Val resolutions and cotangent bundles $\{T^*\Sigma_g\}_{g\geq 1}$?
\end{que}

Considering the fact that its core is a Liouville skeleton (Proposition \ref{canonicalLiouville}(\ref{CoreIsSkeleton})), given any graph $Q$ and the union of arbitrary closed real surfaces $\Sigma_g$ intersecting according to that graph, there is always a Weinstein manifold whose skeleton is the corresponding union of surfaces $\L_Q.$ Those spaces were studied in \cite{EL19}. However, the question is, whether that Weinstein manifold can be enhanced to a holomorphic symplectic structure.

Now we will mention two important corollaries of the existence of holomorphic symplectic structure on an SHS. 

Firstly, given an SHS $\M$ of complex dimension $2n,$ we have that the top exterior power $\omega_{\C}^{\wedge n}$ of its holomorphic symplectic form $\om_\C$ trivialises the canonical bundle $\mathcal{K}:=\Lambda^{2n,0}T^*\M.$ Using the general fact that $c_1(\mathcal{K})=c_1(T^*\M)=-c_1(T\M),$ this implies:

\begin{lm}\label{LemmaCalabiYau}
	Any SHS $(\M,I)$ satisfies $c_1(T\M,I)=0.$ 
\end{lm}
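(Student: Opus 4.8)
The plan is to exhibit an explicit nowhere-vanishing holomorphic section of the canonical bundle and deduce the vanishing of $c_1$ from the triviality of that line bundle. Write $\dim_\C \M = 2n$, so that $\mathcal{K} = \Lambda^{2n,0}T^*\M$ is a holomorphic line bundle on the smooth variety $\M$. The starting observation is that the top exterior power $\om_\C^{\wedge n}$ is a global holomorphic section of $\mathcal{K}$; I would first record that it is \emph{nowhere vanishing}, which is precisely the non-degeneracy of the symplectic form $\om_\C$ (in any Darboux-type or even just pointwise linear-algebra local model, non-degeneracy of a $2$-form on a $2n$-dimensional space is equivalent to the non-vanishing of its $n$-th wedge power).

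Next I would conclude that $\mathcal{K}$ is holomorphically trivial: the map $\mathcal{O}_\M \to \mathcal{K}$, $f \mapsto f\cdot \om_\C^{\wedge n}$, is an isomorphism of line bundles since $\om_\C^{\wedge n}$ generates every stalk. In particular $\mathcal{K}$ is topologically trivial, so $c_1(\mathcal{K}) = 0$ in $H^2(\M;\Z)$.

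Finally I would invoke the standard identity $\mathcal{K} = \det(T^*\M) = (\det T\M)^{\vee}$, which gives $c_1(\mathcal{K}) = -c_1(T\M)$ (this is exactly the relation $c_1(\mathcal{K}) = c_1(T^*\M) = -c_1(T\M)$ already quoted in the text). Combining this with $c_1(\mathcal{K})=0$ yields $c_1(T\M, I) = 0$, as claimed.

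There is essentially no obstacle here: the only point that deserves a sentence is the equivalence between non-degeneracy of $\om_\C$ and non-vanishing of $\om_\C^{\wedge n}$, everything else being formal. If one wishes to emphasize the role of semiprojectivity one can add that $H^2(\M;\Z)\cong H^2(\L;\Z)$ via the deformation retraction of Lemma \ref{CoreIsADefRetr}, so the statement can equivalently be read on the compact core, but this is not needed for the argument.
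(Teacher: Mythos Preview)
Your proof is correct and follows essentially the same approach as the paper: both argue that $\om_\C^{\wedge n}$ is a nowhere-vanishing section of the canonical bundle $\mathcal{K}$, hence $c_1(\mathcal{K})=0$, and then use $c_1(\mathcal{K})=-c_1(T\M)$. Your version is slightly more explicit in spelling out why non-degeneracy of $\om_\C$ gives non-vanishing of $\om_\C^{\wedge n}$, and the closing remark about the core is indeed superfluous.
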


Moreover, we notice that the holomorphic symplectic structure on $\M$ can be upgraded to an almost \HK structure, which we define now:
\begin{de}\label{DefAlmostHKstructure}
	Given a manifold $M$ an \textbf{almost \HK structure} on it is given by the quadruple $(g,I,J,K),$ where $g$ is a Riemannian metric and $I,J,K$ are $g$-orthogonal almost complex structures satisfying $IJ=K.$ This yields non-degenerate 2-forms $\om_I,\om_J,\om_K,$ defined by $\om_S(\cdot,\cdot):=-g(\cdot,S\cdot),$ for $S=I,J,K.$ An almost \HK structure is called \textbf{\HK} if we have $\nabla^g I= \nabla^g J= \nabla^g K= 0,$ where $\nabla^g$ is the Levi-Civita connection of $g.$ In particular, this implies that $I,J,K$ are complex structures and $\om_I,\om_J,\om_K$ are \KH forms.
\end{de}

\begin{lm}\label{LemmaCSRsAreAlmostHK}
	Any SHS manifold $(\M,I,\om_\C)$ can be enriched with an almost \HK structure $(g,I,J,K),$ such that $\om_\C=\om_J+i\om_K.$ 
	In particular, we have that the almost complex structures $S_t:=cos (t) J + sin (t) K$ satisfy $c_1(T\M,S_t)=0.$
\end{lm}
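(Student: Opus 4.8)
The plan is to produce the three almost complex structures $J,K$ (and a compatible metric $g$) by choosing an auxiliary metric and then ``polar-decomposing'' the holomorphic symplectic form, in the same spirit as the construction of compatible almost complex structures in symplectic geometry. First I would fix any Riemannian metric $g_0$ on $\M$ that is compatible with $I$ in the weak sense that $I$ is $g_0$-orthogonal; such a metric exists by averaging. The real and imaginary parts $\om_J := \mathbb{R}e(\om_\C)$ and $\om_K := \mathbb{I}m(\om_\C)$ are then real, closed, non-degenerate $2$-forms (non-degeneracy because $\om_\C^{\wedge n}$ trivialises the canonical bundle by Lemma \ref{LemmaCalabiYau}, so $\om_\C$ is non-degenerate as a complex-bilinear form, and one checks its real and imaginary parts are each non-degenerate as real forms). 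Crucially, since $\om_\C$ is $I$-holomorphic of type $(2,0)$, one has the pointwise identities $\om_J(I\cdot,\cdot)=\om_J(\cdot,I\cdot)=-\om_K(\cdot,\cdot)$ and $\om_K(I\cdot,\cdot)=\om_K(\cdot,I\cdot)=\om_J(\cdot,\cdot)$; equivalently $\om_J$ and $\om_K$ are of type $(2,0)+(0,2)$ and are exchanged by the $I$-action. These are the algebraic relations that will force $IJ=K$ at the end.

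Next I would run the standard polar decomposition on $\om_J$ relative to $g_0$: write $\om_J(\cdot,\cdot) = g_0(A\cdot,\cdot)$ for a $g_0$-skew-adjoint bundle automorphism $A$, and set $J := (\sqrt{A^*A})^{-1}A = (\sqrt{-A^2})^{-1}A$, which is a $g_0$-orthogonal almost complex structure, with $g_J(\cdot,\cdot):=\om_J(\cdot,J\cdot)$ a genuine metric. The key point is that because $g_0$ can be chosen $I$-invariant and $\om_J$ satisfies the type relation above, the operator $A$ anticommutes with $I$ (i.e. $AI=-IA$), hence so does $\sqrt{-A^2}$ commute with $I$, hence $J$ anticommutes with $I$: $IJ=-JI$. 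Then I define $K:=IJ$; it is automatically an almost complex structure ($K^2=IJIJ=-I^2J^2=-\mathrm{Id}$), $g_J$-orthogonal (one checks $I,J,K$ all preserve $g_J$ using the anticommutation relations and $I$-invariance of $g_0$), and $g:=g_J$ together with $(I,J,K)$ satisfies the quaternionic relation $IJ=K$. It remains to identify $\om_J=g(\cdot,J\cdot)$ (true by construction) and to check $\om_K = g(\cdot,K\cdot) = \mathbb{I}m(\om_\C)$; the latter follows because $\om_K(\cdot,\cdot) = \om_J(I\cdot,\cdot)$ from the type relation, and on the other side $g(\cdot,K\cdot)=g(\cdot,IJ\cdot)=g(I\cdot,J\cdot)\cdot(\pm1)$ matches after tracking signs using $g$-orthogonality of $I$. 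Thus $\om_\C = \om_J + i\om_K$ as claimed, giving the desired almost \HK structure.

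For the final sentence about $S_t := \cos(t)J + \sin(t)K$: each $S_t$ is an almost complex structure because $J,K$ anticommute ($JK = -KJ$, since $JK = J\cdot IJ = JIJ = -IJ^2 = I = -KJ$ by similar manipulation), so $S_t^2 = -(\cos^2 t + \sin^2 t)\mathrm{Id} = -\mathrm{Id}$; moreover $S_t = (\cos t + (\sin t) I)J$ shows $S_t$ is obtained from $J$ by the invertible complex-linear reparametrisation $\cos t + (\sin t)I$, so $S_t$ is homotopic through almost complex structures to $J$ (take $t\in[0,t]$), and in fact all the $S_t$ lie in one connected family. Since $\om_\C = \om_J + i\om_K$ is a nowhere-vanishing section of $\Lambda^{2,0}$ with respect to the complex structure $J$ (that is the content of $\om_\C$ being a holomorphic symplectic form for $J$ — equivalently one checks $\om_\C^{n/2}$ trivialises $\Lambda^{n,0}_{S_t}$ for each $t$, as $\om_\C$ is of type $(2,0)$ for every $S_t$ in this family), we get $c_1(T\M,J)=0$; and since $c_1$ is a homotopy invariant of the almost complex structure, $c_1(T\M,S_t)=c_1(T\M,J)=0$ for all $t$. (Alternatively, invoke Lemma \ref{LemmaCalabiYau} directly for the complex structure underlying each $S_t$.)

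The main obstacle I expect is bookkeeping the signs and type decompositions carefully enough to nail down that the polar-decomposition output genuinely satisfies $IJ=K$ \emph{on the nose} (not merely up to sign) and that $\mathbb{I}m(\om_\C) = g(\cdot,K\cdot)$ with the correct sign — this is where an inattentive computation can flip $K\mapsto -K$ or $\om_K \mapsto -\om_K$. The fix is to be disciplined: once $g_0$ is chosen $I$-invariant, everything reduces to linear algebra on a single quaternionic vector space fibrewise, and the anticommutation $AI=-IA$ (which is exactly the ``$(2,0)$-type'' condition on $\om_J$) propagates cleanly through the functional calculus defining $J$. Smoothness of $J$ (hence $K$, $g$) is immediate because polar decomposition and the square root of a positive-definite operator are smooth operations on bundle automorphisms.
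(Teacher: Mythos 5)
Your construction of the almost hyperk\"ahler structure is correct, but it takes a genuinely different route from the paper. The paper's proof works with the $Sp(2n,\C)$-structure on $T\M$ given by $\om_\C$-Darboux trivialisations, reduces the structure group to $Sp(n)=Sp(2n,\C)\cap U(2n)$ using contractibility of $Sp(2n,\C)/Sp(n)$ (the same reduction trick as in Proposition \ref{LemmaBBAreComplexVectorBundles}), and then pulls back the standard quaternionic structure of $\C^{2n}$ through the reduced trivialisations. Your fibrewise polar decomposition of $\om_J=\mathrm{Re}(\om_\C)$ against an auxiliary $I$-invariant metric is the concrete linear-algebraic incarnation of that same retraction: the $(2,0)$-type identity $\om_J(I\cdot,\cdot)=\om_J(\cdot,I\cdot)$ does give $AI=-IA$, hence $IJ=-JI$ for the polar-decomposition output, and with $K:=IJ$ and $g(\cdot,\cdot):=\om_J(\cdot,J\cdot)$ the signs do close up (e.g. $g(u,Kv)=\om_J(u,JIJv)=\om_J(u,Iv)=-\om_K(u,v)$, matching the paper's convention $\om_S=-g(\cdot,S\cdot)$). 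Your route avoids the principal-bundle homotopy theory and is more self-contained; the paper's route makes the reduction-of-structure-group statement explicit.

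The final step, however, contains a genuine error. The form $\om_\C=\om_J+i\om_K$ is of type $(2,0)$ only with respect to $I$: with respect to $J$ (or any $S_t=\cos(t)J+\sin(t)K$) the form $\om_J$ is the $(1,1)$ form $-g(\cdot,J\cdot)$, so $\om_\C$ has a non-zero $(1,1)$-part and $\om_\C^{n/2}$ does \emph{not} trivialise $\Lambda^{n,0}$ taken with respect to $S_t$; the correct $\Theta$-complex volume form for $\Theta\in\{aJ+bK\}$ is built from $\om_I - i\om_{I\Theta}$, exactly as in \eqref{cplxVolForm}, not from $\om_\C$. Your suggested alternative, invoking Lemma \ref{LemmaCalabiYau} for ``the complex structure underlying each $S_t$,'' also does not apply as stated: that lemma concerns the integrable structure $I$ of the algebraic variety and uses that $\om_\C$ is $I$-holomorphic, while the $S_t$ are in general only almost complex. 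The repair is immediate from what you have already established: since $I,J,K$ pairwise anticommute, $u_I I+u_J J+u_K K$ is an almost complex structure for every $(u_I,u_J,u_K)\in S^2$, so each $S_t$ is joined to $I$ through a connected family of almost complex structures; the complex vector bundles $(T\M,S)$ along this family are all isomorphic, whence $c_1(T\M,S_t)=c_1(T\M,I)=0$ by Lemma \ref{LemmaCalabiYau}. This homotopy through the twistor sphere is precisely how the paper concludes.
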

\begin{proof}
	Briefly, a holomorphic symplectic structure on a complex manifold of dimension $2n$ corresponds to a (torsion free) $Sp(2n,\C)$-structure on the tangent bundle of $\M,$ and an almost \HK structure corresponds to an $Sp(n)$-structure, and the key thing is that $Sp(2n,\C)$ deformation retracts to $Sp(n)=Sp(2n,\C)\cap U(2n),$ hence an SHS can indeed be enriched with an almost \HK structure. 
	Now we will explain this in more details, for the convenience of the reader.
	
	Firstly, having a holomorphic symplectic structure $\om_\C$ on $\M,$ we have local trivialisations of $\pi: T\M \fun \M,$ given by Darboux bases, i.e. vector fields 
	$(e_1,\dots,e_n,f_1,\dots,f_n)$ such that $\om_\C(e_i,e_j)=\om_\C(f_i,f_j)=0, $ $ \om_\C(e_i,f_j)=\delta_{ij}.$ Given two such trivialisations 
	$$\Fi_i: \pi^{-1}(U_i) \fun U_i \times \C^{2n}, \ \ \Fi_j: \pi^{-1}(U_j) \fun U_j \times \C^{2n},$$
	on their overlap we have 
	\begin{align*}
		\Fi_j \circ \Fi_i^{-1}: (U_i \cap U_j) \times \C^{2n} &\fun (U_i \cap U_j) \times \C^{2n}, \\ 
		(b,v)&\mapsto (b,g_{ij}(b)v),
	\end{align*}
	where 
	$g_{ij}(b)\in Sp(2n,\C).$ 
	Thus, one gets an associated $Sp(2n,\C)$-principal bundle $P\fun \M$ using the transition functions $g_{ij}.$ As $Sp(2n,\C)$ deformation retracts to its compact form $Sp(n),$ the quotient 
	$Sp(2n,\C)/Sp(n)$ is contractible, hence by \cite[Cor 2.4, Ch. VI]{Hu94} there is an $Sp(n)$-reduction of $P,$ that is, a principal $Sp(n)$-bundle $Q$ such that $P\iso Q\times_{Sp(n)} Sp(2n,\C).$
	Then, by \cite[Thm. 4.1, Ch. VI]{Hu94},\footnote{Notice the slight difference between stated therein and here, as our transition function $g_{ij}$ is their $g_{ij}^{-1}$ and we take $r_i$ to be their $r_i^{-1}.$} that means that there are maps $r_i:U_i\fun Sp(2n,\C)$ such that 
	$r_j(b)g_{ij}(b)r_i(b)^{-1}\in Sp(n),$ for every $i,j.$ Next, denote 
	$$\widetilde{r}_i: U_i \times \C^{2n} \fun U_i \times \C^{2n}, \ (b,v) \mapsto (b, r_i(b)v)$$ and
	define new local trivialisations $\widetilde{\Fi}_i:=\widetilde{r}_i \circ \Fi_i$ 
	of $T\M.$ Then, for all $i,j$ we have 
	\begin{align*}
		\widetilde{\Fi}_j \circ \widetilde{\Fi}_i^{-1}: (U_i \cap U_j) \times \C^{2n} &\fun (U_i \cap U_j) \times \C^{2n}, \\ 
		(b,v)&\mapsto (b,r_j(b)g_{ij}(b) r_i(b)^{-1}v),
	\end{align*}
	thus, as $r_j(b)g_{ij}(b)r_i(b)^{-1}\in Sp(n),$ we get a $Sp(n)$-structure on $T\M.$ Now we will show that it yields an almost \HK structure $(g,I,J,K)$ on $\M$ whose holomorphic symplectic part is the given one, $\om_\C=\om_J+i\om_K.$ 
	
	Denote by $(g^0,I^0,J^0,K^0,\om_\C^0)$ the standard corresponding structures on $\C^{2n}\iso \H^n.$ First, notice that $\Fi_i^*\om_\C^0=\om_\C,$ and $\Fi_i^*I^0=I$ (where pull-back is understood fibre-wise, $\Fi_i: T_b\M \fun \{b\}\times \C^{2n}$), as we have used complex Darboux basis for defining $\Fi_i.$ As $\widetilde{r}_i^*\om_\C^0=\om_\C^0, \ \widetilde{r}_i^*I^0=I,$ we have 
	$\widetilde{\Fi}_i^*\om_\C^0=\om_\C, \ \widetilde{\Fi}_i^*I^0=I$ as well.
	
	Next, we define structures on $T\M$ using the pull-back from local trivialisations $\widetilde{\Fi}_i:$  
	$$g:=\widetilde{\Fi}_i^*g^0,   J:=\widetilde{\Fi}_i^*J^0, K:=\widetilde{\Fi}_i^*K^0.$$
	They are well-defined (i.e. independent of $i$) as the transition functions $\widetilde{\Fi}_j \circ \widetilde{\Fi}_i^{-1}$ fibre-wise lie in $Sp(n),$ in particular, preserve $g^0,J^0,K^0.$
	Moreover, together with $I$ and $\om_\C,$ they form an almost \HK structure on $\M,$ being a pull-back of the standard almost \HK structure on $\C^{2n}.$ This proves the first part of the lemma.
	
	Now, notice that given a vector $u=(u_I,u_J,u_K)\in S^2,$ a linear combination 
	$$S_u=u_I I + u_J J + u_K K$$
	is an almost complex structure. 
	Thus, we get a smooth $S^2$-family of almost complex structures $S_u,$ hence corresponding complex vector bundles $(T\M,S_u)$ are all isomorphic,\footnote{As $S^2$ is path-connected.} and together with Lemma \ref{LemmaCalabiYau} we get $c_1(T\M,S_u)=0.$ In particular, we have  $c_1(T\M,S_t)=0$ for all $S_t=cos (t) J + sin(t)  K.$
\end{proof}

The previous lemma motivates the following definition:

\begin{de}
	Given a SHS manifold $(\M,I,\om_\C),$ an almost \HK structure $(g,I,J,K)$ on it such that $\om_\C= \om_J+i \om_K$ 
	we will call \textbf{compatible}.
\end{de}

\begin{de}\label{DefinitionHKCSR} Let $(\M,\Fi, I,\om_\C)$ be an SHS. We call it a \textbf{semiprojective \HK manifold} and abbreviate by SHK if there is a compatible \HK structure $(g,I,J,K)$ 
	such that the $S^1$-part of $\Fi$ acts by $g$-isometries. 
	On SHK manifolds we only consider $\C^*$-actions satisfying this condition.
\end{de}
In fact, most known SHS are known to be SHK manifolds (all examples of Conical Symplectic Resolutions and Moduli spaces of Higgs bundles). 

\subsection{Conical Symplectic Resolutions} \label{CSRs}

Most important and by far the broadest family of known SHS manifolds are Conical Symplectic Resolutions (CSRs). They were defined in \cite{BPW16,BLPW16}, though these objects were already being considered by various authors, most notably Kaledin \cite{Ka06,Ka08,Ka09} and Namikawa \cite{Nam08,Nam11}.

These manifolds are of interest to theoretical physics \cite{HaSp18,GrHa20,HaKa20}, algebraic geometry \cite{Ka06,Ka09,Nam08,BeSch16} and representation theory \cite{Nak94a,Nak98,Nak01,MO12,Nak15,BFN16}. Also, some of them have very interesting differential-geometric features \cite{Kro89,KroNak90,Nak94a,BD00,Nak18}. Examples of CSRs include many well-known families of spaces such as resolutions of Du Val singularities (Example \ref{DuValResolutionsExampleSHS}), Hilbert schemes of points on them, hyperpolygon spaces, Nakajima quiver varieties, hypertoric varieties, cotangent bundles of flag varieties and nilpotent Slodowy varieties. All known examples of CSRs have a complete \HK metric and in fact are SHK manifolds, 
which makes them a particularly nice geometric setting to work in.

\begin{de}\label{DefCSRwithFi}
	A \textbf{Conical Symplectic Resolution (CSR)} is a projective resolution\footnote{Meaning: $\M$ is a smooth variety, and $\pi$ is an isomorphism over the smooth locus of of $\M_0$.} $\pi:\M \fun \M_0$ of a normal 
	affine variety $\M_0$, where $(\M,\om_\C)$ is a holomorphic symplectic manifold and $\pi$ is equivariant with respect to $\C^*$-actions on $\M$ and $\M_0$ (both denoted by $\Fi$). These actions satisfy two conditions:
	\begin{enumerate}
		\item[(1)] the symplectic form $\om_\C$ has a weight $k\in \N$, so $t \cdot \om_\C = t^k \om_\C,$ and
		\item[(2)]\label{contracts} the action $\Fi$ contracts $\M_0$ to a single fixed point $x_0$, so $\displaystyle \lim_{t\fun 0} t \cdot x=x_0$ for all $x\in \M_0.$
	\end{enumerate}
	In general, a CSR can have many such actions, of possibly different weights $k,$ so we often denote a CSR by $(\M,\Fi)$ to emphasise the choice of $\Fi$.
\end{de}

\begin{lm}\label{CSRisSemiproj} A CSR $\M$ is an SHS manifold, and its core is the central fibre $\L=\pi^{-1}(x_0)$ of the resolution $\pi:\M\fun \M_0.$
\end{lm}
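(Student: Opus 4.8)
The plan is to verify directly the two defining conditions of a semiprojective variety, observe that the weight hypothesis on $\om_\C$ is exactly the conicality requirement of Definition~\ref{DefinitionSHS}, and then compute the core by transporting $\C^*$-orbits down to $\M_0$ along the equivariant resolution map $\pi$ and using its properness.

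First I would record that $\M$ is a smooth connected quasi-projective variety: smoothness and connectedness are built into the notion of a resolution of the (irreducible, normal) affine variety $\M_0$, and quasi-projectivity holds because a projective morphism $\pi$ realises $\M$ as a closed subvariety of $\P^n\times\M_0$, which is quasi-projective as $\M_0$ is affine. For the compactness of $\M^{\Fi}$: since $\pi$ is $\C^*$-equivariant, $\pi(\M^{\Fi})\subseteq\M_0^{\Fi}$, and $\M_0^{\Fi}=\{x_0\}$, because any $\Fi$-fixed $y\in\M_0$ equals $\lim_{t\fun 0}t\cdot y=x_0$ by the contracting condition~(2) of Definition~\ref{DefCSRwithFi}; hence $\M^{\Fi}$ is a closed subset of the fibre $\pi^{-1}(x_0)$, which is proper, and so compact, because $\pi$ is a projective morphism. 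For the limit condition, fix $x\in\M$: the orbit map $\C^*\fun\M_0$, $t\mapsto t\cdot\pi(x)$, is a morphism, so in affine coordinates on $\M_0$ each component is a Laurent polynomial in $t$, and the hypothesis $\lim_{t\fun 0}t\cdot\pi(x)=x_0$ forces each to be an honest polynomial; thus the orbit map extends to a morphism $\mathbb{A}^1\fun\M_0$ sending $0$ to $x_0$. Properness of $\pi$ — the valuative criterion applied at $0$ — then lets one extend the lift $t\mapsto t\cdot x$ over $0$ to a morphism $\mathbb{A}^1\fun\M$, whose value at $0$ is the desired limit $\lim_{t\fun 0}t\cdot x$. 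Combined with condition~(1) of Definition~\ref{DefCSRwithFi}, which says $\om_\C$ has positive weight, i.e. that $\Fi$ is conical, this shows $(\M,\Fi,I,\om_\C)$ is an SHS manifold.

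It remains to identify the core $\L_{\Fi}=\{x\in\M\mid\lim_{t\fun\infty}t\cdot x\text{ exists}\}$ with $\pi^{-1}(x_0)$, which I would do by proving both inclusions. If $\pi(x)=x_0$, then $x_0$ being fixed forces the whole orbit of $x$ into $\pi^{-1}(x_0)$, a proper variety, so the orbit map $\C^*\fun\pi^{-1}(x_0)$ extends across both $0$ and $\infty$; in particular $\lim_{t\fun\infty}t\cdot x$ exists, so $\pi^{-1}(x_0)\subseteq\L_{\Fi}$. Conversely, if this limit exists in $\M$, then by continuity of $\pi$ the limit $\lim_{t\fun\infty}t\cdot\pi(x)$ exists in $\M_0$; together with $\lim_{t\fun 0}t\cdot\pi(x)=x_0$, the Laurent-polynomial coordinates of the orbit of $\pi(x)$ have finite limits at both $0$ and $\infty$, hence are constant, so $\pi(x)=x_0$. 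This gives $\L_{\Fi}=\pi^{-1}(x_0)$. The only step with genuine content is the orbit-lifting used for the limit condition: passing from the existence of the limit downstairs in $\M_0$ to its existence upstairs in $\M$ rests on properness of $\pi$ (via the valuative criterion, equivalently the completeness of $\P^1$), and that is the point I would expect to need the most care; everything else is formal bookkeeping.
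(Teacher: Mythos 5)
Your proposal is correct and follows essentially the same route as the paper: fixed-locus compactness via equivariance and properness of the central fibre, existence of $(t\fun 0)$-limits by pushing the orbit down to $\M_0$ and lifting back through the proper map $\pi$, quasi-projectivity from projectivity over an affine base, and the identification $\L=\pi^{-1}(x_0)$ by the same two inclusions. The only (harmless) variations are technical: you lift limits via the valuative criterion of properness where the paper extends a bounded holomorphic disc map, and you use the Laurent-polynomial argument where the paper invokes that a regular map $\P^1\fun\M_0$ to an affine variety is constant.
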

\begin{proof}
	Notice first that a CSR $\M$ is semiprojective. Indeed, by the condition (2) we have that the fixed locus of $\Fi$ lies in the central fibre $\pi^{-1}(x_0),$ which is projective (as $\pi$ is projective), hence proper, thus so is the fixed locus as its closed subset. 
	Also, the condition (2) along with the fact that $\pi$ is equivariant and proper tells us that every point $x\in\M$ has a limit $ \lim_{t\fun 0} t \cdot x.$
	Indeed, it amounts to say that the holomorphic map from a punctured disc $\accentset{\circ}{D^2}\fun \M, t \mapsto t\cdot x,$ can be extended to $D^2.$ But it is true as it is bounded, having the image in $\pi^{-1}(D^2 \cdot \pi(x))$ which is compact, hence bounded. Here, $D^2 \cdot \pi(x)$ is well-defined as every point in $\M_0$ has a limit for the action when $t\fun 0.$ (condition (2))
	Being projective over an affine variety $\M_0$, the variety $\M$ is quasi-projective. Indeed, $\pi:\M \fun \M_0$ being a projective morphism means that there is an embedding $\M\hookrightarrow \M_0 \times \C P^n$ for some $n$ (\cite[p.103]{Ha77}). As $\M_0$ is an affine variety there is an embedding $\M_0 \hookrightarrow \C^m$ for some $m,$ hence we have the sequence of embeddings
	$$\M_0 \times \C P^n \hookrightarrow \C^m \times \C P^n \hookrightarrow 
	\C P^m \times \C P^n \hookrightarrow \C P^{(m+1)(n+1)-1},$$ the last one being the Segre embedding.
	Altogether we get a holomorphic embedding $\M \stackrel{\iota}{\hookrightarrow} \C P^N,$ where $N:=((m+1)(n+1)-1),$ so $\M$ is quasi-projective.
	Together with previous, we get that a CSR $\M$ is indeed semiprojective, thus an SHS manifold from the definition. 
	
	The fact that the core is equal to the central fibre $\L=\pi^{-1}(x_0)$ follows from \cite[Lem. 4.5.2]{Gi15}.
	Let us assume that $x\in \pi^{-1}(x_0).$ Then, as $\pi^{-1}(x_0)$ is proper, the map $\C^*\fun \L, \ t \mapsto t\cdot x$ extends to a regular map on $\P^1,$ thus the limit $\lim\limits_{t\fun \infty} t\cdot x$ exists.
	Now, having a point $x$ such that $\lim\limits_{t\fun \infty} t\cdot x$  exist, the same will be true for its projection $y:=\pi(x),$ and then as the 
	limit $\lim\limits_{t\fun 0} t\cdot y=0$ always exist, the map $\C^*\fun \L, \ t \mapsto t\cdot y$ extends to a regular map $f:\P^1\fun \M_0.$ As $\M_0$ is affine, $f$ must be a constant, hence $y=f(1)=f(0)=0,$ thus $x\in\pi^{-1}(0)=\L.$  
\end{proof}

We remark that our definition of CSR is slightly different from the original one from \cite[Sec. 2]{BPW16}, 
in that we do not require $\M_0$ to be the affinisation $\Aff(\M):=Spec(\Gamma(\M,\O_\M))$ of $\M,$ although we do require $\M_0$ to be a normal variety. However, we will show that these two definitions are equivalent in the following lemma, which the author proved together with Nicholas Proudfoot.
\begin{lm}\label{diffntdefnsOfCSR}
	In Definition \ref{DefCSRwithFi} of CSR, requiring $\M_0$ to be normal is equivalent to requiring for $\M_0$ to be the affinisation of $\M.$
\end{lm}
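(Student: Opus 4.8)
The plan is to identify the affinisation $\Aff(\M)=\mathrm{Spec}\,\Gamma(\M,\mathcal{O}_\M)$ with the normalisation of $\M_0$; the claimed equivalence then follows at once, since a variety is normal exactly when its normalisation map is an isomorphism. First I would record that the universal property of affinisation factors $\pi$ through a canonical affine scheme,
\begin{equation*}
\M \longrightarrow \Aff(\M) \stackrel{\nu}{\longrightarrow} \M_0,
\end{equation*}
and that "$\M_0$ is the affinisation of $\M$" means precisely that $\nu$ is an isomorphism. So the whole lemma reduces to showing that $\nu$ is an isomorphism if and only if $\M_0$ is normal, and for that it suffices to prove that $\nu$ is the normalisation morphism of $\M_0$.

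To do so I would verify three properties of $\nu$. \textbf{(i) Finiteness:} since $\pi:\M\fun\M_0$ is projective and $\M_0$ is affine, $\pi_*\mathcal{O}_\M$ is a coherent sheaf of $\mathcal{O}_{\M_0}$-algebras, so $\Gamma(\M,\mathcal{O}_\M)=\Gamma(\M_0,\pi_*\mathcal{O}_\M)$ is a finitely generated module over $\C[\M_0]$; in particular $\Aff(\M)$ is an affine variety and $\nu$ is a finite morphism. \textbf{(ii) Birationality:} being a resolution, $\pi$ restricts to an isomorphism over the (dense) smooth locus of $\M_0$, so $\pi$ is birational and $K(\M)=K(\M_0)$; combined with the inclusions $\C[\M_0]\subseteq\Gamma(\M,\mathcal{O}_\M)\subseteq K(\M)$ and finiteness, this forces $\mathrm{Frac}\,\Gamma(\M,\mathcal{O}_\M)=K(\M_0)$, so $\nu$ is birational. \textbf{(iii) Normality of the source:} $\M$ is smooth, hence normal, so any $\alpha\in K(\M)$ that is integral over $\Gamma(\M,\mathcal{O}_\M)$ is integral over each local ring $\mathcal{O}_{\M,x}$, hence lies in $\mathcal{O}_{\M,x}$ for all $x$, hence is a global regular function; thus $\Gamma(\M,\mathcal{O}_\M)$ is integrally closed in its fraction field and $\Aff(\M)$ is normal.

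By uniqueness of the normalisation, a finite birational morphism onto a variety whose source is normal \emph{is} the normalisation map; hence $\nu:\Aff(\M)\fun\M_0$ is the normalisation of $\M_0$. Consequently $\M_0$ is normal $\Longleftrightarrow$ $\nu$ is an isomorphism $\Longleftrightarrow$ $\M_0=\Aff(\M)$, which proves the lemma. The only point that needs a little care is assembling (i)--(iii) simultaneously: each is standard, but one must use "resolution" specifically to get birationality and projectivity over an affine base specifically to get that $\Gamma(\M,\mathcal{O}_\M)$ is a finite $\C[\M_0]$-algebra; once $\nu$ is known to be finite, birational, and to have normal source, its identification with the normalisation is immediate.
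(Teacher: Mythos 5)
Your proof is correct, and it takes a genuinely different route from the paper's. The paper argues the two implications separately: for ``affinisation $\Rightarrow$ normal'' it uses exactly your step (iii), that $\Gamma(\M,\O_\M)=\bigcap_p \O_{\M,p}$ is integrally closed; but for ``normal $\Rightarrow$ affinisation'' it factors $\pi$ as $\M\fun\Aff(\M)\fun\M_0$, invokes connectedness of the fibres of $\pi$ (a separate proposition proved via Stein factorisation, which itself uses normality of $\M_0$) to see that $\Aff(\M)\fun\M_0$ is a bijection, and then applies a variant of Zariski's main theorem to a birational morphism with finite fibres onto a normal target. You instead prove the sharper statement that $\nu:\Aff(\M)\fun\M_0$ \emph{is} the normalisation of $\M_0$, from which both implications drop out at once. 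The trade-off: your argument needs the finiteness/coherence theorem for proper (projective) pushforwards to get that $\Gamma(\M,\O_\M)$ is a finite $\C[\M_0]$-module, an input the paper's proof avoids; in exchange you bypass the fibre-connectedness proposition entirely and obtain extra information, namely a description of $\Aff(\M)$ as the normalisation even when $\M_0$ fails to be normal. Two small remarks: the fraction-field equality in your step (ii) already follows from the inclusions $\C[\M_0]\subseteq\Gamma(\M,\O_\M)\subseteq K(\M)=K(\M_0)$ without finiteness, and throughout you are (legitimately) using that $\M$ is irreducible, which the paper also uses implicitly.
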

\begin{proof} 
	Assume that $\M_0$ is the affinisation of $\M.$ Then, the coordinate rings $\C[\M]:=\Gamma(\M,\O_\M)$ and $\C[\M_0]$ are isomorphic. Being smooth, $\M$ is normal, thus its local rings $\O_{\M,p}$ are integrally closed. As an intersection of integrally closed rings, the coordinate ring $\C[\M]=\cap_{p\in \M} \O_{\M,p}$ is also integrally closed. 
	It follows that the coordinate ring of $\M_0$ is integrally closed.  Since $\M_0$ is affine, this implies that it is normal.
	
	Assume now that $\M_0$ is normal. As $\M_0$ is affine, we have a factorization of $\pi$ through the affinisation map, $\M \fun \Aff(\M) \fun \M_0,$ thus it is enough to show that the second map is an isomorphism. One of the properties of the affinisation map is that proper connected subvarieties get shrunk to points.	
	In particular, as the fibres of $\pi: \M \fun \M_0$ are connected (Proposition \ref{ConnectedFibers}) and projective (since $\pi$ is projective),
	they shrink to points via the map $\M \fun \Aff(\M)$, which yields that the map $g: \Aff(\M) \fun \M_0$ is a bijection, in particular, has finite fibres.
	Moreover, it is birational as $\M \fun \M_0$ is. Thus, one can use a variant of Zariski's main theorem which states that a birational morphism with finite fibres to a normal variety is an isomorphism onto an open subset. Hence, $g$ is isomorphism to an open subset of $\M_0,$ thus being surjective, it is indeed an isomorphism.
\end{proof}
It is also true that the fibres of a CSR $\pi$ are connected, which the author learned from Dmitry Kaledin. Here one essentially uses the normality of $\M_0$.

\begin{prop}
		\label{ConnectedFibers}
	Given a CSR $\pi:\M \fun \M_0$, the fibres of $\pi$ are connected. 
\end{prop}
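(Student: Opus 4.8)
The plan is to deduce connectedness of the fibres of $\pi\colon\M\fun\M_0$ from the normality of $\M_0$ together with the fact that $\pi$ is a proper birational morphism, via Zariski's connectedness theorem (the "Stein factorization" form of Zariski's main theorem). Concretely, I would first invoke Grothendieck's theorem on formal functions / Stein factorization: since $\pi$ is projective, it factors as $\M\fja{\pi'}\widetilde{\M_0}\fja{g}\M_0$, where $\pi'$ has geometrically connected fibres and $g$ is finite. The key step is then to argue that $g$ is an isomorphism. For this I would use that $\pi$ is birational (it is a resolution, hence an isomorphism over the smooth locus of $\M_0$), which forces $g$ to be birational as well; a finite birational morphism onto a \emph{normal} variety is an isomorphism (this is exactly where normality of $\M_0$ enters — it is the standard consequence of Zariski's main theorem). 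Once $g$ is an isomorphism, $\widetilde{\M_0}=\M_0$ and the fibres of $\pi=\pi'$ are connected.

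In more detail, the steps in order would be: (i) recall that $\pi$ is proper (even projective) since it is a projective resolution, so the Stein factorization $\M\fja{\pi'}\widetilde{\M_0}\fja{g}\M_0$ exists with $\pi'_*\O_{\M}=\O_{\widetilde{\M_0}}$ and $\pi'$ having connected fibres, and $g$ finite; (ii) observe that $\widetilde{\M_0}$ is reduced and that $g$ is birational, because $\pi$ is an isomorphism over the smooth (dense open) locus $U\subset\M_0$, so $\pi'$ restricted over $g^{-1}(U)$ must already be an isomorphism onto $g^{-1}(U)$ and $g$ an isomorphism over $U$; (iii) conclude that $g\colon\widetilde{\M_0}\fun\M_0$ is a finite birational morphism onto the normal variety $\M_0$, hence an isomorphism by Zariski's main theorem; (iv) therefore $\pi$ itself has connected fibres.

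The main obstacle — really the only subtle point — is making sure the hypotheses for Zariski's main theorem are genuinely in place: that $\M_0$ is normal (given in Definition \ref{DefCSRwithFi}), that $\widetilde{\M_0}$ is integral so that "birational" even makes sense (follows from $\M$ being smooth hence integral, and $\pi'_*\O_\M=\O_{\widetilde{\M_0}}$), and that $g$ is indeed birational rather than merely generically finite — which follows cleanly from $\pi$ being a resolution. One could alternatively phrase the whole argument as: $\pi_*\O_\M$ is a coherent $\O_{\M_0}$-algebra, finite over $\O_{\M_0}$, and it agrees with $\O_{\M_0}$ on the dense open normal locus; since $\M_0$ is normal and $\pi_*\O_\M$ is torsion-free and finite over $\O_{\M_0}$, the inclusion $\O_{\M_0}\hookrightarrow\pi_*\O_\M$ must be an equality, and $\pi_*\O_\M=\O_{\M_0}$ is equivalent to connectedness of the fibres by formal functions. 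Either formulation works; I would present the Stein-factorization version as it is the most transparent.
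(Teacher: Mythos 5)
Your proposal is correct and follows essentially the same route as the paper: Stein factorization of the projective morphism $\pi$, observing that the finite part $g$ is birational because $\pi$ is, and concluding that $g$ is an isomorphism via Zariski's main theorem applied to the normal base $\M_0$. The extra care you take (integrality of the intermediate variety, the $\pi_*\O_\M$ reformulation) is fine but not needed beyond what the paper's argument already uses.
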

\begin{proof}
	Take a Stein factorisation of $\pi$, that is: there is a variety $\M_0'$ and morphisms $\pi':\M\fun\M_0'$ and $g:\M_0'\fun \M_0$ such that $\pi=g\circ \pi'$, the fibres of $\pi'$ are connected, and $g$ is a finite morphism. As $\pi$ is birational, so is $g,$ but then (a variant of) Zariski's main theorem states that $g$, being a birational morphism with finite fibres to a normal variety, is an isomorphism onto an open subset of $\M_0.$ As $\pi$ is surjective, the same holds for $g.$ Hence, $g$ is an isomorphism, thus all fibres of $\pi$ are connected.
\end{proof}

In the end of this section we want to remark on size of the family of CSRs inside the family of SHS manifolds. 
Let us consider the following example.
\begin{ex}\label{CSRaresmallinSHS}
We have seen in Example \ref{CotangentBundlesAsSHS} that $T^*X$ is an SHS manifold, given \textit{any} smooth projective variety $X.$ 
These are CSRs \textit{only} in the case when $X$ is a flag manifold $X=G/P$ (here $G$ is a semisimple algebraic group and $P$ is a reductive subgroup).
Indeed, given a conical symplectic resolution $$\M:=T^*X \fun \Aff(\M)$$ whose $\C^*$-action contracts the fibres of $T^*X$ with weight 1, the ring of global functions $R:=\Gamma(\M,\O_\M))=\C[\Aff(\M)]$ has a set of homogenous generators $\{x_1,\dots,x_n\}$ with maximal weight 1. 
The last fact, together with the normality of the ring $R,$ is precisely saying that $\Aff(\M)$ is a conical symplectic singularity with maximal weight 1, thus the Namikawa's theorem \cite{Nam18} applies, stating that $\Aff(\M)=\ol{\O}$ must be a closure of a normal nilpotent orbit $\O$ in a semisimple $\mathfrak{g}.$ Then, by \cite[Thm 0.1]{Fuu03}, we indeed have 
$\M \iso T^*(G/P),$ for some parabolic subgroup $P$ of $G$ (and $G$ given by $Lie(G)=\mathfrak{g}$). 
\end{ex}

Considering the previous (topologically simplest) examples of cotangent bundles, one could expect that, just as the set of flag manifolds being very small in the class of projective varieties, it may be that the class of CSRs are very small in the class of SHS manifolds, and there are many more families of these spaces yet to be discovered, beyond the CSRs and two families described in the following Sections \ref{HiggsModuli} and \ref{HilbSchemes}. This is the main motivation for generalising the setup of CSRs (for which the content of this paper was originally written) to the setup of SHS manifolds.

\subsection{Higgs branches of gauge theories}\label{HiggsBranch}
The principal family of examples of SHS manifolds  
comes from the holomorphic symplectic reduction of vector spaces, 
in the physics-related literature usually referred as the \textbf{Higgs branch} of the 3-dimensional supersymmetric gauge theory defined by a 
pair $(G,N),$ where $G$ is a complex reductive algebraic group and $N$ is its complex representation. 
Given the linear action $G\dejstvo N,$
we can enhance it to a Hamiltonian action $G\dejstvo T^*N,$ where we take the usual holomorphic symplectic structure on
$M:=T^*N=N\oplus N^*.$ Thus, given the associated moment map $\mu: M \fun \mathfrak{g}^*,$ 
one can construct the ordinary GIT quotient\footnote{Where GIT is the standard abbreviation for Geometric Invariant Theory, \cite{GITbook}.}
$$\M_0:=\mu^{-1}(0) \sslash G=\text{Spec}(\C[\mu^{-1}(0)]^G).$$ 
More generally, given an arbitrary character $\chi:G\fun \C^*,$ one can consider its twisted GIT quotient
\begin{equation}\label{GITtwistedHiggsBranch}
	\M_\chi=\mu^{-1}(0)\sslash_{\chi} G := \text{Proj}\big(\bigoplus_{n\geq 0} \C[\mu^{-1}(0)]^{G,\chi^n} \big),
\end{equation}
where $\C[\mu^{-1}(0)]^{G,\chi^n}=\{f\in \C[\mu^{-1}(0)] \mid f(g\cdot m)=\chi^n(g)f(m), \ \forall m \in M\}.$
In particular, for $\chi=1$ we get the ordinary GIT quotient $\M_0.$ 
By the construction, there is an associated morphism 
\begin{equation}\label{GITmorphsimHiggsBranch}
	\pi:\M_{\chi} \fun \M_0.
\end{equation}
which is projective due to the general GIT theory.
Moreover, both $\M_\chi$ and $\M_0$ inherit a natural algebraic, hence holomorphic Poisson structure from $M.$
To discuss smoothness of $\M_\chi,$ we have to introduce the notion of (semi)stable points, existing in any GIT quotient setup.
The point $m \in M$ is called $\chi$-semistable if for any $z \in \C, \ z\neq 0,$ the closure of the orbit $(m,z)$ is disjoint from the $M \times {0},$ where
the action on $M\times \C$ is defined as a product $g\cdot (m,z)= (g \cdot m, \chi^{-1}(g) z).$ The set of semistable points we denote by $M^{ss}$
A point $m \in M^{ss}$ is stable if its stabiliser $G_m$ is finite and its orbit is closed in $M^{ss}.$ The locus of stable points we denote by $M^{s}.$ 
The important fact is that $\M_{\chi}$ can be seen topologically as a set of closed $\chi$-semistable orbits in $\mu^{-1}(0).$ 
So, if $G$ acts freely on the locus of $\chi$-semistable points in $\mu^{-1}(0),$ we have that $\M_{\chi}=\mu^{-1}(0)^s/G$ 
is non-singular and its Poisson structure is non-degenerate, hence is holomorphic symplectic.\footnote{We will not delve in details and proofs of these standard statements, rather we refer the reader to a nice exposition of GIT under Hamiltonian reduction, written in \cite[Ch. 9]{Kir16}.}
We will assume that this is the case in the text below. This condition happens in many cases; when $N$ is a space of framed quiver representations (example \eqref{NakaQuivVar} below), 
it holds for the generic choice of $\chi.$

By construction, $\M_\chi$ is a quasi-projective variety, so in order to be an SHS manifold it is left to find a conical action on it.
There is a standard weight-2 conical action on $T^*N$  
\begin{equation}\label{FullActionHiggsBranch}
	\C^*\dejstvo T^*N=N \oplus N^*, \ 	t \cdot (x,\xi):=(t x, t \xi).
\end{equation}
Furthermore, there is also a weight-1 action on $T^*N$
\begin{equation}\label{HalfActionHiggsBranch}
	\C^*\dejstvo T^*N=N \oplus N^*, \ t \cdot (x,\xi):=(x, t \xi),
\end{equation}
made by dilating the cotangent fibres, which is not conical, and thus not always conical on the quotient $\M_\chi.$ 
More precisely, we have the following:

\begin{lm}\label{WhenHiggsBranchWeight1Action} Given a pair $(N,G)$ and a character $\chi,$ its Higgs branch $\M_\chi$ together with the action induced by \eqref{FullActionHiggsBranch} is a weight-2 SHS manifold, 
provided that it is non-singular. Moreover, it is a weight-1 SHS only if $N^G = {0}.$
\end{lm}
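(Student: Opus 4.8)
The plan is to show: (a) the action induced by \eqref{FullActionHiggsBranch} on $\M_\chi$ is conical of weight $2$, so that $\M_\chi$ (when smooth) is a weight-$2$ SHS; and (b) if a weight-$1$ conical action exists on $\M_\chi$, then $N^G=\{0\}$. For (a), I would first record that the weight-$2$ action on $T^*N$ commutes with the linear $G$-action (each of $N$ and $N^*$ is a $G$-representation, and scaling is $G$-equivariant), hence descends to the moment-map level since $\mu(t\cdot(x,\xi))=t^2\mu(x,\xi)$, so it preserves $\mu^{-1}(0)$ and the semistable locus, and therefore acts algebraically on $\M_\chi$. Then $\om_\C$ on $T^*N$, being $dx\wedge d\xi$ in the obvious coordinates, satisfies $t\cdot\om_\C = t^2\om_\C$; since the symplectic form on $\M_\chi$ is the one induced from $\om_\C$ via symplectic reduction, it inherits weight $2$. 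Combined with the already-established fact (Lemma \ref{CSRisSemiproj}-type reasoning, or directly that $\M_\chi\fun\M_0$ is projective and $\M_0$ is contracted to a point by the same action — here one uses that $\C[\mu^{-1}(0)]$ is generated in positive weights so $\M_0$ has a contracting $\C^*$-action with a single fixed point $x_0$, making $\M_\chi$ semiprojective as in the proof of Lemma \ref{CSRisSemiproj}), this gives that $\M_\chi$ is a weight-$2$ SHS.

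For (b), the key observation is that a nonzero $G$-fixed subspace $N^G\subset N$ contributes a ``free direction'' to $\M_\chi$ that is incompatible with weight-$1$ conicality. Concretely, $N^G\oplus (N^G)^*$ is a $G$-invariant symplectic subspace of $T^*N$ on which $G$ acts trivially, so $\mu$ vanishes on it identically; its image in $\M_\chi$ is a closed holomorphic-symplectic submanifold isomorphic to $N^G\oplus(N^G)^*$ (the $G$-action being trivial there, these points are automatically semistable and have trivial stabiliser contributions in the relevant sense — or at least a Zariski-open dense subset maps in as $\C^{2k}$). On this submanifold the restriction of $\om_\C$ is the standard symplectic form on $\C^{2k}$ where $k=\dim_\C N^G>0$. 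Any conical $\C^*$-action on $\M_\chi$ restricts to a linear action on this $\C^{2k}$ (after possibly passing to the fixed-point structure), and for the standard symplectic form $dx\wedge d\xi$ on $\C^{2}$ the induced weight on the form is the \emph{sum} of the two coordinate weights; for this submanifold to be contracted to a compact set (semiprojectivity) both coordinate weights must be strictly positive, forcing the weight on $\om_\C$ restricted here to be $\geq 2$. Hence the global $\om_\C$-weight is $\geq 2$, so no weight-$1$ conical action exists unless $N^G=\{0\}$.

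The main obstacle I anticipate is making the ``$N^G\oplus(N^G)^*$ sits inside $\M_\chi$ as $\C^{2k}$'' claim precise: one must check that points supported on $N^G\oplus(N^G)^*$ land in the semistable locus for the chosen $\chi$ (if $\chi$ is nontrivial on $G$ but $G$ acts trivially on these points, semistability could a priori fail), and that the induced map $\C^{2k}\fun\M_\chi$ is a locally closed embedding onto a $\C^*$-invariant subvariety so that the restriction-of-action argument applies. An alternative, cleaner route avoiding this is to argue at the level of global functions: if $N^G\neq\{0\}$, then $\C[\mu^{-1}(0)]^G$ contains the coordinate functions on $N^G$ (which are $G$-invariant and lie in $\mu^{-1}(0)$'s coordinate ring), and these have weight $1$ under \eqref{FullActionHiggsBranch}; under a hypothetical weight-$1$ conical action $\Fi'$ the same functions would need weight consistent with $\Fi'$ being a ``square root'' of the weight-$2$ action up to the $G$-action, and one derives a contradiction with the requirement that $\Fi'$ contracts $\M_0$ — this is the argument used in spirit in Example \ref{IntroDuVal_Weight1actions_TypeA}, and I would follow that template, tracking weights of the generators of $\C[\mu^{-1}(0)]^G$ carefully. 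Either way, the heart of the matter is purely a bookkeeping argument about which weights on coordinates are forced by conicality, and the only genuinely delicate point is the interaction with the character $\chi$ and the (semi)stability condition.
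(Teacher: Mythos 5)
Your part (a) is fine and follows the paper's own route (the action commutes with $G$, descends, is weight $2$ on the reduced form, and semiprojectivity is obtained as in Lemma \ref{CSRisSemiproj} from the contraction of $\M_0$ to a point and projectivity of $\M_\chi \fun \M_0$). The problem is part (b). Your key step is that ``any conical $\C^*$-action on $\M_\chi$ restricts to a linear action on this $\C^{2k}$'': this is not justified and is where the argument breaks. The weight-$1$ action you are trying to rule out is an arbitrary conical action, a priori unrelated to the dilation action used to produce the embedded copy of $N^G\oplus(N^G)^*$; there is no reason it preserves that submanifold at all, let alone acts linearly on it, so you cannot read off its weight from the coordinate weights there. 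The parenthetical ``after possibly passing to the fixed-point structure'' does not repair this. Your fallback sketch via global functions has the same defect: it implicitly assumes the hypothetical weight-$1$ action is a square root of (or at least commutes with, or is comparable to) the standard weight-$2$ action, which is not given by the hypothesis. You also correctly flag, but do not resolve, whether points supported on $N^G\oplus(N^G)^*$ are $\chi$-semistable.

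The paper's proof sidesteps the hypothetical action entirely. Using reductivity, split $N=N^G\oplus N^{norm}$ $G$-invariantly; this induces a $G$-invariant symplectic splitting $T^*N=T^*N^G\oplus T^*N^{norm}$, hence $\mu^{-1}(0)=T^*N^G\times(\mu^{norm})^{-1}(0)$, and -- directly from the definition of (semi)stability -- the semistable locus splits too (this is exactly what settles your semistability worry). Quotienting gives a $\C^*$-invariant product decomposition $\M_\chi=T^*N^G\times\M_\chi^{norm}$, whose core is $\{0\}\times\L_\chi^{norm}$ and hence has dimension strictly less than $\tfrac12\dim\M_\chi$. Since by Corollary \ref{CoreDecompositonWeight1} a weight-$1$ SHS has a core of pure middle dimension (a compact half-dimensional, in fact Lagrangian, subvariety controlling the topology of $\M$), no weight-$1$ conical action can exist when $N^G\neq 0$. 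The essential idea you are missing is this dimension-of-the-core criterion: it converts the nonexistence statement about \emph{all} conical actions into a computation for the one explicit action, which is exactly what your approach cannot do. If you want to salvage your line of thought, you must first prove an invariance or comparison statement (e.g. via commuting actions and Lemma \ref{CommutingActionsSameCore}, or a homological argument that $H_{mid}(\M_\chi)=0$), rather than assume the unknown action respects the embedded $T^*N^G$.
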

\begin{proof}
The fact that $\M_\chi$ is holomorphic symplectic and quasi-projective is already explained above. Let us prove that the action induced by \eqref{FullActionHiggsBranch} is indeed conical on $\M_\chi.$
Firstly, notice that this action commutes with the action by $G,$ hence is well defined on both $\M_\chi$ and $\M_0.$ Moreover, it makes the morphism $\pi$ equivariant. Now, notice that $\M_0^{\C^*}=[(0,0)].$ Indeed, given an arbitrary point
$[x,\xi]\in \M_0,$ we have $\lim_{t\fun 0} t\cdot [x,\xi]= \lim_{t\fun 0} [t x, t \xi]= [0,0],$ hence every point in $\M_0$ flows to $[0,0]$ when $(t\fun 0),$ thus it is indeed the only fixed point.
Hence, the semiprojectivity of $\M_\chi$ follows the same way as for a CSR (Lemma \ref{CSRisSemiproj}).
Finally, as the action \eqref{FullActionHiggsBranch} is weight-2 on $T^*N,$ it is on the quotient $\M_\chi$ as well, hence it is a weight-2 SHS manifold.

Assume now that $N^G \neq 0,$ and let us prove that $\M_\chi$ indeed does not have a weight-1 conical action.
Denoting by $F:=N^G,$ as $G$ is reductive there is a $G$-invariant split $N=F\oplus N^{norm},$ given by the weight-decomposition of the representation $N$ (notice that $F$ is the 0-weight space). 
This induces a  symplectic $G$-invariant split 
\begin{equation}\label{DecompositonHiggsBranch}
M= T^*F \oplus T^*N^{norm},
\end{equation} 
and as the moment map of the product of two actions is the sum of the moment maps, 
we have $\mu^{-1}(0)=T^*F \times (\mu^{norm})^{-1}(0),$ where $\mu^{norm}: M^{norm}:=T^* N^{norm} \fun \g^*$ is the moment map of the action $G \dejstvo T^* N^{norm}.$
Moreover, as one can directly show from the definition of (semi)stability, 
the set of $\chi$-(semi)stable points splits as well, hence there is a $G$-invariant split
$	\mu^{-1}(0)^{s}=T^*F \times (\mu^{norm})^{-1}(0)^{s},$
thus quotienting out by $G$ we get 
\begin{equation}\label{splitHiggsBranch2}
\M_\chi = T^*F \times \M_\chi^{norm}.
\end{equation}
As the $\C^*$-action \eqref{FullActionHiggsBranch} commutes with $G,$ the split \eqref{DecompositonHiggsBranch} is $\C^*$-invariant, as both summands are sums of weight spaces of $G\dejstvo M.$ 
Thus, the induced split \eqref{splitHiggsBranch2} is $\C^*$-invariant as well. 
Notice that $T^*F\iso \C^{2n}$ is a weight-2 SHS with the dilation action, whereas $\M_\chi^{norm}$ is a weight-2 SHS, being the Higgs branch for the pair $(G,N^{norm}).$ 
Thus, we have the induced split of cores
$$\L_\chi = \{0\} \times \L_\chi^{norm}.$$
Hence, the core $\L_\chi$ cannot be half-dimensional in $\M_\chi,$ thus by Corollary \ref{CoreDecompositonWeight1}, $\M_\chi$ cannot be a weight-1 SHS.
\end{proof}

\begin{rmk}
	We remark that in many cases a stronger statement, that $\pi$ is a weight-2 CSR, is true as well. 
	However, this reduces to a smaller class of examples, and for our purposes we only need the space
	$\M_\chi$ to be an SHS, hence do not impose this restriction.
\end{rmk}

Two important examples of this construction are:
\begin{enumerate}
	\item \label{NakaQuivVar} Nakajima Quiver Varieties, where $N=N(Q,V,W)$ is constructed as a moduli space of framed quiver representations of 
	a quiver $Q$ of the given dimension $V$ and framing $W.$
	\item \label{HTvar} (Additive) Hypertoric Varieties, where $G\iso (\C^*)^k$ is an algebraic torus acting linearly on a vector space $N.$ 
\end{enumerate} 
For the former family, the necessary condition $N^G=0$ for the existence of a weight-1 conical action on the space $\M_\chi$ 
translates to the condition that there are no loop-edges in the chosen quiver $Q.$
For such quivers, Nakajima proved in \cite[Sec. 5]{Nak94a} that the weight-1 action coming from dilation of cotangent fibres \eqref{HalfActionHiggsBranch} is indeed conical, 
so in the case of quiver varieties this condition is sufficient as well.

\subsection{Moduli spaces of Higgs bundles}\label{HiggsModuli}
Another important family of SHS manifolds, that is parallel but disjoint from the CSRs are the Moduli spaces of Higgs bundles, constructed by Hitchin \cite{Hi87}, and since then considered by many authors. Hitchin considered the differential-geometric moduli space constructed via infinite-dimensional \HK reduction, whereas Nitsure \cite{Nitsure91} constructed their algebraic-geometric version $\mathcal{M}'(r,d,\Sigma),$ the moduli scheme of stable pairs $(E,\phi)$ where $E$ is a vector bundle of rank $r$ and degree $d$ on a projective curve $\Sigma$ and $\phi$ a morphism 
$\phi: E \fun E \oplus K_\Sigma$ (so-called Higgs field), where $K_\Sigma$ is the canonical bundle of $\Sigma.$\footnote{In fact, Nitsure considered the generalisation of this, by switching $K_\Sigma$ with an arbitrary line bundle $L$ on $\Sigma,$ but for brevity we are not considering those moduli spaces.} 
Nitsure proves that $\mathcal{M}'(r,d,\Sigma)$ is a smooth quasi-projective variety whenever $(n,d)=1.$ There is a natural $\C^*$-action on $\mathcal{M}'(r,d,\Sigma),$ given by scaling the Higgs field 
$t\cdot (E,\phi)= (E,t\phi).$ 
By Hitchin's construction, the moduli space $\mathcal{M}'(r,d,\Sigma)$ is \HKL, hence holomorphic symplectic and the $\C^*$-action acts with weight-1 on its form. To see that they are also semiprojective, consider the so-called Hitchin fibration 
$$p:\mathcal{M}'(r,d,\Sigma)\fun \oplus_{i=1}^r H^0(\Sigma,K_\Sigma^i)$$
given by coefficients of the characteristic polynomial of $\phi.$ 
It is $\C^*$-equivariant, with a linear $\C^*$-action on the affine space $\oplus_{i=1}^r H^0(\Sigma,K_\Sigma^i)$ that has a single fixed point.
Together with the fact that $p$ is proper\footnote{Due to Hitchin for $n=2$ and Nitsure in general.}, the semiprojectivity follows the same way as for CSRs (Lemma \ref{CSRisSemiproj}).

\subsection{Hilbert schemes of points on $T^*\Sigma$}\label{HilbSchemes}
The last known family of examples (to the author's knowledge) of SHS manifolds are the Hilbert schemes of points on cotangent bundles of smooth projective curves, $\Hilb^n(T^*\Sigma_g), g\geq 0.$ 
These generalise the lowest-dimensional examples from Example \ref{CotangentBundlesAsSHS}. Apart from the $g=0$ examples (which are quiver varieties of type $\tilde{A_1},$ \cite{Kuz07}),
this family is also disjoint from the CSRs. These manifolds were considered by Nakajima \cite{Nak99}. 
According to \cite{Gro60}, $\Hilb^n(X)$ of
a quasi-projective $X$ is again quasi-projective. Hence, as $T^*\Sigma_g$ is quasi-projective (Example \ref{CotangentBundlesAsSHS}), so is $\Hilb^n(T^*\Sigma_g).$ 
It has a natural symplectic structure and the conical weight-1 $\C^*$-action, both coming from the ones on $T^*\Sigma_g,$ see \cite[Sec 7.2]{Nak99} for details.
The fixed locus of the $\C^*$-action is
\begin{equation}\label{FixedPointsHilbTSigma}
	\Hilb^n(T^*\Sigma_g)^{\C^*}=\bigsqcup_{\{\nu : \text{ partition of } n\}}\ \Sym^\nu \Sigma_g,
\end{equation}
where $\Sym^{\nu} \Sigma_g := \Sym^{\a_1}\Sigma_g \times \Sym^{\a_1}\Sigma_g \times \cdots \times\ \Sym^{\a_n}\Sigma_g $ for the partition $\nu$
in which the number $i$ appears $\a_i$ times. 
In particular, the fixed locus is compact, and $\Hilb^n(T^*\Sigma_g)$ are indeed weight-1 SHS manifolds.

At the end of this section, let us go back to Question \ref{ClassificationOfSHSInDim2} on classification of SHS manifolds amongst complex surfaces. As we have seen, $\Hilb^n(T^*\Sigma)$ is an SHS manifold because $T^*\Sigma$ is so. In the same way, $\Hilb^n(X_\Gamma)$ is an SHS manifold, where $X_\Gamma$ is the minimal resolution of a Du Val singularity. 
So, if the answer to Question \ref{ClassificationOfSHSInDim2} is affirmative and there is another SHS surface $X$ that does not belong to these two classes, then $\Hilb^n(X)$ would be a new class of examples of SHS manifolds.
\section{Symplectic structures}\label{SectionSymplectic}

In this section we describe the \textit{real} symplectic structures on SHS manifolds. Thus we will assume that symplectic forms are real, unless otherwise stated. 

\subsection{Non-exact symplectic structure}\label{NonExactSymplonCSR}

In this section we show that an arbitrary semiprojective $(\M,\Fi)$ admits a non-exact 
\KH structure which is invariant under the $S^1$-part of $\Fi.$
Recall first that, given a function $H$ on a symplectic manifold $(M,\om),$ its \textbf{Hamiltonian vector field} $X_H$ is defined by $\om(\cdot, X_H)=dH(\cdot).$ The flow of this field preserves the symplectic form $\om.$ In particular, given a symplectic manifold $(M,\om)$ with a symplectic $S^1$-action, we call its \textbf{moment map} a function $H$ whose Hamiltonian vector field $X_H$ is equal to the vector field of the $S^1$-flow. The moment map need not to exist in general, though we will show that it does in our setup.

\begin{lm} \label{thereisaNonExactstructure} Any semiprojective variety $(\M,I,\Fi)$ has an $I$-compatible $S^1$-invariant 
	\KH structure $\om_I$
that admits a moment map $H:\M \fun \R.$
The vector field of the $\R_+$-part of $\Fi$ is $X_{\R_+}=\nabla H$ and the vector field of the $S^1$-part is $X_{S^1}=X_H.$  
\end{lm}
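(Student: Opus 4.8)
The plan is to obtain $\om_I$ and $H$ by pulling back the Fubini--Study data along an equivariant linear projective embedding of a smooth completion, and then to read off the two vector-field identities from the Kähler relation together with the holomorphicity of $\Fi$.

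First I would apply Lemma \ref{CompletionLemma} to get a smooth projective variety $Y$ carrying a $\C^*$-action that extends $\Fi$ and contains $\M$ as a $\C^*$-invariant Zariski-open subvariety. Being smooth, hence normal, $Y$ admits by Sumihiro's theorem \cite{Su74} a $\C^*$-equivariant closed embedding $\iota\colon Y \hookrightarrow \C P^N$ for which the $\C^*$-action on $\C P^N$ is the projectivisation of a linear $\C^*$-action on $\C^{N+1}$; diagonalising, this reads $t\cdot[z_0:\cdots:z_N]=[t^{a_0}z_0:\cdots:t^{a_N}z_N]$ for integers $a_j$. In particular the circle $S^1=\{\,|t|=1\,\}\subset\C^*$ acts through the unitary diagonal torus of $U(N+1)$, so it preserves the Fubini--Study Kähler form $\om_{FS}$ and its standard moment map $\mu_{FS}([z])=\frac{\sum_j a_j|z_j|^2}{\sum_j|z_j|^2}$ (up to an inessential normalisation).

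Next I would set $\om_I:=(\iota^*\om_{FS})|_\M$ and $H:=(\iota^*\mu_{FS})|_\M$. Since $\iota$ is a holomorphic embedding and $\om_{FS}$ is Kähler for the complex structure of $\C P^N$, the pullback $\iota^*\om_{FS}$ is a Kähler form for $I$ on $Y$, hence on the open subset $\M$; its $S^1$-invariance is inherited from that of $\om_{FS}$ because $\iota$ is $\C^*$-equivariant. For the moment-map identity, note that $X_{S^1}$ is everywhere tangent to $\M$ (as $\M\subset Y$ is $\C^*$-invariant) and that $\iota$ intertwines $X_{S^1}$ with the infinitesimal generator of the $S^1$-action on $\C P^N$; pulling the identity $\iota_{X_{S^1}}\om_{FS}=d\mu_{FS}$ back along $\iota|_\M$ then yields $\iota_{X_{S^1}}\om_I=dH$ on $\M$, which by definition says $X_{S^1}=X_H$, i.e. $H$ is a moment map for the $S^1$-part of $\Fi$.

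Finally, for the two vector-field identities I would argue pointwise on the Kähler manifold $(\M,g_I,I,\om_I)$, where $g_I$ denotes the associated metric and $\om_I(\cdot,\cdot)=-g_I(\cdot,I\cdot)$. For every tangent vector $v$ one has $dH(v)=\om_I(v,X_{S^1})=-g_I(v,I X_{S^1})$, so $\nabla H=-I X_{S^1}$. Since $\Fi$ is holomorphic, the Cauchy--Riemann equations applied to the orbit maps give $X_{S^1}=I X_{\R_+}$, hence $X_{\R_+}=-I X_{S^1}=\nabla H$ (with $\R_+$ oriented so that $H$ is non-decreasing along its flow); combined with $X_{S^1}=X_H$ this is the assertion of the lemma. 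I do not expect a serious obstacle here: the only points needing care are the appeal to linearisability of the $\C^*$-action on $Y$ inside projective space (so that $S^1$ acts unitarily and $\om_{FS},\mu_{FS}$ are at our disposal) and the sign/orientation bookkeeping ensuring $X_{\R_+}=\nabla H$ rather than $-\nabla H$.
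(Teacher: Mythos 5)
Your proposal is correct, but the existence part takes a genuinely different route from the paper's. The paper also starts from the equivariant completion $Y$ of Lemma \ref{CompletionLemma}, but it does not re-embed $Y$: it takes any K\"ahler metric on the projective variety $Y$, averages it over the $S^1$-part of the action to make it invariant, and then invokes the proof of Frankel's lemma --- using compactness of $Y$ and non-emptiness of the fixed locus --- to produce a moment map, which is finally restricted to the open invariant subset $\M$. You instead apply Sumihiro's theorem a second time, to $Y$ itself, to obtain a $\C^*$-equivariant embedding into $\C P^N$ with linearised action, and pull back the Fubini--Study form together with its explicit moment map $\frac{\sum_j a_j|z_j|^2}{\sum_j |z_j|^2}$; this buys you an explicit formula for $H$ and avoids both the averaging step and the appeal to Frankel (a linear $S^1$-action on $\C P^N$ is Hamiltonian with no fixed-point hypothesis needed), at the cost of invoking linearisability of the action on the completion --- which is legitimate, and in fact you could even bypass $Y$ altogether by using the equivariant embedding of $\M$ itself that appears in the proof of Lemma \ref{CompletionLemma}. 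The derivation of the identities $X_{S^1}=X_H$ and $X_{\R_+}=\nabla H$ from $X_{S^1}=I X_{\R_+}$ and $\om_I(\cdot,\cdot)=-g(\cdot,I\cdot)$ is essentially the same as in the paper. One small internal wrinkle: in your second paragraph you state the moment-map identity as $i_{X_{S^1}}\om_{FS}=d\mu_{FS}$, whereas your third paragraph (and the paper's convention $\om(\cdot,X_H)=dH$) uses the opposite sign; this only amounts to replacing $H$ by $-H$, as your closing remark on sign bookkeeping anticipates, so it is cosmetic rather than a gap.
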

\begin{proof}
	By Lemma \ref{CompletionLemma}, there is a $\C^*$-equivariant completion $\M \hookrightarrow  Y,$ where $Y$ is a smooth projective variety, thus has a \KH structure. 
	Averaging its Riemannian
	metric over $S^1,$ one can make it $S^1$-invariant. Thus, we get an $S^1$-invariant \KH structure on $Y,$ which has non-empty set of fixed points 
	(as $Y^{S^1} \supset \M^{S^1} = \M^{\Fi} \neq \emptyset$), thus by the proof of \cite[Lem. 2]{Frankel59}, it has a moment map. 
	As $\M\subset Y$ is an open $S^1$-invariant subset, restricting to $\M$ we get the \KH structure $(\M,g,I,\om_I)$ together with the moment map $H:\M \fun \R.$
	 
%
	
	The $S^1$-part ($X_{S^1}=X_H$) is immediate from the definition of the moment map. Then, as the $\C^*$-action is holomorphic, we have $X_{S^1}=I X_{\R_+}.$ On the other hand, from
	$$\om_I(\xi,X_H)=dH(\xi)=g(\xi, \nabla H)=\om_I(\xi,I \nabla H),$$ we get that $X_{H}= I \nabla H,$ and thus the claim $X_{\R_+}=\nabla H$ follows immediately.
\end{proof}
	
For a curious reader, we will explain why the symplectic form of this \KH structure on $\M$ is almost never exact.
	
\begin{lm}
	When $\M$ is not a point or equal to $\C^{2n},$ for some $n,$ the symplectic form of \KH structure from Lemma \ref{thereisaNonExactstructure} is non-exact.
\end{lm}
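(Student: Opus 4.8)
The idea is to detect non-exactness cohomologically. Since the \KH form $\om_I$ of Lemma \ref{thereisaNonExactstructure} restricts to a strictly positive area form on any complex curve, it suffices to exhibit a compact complex curve $C\subset\M$: then $\langle[\om_I],[C]\rangle=\int_C\om_I>0$, so $[\om_I]\ne 0$ in $H^2(\M;\R)$ and $\om_I$ cannot be exact. By Lemma \ref{CoreIsADefRetr} the variety $\M$ deformation retracts onto its core $\L$, a proper complex variety, and by Lemma \ref{CompletionLemma} $\L$ is a closed subvariety of a smooth projective $Y$, hence projective; so it is enough to find such a $C$ inside $\L$, which we do by distinguishing the cases $\dim_\C\L\geq 1$ and $\dim_\C\L=0$.

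First suppose $\dim_\C\L\geq 1$. I would take an irreducible component $Z\subseteq\L$ of positive dimension and intersect it with general hyperplanes of $Y$ until reaching a projective curve $C\subseteq Z\subseteq\L\subseteq\M$. Pulling $\om_I$ back along the normalisation $\nu:\tilde C\fun C$ gives a positive area form on the compact Riemann surface $\tilde C$, so $\int_{\tilde C}\nu^*\om_I>0$; hence $\langle[\om_I],\nu_*[\tilde C]\rangle\ne 0$ in the pairing $H^2(\M;\R)\times H_2(\M;\R)\fun\R$, and $\om_I$ is not exact.

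It remains to exclude $\dim_\C\L=0$. Being connected (a deformation retract of the connected $\M$) and zero-dimensional, $\L$ is then a single point $p$; since every $\C^*$-fixed component of $\M$ lies in $\L$, this $p$ is the unique fixed component. But then Corollary \ref{BBDecompositionCSRs} writes $\M$ as a single downward attracting set $\D_0$ over $\{p\}$, i.e. the $(t\fun 0)$-attraction morphism $\M\fun\{p\}$ is a fibre bundle with affine fibre, so $\M\iso\C^{2n}$, contrary to hypothesis. This last step --- ruling out a one-point core --- is the only non-formal ingredient, and it is precisely the \BB structure theory recalled above that makes it work; the remaining pieces (positivity of \KH forms on curves, the core being a deformation retract, and slicing a positive-dimensional projective variety down to a curve) are standard.
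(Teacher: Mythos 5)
Your argument is correct, and its main case runs along a genuinely different line from the paper's. You both dispose of the zero-dimensional core in the same way (a one-point core forces $\M$ to be a single \BB attracting cell over a point, hence an affine space, via Corollary \ref{BBDecompositionCSRs}), but for a positive-dimensional core the paper splits into two cases: if the core is entirely $\C^*$-fixed it is a smooth closed $I$-holomorphic, hence $\om_I$-symplectic, submanifold, and Stokes forbids an exact symplectic form on it; otherwise it picks a non-fixed point of $\L$, extends its $\C^*$-orbit to a holomorphic map $\C P^1\fun\L$ (citing Sommese's extension lemma, after the equivariant compactification), and contradicts exactness via Gromov's lemma plus Stokes. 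You instead produce a compact holomorphic curve inside $\L$ uniformly, by noting that $\L$ is a closed subvariety of the projective completion $Y$ (hence projective), slicing a positive-dimensional component down to a curve by general hyperplanes, and integrating $\om_I$ over its normalisation. Your route avoids the fixed/non-fixed dichotomy and the appeal to \cite{So75}, at the price of invoking projectivity of the core, Bertini-type slicing and normalisation; the paper's route stays closer to the $\C^*$-dynamics and needs only properness of the core plus the extension lemma. Both reduce to the same basic obstruction — a compact holomorphic curve (or closed symplectic submanifold) pairs positively with a K\"ahler class — and both share the same harmless imprecision in the point-core case, namely identifying ``affine space'' with the $\C^{2n}$ excluded in the statement.
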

\begin{proof}
	Let us assume that $\M$ is not a single point or $\C^{2n},$ for some $n.$ 
	Then, the core $\L$ is not a single point, as otherwise $\M$ would contract to it via the $\C^*$-action and thus would be isomorphic to an affine space. The last fact is due to \BB decomposition applied to semiprojective varieties, \cref{BBDecompositionCSRs}. Thus the core is not a point.
	
	Assume first that it is fixed by the $\C^*$-action, i.e. $\M^{\Fi}=\L.$ Then, $\L$ is a smooth
	$I$-holomorphic subvariety, thus $\om_I$-symplectic submanifold of $\M.$
	If $\om_I$ was indeed exact on $\M$, its restriction on $\L$ would yield an exact symplectic form on a connected closed manifold, which is impossible due to Stokes theorem.
	
	Otherwise, there is a point $x\in\L$ which is non-fixed; its $\C^*$-orbit can be extended to a holomorphic map $u: \C P^1 \fun \L,$ by \cite[Lem. II-A]{So75}.\footnote{Strictly speaking, one can use this result only for a compact \KH manifold with a $\C^*$-action. However, being smooth, $\M$ is a normal variety, so due to \cite[Thm. 1]{Su74} one can compactify it $\C^*$-equivariantly to a smooth projective variety $X,$ which is then naturally \Kh.}
	Hence, if $\om_I=d\th$ was indeed closed on $\M$, the Gromov lemma together with the Stokes formula makes a contradiction,
	$0=\int_{\partial \C P^1} u^*\th = \int_{\C P^1} u^*\om_I= {1\over 2}\int_{\Sigma} ||du||^2>0.$ 
\end{proof}

\subsection{Canonical Liouville structure}\label{ExactSymplOnCSR}

In this section we construct a canonical Calabi-Yau Liouville structure on a given SHS $(\M,\Fi).$ Moreover, we prove that, at least in the case of CSRs, 
different commuting conical actions yield isomorphic Liouville structures. Let us start with a definition of Liouville manifolds:

\begin{de}\label{DefLiouvilleManifold}
	A \textbf{Liouville manifold} is a non-compact exact symplectic manifold $(M,\om=d\th)$ which has a compact submanifold $K\subset M$ with boundary, such that there is a symplectomorphism 
	\begin{equation}\label{defnLiouville}
		(M\setminus int(K),\om)\iso (\Sigma \times [1,+\infty),d(R\a))
	\end{equation}
	where $\Sigma = \partial K,\ \alpha=\th|_\Sigma,$ $R$ is the coordinate on $[1,\infty],$ and $R\alpha$ pulls back to $\theta$ via the symplectomorphism. 
	The subset $M\setminus int(K)\iso (\Sigma \times [1,+\infty))$ we will call the \textbf{convex end}.
	The flow of the vector field $Z$ defined by $i_Z \om=\th$ is called the \textbf{Liouville flow} of the Liouville manifold $(\M,\om).$ Its Lie derivative satisfies $L_Z \om =\om.$
	The \textbf{Liouville skeleton} is the set of points in $M$ which do not escape every compact set under the Liouville flow.
\end{de}

Motivated by the last definition, any vector field $Z$ on a symplectic manifold $(M,\om)$ satisfying $L_Z \om =\om$ we call a \textbf{Liouville vector field.}
There is a standard lemma for finding a Liouville manifold structure on an arbitrary open symplectic manifold.

\begin{lm} \label{LemaLiouville} 
	A symplectic manifold $(M,\om)$ such that 
	\begin{enumerate}
		\item There is a hypersurface $\Sigma=\partial K$ that bounds a compact submanifold $K$.
		\item There is a Liouville vector field $Z$ defined on $M$ which is positively-integrable and non-zero outside of $\innt(K)$.
		\item $Z \pitchfork T\Sigma$ with $Z$ pointing outside of $K.$
	\end{enumerate} is a Liouville manifold.
\end{lm}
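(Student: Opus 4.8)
The plan is to use the positively-integrable Liouville flow of $Z$ to construct the required symplectomorphism between $M \setminus \innt(K)$ and the half-symplectization $(\Sigma \times [1,+\infty), d(R\alpha))$. First I would set $\alpha := \theta|_\Sigma$, where $\theta := i_Z\om$, and note that by hypothesis (3) the flow $\phi^t_Z$ of $Z$ is defined for all $t \geq 0$ on $\Sigma$ (positive integrability) and, since $Z$ is transverse to $\Sigma$ pointing outward, the map
\begin{equation*}
	\Psi : \Sigma \times [0,+\infty) \fun M \setminus \innt(K), \qquad (x,t) \mapsto \phi^t_Z(x)
\end{equation*}
is well-defined. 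The first key step is to check that $\Psi$ is a diffeomorphism onto $M \setminus \innt(K)$: injectivity and the fact that it is a local diffeomorphism follow from transversality of $Z$ to $\Sigma$ together with the fact that $Z$ is non-vanishing outside $\innt(K)$ (so no flow line can return to $\Sigma$ or become stationary), and surjectivity follows because every point outside $K$ lies on a flow line that, traced backwards, must hit $\Sigma$ — here one uses that $K$ is compact and $Z$ points outward along $\Sigma$, so a backward trajectory cannot escape to infinity before meeting $\Sigma$.

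The second key step is the computation of $\Psi^*\om$. Since $L_Z\om = \om$, we have $(\phi^t_Z)^*\om = e^t\om$, hence along the flow $\om$ scales exponentially; concretely $\Psi^*\om = d(e^t \, \mathrm{pr}_\Sigma^*\alpha)$ on $\Sigma \times [0,+\infty)$, where $\mathrm{pr}_\Sigma$ is the projection to $\Sigma$. This is the standard computation: $\Psi^*\theta = e^t\,\mathrm{pr}_\Sigma^*\alpha$ because $i_Z\theta = 0$ and $L_Z\theta = \theta$, and then $\Psi^*\om = d\Psi^*\theta$. Finally, substituting $R = e^t$ (a diffeomorphism $[0,+\infty) \fun [1,+\infty)$) turns this into $d(R\alpha)$ on $\Sigma \times [1,+\infty)$, and under this reparametrization $R\alpha$ pulls back to $\theta$ as required. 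Composing $\Psi$ with this substitution gives the desired symplectomorphism, so $(M,\om)$ satisfies Definition \ref{DefLiouvilleManifold}.

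I do not expect any serious obstacle here; the statement is essentially the classical ``collar/symplectization'' construction and the only points requiring a little care are (i) the global surjectivity of $\Psi$, where compactness of $K$ and the outward-pointing hypothesis are exactly what is needed to rule out flow lines that miss $\Sigma$, and (ii) keeping track of which flow direction one is integrating — one flows $Z$ \emph{forward} from $\Sigma$ to sweep out the convex end, and positive integrability is precisely the hypothesis guaranteeing this is defined for all time. The potential subtlety is ensuring that $M \setminus \innt(K)$ has no extra components or ends not reached from $\Sigma$; this is handled by hypothesis (1), which says $\Sigma = \partial K$ bounds $K$, so $M \setminus \innt(K)$ has boundary exactly $\Sigma$ and every point of it is connected to $\Sigma$ by a trajectory of $-Z$ staying in $M \setminus \innt(K)$.
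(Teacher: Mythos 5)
Your construction is exactly the one the paper has in mind: the paper states Lemma \ref{LemaLiouville} without proof, as a standard fact, and the paragraph following it records precisely the output of your argument ($R=e^{t}$, $\theta=i_Z\om$, $Z=R\partial_R$, $\Sigma=\{R=1\}$). The flow-out map $\Psi$, the injectivity/local-diffeomorphism discussion, the computation $\Psi^*\theta=e^{t}\,\mathrm{pr}_\Sigma^*\alpha$ via $L_Z\theta=\theta$ and $i_Z\theta=0$, and the substitution $R=e^{t}$ are all correct and are the intended route.

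The step that is not actually proved in your write-up is the surjectivity of $\Psi$, i.e.\ the claim that every point of $M\setminus\innt(K)$ reaches $\Sigma$ under the backward flow in finite time. The reasons you give --- compactness of $K$ together with outward-pointing of $Z$ along $\Sigma$, and, in your last sentence, hypothesis (1) --- do not yield this: transversality only controls how an orbit crosses $\Sigma$ when it gets there, and says nothing about a backward trajectory that never comes near $K$. A priori such a trajectory could leave every compact set (only positive integrability of $Z$ is assumed, so backward completeness may fail), or could remain bounded and accumulate on a compact invariant set of the Liouville flow disjoint from $\innt(K)$; hypothesis (2) rules out zeros of $Z$ outside $K$, but not, say, closed orbits or invariant tori on which $Z\neq 0$ (these do occur for Liouville fields, e.g.\ $Z=p\,\partial_p+c\,\partial_q$ on $T^*S^1$). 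So one must either add a hypothesis forcing backward orbits into $K$ (for instance an exhausting function $\phi$ with $d\phi(Z)>0$ outside $K$) or verify this in the situation where the lemma is applied. In the paper's application (Proposition \ref{canonicalLiouville}) it is automatic: there $Z$ is a positive multiple of the vector field of the $\R_+$-action, the backward Liouville flow is the $t\fun 0$ part of the $\C^*$-action, and every point has a $t\fun 0$ limit lying in the fixed locus, hence in the core, hence in $\innt(K)$, so every backward orbit starting outside $K$ must cross $\Sigma$. With this (or an equivalent hypothesis) supplied, your proof is complete; as written, the surjectivity step is a genuine gap.
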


In the setup of this lemma, the coordinate $R$ appearing in Definition \ref{DefLiouvilleManifold} is exactly $R=e^t,$ where  $t>0$ is the time needed to flow via the vector field $Z$ starting from $\Sigma$, in order to reach the given point. In particular, $\Sigma = \{R=1\}$ and $Z=R\partial_R.$ Moreover, by the Cartan formula $\om=L_Z\omega=d(i_Z \omega)+i_Z(d\omega)=d(i_Z\omega)$, 
hence we can take $\theta=i_Z \omega$ to be the primitive 1-form.

Next we define isomorphisms between two Liouville manifolds.

\begin{de}
	A \textbf{Liouville isomorphism} between Liouville manifolds $(M_1,d\th_1)$  and $(M_2,d\th_2)$ 
	is a diffeomorphism $\psi : M_1 \fun M_2$ satisfying $\psi^*\th_2=\th_1 +df$ where 
	$f$ is compactly supported.  
\end{de}

It is immediate that any Liouville isomorphism is symplectomorphism and compatible with the Liouville flow at infinity.

Let $M$ be a manifold. Given a 1-form $\th$ on it such that $d\th=:\om$ is symplectic, together with a compact hypersurface $\Sigma,$ such that the vector field $Z$ defined by $i_Z \om=\th$ satisfies the conditions from Lemma \ref{LemaLiouville}, $(M,d\th)$ is a Liouville manifold. Thus, assuming those conditions are satisfied, we will call the pair $(M,\th)$ a \textbf{Liouville structure} on $M.$
There is a deformation lemma of Moser type for Liouville structures which we will need, which follows from \cite[Lem. 5]{SeiSm05}. 
\begin{lm}\label{DeformationLemma}
	Let $(M,\th_s)_{s\in S}$ be a smooth family of Liouville structures on $M$ having the same hypersurface $\Sigma,$ parametrised on a 
	connected manifold $S.$ Then all the $(M,d\th_s)$ are mutually Liouville isomorphic.
\end{lm}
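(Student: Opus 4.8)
The plan is to run the standard Moser trick in the exact-symplectic setting, following the cited \cite[Lem.~5]{SeiSm05}, and for this I would first reduce to the case where $S$ is an interval: since $S$ is connected and we only need a \emph{mutual} Liouville isomorphism, it suffices to connect any two parameters $s_0,s_1 \in S$ by a smooth path and build the isomorphism along that path. So assume $S=[0,1]$ and write $\om_s:=d\th_s$. The key observation is that $\{\th_s\}$ is a path of Liouville structures with a \emph{fixed} hypersurface $\Sigma=\partial K$; in particular the Liouville vector fields $Z_s$ (defined by $i_{Z_s}\om_s=\th_s$) are all transverse to $\Sigma$, pointing outward, so the convex ends $(M\setminus \mathrm{int}(K),\om_s)\iso(\Sigma\times[1,\infty),d(R\alpha_s))$ are cylindrical. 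This lets me arrange that $\dot\th_s := \tfrac{d}{ds}\th_s$ can be taken compactly supported: on the convex end each $\th_s$ is $R\alpha_s$ for $\alpha_s=\th_s|_\Sigma$, and after a preliminary $s$-dependent diffeomorphism of the collar (absorbing the change in $\alpha_s$, which is itself handled by a Moser argument on the compact $\Sigma$) we may assume $\th_s$ is $s$-independent near and beyond $\Sigma$.

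With $\dot\th_s$ compactly supported, I would define the time-dependent vector field $X_s$ by $i_{X_s}\om_s = -\dot\th_s$, which is well defined and compactly supported since $\om_s$ is nondegenerate and $\dot\th_s$ is compactly supported. Let $\psi_s$ be its flow starting at $\psi_0=\mathrm{id}$; compact support guarantees $\psi_s$ is a globally defined diffeomorphism of $M$ for all $s\in[0,1]$. The Cartan/Moser computation gives
\begin{equation*}
\frac{d}{ds}\bigl(\psi_s^*\th_s\bigr) = \psi_s^*\bigl(L_{X_s}\th_s + \dot\th_s\bigr) = \psi_s^*\bigl(d\,i_{X_s}\th_s + i_{X_s}d\th_s + \dot\th_s\bigr) = \psi_s^*\,d\bigl(i_{X_s}\th_s\bigr),
\end{equation*}
using $i_{X_s}\om_s+\dot\th_s=0$. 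Hence $\psi_1^*\th_1 - \th_0 = d\bigl(\int_0^1 \psi_s^*(i_{X_s}\th_s)\,ds\bigr) = df$ with $f$ compactly supported (as $X_s$ is), which is exactly the definition of a Liouville isomorphism $(M,d\th_0)\to(M,d\th_1)$. Composing such isomorphisms along a path in $S$ joining any two chosen parameters yields the claim for general connected $S$.

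The main obstacle is the bookkeeping on the convex end: \emph{a priori} $\dot\th_s$ need not be compactly supported, so one has to first normalise the family near $\Sigma$ so that the collar structure $(\Sigma\times[1,\infty),d(R\alpha_s))$ is literally constant in $s$. This requires a Moser argument for the path $\alpha_s$ of contact-type $1$-forms on the \emph{compact} manifold $\Sigma$ (which is routine) followed by extending the resulting isotopy of $\Sigma$ to a collar-preserving isotopy of $M$ that matches the identity away from a neighbourhood of $\Sigma$; then the residual $\dot\th_s$ is supported in $K$ together with a compact piece of the collar. Once this normalisation is in place the rest is the verbatim Moser argument above, and this is precisely the content packaged in \cite[Lem.~5]{SeiSm05}, so in the write-up I would carry out the normalisation step in detail and then quote that lemma for the conclusion.
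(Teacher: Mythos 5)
Your plan is correct in substance, but it takes a more self-contained route than the paper does. The paper's own ``proof'' consists of the citation to \cite[Lem. 5]{SeiSm05} together with the translation spelled out in the remark that follows: a Liouville manifold becomes a complete finite type convex symplectic manifold by taking $\phi=\log(R)$ on the convex end (cut off over the compact piece), a family $(M,\theta_s)$ with a common hypersurface then becomes a complete finite type convex symplectic deformation, and the cited lemma produces the Liouville isomorphisms. You instead reduce to a path in $S$, normalise the family at infinity so that $\dot\theta_s$ is compactly supported, and run the compactly supported Moser/Cartan computation $i_{X_s}\omega_s=-\dot\theta_s$, which indeed yields $\psi_1^*\theta_1=\theta_0+df$ with $f$ compactly supported, exactly matching the paper's definition of Liouville isomorphism. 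What your approach buys is an explicit, essentially elementary argument in which the only non-formal input is the normalisation near infinity; what the paper's approach buys is brevity, since all the non-compact bookkeeping is delegated to Seidel--Smith, whose lemma is stated precisely for families over an interval with the completeness hypothesis playing the role of your compact-support reduction.

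One point in your normalisation step is stated too strongly: you cannot, in general, make the contact forms $\alpha_s=\theta_s|_\Sigma$ constant by ``a Moser argument on the compact $\Sigma$'' --- contact forms have moduli (Reeb dynamics), and only the contact structures $\ker\alpha_s$ can be made constant, via Gray stability, giving $\phi_s^*\alpha_s=f_s\alpha_0$ with $f_s>0$. This is still enough for your purposes: the fibrewise rescaling $(y,R)\mapsto(\phi_s(y),R/f_s(y))$ of the symplectisation pulls $R\alpha_s$ back to $R\alpha_0$ literally, and since the backward Liouville flow from $\Sigma$ is complete on the compact piece, the cylindrical coordinates extend below $R=1$, so the slight displacement of the level $\{R=1\}$ causes no trouble. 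With that correction the residual $\dot\theta_s$ is supported in a fixed compact set and the rest of your argument goes through; composing the normalising diffeomorphisms with the Moser flow gives the required Liouville isomorphism, and concatenating along a path handles a general connected parameter space $S$.
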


\begin{rmk}
	We remark that, what we call Liouville manifold here, in \cite[Sec. 2]{SeiSm05} is called a \textit{complete finite type convex symplectic manifold}, defined as a triple $(M,\th,\phi),$ where $\phi:M\fun \R$ is a smooth exhausting function, such that the Liouville flow $Z$ of $\th$ is positively-integrable and there is a constant $c_0$ such that the level sets $\phi^{-1}(c)$ are transversal to $Z$ for $c\geq c_0.$
	Given a Liouville manifold $(M,d\th),$ one can turn it into this setup, by choosing $\phi:=log (R)$ on the convex end (where $R$ is the radial coordinate, recall Definition \ref{DefLiouvilleManifold}), and smooth it out on its complement by a cut-off function. Then, having the family of Liouville structures $(M,\th_t),$ one immediately has
	that the corresponding triples $(M,\th_t,\phi_t)$ make a \textit{complete finite type convex symplectic	deformation}, which is the condition of cited \cite[Lem. 5]{SeiSm05} that ensures Liouville isomorphisms between $(M,d\th_t).$ 
\end{rmk}

Now we prove that an SHS manifold has a canonical Liouville structure, the fact which grew from the author's conversations with Paul Seidel and Nick Sheridan.
Denote by $\om_J:=\mathbb{R}e(\om_\C)$ and $\om_K=\mathbb{I}m(\om_\C).$

\begin{prop} \label{canonicalLiouville} Any SHS manifold $(\M,\Fi,\om_\C)$ has a canonical family of isomorphic Calabi-Yau Liouville structures $(\M,a\th_J+b\th_K),$ 
	parametrised by $(a,b)\in \R^2\setminus \{0\}$ such that:
	\begin{enumerate}
		\item The Liouville vector field $Z$ is the $1/s$-multiple of the vector field of the $\R_+$-action, where $s$ is the $\om_\C$-weight of $\Fi.$
		\item The Liouville 1-forms are $\th_J=i_Z\om_J, \th_K=i_Z \om_K,$ thus primitives of the real and imaginary part of $\om_\C.$ 
		\item \label{CoreIsSkeleton} The Liouville skeleton is the core $\L$ itself.
	\end{enumerate}
\end{prop}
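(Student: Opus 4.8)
The plan is to take $Z:=\tfrac{1}{s}X_{\R_+}$, the $1/s$-multiple of the generator of the $\R_+$-part of $\Fi$, and show it is simultaneously the Liouville vector field for every form in the pencil $a\om_J+b\om_K$, $(a,b)\in\R^2\setminus\{0\}$.

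First I would pass to the infinitesimal form of the weight condition: writing $t=e^\tau$ and differentiating $\Fi_{e^\tau}^*\om_\C=e^{s\tau}\om_\C$ at $\tau=0$ gives $L_{X_{\R_+}}\om_\C=s\,\om_\C$, hence $L_Z\om_\C=\om_\C$. Since $X_{\R_+}$ is real and $\om_\C$ is closed, splitting into real and imaginary parts yields $L_Z\om_J=\om_J$ and $L_Z\om_K=\om_K$, and by Cartan's formula (using $d\om_J=d\om_K=0$) the $1$-forms $\th_J:=i_Z\om_J$, $\th_K:=i_Z\om_K$ satisfy $d\th_J=\om_J$ and $d\th_K=\om_K$. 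Thus for every $(a,b)\in\R^2\setminus\{0\}$ the form $a\th_J+b\th_K$ is a primitive of $a\om_J+b\om_K$, and since $i_Z(a\om_J+b\om_K)=a\th_J+b\th_K$ while $a\om_J+b\om_K$ is non-degenerate, its Liouville vector field is exactly $Z$, independently of $(a,b)$; this already gives items (1) and (2). Closedness and non-degeneracy of $a\om_J+b\om_K$ come from the compatible almost \HK structure of Lemma \ref{LemmaCSRsAreAlmostHK}: closedness is inherited from $\om_\C$, and $a\om_J+b\om_K=-g(\cdot,(aJ+bK)\cdot)$ with $(aJ+bK)^2=-(a^2+b^2)\,\mathrm{Id}$ invertible.

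Next I would verify the hypotheses of Lemma \ref{LemaLiouville} for $Z$ relative to $\om_J$ (the argument being identical for each form in the pencil, since $Z$ is their common Liouville field). Completeness of the $Z$-flow is immediate, as it is a reparametrisation of the $\R_{>0}$-part of the global $\C^*$-action. For the hypersurface and transversality I would use two dynamical facts about the $Z$-flow, i.e. the $(t\fun\infty)$-flow of $\Fi$: the core $\L$ is compact (Lemma \ref{CoreIsADefRetr}), $\Fi$-invariant, and equals the set of points whose forward $Z$-orbit stays in a compact subset of $\M$; whereas any $p\in\M\setminus\L$ has a $\C^*$-limit in the completion $Y$ of Lemma \ref{CompletionLemma} but, by definition of the core, not in $\M$, so its forward $Z$-orbit leaves every compact subset of $\M$ while its backward $Z$-orbit converges into the fixed locus $\F\subseteq\L$. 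Feeding this, together with the identity $Z=\tfrac1s\nabla_{g_I}H$ from Lemma \ref{thereisaNonExactstructure} and the Kähler data on the completion, into the standard device of averaging a proper exhaustion of $\M$ along the $Z$-flow, one produces a proper $\rho\colon\M\fun\R_{\geq 0}$ with $d\rho(Z)>0$ outside a compact set; then $\Sigma:=\rho^{-1}(c)$ for a large regular value $c$ bounds a compact $K\supset\L$ and is transverse to $Z$ pointing outwards, so Lemma \ref{LemaLiouville} applies. Since the orbits that stay in a compact subset of $\M$ under the forward $Z$-flow are exactly those having a limit in $\M$, i.e. the points of $\L$, the resulting Liouville skeleton is precisely $\L$, which is item (3). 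Finally, all the structures $(\M,a\th_J+b\th_K)$ share the same $Z$ and the same $\Sigma$, and $\R^2\setminus\{0\}$ is connected, so the Moser-type Lemma \ref{DeformationLemma} makes them mutually Liouville isomorphic (and the loop around the origin produces the monodromy in $\mathrm{Symp}(\M,\om_J)$); the Calabi--Yau property for $a\om_J+b\om_K$ amounts to $c_1(T\M,S_t)=0$ for the compatible complex structure $S_t=\cos(t)J+\sin(t)K$, which holds by Lemma \ref{LemmaCSRsAreAlmostHK}.

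The main obstacle is precisely this middle step: producing a proper function on $\M$ that is increasing along the $Z$-flow near infinity, equivalently a $Z$-transverse hypersurface enclosing the core. Everything else is formal manipulation of the weight condition plus citations of results already in place; this step is where one genuinely has to exploit the completion $Y$ and control how $\C^*$-orbits approach $Y\setminus\M$, since the naive candidate exhaustions — the moment maps of the Kähler structure of Lemma \ref{thereisaNonExactstructure} — are bounded on $\M$ and therefore blind to infinity.
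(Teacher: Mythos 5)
Your first and last steps track the paper's argument exactly: the computation $L_Z\om_\C=\om_\C$ for $Z=\tfrac1s X_{\R_+}$, the primitives $\th_J=i_Z\om_J$, $\th_K=i_Z\om_K$, completeness of the flow from the global $\R_+$-action, the Moser-type Lemma \ref{DeformationLemma} over the connected parameter space $\R^2\setminus\{0\}$, the Calabi--Yau property via Lemma \ref{LemmaCSRsAreAlmostHK}, and the identification of the skeleton with $\L$. The gap sits exactly where you locate it. The sentence ``feeding this \dots into the standard device of averaging a proper exhaustion of $\M$ along the $Z$-flow, one produces a proper $\rho$ with $d\rho(Z)>0$ outside a compact set'' is not a proof: if $\rho(x)=\int_0^1\rho_0(\phi_t(x))\,dt$ for the time-$t$ flow $\phi_t$ of $Z$ and an arbitrary proper exhaustion $\rho_0$, then $d\rho(Z)(x)=\rho_0(\phi_1(x))-\rho_0(x)$, which has no definite sign, while averaging over all of $\R$ does not converge; and the dynamical facts you list (forward $Z$-orbits off the core leave every compact set, backward orbits converge into $\F\subset\L$) do not by themselves yield a Lyapunov-type proper function --- that implication is precisely what would have to be proved. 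Note also that the paper never constructs such a proper function in the general SHS case; it only does so for CSRs (Proposition \ref{CorProofThatPhiIsRegular}), using homogeneous generators of $\C[\M_0]$, which are unavailable here, so there is no result in place you could cite for this step.

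What the paper does instead, and what your argument is missing, is a direct construction of the compact $Z$-transverse hypersurface with no exhaustion at all: on $\M\setminus\L$ the $\R_+$-flow is complete and zero-free, so $\M\setminus\L\fun(\M\setminus\L)/\R$ is an $(\R,+)$-principal bundle; the base is compact because every orbit must cross the smooth boundary $\partial K$ of any compact neighbourhood $K$ of the core (it comes in from infinity and its backward limit lies in $\F\subset\L$, by Lemma \ref{CoreIsADefRetr} contained in $\innt(K)$), giving a continuous surjection $\partial K\fun(\M\setminus\L)/\R$; and since every $(\R,+)$-principal bundle is trivial, a global smooth section provides a compact hypersurface $\Sigma\subset\M\setminus\L$ transverse to $Z$. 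One then checks that $Z$ points outward along $\Sigma$ (inward-pointing $Z$ at $x\in\Sigma$ would force the forward orbit of $x$ to have a limit in the compact region bounded by $\Sigma$, i.e.\ $x\in\L$, a contradiction) and that the connected core lies in the bounded component, after which Lemma \ref{LemaLiouville} applies. If you wish to keep your proper-function formulation, you could set $\rho=e^{\tau}$ where $\tau$ is the flow time from $\Sigma$, but that presupposes exactly the compactness of the orbit space and the triviality of the bundle, so the principal-bundle step cannot be bypassed.
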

\begin{proof}
	As $\om_\C$-weight of $\Fi$ is equal to $s,$ we have that $t \cdot \om_\C = t^s \om_\C$ for every $t\in\C^*.$ Using the real values of $t$ and taking a derivation this gives us 
	$L_{X_{\R_+}} \om_\C= s \om_\C,$ where $X_{\R_+}$ is the vector field of the $\R_+$-part of the action. Therefore, the vector field $Z:=\frac{1}{s} X_{\R_+}$ satisfies 
	\begin{equation}\label{RplusFlowIsLiouvilleScaled}
		L_Z \om_\C =\om_\C,
	\end{equation}
	which gives 
	$L_Z \om_J =\om_J$ and $L_Z \om_K =\om_K.$ Thus, $Z$ is a Liouville vector field with respect to any linear combination of $\om_J$ and $\om_K,$ 
	and it gives primitive 1-forms $\th_J:=i_Z \om_J,$ $\th_K:=i_Z \om_K$ 
	It is positively-integrable, as it comes from the $\R_+$-action which is globally defined. 
	
	Now, in order to use Lemma \ref{LemaLiouville}
	we have to prove that there is a hypersurface $\Sigma$ bounding a compact set, such that $Z \pitchfork T\Sigma$ with $Z$ pointing outside and $Zeros(Z) \subset \innt(K).$
	First, as the core $\L$ is compact (due to Lemma \ref{CoreIsADefRetr}), 
	we can construct its compact neighbourhood $K$ whose boundary $\partial K$ is smooth. 
	Then, recall that the flow of $X_{\R_+},$ thus of $Z$ exists for all times on $\M \setminus \L.$ This gives a $(\R,+)$-principal bundle 
	\begin{equation}\label{PRincBundle}
		\M \setminus \L \fun (\M \setminus \L)/\R =:\mathcal{M}.
	\end{equation}	
	Notice that each point $x \in \partial K$ belongs to an $\R_+$-orbit in $\M \setminus \L$ and moreover each orbit in $\M \setminus \L$ intersects $\partial K,$ as it goes from infinity and eventually reaches near the core. Thus, there is a continuous surjective map $\partial K \fun \mathcal{M},$ proving that $\mathcal{M}$ is compact.
	Now, as any $(\R,+)$-principal bundle is trivial,\footnote{These are classified by homotopy classes of maps to the classifying space $B\R,$ which is contractible. The last fact is due to the long exact sequence of homotopy groups coming from the fibration $E\R \fun B\R,$ and Whitehead theorem.} there is a smooth section $f: \mathcal{M} \fun \M \setminus \L$ of \eqref{PRincBundle}
	whose image $\Sigma:=f(\mathcal{M})$ is then a compact hypersurface in $\M\setminus \L$ transverse to the $(\R,+)$-action, hence, to flow of $Z.$ 
	
	Hypersurface $\Sigma$ bounds a compact set, call it $K.$ It is left to prove that $Z$ is outward pointing on $\Sigma$ and $Zeros(Z) \subset \innt(K).$ 
	Indeed, if $Z$ is pointing inward on $\Sigma,$ then each point $x \in \Sigma$ will have a limit of flow of $Z,$ or equivalently of $\R_+$-action in $\innt(K),$ 
	which is impossible unless $x \in \L$ (by definition of the core), which is a contradiction. 
	Thus, $Z$ is outward pointing on $\Sigma,$ and thus each point has a backwards limit in $\innt (K),$ showing that $Zeros(Z) \cap \innt(K) \neq \emptyset,$ so the core 
    has non-empty intersection with $\innt(K),$\footnote{As $Zeros(Z)=\M^{\R_+}=\M^{\Fi} \subset \L.$} 
    thus must be completely contained in it, being connected\footnote{As it is a deformation retract of connected $\M$ (recall that in Definition \ref{DefinitionSHS} of an SHS manifold we ask for it to be connected).} and disjoint from the boundary $\Sigma.$
	
	Altogether, the pair $(\Sigma,Z)$ satisfies the conditions from Lemma \ref{LemaLiouville}, with respect to all non-zero linear combinations $a \om_J + b \om_K$ of symplectic forms, thus yields Liouville structures $\{a\th_J+b\th_K \mid (a,b)\in\R^2 \setminus \{0\}\}$ which are then all Liouville isomorphic due to Lemma \ref{DeformationLemma}.
	The Calabi-Yau property of these symplectic forms is due to Lemma \ref{LemmaCSRsAreAlmostHK}. The statement that the core $\L$ is the skeleton follows immediately from its definition.
\end{proof}

\begin{rmk}
In particular, the Liouville isomorphisms of the structures $(\M,a \th_J + b \th_K)$ give us the monodromy map landing in 
$\text{Symp}(\M,\om_J),$ obtained by going around the origin in the parameter space $\R^2\setminus \{0\}.$
Studying these monodromy maps for certain SHS manifolds might be an interesting further avenue of our research.
\end{rmk}

We will denote the family of linear combinations used in the previous proposition by 
\begin{equation}
	\th_{J,K}:=\{a\th_J + b \th_K \mid \R^2\setminus \{0\} \}, \ \ \om_{J,K}:=\{a\om_J + b \om_K \mid (a,b)\in \R^2\setminus \{0\} \}.
\end{equation}
As the family $(\M,\th_{J,K})$ of Liouville structures are mutually Liouville isomorphic, 
all corresponding symplectic cohomologies $SH^*(\M,\Fi,\om_{J,K})$ are mutually isomorphic.\footnote{This is a standard statement, see e.g. \cite[p. 12]{Sei08}.} 
In fact, in the case of a CSR, one can choose the hypersurface $\Sigma$ more canonically, giving a description for the symplectic cohomology.

\begin{prop}\label{CorProofThatPhiIsRegular} 
Given a CSR $\pi:\M \fun \M_0,$ its symplectic cohomology can be represented 
	$$SH^*(\M, \Fi,\om_{J,K})=HF^*(\M,\Phi,\om_{J,K})$$ as the Floer cohomology of the polynomial $\Phi=\sum_i {|\pi^*(f_i)|}^{\frac{2w}{w_i}}$ obtained by lifts $\pi^*(f_i)$ of a set $f_i$ of $\Fi$-homogeneous generators of the coordinate ring $\C[\M_0]$. Here $w_i$ is the weight of $f_i,$ and $w=lcm(w_i).$
\end{prop}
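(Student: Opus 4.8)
The plan is to realise $\Phi$ as a smooth, proper, $\R_{>0}$-homogeneous exhausting function whose regular level sets at infinity can serve as the bounding hypersurface in the canonical Liouville structure of Proposition~\ref{canonicalLiouville}, and then to invoke the standard computation of symplectic cohomology through a cofinal family of such ``algebraic'' Hamiltonians.

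First I would fix the generators and record the relevant properties of $\Phi$. Since $\M_0$ is affine and $\Fi$ contracts it to $x_0$, the ring $\C[\M_0]$ is $\Z_{\geq 0}$-graded by $\Fi$ with $\C[\M_0]_0=\C$ (an $\Fi$-invariant regular function is constant along orbit closures, hence on the connected variety $\M_0$), so one may choose finitely many $\Fi$-homogeneous generators $f_1,\dots,f_n$ of weights $w_i=\deg f_i\geq 1$, and put $w=\mathrm{lcm}(w_i)$, so that each $w/w_i\in\Z_{>0}$. Then $|\pi^{*}f_i|^{2w/w_i}=\big((\pi^{*}f_i)\,\overline{\pi^{*}f_i}\big)^{w/w_i}$ is real-analytic, hence $\Phi$ is smooth. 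As $f_i(x_0)=t^{w_i}f_i(x_0)$ forces $f_i(x_0)=0$, and every $g\in\C[\M_0]$ is a polynomial in the $f_i$ (so $g(p)=g(x_0)$ at any common zero $p$), the common zero locus of the $f_i$ is exactly $\{x_0\}$; hence $\Phi^{-1}(0)=\pi^{-1}(x_0)=\L$ is the core (Lemma~\ref{CSRisSemiproj}). The tuple $(f_1,\dots,f_n)$ is a closed embedding $\M_0\hookrightarrow\C^n$ under which $\sum_i|f_i|^{2w/w_i}$ restricts the exhaustion $\sum_i|z_i|^{2w/w_i}$ of $\C^n$, so this function is proper on $\M_0$ and $\Phi$, being its pullback by the proper map $\pi$, is proper on $\M$. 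Finally, equivariance of $\pi$ gives $(\pi^{*}f_i)(t\cdot x)=t^{w_i}(\pi^{*}f_i)(x)$, whence $\Phi(t\cdot x)=t^{2w}\Phi(x)$ for $t\in\R_{>0}$; differentiating at $t=1$ yields $d\Phi(X_{\R_{+}})=2w\,\Phi$, so for the Liouville field $Z=\tfrac1s X_{\R_{+}}$ of Proposition~\ref{canonicalLiouville} we get $d\Phi(Z)=\tfrac{2w}{s}\Phi$.

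With these in hand, I would first re-derive the Liouville structure from $\Phi$. For every $c>0$ the identity $d\Phi(Z)=\tfrac{2w}{s}c\neq 0$ on $\Phi^{-1}(c)$ shows that $\Sigma_c:=\Phi^{-1}(c)$ is a smooth compact hypersurface bounding the compact domain $\Phi^{-1}([0,c])$, which contains $\L=\Phi^{-1}(0)\supseteq\mathrm{Zeros}(Z)$ in its interior, and that $Z$ is transverse to $\Sigma_c$ and outward-pointing. Thus $(\Sigma_c,Z)$ satisfies the hypotheses of Lemma~\ref{LemaLiouville} for every $\om=a\om_J+b\om_K$, with the same primitive $i_Z\om=a\th_J+b\th_K$ as in Proposition~\ref{canonicalLiouville}; since the Liouville completion of $(\M,a\th_J+b\th_K)$ is independent, up to canonical Liouville isomorphism, of the choice of a transverse bounding hypersurface (two such hypersurfaces are isotopic through the Liouville flow, or one applies Lemma~\ref{DeformationLemma} to a constant family), the symplectic cohomology $SH^{*}(\M,\Fi,\om_{J,K})$ is the same whether computed with the hypersurface of Proposition~\ref{canonicalLiouville} or with $\Sigma_c$.

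The last step, and the main content, is the Floer-theoretic identification. On the convex end $\Sigma_c\times[1,\infty)$ with $Z=R\,\partial_R$, homogeneity forces $\Phi\propto R^{2w/s}$, so the Hamiltonian vector field of $\Phi$ there is a positive reparametrisation of the Reeb field of $\Sigma_c$. One needs $2w/s\geq 1$ in order that the rescalings $\{\lambda\Phi\}_{\lambda>0}$ form a cofinal family of admissible Hamiltonians (their slopes $\propto\lambda R^{2w/s-1}$ are non-decreasing in $R$ and diverge with $\lambda$); this inequality holds because $\pi$ makes $\M_0$ generically holomorphic symplectic, so the induced Poisson bracket on $\C[\M_0]$, which is $\Fi$-homogeneous of weight $-s$, is not identically zero, forcing $\{f_i,f_j\}\neq 0$ for some $i,j$, an element of degree $w_i+w_j-s\geq 0$, i.e. $s\leq w_i+w_j\leq 2w$. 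The standard description of $SH^{*}$ through a cofinal family of Hamiltonians (respectively, through a single superlinear Hamiltonian when $2w/s>1$) then gives
\[
SH^{*}(\M,\Fi,\om_{J,K})=\varinjlim_{\lambda\to\infty}HF^{*}(\M,\lambda\Phi,\om_{J,K})=:HF^{*}(\M,\Phi,\om_{J,K}),
\]
which is the asserted description. I expect this last step to be where care is needed: $\Phi$ has a whole subvariety $\L$ as its minimum locus and grows at the non-quadratic rate $R^{2w/s}$, so one must check it fits the admissible-Hamiltonian framework (interior perturbation to nondegeneracy, continuation maps between slopes) --- but this is a routine instance of comparing $SH^{*}$ with the Floer homology of a homogeneous algebraic Hamiltonian, to be set up in~\cite{RZ22}; the genuinely new inputs are just the inequality $s\leq 2w$ and the explicit shape of $\Phi$. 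Everything preceding (smoothness from integrality of $w/w_i$, properness from the closed embedding, homogeneity and transversality of level sets) is a direct verification.
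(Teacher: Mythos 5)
Your proposal is correct and follows essentially the same route as the paper's proof: the same homogeneity computation $d\Phi(Z)\propto\Phi$, the same use of Lemma \ref{LemaLiouville} with a level set $\Phi^{-1}(c)$ as the bounding hypersurface, and the same identification of $\Phi$ with a function of the radial coordinate $R$ on the convex end, feeding into the standard admissible-Hamiltonian description of $SH^*$. Your version is in fact slightly more careful: you keep the factor $1/s$ (so $\Phi\propto R^{2w/s}$ rather than the paper's stated $R^{2w}$), and you supply the inequality $s\le 2w$ via the Poisson bracket together with the cofinal family $\{\lambda\Phi\}$ to cover the borderline case $2w/s=1$, points the paper's proof glosses over.
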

\begin{proof}
	Firstly, as $\M_0$ is an affine variety with a $\C^*$-action, its coordinate ring has induced $\Z$-grading. 
	Pick a finite set $(f_i)_i$ of its homogeneous generators, denoting their weights by $w_i$ respectively, and denote by $w$ their least common multiple. 
	Define the real polynomial on $\M,$ considered in \cite[Prop. 2.5]{BPW16}
	\begin{equation*}\label{polynomialForLiouvilleStructure}
	\Phi:=\pi^* (\sum_i |f_i|^{\frac{2w}{w_i}}).
	\end{equation*}
	As $\pi$ is $\C^*$-equivariant and the polynomials $f_i$ have weights $w_i,$ denoting the $\C^*$-action on $\M$ by $\Fi_t$
	we have $\Fi_t^* \Phi =| t |^{2w}\Phi,$ thus
	$L_Z \Phi=2w\Phi.$ As $L_Z \Phi = {d} \Phi (Z),$ we see that $0$ is the only singular value of $\Phi,$ and hence for any $\Phi_0>0$ the hypersurface $$\Sigma:=\Phi^{-1}(\Phi_0)$$ is smooth. As ${d} \Phi (Z)=2w \Phi,$ the pair $(\Sigma,Z)$ satisfies the conditions from Lemma \ref{LemaLiouville}, so we can use $\Sigma=\{\Phi=\Phi_0\}$ as the hypersurface for Liouville structure. Let us prove that
	$\Phi=\Phi_0 R^{2w},$ where $R$ is the radial coordinate of our Liouville structure.
	Indeed, the equation $L_Z \Phi=2w\Phi$ implies that $\partial_R \log \Phi = 2w/R$ (as $Z=R\partial_R$), therefore the Liouville flow maps the level set $\Sigma=\Phi^{-1}(\Phi_0)$ to another level set of $\Phi$. This implies that in the $\Sigma\times [1,\infty)$ coordinates from \eqref{defnLiouville}, $\Phi$ is constant in the $T\Sigma$ directions. Thus the $Z$-directional derivative equation is enough to determine $\Phi$, as claimed. Finally, $\Phi=h(R)$ is an admissible Hamiltonian such that
	$\lim_{R\fun +\infty} h'(R)=+\infty$,  so its Floer cohomology $HF^*(\M,\Phi)$ gives the symplectic cohomology by definition (with suitable perturbations to achieve nondegeneracy), \cite[Sec. 3d]{Sei08}.
\end{proof}

\begin{rmk}
	In particular, computing the exact symplectic cohomology $SH^*(\M,\Fi,\om_{J,K})$ of CSRs is an interesting question in itself. Apart from the case of cotangent bundles of flag manifolds $T^*(G/P)$ (which is computable due to Viterbo isomorphism \cite{Vi96, Ab15}), amongst all CSRs the exact symplectic cohomology is computed only for resolutions of Du Val singularities of type A and D, due to Etg\"{u}-Lekili \cite[Cor. 42 and Cor. 46]{EL17}.
\end{rmk}
%
%
Next, we show that in the case of a CSR the Liouville structure constructed in Proposition \ref{canonicalLiouville} does not depend on the choice of the conical action, 
in a set of commuting ones.

	\begin{prop}\label{ConicalActionsCommuteInduceSameLiouville}
		Given a CSR $\M,$ two different conical actions that commute yield isomorphic Liouville structures on it.
	\end{prop}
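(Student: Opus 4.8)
The plan is to exploit the Moser-type deformation Lemma~\ref{DeformationLemma}, exactly as in the proof of Proposition~\ref{canonicalLiouville}, but now interpolating between the Liouville vector fields of the two commuting conical actions rather than between the symplectic forms $\om_J$ and $\om_K$. Let $\Fi^1$ and $\Fi^2$ be the two commuting conical actions, of $\om_\C$-weights $s_1$ and $s_2$, and let $Z_1=\frac{1}{s_1}X^1_{\R_+}$, $Z_2=\frac{1}{s_2}X^2_{\R_+}$ be the two Liouville vector fields produced by Proposition~\ref{canonicalLiouville}, both satisfying $L_{Z_i}\om_\C=\om_\C$. First I would observe that any convex combination $Z_\sigma:=(1-\sigma)Z_1+\sigma Z_2$, $\sigma\in[0,1]$, is again a Liouville vector field for $\om_\C$ (and hence for every form in the family $\om_{J,K}$), since $L_{Z_\sigma}\om_\C=(1-\sigma)\om_\C+\sigma\om_\C=\om_\C$. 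So each $Z_\sigma$ gives candidate Liouville $1$-forms $\th^\sigma_J:=i_{Z_\sigma}\om_J$, $\th^\sigma_K:=i_{Z_\sigma}\om_K$, and it remains to check that the hypotheses of Lemma~\ref{LemaLiouville} hold uniformly along the family, so that Lemma~\ref{DeformationLemma} applies and yields Liouville isomorphisms between the structure at $\sigma=0$ (attached to $\Fi^1$) and the structure at $\sigma=1$ (attached to $\Fi^2$).

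The key geometric input is that the two cores agree: by Lemma~\ref{CommutingActionsSameCore}, $\L_{\Fi^1}=\L_{\Fi^2}=:\L$, and by Lemma~\ref{CoreIsIsotropic} this common core is the Lagrangian (or isotropic) subvariety on which both Liouville flows are skeleta. The zero set of $Z_\sigma$ is contained in $Zeros(Z_1)\cup Zeros(Z_2)=\M^{\Fi^1}\cup\M^{\Fi^2}\subset\L$ (in fact for a generic $\sigma$ it is exactly $\M^{\Fi^1}\cap\M^{\Fi^2}$, but the containment in $\L$ is all we need), because the identity $i_{Z_\sigma}\om_\C=(1-\sigma)\th^1_J+\sigma\th^2_J+i((1-\sigma)\th^1_K+\sigma\th^2_K)$ shows $Z_\sigma$ is nondegenerate wherever at least one of $Z_1,Z_2$ is. Next I would build a single compact hypersurface $\Sigma$ that works for all $\sigma$ simultaneously: take a compact neighbourhood $K$ of $\L$ with smooth boundary as in Proposition~\ref{canonicalLiouville}, and shrink it (using the flows) until $\partial K$ is transverse to $Z_1$ and to $Z_2$, with both pointing outward; then by convexity $Z_\sigma$ is transverse to $\partial K$ and outward-pointing for every $\sigma\in[0,1]$. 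One must also check positive integrability of the flow of $Z_\sigma$ on $\M\setminus K$ — this follows because $Z_\sigma$ is a convex combination of two complete (in forward time, on $\M\setminus\L$) Liouville fields all of whose flow lines eventually leave any compact set, so the usual argument (no escape to the core in finite backward time, properness of $\Phi$ from Proposition~\ref{CorProofThatPhiIsRegular} or simply compactness of $\L$) gives completeness.

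With $\Sigma$ fixed and $(Z_\sigma)_{\sigma\in[0,1]}$ a smooth family verifying the hypotheses of Lemma~\ref{LemaLiouville} for, say, the form $\om_J$, we get a smooth family of Liouville structures $(\M,\th^\sigma_J)_{\sigma\in[0,1]}$ with common hypersurface $\Sigma$, so Lemma~\ref{DeformationLemma} gives a Liouville isomorphism between $(\M,d\th^0_J)$ and $(\M,d\th^1_J)$; the same argument for any member $a\om_J+b\om_K$ of the family $\om_{J,K}$, or simply composing with the isomorphisms already furnished by Proposition~\ref{canonicalLiouville} within each conical action, shows the two entire families of Liouville structures attached to $\Fi^1$ and to $\Fi^2$ are Liouville isomorphic. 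I expect the only real subtlety — the ``main obstacle'' — to be the uniform verification of the outward-transversality and completeness conditions along the interpolation: one has to be slightly careful that a single choice of $\Sigma$ (equivalently, a single choice of the trivialising section of the $(\R,+)$-bundle $\M\setminus\L\to\mathcal{M}$) can be made transverse to both $X^1_{\R_+}$ and $X^2_{\R_+}$ at once. This is arranged by first picking $\Sigma_1$ transverse to $Z_1$, then flowing it slightly by $Z_2$ (using compactness of $\Sigma_1$) to a nearby hypersurface still transverse to $Z_1$ and now transverse to $Z_2$, which by openness of the transversality condition on the compact set $[0,1]\times\Sigma$ remains transverse to every $Z_\sigma$; everything else is the routine Moser argument already packaged in Lemmas~\ref{LemaLiouville} and~\ref{DeformationLemma}.
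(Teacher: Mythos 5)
Your overall strategy is the same as the paper's: interpolate linearly between the two Liouville vector fields, $Z_t=(1-t)Z_1+tZ_2$, observe that each $Z_t$ is Liouville for $\om_{J,K}$, and invoke Lemma~\ref{DeformationLemma} once a \emph{single} compact hypersurface is shown to be transverse (outward) to the whole family. The gap is exactly at the step you flag as the main obstacle, and your proposed fix does not work. Flowing a hypersurface $\Sigma_1$ by the field $Z_2$ cannot create transversality to $Z_2$: the flow preserves its own generating field, $(\phi^{Z_2}_t)_*Z_2=Z_2$, so a point where $Z_2$ is tangent to $\Sigma_1$ is carried to a point where $Z_2$ is tangent to $\phi^{Z_2}_t(\Sigma_1)$; the tangency locus is transported, not removed. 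Likewise ``shrink $K$ until $\partial K$ is transverse to both $Z_1$ and $Z_2$'' is asserted, not proved, and it is precisely the nontrivial content: having the same compact skeleton does not by itself produce a hypersurface transverse to both fields (note also that your claim $Zeros(Z_\sigma)\subset Zeros(Z_1)\cup Zeros(Z_2)$ is unjustified, since a convex combination of two nonzero vectors can vanish). A further warning sign is that your argument uses commutativity only through Lemma~\ref{CommutingActionsSameCore}; in the paper commutativity enters essentially in the transversality verification itself.

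What the paper actually does at this point is concrete and uses the CSR structure: it takes $\Sigma=\Phi_2^{-1}(c)$, where $\Phi_2$ is the plurisubharmonic-type exhaustion of Proposition~\ref{CorProofThatPhiIsRegular} built from generators of $\C[\M_0]$, and proves ${d}\Phi_2(Z_t)>0$ on every level set $c>0$. This is done by choosing generators $f_k$ of $\C[\M_0]$ that are homogeneous for \emph{both} actions (possible because they commute), so that $\nu=j\circ\pi:\M\fun\C^N$ is equivariant for both, and then computing ${d}\Phi_2(Z_t)$ explicitly on $\C^N$ as a sum of terms $\frac{2w^2}{w_k^2}\big((1-t)w_k^1+t\,w_k^2\big)(x_k^2+y_k^2)$, which is positive because all the weights $w_k^1,w_k^2$ are positive (conicality). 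So to complete your proof you must replace the soft perturbation argument by such a verification that one hypersurface (e.g.\ a level set of $\Phi_2$) is outward-transverse to $Z_1$ as well as $Z_2$; as it stands, the existence of the common hypersurface is unproven and the perturbation mechanism you propose fails.
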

	\begin{proof}
		Consider two conical actions $\Fi^1$ and $\Fi^2$ on $\M$ that commute and have weights $s_1$ and $s_2$ respectively. As in Proposition \ref{canonicalLiouville}, $1/s_1$ and $1/s_2$-multiples of their $\R_+$-vector fields yield two Liouville  vector fields $Z_1$ and $Z_2,$ with respect to linear combinations $a\om_J+ b \om_K$ of symplectic forms. As we proved in the same proposition that a choice of $(a,b) \neq (0,0)$ does not impact on the Liouville structure, we will use the form $\om_J$ for simplicity.
		Setting $Y:=Z_2-Z_1$, we define a family of
		Liouville vector fields $$Z_t=Z_1+tY$$ and corresponding primitive 1-forms 
		$\theta_t$ of $\omega$ defined by
		$\theta_t=i_{Z_t}\omega_J$.	
		By Lemma \ref{DeformationLemma} it is enough to prove 
		that the vector fields $Z_t$ intersect the hypersurface\footnote{Here, $\Phi_2$ is the polynomial
			from \eqref{polynomialForLiouvilleStructure} for the action $\Fi^2.$} $\Sigma_2=\Phi_2^{-1}(c)$ transversely outwards for some $c>0,$ in other words ${d}\Phi_2 (Z_t)>0.$	
		We prove that by projecting to $\M_0.$
		
		Firstly, as the actions $\Fi^1$ and $\Fi^2$ commute, there is a set $\{f_k\}_{k=1}^N$ of generators of $\CM0$ that are homogeneous under both actions. Thus, we have an embedding $j: \M_0 \hookrightarrow \C^N, \ p \mapsto (f_1(p),\dots,f_N(p)).$ Denote by $z_1,\dots,z_N$ the coordinates in $\C^N.$ Denote the weights of $f_k$ by $w_k^1$ and $w_k^2$ under the actions
		$\Fi^1$ and $\Fi^2$ respectively, and their corresponding least common multiples by $w^1, w^2.$ 
		Then, the map $\nu:=j\circ \pi:\M\fun \C^N$ is $\C^*$-equivariant with $\Fi^1$ and $\Fi^2$, where the corresponding actions on $\C^N$ are given by $$t \cdot (z_1,\dots, z_N)=(t^{w_1^i}z_1,\dots,t^{w_N^i}z_N),$$ for $i=1$ and $i=2$ respectively. The vector fields of the $\R_+$-parts of these actions on $\C^N$ are given by the standard formulas
		$V_i(z)=\sum_{k=1}^N w_k^i (x_k\partial_{x_k} + y_k \partial_{y_k}),$ where $z_k=x_k+i y_k.$
		
		Now, fixing some $c>0,$ we want to prove ${d}\Phi_2 (Z_t(p))>0$ for an arbitrary point $p\in \Phi_2^{-1}(c).$ As $\Phi_2=\pi^*\a_2=\nu^*\a_2,$ we have 
		${d}\Phi_1 (Z_t(p))=d (\nu^*\a_1)(Z_t(p))= {d}\a_1 (\nu_*(Z_t(p))),$ where $\a_2:=\sum_k |z_k|^{\frac{2w^2}{w_k^2}},$
		and $\nu_*(Z_t)=\nu_*((1-t)Z_1 + t Z_2)= (1-t) V_1 + t V_2,$ as $\nu$ is $\C^*$-equivariant. Thus, using 
		${d}\a_2=\sum_{k=1}^N {\frac{2w^2}{w_k^2}} (x_k {d}_{x_k} + y_k {d}_{y_k})$ we have
		\begin{align*}
			{d}\Phi_2 (Z_t(p))&= {d}\a_2 ((1-t) V_1 + t V_2) = (1-t) \ {d}\a_2 (V_1) + t \ {d}\a_2(V_2)\\
			&=(1-t)\sum_{k=1}^N {\frac{2w^2}{w_k^2}}w_k^1 (x_k^2+y_k^2)+ t \sum_{k=1}^N {\frac{2w^2}{w_k^2}}w_k^2 (x_k^2+y_k^2),
		\end{align*}
		thus it is positive whenever $\nu(p)=(x_1,y_1,\dots,x_N,y_N)\neq 0,$ that is to say, for all $c>0.$ 
		
		Thus, by Lemma \ref{DeformationLemma}, $(\M,d\th_t)$ are Liouville isomorphic. In particular, 
		$(\M,d\th_0)$ and $(\M,d\th_1)$ are Liouville isomorphic as well, hence the proposition is proved.
	\end{proof}


There are a lot of instances of commuting conical actions to be found in the standard examples of CSRs, such as quiver varieties, Slodowy varieties, or hypertoric varieties. 

\subsection{Stein structure}

In this section we show that the Liouville structure on SHS manifolds from the previous section can be made Stein when they are also SHK.
This is true for all known examples of CSRs, or in the case of Moduli spaces of Higgs bundles.

Recall that an SHS manifold $(\M,I,\om_\C)$ is an SHK when it can be enriched with the \HK structure $(g,I,J,K)$ such that $\om_\C=\om_J + i \om_K.$ On such manifolds we consider only $\C^*$-actions whose
$S^1$-parts are $g$-isometric, hence preserve $\om_I.$ 
We have the following proposition, whose proof basically follows \cite[Prop. 9.1]{Hi87}.

\begin{prop}\label{SteinStructure}
An SHK manifold $(\M,\Fi),$ whose $S^1$-part has a proper moment map with respect to $\om_I,$ is a Stein manifold with respect to any linear combination of complex structures $\{J,K\}.$
\end{prop}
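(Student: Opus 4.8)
The plan is to use the moment map $H\colon \M\to\R$ associated to the $S^1$-part of $\Fi$ (Lemma \ref{thereisaNonExactstructure}) as an exhausting plurisubharmonic function for a suitable complex structure in the family $\{J,K\}$. Fix $\Theta\in\{J,K\}$ (or more generally any $\Theta=aJ+bK$ with $a^2+b^2=1$; the argument is identical). Recall from Lemma \ref{thereisaNonExactstructure} that the vector field of the $\R_+$-part of $\Fi$ is $X_{\R_+}=\nabla^{g}H$ (gradient with respect to the \KH metric $g_I$ associated to $I$ and $\om_I$), and that the $S^1$-vector field is $X_{S^1}=X_H=I\nabla^g H$. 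The key observation, going back to Hitchin \cite[Prop. 9.1]{Hi87}, is that because the $S^1$-action is $g$-isometric and tri-holomorphic on the \HK structure, the flow of $X_{\R_+}$ is a ``hyperk\"ahler rotation'' that exchanges the roles of the complex structures: conjugating $I$ by the flow of $X_{\R_+}$ rotates within the twistor sphere, and consequently $H$, which is an $\om_I$-moment map, becomes (a multiple of) a K\"ahler potential for $\om_\Theta$ in the complex structure $\Theta$. Concretely, one computes that the $(1,1)$-form $-dd^{c}_{\Theta}H$ (where $d^c_\Theta=\Theta^*\circ d$) equals a positive multiple of $\om_\Theta(\cdot,\Theta\cdot)$, i.e. of the metric $g$, hence $H$ is strictly $\Theta$-plurisubharmonic.

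First I would set up the twistor/rotation bookkeeping carefully: since the $S^1$-action preserves $g$ and all of $I,J,K$ (it is tri-holomorphic because it is holomorphic for $I$ and preserves $\om_\C=\om_J+i\om_K$ up to the conical weight — here one uses that on an SHK the action is $g$-isometric, so the $S^1$-part, being the phase of a conical action, actually preserves $\om_J$ and $\om_K$ separately, or one rescales), the gradient flow $\psi_t$ of $X_{\R_+}$ satisfies $\psi_t^*g = e^{t/?}g$-type scaling controlled by the weight, and rotates $I$ towards $\Theta$ in a controlled way. The cleanest route is the infinitesimal identity: using $\iota_{X_{S^1}}\om_I=dH$, $\iota_{X_{S^1}}\om_J=$ (moment-map-type expression), one derives $dd^c_\Theta H = c\,\om_\Theta$ for an explicit constant $c>0$ by a direct local computation in a Darboux-type frame for the \HK structure, exactly paralleling Hitchin. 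Next I would invoke properness: $H$ is proper by hypothesis, so it is an exhausting function, and being strictly $\Theta$-plurisubharmonic it exhibits $(\M,\Theta)$ as a Stein manifold (by the standard criterion: a complex manifold admitting a proper strictly plurisubharmonic exhausting function is Stein — Grauert's theorem).

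The main obstacle I anticipate is making the ``hyperk\"ahler rotation'' argument rigorous and checking the signs/constants so that the resulting $(1,1)$-form is genuinely \emph{positive} rather than merely nonzero — in particular verifying that the conical weight $s$ and the normalization of the moment map $H$ conspire correctly, and handling the fixed locus $\M^\Fi$ (where $\nabla H=0$) where $H$ attains its critical values: there one must check strict plurisubharmonicity survives, which it does because the Hessian of $H$ in the normal directions to $\M^\Fi$ is controlled by the weight decomposition \eqref{EqnWeightDecomp} of Lemma \ref{WeightDecompositionIntoBundles}, and all weights are nonzero in the normal directions and the $\om_I$-compatibility forces positivity. A secondary technical point is that Lemma \ref{thereisaNonExactstructure} produced \emph{some} $S^1$-invariant \KH structure $\om_I$ with a moment map, whereas here we want to use the \HK structure's $\om_I$; one should note these agree (or can be arranged to agree) because on an SHK the \HK metric is $S^1$-invariant and $\om_I=-g(\cdot,I\cdot)$ admits the moment map $H$ by the same Frankel-type argument, so no conflict arises. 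Modulo these checks — all of which are local and mirror \cite[Prop. 9.1]{Hi87} — the proof is complete.
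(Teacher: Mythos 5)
Your proposal follows essentially the same route as the paper: both implement Hitchin's argument \cite[Prop. 9.1]{Hi87}, showing that the $S^1$-moment map is (up to the weight) a \KH potential for $\om_\Theta$ and then invoking the standard Stein criterion, so the core idea is right. Three remarks. First, the key identity is obtained in the paper by a short global computation, not a local Darboux-frame one or a ``\HKL\ rotation'': since $L_Z\om_J=\om_J$ for $Z=\frac1s X_{\R_+}$, and since $dH=\om_I(\cdot,X_{S^1})$ and $\nabla H=X_{\R_+}=-IX_{S^1}$, the pointwise quaternionic relations give $i_Z\om_J=-\frac1s\,dH\circ J$ everywhere, hence $\om_J=-d\big(d(H/s)\circ J\big)$; strict plurisubharmonicity is then automatic, including along the fixed locus, because the positivity in question is that of the \KH form $\om_J(\cdot,J\cdot)=g$, not of the real Hessian of $H$ --- so your anticipated obstacle at $\M^{\Fi}$ (and the appeal to the weight decomposition there, which in any case would not control the directions tangent to $\M^{\Fi}$) is a non-issue once the identity is proved globally. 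Second, your parenthetical claim that the $S^1$-part is tri-holomorphic and preserves $\om_J$ and $\om_K$ separately is false: a weight-$s$ conical circle action rotates $\om_J$ and $\om_K$ into each other; fortunately nothing in the computation needs tri-holomorphy, only the moment-map equation, the relations $\om_S(\cdot,\cdot)=-g(\cdot,S\cdot)$, $IJ=K$, and $L_Z\om_\C=\om_\C$. Third, properness of $H$ alone does not make it an exhaustion; one also needs $H$ bounded below, which the paper derives from semiprojectivity (the negative gradient flow of $H$ is the $(t\fun 0)$-flow, whose limits lie in a compact set, so $H$ attains a global minimum) --- this step is missing from your write-up, though easy to supply.
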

\begin{proof}
Notice first that the moment map H is bounded from below. Indeed, 
by definition, moment map $H$ satisfies $dH(\cdot)= \om_I(\cdot, X_{S^1}),$ where $X_{S^1}$ is the vector field of the $S^1$-part of $\Fi.$
Hence, as $\om_I(\cdot, \cdot)=-g(\cdot, I \cdot),$ we have $\nabla H = - I X_{S^1} = X_{\R_+}.$
Thus, the negative gradient flow of $H$ is equivalent to the $(t \fun 0)$-action flow, which has limit points and they are compact, by the semiprojectivity of $(\M,\Fi).$ 
Hence, there is a global minimum of $H,$ so it is bounded from below.

On the other hand, recall by \eqref{RplusFlowIsLiouvilleScaled} that we have $L_Z \om_J= \om_J,$ for $Z:=\frac{1}{s} X_{\R_+}$ where $s$ is the weight of $\Fi.$
By Cartan formula, that gives $d (i_Z \om_J) = \om_J,$ so considering $\th_J:= i_Z \om_J$ we have
\begin{align*}
\th_J(\cdot) &= \om_J (Z, \cdot)= \frac{1}{s} \om_J (X_{\R_+}, \cdot)= \frac{1}{s} \om_J (\nabla H, \cdot)= - \frac{1}{s} g(\nabla H, J\cdot) = -\frac{1}{s} \om_I(\nabla H, I J\cdot) \\ 
&=-\frac{1}{s} \om_I(-I X_{S^1}, I J\cdot)=\frac{1}{s} \om_I( X_{S^1}, J\cdot) = -\frac{1}{s} \om_I( J \cdot , X_{S^1}) = -\frac{1}{s}  dH ( J \cdot),
\end{align*}
Altogether, $\om = -d (dh \circ J),$ where $h=\frac{1}{s} H$ is proper and bounded from below, and as $\om_J$ is a \KH form, this is precisely (one of the equivalent) condition saying that $(\M,J)$ is Stein.
As $\om_K(\cdot, \cdot)= -g(\cdot, K \cdot)$ as and $L_Z \om_\C = \om_\C,$ the same proof works for any linear combination of $J$ and $K.$ 
\end{proof}

In the case of spaces coming as Higgs branch of gauge theories (Section \ref{HiggsBranch}), we can in fact give a criterion when this Stein structure is subcritical.
As explained in the Section \ref{HiggsBranch}, these spaces are defined as GIT quotients of complex reductive group $G$ acting on $T^*N,$ where $N$ is a complex vector space, 
and when smooth, they inherit a holomorphic symplectic structure from $T^*N.$ 
However, one can also define them via the \HK quotient construction, as the compact form $K$ of $G$ acts by \HK isometries on the vector space $T^*N$ enhanced naturally with a \HK structure. 
Moreover, the $S^1$-restriction of the $\C^*$-dilation action on the whole $T^*N=N\oplus N^*$ acts by \HK isometries and commutes with the action of $K,$ hence it passes to the quotient. 
It has the proper moment map, as it does on $T^*N,$ so we have a conclusion.

\begin{cor}\label{SubcritStein} The Higgs branch $\M_\chi$ of the gauge theory associated to a pair $(G,N)$ is a SHK manifold, hence has a Stein structure. 
	Moreover, this structure is subcritical when $N^G\neq {0}.$
\end{cor}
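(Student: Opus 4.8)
The plan is to first record that $\M_\chi$ is an SHK manifold, so that the Stein structure is handed to us by Proposition~\ref{SteinStructure}, and then, when $N^G\neq 0$, to read off subcriticality from the product decomposition appearing in the proof of Lemma~\ref{WhenHiggsBranchWeight1Action}. For the SHK claim: the flat \HK structure $(g_0,I_0,J_0,K_0)$ on $T^*N\iso\H^{\dim_\C N}$ is preserved by the maximal compact subgroup $K\leq G$, and $\om_{J_0}+i\om_{K_0}$ is the standard holomorphic symplectic form on $T^*N=N\oplus N^*$. By the Kempf--Ness correspondence, the twisted GIT quotient $\M_\chi$ is identified, as a complex symplectic manifold, with the \HK quotient $(\mu_\R^{-1}(\zeta_\R)\cap\mu_\C^{-1}(0))/K$ (here $\mu_\R$ is the $\om_{I_0}$-moment map for $K$, $\mu_\C$ the holomorphic moment map, and $\zeta_\R$ the central parameter corresponding to $\chi$), and thereby inherits a \HK structure $(g,I,J,K)$ with $\om_J+i\om_K=\om_\C$; this is a compatible \HK structure. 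The $S^1$-restriction of the dilation action \eqref{FullActionHiggsBranch} on $N\oplus N^*$ acts by $g_0$-isometries preserving $I_0$ and commutes with $K$, hence descends to $\M_\chi$, acts by $g$-isometries, and has $\om_I$-moment map equal to the descent of $v\mapsto\tfrac12\|v\|^2$; this is proper, since $\{H\leq c\}$ is the image under the quotient map of $\mu_\R^{-1}(\zeta_\R)\cap\mu_\C^{-1}(0)\cap\{\|v\|^2\leq 2c\}$, a closed bounded (hence compact) subset of the vector space $T^*N$. So $\M_\chi$ is SHK (Definition~\ref{DefinitionHKCSR}), and Proposition~\ref{SteinStructure} makes it Stein with respect to $J$ and $K$.

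Now suppose $F:=N^G\neq 0$ and put $k:=\dim_\C F\geq 1$. The $G$-invariant (hence $K$-invariant) splitting $N=F\oplus N^{norm}$ from the proof of Lemma~\ref{WhenHiggsBranchWeight1Action} gives a splitting $T^*N=T^*F\oplus T^*N^{norm}$ of flat \HK vector spaces on which $K$ acts trivially on the first factor and as $G\dejstvo T^*N^{norm}$ on the second. Since the \HK moment map of a product action is the sum of the factors' moment maps and the $T^*F$-factor contributes $0$ (and $\zeta_\R$ has no $T^*F$-component), the \HK quotient splits \emph{as \HK manifolds} as $\M_\chi\iso T^*F\times\M_\chi^{norm}$ with the product structure, where $T^*F\iso\C^{2k}$ carries its standard flat structure and $\M_\chi^{norm}$ is the Higgs branch of $(G,N^{norm})$. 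Consequently $J=J_0\times J^{norm}$, the metric is a product, and the strictly plurisubharmonic exhausting function $h=\tfrac1s H$ furnished by Proposition~\ref{SteinStructure} is a sum $h=h_0+h^{norm}$, with $h_0$ a positive multiple of the standard quadratic potential $\|v\|^2$ on $\C^{2k}$; that is, the Stein structure on $\M_\chi$ is the product of the standard Stein $\C^{2k}$ with the Stein structure on $\M_\chi^{norm}$.

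Finally, such a product with $k\geq1$ is subcritical: perturbing $h^{norm}$ to a Morse function $\widetilde h^{norm}$ on $\M_\chi^{norm}$ that remains strictly plurisubharmonic, proper and bounded below (a standard manoeuvre), the function $h_0+\widetilde h^{norm}$ is a Morse plurisubharmonic exhaustion of $\C^{2k}\times\M_\chi^{norm}$ whose critical points are precisely the $(0,p)$ with $p$ critical for $\widetilde h^{norm}$, and its Morse index at $(0,p)$ equals $\mathrm{ind}_p\,\widetilde h^{norm}\leq\dim_\C\M_\chi^{norm}=\tfrac12\dim_\R\M_\chi^{norm}<\tfrac12\dim_\R\M_\chi$. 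Hence every critical point lies strictly below the middle dimension, i.e.\ $\M_\chi$ is a subcritical Stein manifold; equivalently, $\C^{2k}\times\M_\chi^{norm}=\C\times(\C^{2k-1}\times\M_\chi^{norm})$ manifestly splits off a $\C$-factor. The one step that genuinely needs care is the claim that the \emph{full} \HK structure --- not merely $\om_\C$ or the real symplectic form --- splits as a product under $N=F\oplus N^{norm}$, so that the K\"ahler potential $h$ decomposes additively; I would verify this directly from the moment-map-of-a-product identity, using that a \HK quotient by a group acting trivially on one factor of a product is that factor times the \HK quotient of the other.
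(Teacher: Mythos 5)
Your proof is correct. Its first half --- realising $\M_\chi$ as a hyperk\"ahler quotient, descending the $S^1$-part of the dilation action together with its proper moment map, and then invoking Proposition~\ref{SteinStructure} --- is exactly the paper's argument for the SHK and Stein claims. Where you genuinely diverge is subcriticality. The paper uses the splitting $\M_\chi \iso T^*F \times \M_\chi^{norm}$ from the proof of Lemma~\ref{WhenHiggsBranchWeight1Action} only through its consequence for the core: $\L_\chi=\{0\}\times\L_\chi^{norm}$ is less than half-dimensional, the core is the Liouville skeleton (Proposition~\ref{canonicalLiouville}), and subcriticality is then quoted as equivalent to the skeleton having dimension below $\dim_\C\M_\chi$. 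You instead upgrade the splitting to one of the hyperk\"ahler and Stein data --- checking, as you rightly flag, that the flat structure on $T^*N$ splits along $N=F\oplus N^{norm}$ (automatic, since $F=N^K$ is orthogonal to the nontrivial isotypic components for any $K$-invariant Hermitian metric) and that reducing by a group acting trivially on the $T^*F$-factor gives a product hyperk\"ahler quotient; the relevant point is that the moment map of the trivial factor is constant, which is a cleaner way to say what you phrase as ``$\zeta_\R$ has no $T^*F$-component''. From this the potential $h=\tfrac{1}{s}H$ decomposes as $h_0+h^{norm}$ and subcriticality is read off either from the Morse-index bound (indices of a strictly plurisubharmonic Morse function are at most the complex dimension, here $\dim_\C\M_\chi-2k$) or from the manifest $\C$-factor. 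The paper's route buys brevity: only the dimension of the skeleton is needed, at the cost of citing the equivalence ``skeleton less than half-dimensional $\Leftrightarrow$ subcritical'' without proof. Your route is longer but more self-contained at the level of definitions: it exhibits an explicit (perturbed) plurisubharmonic Morse exhaustion with all indices strictly below the middle dimension, i.e.\ precisely the hypothesis of Cieliebak's theorem invoked later in Remark~\ref{WhenExactSHvanishesHiggsBranch}, with the extra cost being the routine verification of the product hyperk\"ahler reduction.
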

\begin{proof}
	The first statement is due to the above. Assuming $N^G\neq {0},$ in the proof of Lemma \ref{WhenHiggsBranchWeight1Action} we have concluded that then the core $\L_\chi$ is less than half-dimensional.
	Having in mind that the core is precisely the Liouville skeleton (Proposition \ref{canonicalLiouville}), this is one of the equivalent conditions of the Stein structure to be subcritical.
\end{proof}
\section{Smooth core components} \label{SmoothCompCSRs}

In this section we construct a family of smooth components of the core of an arbitrary weight-1 SHS $\M.$ 
Those are constructed as the minima of moment maps of the $S^1$-part of different weight-1 conical actions, thus we call them {minimal}. 
They are significant from the symplectic-topological perspective, being Lagrangian submanifolds of $\M$ with its Liouville structure, on which we discuss in further sections.
However, being smooth core components, they are also interesting in their own right. 
For instance, when $\M$ is a Resolution of a Slodowy variety, its core is a Springer fibre, and minimal components are (some of) its smooth components, on which we discuss further in \cite{FZ2}.

\subsection{Existence of a smooth core component} \label{existencesmoothcomp}



Recall from Corollary \ref{CoreDecompositonWeight1} that, given a weight-1 SHS $(\M,\Fi),$ its core $\L=\bigsqcup_i \L_i$ has pure dimension ${1 \over 2} \dim \M$ and its irreducible components are precisely the closures $\overline{\L_i}.$ 
  
In particular, given a weight-1 SHS $\M,$ the irreducible components $\ol{\L_i}$ of its core $\L$ are proper complex varieties, thus have well-defined fundamental classes in singular homology, which freely generate the top-dimensional homology of the core $H_{top}(\L).$\footnote{Indeed, an irreducible proper complex algebraic variety admits a triangulation, making it a closed oriented connected pseudomanifold, and these have fundamental classes by \cite[Sec. 24]{SeTh80}. The statement about the core $\L$ follows from the Mayer-Vietoris sequence since intersections of irreducible components have real codimension bigger or equal to 2.}
Hence, together with Lemma \ref{CoreIsADefRetr} we get the following:

\begin{lm}\label{LemaNonIsotopic} Given a weight-1 CSR $\M,$ the fundamental classes $[\ol{\L_i}]_{i}$ make a basis of $H_{top}(\L,\Z)\iso H_{mid}(\M,\Z).$ In particular, $\ol{\L_i}$ are non-isotopic in $\M.$
\end{lm}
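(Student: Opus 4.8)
The plan is to establish the freeness of $H_{top}(\L;\Z)$ on the classes $[\ol{\L_i}]$ first, as a purely topological consequence of the description of the core from Corollary \ref{CoreDecompositonWeight1}, and then to read off the identification with $H_{mid}(\M;\Z)$ and the non-isotopy from it.

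First I would record the relevant structure. By Corollary \ref{CoreDecompositonWeight1} the core $\L=\bigcup_i \ol{\L_i}$ is of pure complex dimension $n:=\tfrac12\dim_\C\M$, with the $\ol{\L_i}$ its irreducible components, each an irreducible proper complex variety of dimension $n$. As recalled in the footnote to the statement, each $\ol{\L_i}$ is then a closed connected oriented pseudomanifold of real dimension $2n$, so $H_{2n}(\ol{\L_i};\Z)\cong\Z$ with generator the fundamental class $[\ol{\L_i}]$. Moreover, two distinct irreducible components of a pure-dimensional variety meet in a proper closed subvariety, so $\ol{\L_i}\cap\ol{\L_j}$ has complex dimension $\le n-1$, hence real dimension $\le 2n-2$, whenever $i\ne j$.

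The heart of the argument is the elementary claim: if $Y_1,\dots,Y_r$ are irreducible proper complex varieties all of dimension $n$ with $\dim_\C(Y_i\cap Y_j)\le n-1$ for $i\ne j$, then $H_{2n}\bigl(\bigcup_i Y_i;\Z\bigr)$ is free with basis the (pushed-forward) fundamental classes $[Y_i]$. I would prove this by induction on $r$: put $A=\bigcup_{i<r}Y_i$, $B=Y_r$, so that $A\cap B=\bigcup_{i<r}(Y_i\cap Y_r)$ has real dimension $\le 2n-2$ and hence $H_{2n}(A\cap B)=H_{2n-1}(A\cap B)=0$. The Mayer--Vietoris sequence for the cover $\bigcup_i Y_i=A\cup B$ then gives $H_{2n}(A\cup B;\Z)\cong H_{2n}(A;\Z)\oplus H_{2n}(B;\Z)$ with the isomorphism induced by the two inclusions, and the inductive hypothesis for $A$ (whose irreducible components are $Y_1,\dots,Y_{r-1}$) together with the base case $r=1$ for $B$ finishes the claim. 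The one point requiring care is that this Mayer--Vietoris sequence is being applied to a cover by \emph{closed} sets; this is justified by fixing a triangulation of $\bigcup_i Y_i$ in which each $Y_i$ and each $Y_i\cap Y_j$ is a subcomplex (standard for algebraic sets) and thickening $A,B$ to regular neighbourhoods. Applying the claim to $\L$, and noting that $2n=\dim_\R\L$ is the top degree, gives that $H_{top}(\L;\Z)$ is free with basis $\{[\ol{\L_i}]\}_i$.

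Finally, $H_{top}(\L;\Z)\cong H_{mid}(\M;\Z)$ is immediate from Lemma \ref{CoreIsADefRetr}, since $\L\hookrightarrow\M$ is a homotopy equivalence, inducing isomorphisms in all degrees, and $2n=\dim_\C\M$ is the middle real degree of $\M$; the basis $\{[\ol{\L_i}]\}_i$ is thereby carried to a $\Z$-basis of $H_{mid}(\M;\Z)$. For the last assertion, if $\ol{\L_i}$ and $\ol{\L_j}$ with $i\ne j$ were isotopic in $\M$, their fundamental classes would agree up to sign in $H_{mid}(\M;\Z)$, contradicting that $[\ol{\L_i}]$ and $[\ol{\L_j}]$ (hence also $-[\ol{\L_j}]$) are distinct members of a $\Z$-basis. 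I do not anticipate any real obstacle; the closed-versus-open Mayer--Vietoris bookkeeping is the only mildly delicate point.
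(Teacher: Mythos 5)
Your proposal is correct and follows essentially the same route as the paper, whose argument (given in the paragraph and footnote preceding the lemma) likewise takes the fundamental classes of the irreducible components as oriented pseudomanifolds, observes that pairwise intersections have real codimension at least $2$ so Mayer--Vietoris gives freeness of $H_{top}(\L;\Z)$ on these classes, and then invokes Lemma \ref{CoreIsADefRetr} to identify this with $H_{mid}(\M;\Z)$ and deduce non-isotopy. Your write-up merely makes the Mayer--Vietoris induction and the closed-cover bookkeeping explicit, which the paper leaves implicit.
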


Given a weight-1 SHS $(\M,\Fi)$, notice that its irreducible core components $\ol{\L_i}$ in general need not be smooth. However, we will prove that at least one of them is always smooth. Recall first that by Lemma \ref{thereisaNonExactstructure}, there is an $S^1$-invariant $I$-compatible \KH form $\om_I$ on $\M,$ where $I$ is the complex structure of $\M,$ and $S^1$ acts via $\Fi.$ Moreover, by the same lemma, with respect to $\om_I$ there is a moment map $H:\M\fun \R$ for the $S^1$-action, and $\nabla H$ is the vector field of the $\R_+$-part of 
$\varphi$.

\begin{prop}\label{minimalcomp} Given an SHS $(\M,\Fi),$ the minimum of its $S^1$-moment map $H$ is achieved along a single $\Fi$-fixed component, which lies in the core of $\M.$ 
Moreover, when $\Fi$ is weight-1 this component is an irreducible component of the core,
therefore a holomorphic Lagrangian submanifold of $(\M,\om_\C).$ 
\end{prop}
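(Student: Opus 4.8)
The plan is to exploit the Morse-theoretic properties of the moment map $H$ together with the Betti-number relation coming from the homology decomposition of the core (Lemma \ref{IntroHomologyDecompositionOfTheCore}, proved as Proposition \ref{HomologyDecompositionOfTheCore}). First I would recall that by Lemma \ref{thereisaNonExactstructure} the vector field $X_{\R_+}$ of the $\R_+$-part of $\Fi$ equals $\nabla H$; hence the $(t\fun\infty)$-attracting sets $\L_i$ are exactly the \emph{unstable} manifolds of the critical manifolds $\F_i$ for $H$, and the critical values of $H$ are $H(\F_i)$. Since all points of $\M$ flow (under $t\fun 0$) to $\M^\Fi\subset\M$, the function $H$ is bounded below and attains its minimum on some union of fixed components; the attracting set of a fixed component on which $H$ is minimal has trivial negative normal directions, i.e. it is open in $\M$, so by the Bia\l{}ynicki-Birula decomposition (Corollary \ref{BBDecompositionCSRs}) this component is a connected component of the minimum and the minimum is achieved along fixed components lying in $\L$. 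Moreover $\F_i\subset\M^\Fi\subset\L$ always, so every minimising component lies in the core.

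The key step is showing the minimum is achieved along a \emph{single} fixed component. Here I would use the Betti-number relation \eqref{BettiNumbers}: $b_0(\M)=\sum_i b_{-\mu_i}(\F_i)$. Since $\M$ is connected, $b_0(\M)=1$, and $b_{-\mu_i}(\F_i)=0$ unless $\mu_i=0$, in which case $b_0(\F_i)=1$ as $\F_i$ is connected. A fixed component $\F_i$ has $\mu_i=0$ (i.e. its attracting set $\L_i\fun\F_i$ has zero-dimensional fibres, so $\L_i=\F_i$ is open in $\M$) exactly when $T_{\F_i}\M$ has no negative weights, which by the gradient interpretation is exactly when $\F_i$ is a local — hence, by connectedness, global — minimum of $H$. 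Thus there is exactly one such $\F_i$, which I denote $\F_\Fi$; this is the unique minimising component.

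It then remains to treat the weight-1 case. When $\Fi$ is weight-1, by Lemma \ref{CoreIsIsotropic} (and Corollary \ref{CoreDecompositonWeight1}) the core is pure of dimension $\frac12\dim_\C\M$ with irreducible components the closures $\ol{\L_i}$. For the minimal component, $\L_i=\F_\Fi$ is already closed (being a connected component of $\M^\Fi$, which is compact) and open in $\M$ of complex dimension $\frac12\dim_\C\M$ by the isotropy/pairing argument in the proof of Lemma \ref{CoreIsIsotropic} (the space $\oplus_{k<0}H_k$ vanishes and $\oplus_{k>0}H_k$ is half-dimensional, forcing $\oplus_{k=0}H_k=T\F_\Fi$ to be half-dimensional too). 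Hence $\F_\Fi=\ol{\L_i}$ is one of the irreducible components of $\L$; being a smooth half-dimensional $\om_\C$-isotropic submanifold, it is a holomorphic Lagrangian of $(\M,\om_\C)$. The one point requiring care — and the main obstacle — is justifying that $H$ genuinely behaves like a Morse–Bott function whose unstable manifolds coincide with the $\L_i$ and whose index of $\F_i$ equals $\mu_i$; this is where one invokes that the critical set of $H$ is exactly $\M^\Fi$ (as $X_{S^1}=X_H$ vanishes precisely on fixed points) and that the negative eigenspace of the Hessian at $\F_i$ is $\oplus_{k<0}H_k$, matching the B-B weight decomposition \eqref{BBdecompWeightSpaces}, so that $\mu_i=\dim_\R\bigl(\oplus_{k<0}H_k\bigr)$ vanishes iff $\F_i$ is the minimum.
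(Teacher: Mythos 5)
Your proposal is correct and follows essentially the same route as the paper's proof: identify $\mathrm{Crit}(H)$ with the fixed locus $\bigsqcup_i\F_i$ inside the compact core to get existence of the minimum, deduce uniqueness from the Betti-number relation \eqref{BettiNumbers} (exactly one component has $\mu_i=0$, and $\mu_i=0$ characterises the minimising component), and in the weight-1 case use $\L_i=\F_i=\ol{\L_i}$ together with Corollary \ref{CoreDecompositonWeight1} and Lemma \ref{CoreIsIsotropic} to conclude it is a smooth irreducible component of the core, hence a holomorphic Lagrangian. The only blemish is the repeated aside that $\L_i=\F_i$ is ``open in $\M$'' --- it is closed and compact, not open (openness would hold for the $t\fun 0$ basin $\D_i$ when $\mu_i=0$) --- but this claim is never used, so the argument stands as is.
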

\begin{proof}
We claim that the global minimum of the moment map $H$ exists and is attained on the core $\L$. Firstly, we have
$$Crit(H)=Zeros(\nabla H)=Zeros(X_H)=Zeros(X_{S^1})=\M^{S^1} = \M^{\Fi} = \bigsqcup_{i} \F_i  \subset \L,$$
thus, if the minimum exists, it is in the core $\L.$
As the core $\L$ is proper (Lemma \ref{CoreIsADefRetr}), hence compact, the minimum of $H|_{\L}$ exists. It must be attained on a subset of $Crit(H),$ so by the equation above, the minimum is attained for a certain family of fixed components $\{\F_i\}_i.$ 

In fact, there is always a {single} such component. 
Indeed, as $\M$ is connected, equation \eqref{BettiNumbers} from Proposition \ref{HomologyDecompositionOfTheCore},
\begin{equation}\label{Bettinumberequation}
 b_k(\M) = \sum_{i \in A} b_{k-\mu_i}(\F_i),
\end{equation}
tells us that precisely one component can contribute to $b_0(\M)=1,$ and that component $\F_i$ must have $\mu_i=0$. Here, $\mu_i$ is the dimension of the bundle 
$\L_i\fun \F_i, \ x\mapsto \lim_{t\fun \infty} t\cdot x,$ which is zero precisely when there are no $\nabla H$ flowlines converging to $\F_i$ (recall that $\nabla H$ is the vector field of the $\R_+$-action). But that exactly happens when $\F_i$ is the minimum. This proves the first part of the proposition.

Assume now that $\Fi$ is a weight-1 action. 
By previous, we have that the dimension of bundle $\L_i\fun \F_i$ is zero, so $\L_i=\F_i.$
Moreover, as $\F_{i}$ is already a closed subvariety of $\M,$ we have that 
\begin{equation}\label{TheEpicenterOfThesis}
	\overline{\L_{i}}=\L_{i}=\F_{i}.
\end{equation}
Hence, together with Corollary \ref{CoreDecompositonWeight1}, we conclude that the irreducible component of the core $\overline{\L_{i}}$ is smooth. 
It is a Lagrangian submanifold of $(\M,\om_\C),$ due to Lemma \ref{CoreIsIsotropic}.
\end{proof}

\begin{rmk} We remark that, as the careful reader would notice, one does not need to introduce a moment map in the previous argument at all. 
Indeed, the Betti number equation \eqref{Bettinumberequation} already gives us the existence of a component $\F_i$ having $\mu_i=0,$ thus a component equal to its attracting set $\L_i,$ hence an irreducible component. 
However, the author firstly came up with the moment map viewpoint and only later learned of the algebraic-geometric one, so decided to stick to this first proof, thinking that it provides a richer
perspective to the geometry of this manifolds as well. Also, one could notice that the \eqref{Bettinumberequation} is precisely the condition for the moment map $H$ to be perfect, 
which is proved for $S^1$-actions on compact \KH manifolds in \cite{AtB83,Ki84}. 
Even in the non-compact case (such is ours), as Nakajima notices in \cite[p.63]{Nak99}, this would hold if $H$ was a proper function. 
This for example holds in the CSR case or for the Moduli space of Higgs bundles, but the issue is that we do not know whether we can make $H$ to be proper function for general SHS manifolds, hence cannot claim the equation \eqref{Bettinumberequation} purely from the moment map perspective and the Morse-Bott theory.
\end{rmk}

In particular, for the case of a Moduli space of Higgs bundles (Section \ref{HiggsModuli}), the component of its core that Proposition \ref{minimalcomp} recovers is the well-known moduli space of stable vector bundles \cite{AtB83}. In the case of $\Hilb^n(T^*\Sigma)$ (Section \ref{HilbSchemes}) this recovers the $\Sym^n(\Sigma),$ the biggest component in the decomposition \eqref{FixedPointsHilbTSigma}. 
Unlike these two cases, in the examples of CSRs one generally gets a family of different core components considering different actions, as we are going to see in  
Example \ref{DuVal_Weight1actions_TypeA}, and more generally for quiver varieties of type A in the forthcoming article \cite{FZ2}.

We also show a converse of Proposition \ref{minimalcomp}, that is, if an action has a fixed component, it must be a power of a weight-1 action. We first need a technical lemma, whose proof is due to Alexander Ritter and Kevin McGerty.

\begin{lm}\label{rootExtends}
	Consider a smooth irreducible complex algebraic variety $X$ with an algebraic $\C^*$-action $\Fi.$ If it has an $s$-th root $\psi$ on a Zariski open subset $U,$ then it extends to an holomorphic $s$-th root on the whole of $X.$
\end{lm}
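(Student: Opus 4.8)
The strategy is to prove the formally stronger statement that $\psi$ extends to an \emph{algebraic} $\C^*$-action on $X$ (algebraic $\Rightarrow$ holomorphic), by recognising that possessing an $s$-th root is precisely a descent datum for $\Fi$ along the $s$-th power map. Unwinding the hypothesis, one has a nonempty — hence, $X$ being irreducible, dense — Zariski-open $\Fi$-stable subset $U\subset X$, a $\C^*$-action $\psi\colon\C^*\times U\fun U$, and the relation $\Fi_t|_U=\psi_{t^s}$ for all $t\in\C^*$. Writing $\mu_s\subset\C^*$ for the group of $s$-th roots of unity, this relation gives $\Fi_\zeta|_U=\psi_{\zeta^s}=\psi_1=\mathrm{id}_U$ for every $\zeta\in\mu_s$. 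Since $\Fi_\zeta$ is an automorphism of $X$, its fixed locus $\{x\in X\mid\Fi_\zeta(x)=x\}=(\mathrm{id},\Fi_\zeta)^{-1}(\Delta_X)$ is Zariski-closed; containing the dense open $U$ it must be all of $X$, so $\Fi_\zeta=\mathrm{id}_X$ for every $\zeta\in\mu_s$. This — promoting the triviality of $\Fi_\zeta$ from $U$ to $X$ — is the one genuinely extra piece of information extracted from the data on $U$.

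Now set $q:=[s]\times\mathrm{id}_X\colon\C^*\times X\fun\C^*\times X$, $(t,x)\mapsto(t^s,x)$. This is a finite étale surjection, and in fact the geometric — hence categorical — quotient of $\C^*\times X$ by the free $\mu_s$-action $\zeta\cdot(t,x)=(\zeta t,x)$, as one sees affine-locally on $X$ from $\C[t^{\pm1}]^{\mu_s}=\C[t^{\pm s}]$ (so no quasi-projectivity of $X$ is needed). The previous paragraph says precisely that the morphism $\Fi\colon\C^*\times X\fun X$, $(t,x)\mapsto\Fi_t(x)$, is $\mu_s$-invariant, since $\Fi(\zeta t,x)=\Fi_\zeta(\Fi_t(x))=\Fi_t(x)$; hence it factors uniquely as $\Fi=\Psi\circ q$ for a morphism $\Psi\colon\C^*\times X\fun X$, i.e.\ $\Psi_{t^s}=\Fi_t$ for all $t$ (writing $\Psi_u:=\Psi(u,\cdot)$). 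Surjectivity of $t\mapsto t^s$ together with this identity and the group law of $\Fi$ then forces $\Psi_{uv}=\Psi_u\Psi_v$ and $\Psi_1=\mathrm{id}_X$, so $\Psi$ is an algebraic $\C^*$-action with $\Psi^s=\Fi$; and on $\C^*\times U$ one has $\Psi_{t^s}=\Fi_t|_U=\psi_{t^s}$, whence $\Psi|_{\C^*\times U}=\psi$, again by surjectivity of $[s]$. Thus $\Psi$ is the desired extension.

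The only delicate point is the factorization $\Fi=\Psi\circ q$: one must know that a $\mu_s$-invariant \emph{morphism} out of $\C^*\times X$ descends to an honest morphism out of the quotient $\C^*\times X$, not merely to a continuous map. This is the standard fact that a finite group acting on a scheme with an invariant affine open cover admits a categorical quotient through which every invariant morphism factors; all the remaining verifications are routine manipulations with the group law and the surjectivity of the power map.
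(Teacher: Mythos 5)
Your proof is correct, and its engine is the same as the paper's: the separatedness-plus-irreducibility argument that promotes the identity $\Fi_\zeta=\mathrm{id}$ (for $\zeta^s=1$), known a priori only on the dense open $U$, to all of $X$. The paper phrases this as closedness of the preimage of the diagonal under $(t,x)\mapsto(\Fi(t,x),\Fi(\eps t,x))$; you phrase it as closedness of the fixed locus of the automorphism $\Fi_\zeta$ — the same observation. Where you genuinely diverge is in producing the extension. The paper simply sets $\overline{\psi}(t,x):=\Fi(t^{1/s},x)$, well defined by the previous step and holomorphic because one may locally choose a holomorphic branch of the $s$-th root; this is the minimal-machinery route and matches the lemma's merely holomorphic conclusion. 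You instead descend the $\mu_s$-invariant morphism $\Fi$ along the finite \'etale quotient $q=[s]\times\mathrm{id}$, which buys a stronger conclusion — the root is an \emph{algebraic} $\C^*$-action, with the group law, the identity $\Psi^s=\Fi$, and the agreement with $\psi$ on $U$ all verified explicitly (points the paper leaves implicit) — at the cost of invoking the standard categorical-quotient/descent fact for finite group actions admitting invariant affine covers. As you note, that fact could itself be replaced by exactly the paper's local-branch argument, since $q$ admits local holomorphic sections; so the two constructions are two dressings of the same well-definedness statement, yours algebraic and slightly stronger, the paper's analytic and more economical.
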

\begin{proof}
	Firstly, as an algebraic variety $X$ is a separable scheme, the diagonal $\Delta\subset X\times X$ is a closed subvariety. Then, seeing the actions as functions $\Fi,\psi:\C^*\times X \fun X,$ from the condition that $\Fi=\psi^s$ holds on $U,$ we have
	that $\Fi(\eps t,x)=\Fi(t,x)$ holds for $x\in U$, for any $s$-th root of unity $\eps$. In other words,
	the image $F(\C^*\times U)$ of the morphism 
	$$F:\C^* \times X \fun X\times X,\ (t,x)\mapsto (\Fi(t,x),\Fi(\eps t,x))$$
	lies in the diagonal $\Delta.$ As $X$ is irreducible, $\C^*\times X=\ol{\C^*\times U}.$ Thus, by continuity of $F,$  $$F(\C^*\times X)=F(\ol{\C^*\times U})\subset \ol{F(\C^*\times U)}\subset \ol{\Delta} = {\Delta}.$$ Hence, 
	$\Fi(\eps t,x)=\Fi(t,x)$ holds everywhere on $X.$
	Thus, the map $\ol{\psi}(t,x):=\Fi(t^{1/s},x)$ is well-defined for all $x\in X.$
	It is a holomorphic map, as this is a local condition, and one can always locally choose a holomorphic branch of the $s$-th root.
	Furthermore, it agrees with $\psi$ on $U,$ thus it is a holomorphic extension of $\psi$ to $X.$
\end{proof} 

\begin{prop}\label{HavingAMinimalCompMeansWeight1Action}
	Consider a weight-$s$ SHS $(\M,\Fi)$ and suppose that the action $\Fi$ fixes a Lagrangian submanifold. Then there is a weight-1 conical action $\psi$ such that $\Fi=\psi^s.$ 
\end{prop}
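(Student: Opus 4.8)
The plan is to reduce everything to the local structure of the action near the fixed Lagrangian, obtain a candidate $s$-th root $\psi$ on a dense open subset, and then invoke Lemma \ref{rootExtends} to spread it over all of $\M.$ So first I would let $L = \F$ be the given $\Fi$-fixed Lagrangian component of the core. By Lemma \ref{CoreIsIsotropic} and the weight analysis inside its proof (the pairing \eqref{PairingByOmC} $\om_\C: H_k \otimes H_{s-k}\fun \C$), the tangent bundle along $L$ decomposes as $T_L\M = H_0 \oplus (\oplus_{k\neq 0} H_k)$ with $H_0 = TL$; since $L$ is fixed and Lagrangian and isotropic directions carry negative weights, the normal bundle is $\nu_L = \oplus_{k<0} H_k = T_L\mathcal{L}_i$, and by the nondegenerate pairing these weights $k$ satisfy $s - k > 0$, i.e. $k < s$; combined with $k < 0$ this gives $k \in \{-s+1, \dots, -1\}$... wait, more carefully: $\nu_L \cong (\oplus_{k>0} H_k)^*$ via $\om_\C$, so the weights appearing in $\nu_L$ together with their $\om_\C$-duals exhaust all nonzero weights. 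The key point I want is that \emph{$s$ divides every weight $k$ occurring in $T_L\M$}; equivalently the action factors through a genuine $s$-th power near $L$. This is where the Lagrangian hypothesis does real work: because $L$ is Lagrangian and $\om_\C$ has pure weight $s$, the cotangent-bundle-like local model $\M \supset \mathcal{D}_L \cong$ (a neighbourhood modeled on $T^*L$ fibered over $L$) forces the fiber weights to be constant equal to $-s$ on the symplectic-conormal part (since $\om_\C$ pairs the weight-$0$ base directions with weight-$s$ directions, but base directions are weight $0$, contradiction unless...). Let me instead argue: the attracting set $\mathcal{D}_L = \{x : \lim_{t\to 0} t\cdot x \in L\}$ retracts $\C^*$-equivariantly onto $L$ and is open dense in $\M$ (it is the big \Bb cell, since $L$ is the minimal component, cf. proof of Proposition \ref{minimalcomp}); on it the action is conjugate to the linear action on the total space of $\nu_L = \oplus_{k>0}H_k^{\vee}$...

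The cleanest route: since $L$ is the minimum of $H$ (Proposition \ref{minimalcomp}), $\mathcal{D}_L$ is Zariski-open and dense. Moreover $\mathcal{D}_L$ is $\C^*$-equivariantly isomorphic, via \BB (Proposition \ref{LemmaBBAreComplexVectorBundles}), to the total space of the vector bundle $N := \oplus_{k>0} H_k$ over $L$ with its linear fiberwise $\C^*$-action of weights $\{k > 0\}$. On $L$ itself the action is trivial. I claim all the weights $k$ occurring in $N$ are positive multiples of $s$. Indeed, $\om_\C$ restricted to $\mathcal{D}_L$ must be $\om_\C$-weight $s$; writing $\om_\C$ in the bundle coordinates, the zero section $L$ is Lagrangian, so $\om_\C$ pairs $TL$ (weight $0$) with $N$ nondegenerately, forcing $N \cong T^*L$ and every fiber weight equal to $0 + s = s$. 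Wait — that would say $N = H_s$ only, and $\mathcal{D}_L \cong T^*L$ with the weight-$s$ fiber scaling. Hence on the dense open set $\mathcal{D}_L \cong T^*L$ the action is literally $(q,p) \mapsto (q, t^s p)$, which is manifestly the $s$-th power of $\psi_0: (q,p)\mapsto (q, t\, p)$. So define $\psi$ on $U := \mathcal{D}_L$ by this formula. Then $\Fi = \psi^s$ on the dense open $U$.

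Now apply Lemma \ref{rootExtends} to $X = \M$ (smooth irreducible, as an SHS manifold is connected and smooth) with the algebraic $\C^*$-action $\Fi$: since $\Fi$ admits the $s$-th root $\psi$ on the Zariski-open dense $U = \mathcal{D}_L$, it extends to a holomorphic $s$-th root, which I still call $\psi$, on all of $\M$; concretely $\psi_t(x) = \Fi_{t^{1/s}}(x)$ is well-defined globally. It remains to check $\psi$ is (i) an \emph{algebraic} $\C^*$-action and (ii) \emph{conical of weight $1$}. For (ii): on $U$ we have $\psi_t^* \om_\C = t \,\om_\C$ by construction, and $U$ is dense, so by continuity $\psi_t^*\om_\C = t\,\om_\C$ on all of $\M$; thus $\psi$ has $\om_\C$-weight $1$, and since $\M$ is already semiprojective with fixed locus in the core and all limits existing (these are properties of $\Fi = \psi^s$ that transfer immediately to $\psi$, as $\lim_{t\to 0}\psi_t(x) = \lim_{s\to 0}\Fi_s(x)$), $\psi$ is conical. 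For (i), algebraicity: $\psi$ is a priori only holomorphic; but a holomorphic $\C^*$-action on a quasi-projective variety whose $s$-th power is algebraic is itself algebraic — one can see this because $\psi$ is determined on $U \cong T^*L$ where it is visibly algebraic, and the extension is unique, or invoke that $\mathrm{Aut}(\M)$ carries a scheme structure in which $\Fi$ traces an algebraic one-parameter subgroup whose $\C^*$ must have an algebraic $s$-th root.

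The main obstacle I anticipate is the claim that $N = \oplus_{k>0}H_k$ reduces to the single weight $s$ (i.e. $\mathcal{D}_L \cong T^*L$ with pure weight-$s$ fiber action) rather than a general positively-weighted bundle; this requires genuinely using that $L$ is \emph{Lagrangian} together with $\om_\C$ having pure weight $s$, via the pairing $\om_\C : H_k \otimes H_{s-k} \to \C$ from the proof of Lemma \ref{CoreIsIsotropic} — the base $TL = H_0$ can only pair with $H_s$, and nondegeneracy of $\om_\C$ then kills all other weight spaces along $L$. If instead one only knew $L$ isotropic this would fail, so the Lagrangian hypothesis in Proposition \ref{HavingAMinimalCompMeansWeight1Action} is exactly what makes it go through. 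A secondary technical point is promoting the holomorphic root to an algebraic one, but this is standard given that the root is explicit on the dense cell.
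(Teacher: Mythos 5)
Your proposal follows essentially the same route as the paper's proof: use the weight-$s$ pairing $\om_\C\colon H_k\otimes H_{s-k}\to\C$ together with the Lagrangian hypothesis to conclude $T_L\M=H_0\oplus H_s$, define an $s$-th root of $\Fi$ fibrewise on the attracting set of $L$ (where the action has pure fibre weight $s$), extend it to all of $\M$ via Lemma \ref{rootExtends}, and then verify that the extension has $\om_\C$-weight $1$ and is conical because its fixed locus and $(t\to 0)$-limits coincide with those of $\Fi$.

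One step is mis-justified, though the conclusion survives. You claim $\mathcal{D}_L$ is Zariski-open and dense ``since $L$ is the minimum of $H$ (Proposition \ref{minimalcomp})''. Minimality is not a hypothesis of the statement (only that $\Fi$ fixes \emph{some} Lagrangian), and Proposition \ref{minimalcomp} does not assert that every fixed Lagrangian is the moment-map minimum, so as written this appeal is unfounded (and any attempt to prove $L$ is the minimum would itself use the density you are trying to establish). The paper gets density directly from what you have already shown: since $T_L\M=H_0\oplus H_s$, the attracting set $\mathcal{D}_L$ is a locally closed subvariety of maximal dimension in the irreducible $\M$, hence its closure is all of $\M$ and it is open therein. (Your first paragraph also briefly confuses the signs of the weights and the $(t\to\infty)$ versus $(t\to 0)$ sets before the ``cleanest route'' paragraph corrects this; only the $(t\to 0)$-attracting set, with all fibre weights equal to $s>0$, is relevant.) On the remaining point you are, if anything, more careful than the paper: Lemma \ref{rootExtends} only produces a \emph{holomorphic} root, and the paper leaves the algebraic upgrade implicit, whereas you flag it; a clean way to close it is to note that $\Fi(\eps t,x)=\Fi(t,x)$ for all $s$-th roots of unity $\eps$ (established in the proof of Lemma \ref{rootExtends}) says that $\Fi$ is invariant under $\mu_s$ acting on the $\C^*$-factor, hence descends algebraically along the quotient $\C^*\to\C^*/\mu_s\cong\C^*$, $t\mapsto t^s$, exhibiting $\psi$ as an algebraic action.
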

\begin{proof}
	Denote by $\F$ the $\Fi$-fixed Lagrangian 
    Notice that its weight-decomposition \eqref{EqnWeightDecomp} 
	$$T_\F \M= H_0 \oplus H_k$$
	consist just of spaces $H_0= T\F$ and its $\om_\C$-dual $H_k.$ 
	Indeed, as $\F$ is a Lagrangian, $\dim_{\C}H_0={1 \over 2} \dim_{\C}\M$, therefore by the non-degenerate pairing \eqref{PairingByOmC} also $\dim_{\C} H_k={1 \over 2}\dim_{\C}\M$, 
	so $H_0\oplus H_k$ already has the full dimension, and there cannot be any more weight spaces in the decomposition.
	Thus, on the corresponding attracting bundle $\D\fun\F$ (recall \BB pieces, \eqref{BBdecompWeightSpaces}) the action acts by weight $k,$ hence one can define the $k$-th root of the action $\Fi$ on it, call it $\psi.$ The bundle $\D$ has maximal dimension in $\M,$ as its tangent space at $\F$ is $H_0 \oplus H_k.$ 
	Thus, as $\D$ is a locally closed subset of maximal dimension in the irreducible $\M,$ its closure $\ol{\D}$ must be equal to $\M,$ and $\D$ is open in it.\footnote{Recall that locally closed means open in its closure.}
	Hence, one can use Lemma \ref{rootExtends} to conclude that $\psi$ is extendable from $\D$ to an action on the whole $\M,$ such that $\Fi=\psi^k.$ Thus, $\psi$ is a weight-1 action. Recall that the conical property for the action means that under the limit $t\fun 0$ the $\C^*$-action contracts $\M$ to the compact fixed locus. As the fixed loci of $\psi$ and $\Fi$ are equal, and $\Fi=\psi^k$ is conical, $\psi$ must be conical as well.
\end{proof}

\begin{rmk}\label{AmongWeight2OnlySeekForEven}
In particular, considering a weight-2 action that has a fixed component, from the last Proposition we get that it \textbf{has} to be an even action (a square of an action). We will have this in mind while searching for weight-2 actions that yield minimal components, in Section \ref{SubsectionConstructingWeight1ConicalActions}.
\end{rmk}

\subsection{Minimal components}\label{SectionMinimalComponents}
The smooth core component from Proposition \ref{minimalcomp}, obtained as the minimum locus of an $S^1$-moment map, is fixed under a conical weight-1 action. Moreover, by the Betti number argument in the proof, any such action will have a single minimal component. That motivates the following definition: 

\begin{de} Given an SHS $(\M,\Fi)$, a \textbf{minimal component} $\F_\phi$ of its core $\L_\Fi$
is the component that is fixed under a weight-1 conical action $\phi$ that commutes with $\Fi.$\footnote{We need the commuting condition in order to ensure that these actions have the same core, recall Lemma \ref{CommutingActionsSameCore}.}
We denote by $\text{Con}_1(\M,\Fi)$ the set of all weight-1 conical actions commuting with $\Fi$ and by
$$\mathrm{Min}(\M,\Fi):=\{\F_\phi \mid \phi\in \Con1\}.$$ the collection of all minimal components in the core $\L_\Fi.$
\end{de}

In principle, given an arbitrary SHS, we can have many weight-1 conical actions. We show that the different actions yield different minimal components, when they commute.

\begin{prop}\label{DifntActionsDiffntMinComp}
Let $\M$ be a weight-1 SHS. Different commuting weight-1 conical actions on it yield different minimal components of its core.
\end{prop}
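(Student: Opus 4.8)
The plan is to argue by contradiction. Suppose $\phi^1\neq\phi^2$ are commuting weight-$1$ conical actions sharing the same minimal component $\F:=\F_{\phi^1}=\F_{\phi^2}$; I will show this forces $\phi^1=\phi^2$. Pick a point $f\in\F$: since $\F$ is a connected component of both $\M^{\phi^1}$ and $\M^{\phi^2}$ it is fixed pointwise by both actions, so $f$ is a common fixed point. The induced linearisations $d\phi^1_t|_f,d\phi^2_t|_f$ on $T_f\M$ commute (because $\phi^1,\phi^2$ do) and are semisimple, hence simultaneously diagonalisable; this yields a joint weight decomposition $T_f\M=\bigoplus_{(a,b)}H_{(a,b)}$, where $a$ is the $\phi^1$-weight and $b$ the $\phi^2$-weight.

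Now I would use that $\F$ is the minimal component of $\phi^1$: by Proposition~\ref{minimalcomp} (the minimal component has attracting bundle of fibre dimension $0$) together with the non-degenerate pairing $\om_\C:H_k\otimes H_{1-k}\fun\C$ from the proof of Lemma~\ref{CoreIsIsotropic}, the $\phi^1$-weights on $T_f\M$ all lie in $\{0,1\}$ and the weight-$0$ space is exactly $T_f\F$; symmetrically for $\phi^2$. Equating the two resulting descriptions $\bigoplus_b H_{(0,b)}=T_f\F=\bigoplus_a H_{(a,0)}$ and using linear independence of distinct joint weight spaces forces $H_{(0,1)}=H_{(1,0)}=0$, so $T_f\M=H_{(0,0)}\oplus H_{(1,1)}$ with $H_{(0,0)}=T_f\F$. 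Hence $d\phi^1_t|_f=d\phi^2_t|_f$: the identity on $T_f\F$ and scaling by $t$ on $H_{(1,1)}$. Next I set $\psi:=\phi^1(\phi^2)^{-1}$, which is an algebraic $\C^*$-action since the two commute, and which fixes $f$ with $d\psi_t|_f=\mathrm{id}$ for all $t$. The fixed locus $\M^\psi$ is a smooth closed subvariety (\cite[Lem. 5.11.1]{CGi97}) whose tangent space at $f$ is the $d\psi$-invariant subspace $(T_f\M)^{d\psi}=T_f\M$; thus $\M^\psi$ has full dimension at $f$, so it contains a Zariski-open neighbourhood of $f$, and being closed in the irreducible variety $\M$ it equals $\M$. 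Therefore $\psi$ is trivial and $\phi^1=\phi^2$, contradicting our assumption. (Alternatively, linearising $\psi$ equivariantly near $f$ gives $\psi\equiv\mathrm{id}$ on a Zariski-open set, and one finishes with the identity-theorem argument of Lemma~\ref{rootExtends}.)

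The crux — and the only place the hypotheses are genuinely needed — is the linear-algebra step: translating ``$\F$ is a minimal component'' into ``only weights $0,1$ occur on $T_f\M$ and the weight-$0$ part is all of $T_f\F$'' (this uses that $\M$ is weight-$1$ and the symplecticity of $\om_\C$), and passing to a joint diagonalisation (this uses that $\phi^1,\phi^2$ commute). Without commutativity the complementary weight-$1$ subspaces of $T_f\M$ for the two actions could differ, so one should not expect any such rigidity for non-commuting actions; commutativity is precisely what identifies these subspaces and pins down the two linearisations.
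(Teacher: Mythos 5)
Your argument is correct, but it proves the rigidity by a genuinely different route than the paper. The paper's proof is differential-geometric: it averages the K\"ahler metric of Lemma \ref{thereisaNonExactstructure} over $S^1\times S^1$, observes at a point of the common minimal component that $T_x\M=H_0\oplus H_1$ with $H_1=H_0^{\perp_g}$ (so the two linearised $S^1$-actions on the normal bundle coincide), and then globalises via the equivariant tubular neighbourhood theorem, analytic continuation of the $S^1$-actions, and uniqueness of the complexification. You instead stay entirely algebraic: the joint $(\C^*)^2$-weight decomposition at one common fixed point, plus minimality (no negative weights, i.e.\ $\mu=0$) and the weight-one pairing $\om_\C:H_k\otimes H_{1-k}\fun\C$, pin both differentials down to ``identity on $T_f\F$, scaling by $t$ on the complement''; you then globalise through the auxiliary action $\psi=\phi^1(\phi^2)^{-1}$, whose fixed locus is smooth with tangent space the invariants, hence full-dimensional at $f$, hence all of the irreducible $\M$. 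What each approach buys: yours avoids the invariant metric, the tubular neighbourhood theorem and any appeal to analytic continuation, reducing the statement to a linearisation at a single point plus the standard fixed-locus facts already used in the paper (smoothness of $\M^\psi$ and $T_f(\M^\psi)=(T_f\M)^{d\psi}$, the same fact behind $H_0(x)=T_x\F_i$); the paper's metric argument, besides proving the proposition, establishes the orthogonality $H_1=H_0^{\perp}$ that recurs later in the Floer-theoretic computations (cf.\ Lemma \ref{JSendsL1toL2}), which is why it is phrased that way. Two small points to tighten: spell out that ``weights lie in $\{0,1\}$'' follows because $\mu=0$ kills $\oplus_{k<0}H_k$ (via $T_{\F}\L_{i}=\oplus_{k\le 0}H_k$) and then $H_k\iso H_{1-k}^*$ kills $k\ge 2$; and give a reference for the tangent-space-equals-invariants statement for $\M^\psi$ (it holds for reductive group actions on smooth varieties and is the fact already invoked for $H_0(x)=T_x\F_i$). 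Neither is a gap in substance.
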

\begin{proof} Firstly, recall that by Lemma \ref{CommutingActionsSameCore} we have that commuting conical actions yield the same core. 
Let us pick a \KH metric $g$ on $\M$ from Lemma \ref{thereisaNonExactstructure}. 
Having two commuting conical actions $\varphi^1$ and $\Fi^2$, we can integrate over their $S^1$-actions to get a \KH metric
$$\widetilde{g}:=\int_{S^1\times S^1} (\Fi_t^1)^*(\Fi_s^2)^* g \,dt\,ds$$
that is $S^1$-invariant for both actions (since they commute).
Now, let us assume that the actions $\varphi^1$ and $\Fi^2$ have weight 1 and that they have the same minimal component $\F_{min}.$ 
Recall the weight-decomposition of a fixed point $x\in \F_{min}$ 
\begin{equation}\label{weightdecomp}
T_x \M= \bigoplus_{k\in \Z} H_k,
\end{equation}
where $H_0=T_x \F_{min}$ does not depend on the action.
As the $S^1$-action preserves the Hermitian structure $\langle \cdot ,\cdot \rangle=\widetilde{g}(\cdot,\cdot)-i\widetilde{g}(\cdot,I\cdot),$ the weight decomposition (\ref{weightdecomp}) is $\widetilde{g}$-orthogonal.
The symplectic form $\om_\C$ has weight 1, so as in Lemma \ref{CoreIsIsotropic} it induces a non-degenerate pairing
$$\om: H_k \oplus H_{1-k} \fun \C$$
between the weight spaces $H_k.$ As $T_x \F_{min}=H_0$ and its dimension (being a Lagrangian) is half of the dimension of $\M,$ we deduce that $$T_x \M= H_0 \oplus H_1.$$ As this is an orthogonal decomposition, we have that $H_1=H_0^{\perp},$ so it is independent of the action. Hence, two actions $\varphi_1$ and $\varphi_2$ induce the same $S^1$-actions on the normal bundle $N\F_{min}$ of $\F_{min}.$ 

By the equivariant tubular neighbourhood theorem for isometric actions of compact Lie groups \cite[Thm. 2.2, Ch. VI]{Bre72}, there is an $S^1$-invariant tubular neighbourhood $\mathcal{N}\F_{\min}$ of $\F_{\min}$ given 
by the $S^1$-equivariant exponential map
$$\phi: U \fja{\iso} \mathcal{N}\F_{\min},$$
where $U\subset N\F_{\min}$ is a neighbourhood of the zero section.
Hence, the restrictions to $S^1$ of the two actions $\varphi^1$ and $\Fi^2$ agree on the open subset $\mathcal{N}\F_{min}$ of $\M.$ As they act holomorphically, due to analytic continuation they need to agree on the whole of $\M.$ As $\C^*$ is the complexification of $S^1,$ there is a unique holomorphic extension of a holomorphic $S^1$-action, hence the $\C^*$-actions $\varphi^1$ and $\Fi^2$ agree as well, and the proposition is proved.
\end{proof}



Thus, composing Propositions \ref{minimalcomp} and \ref{DifntActionsDiffntMinComp} we have the following:
\begin{thm}\label{MinimalComponentsTheorem}
	Given a weight-1 SHS $(\M,\Fi),$ there are at least $N$ smooth irreducible components of its core, 
	where $N$ is the maximal number of commuting weight-1 conical actions on $\M.$
\end{thm}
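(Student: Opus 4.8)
The plan is to combine the two propositions that immediately precede the statement. The starting point is Proposition \ref{minimalcomp}: given any weight-1 conical action $\phi$ on $\M$, the minimum locus of the $S^1$-moment map $H$ of $\phi$ (with respect to the $S^1$-invariant \KH structure from Lemma \ref{thereisaNonExactstructure}) is a single $\Fi$-fixed component $\F_\phi$, and since $\phi$ is weight-1 this component coincides with its own $(t\fun\infty)$-attracting set, hence is an irreducible — and smooth — component of the core. So each weight-1 conical action $\phi$ produces a smooth irreducible core component $\F_\phi$. First I would fix a maximal collection $\phi^1,\dots,\phi^N$ of pairwise commuting weight-1 conical actions on $\M$, of size $N$ by hypothesis; note this collection is nonempty by assumption (since $\M$ is a weight-1 SHS, there is at least one such action), and all of these actions share the same core by Lemma \ref{CommutingActionsSameCore}, so it makes sense to compare their minimal components inside one fixed variety $\L$.

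Next I would apply Proposition \ref{DifntActionsDiffntMinComp}: any two \emph{distinct} commuting weight-1 conical actions yield \emph{distinct} minimal components. Applied to the pairs $\phi^a,\phi^b$ with $a\neq b$ — which commute with each other and are distinct — this gives $\F_{\phi^a}\neq\F_{\phi^b}$. Therefore $\F_{\phi^1},\dots,\F_{\phi^N}$ are $N$ pairwise distinct smooth irreducible components of the core $\L$ of $\M$, which is exactly the claimed bound.

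There is essentially no obstacle left at the level of the theorem itself, since all the work is done in the two cited propositions; the only point to be careful about is the bookkeeping of the commuting condition. The collection realizing the maximum $N$ consists of actions that commute pairwise, so for each pair the hypotheses of both Proposition \ref{minimalcomp} and Proposition \ref{DifntActionsDiffntMinComp} are met, and Lemma \ref{CommutingActionsSameCore} guarantees we are genuinely comparing components of one and the same core rather than of different cores. Assembling these observations yields the stated lower bound of $N$ smooth irreducible core components.
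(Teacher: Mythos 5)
Your proposal is correct and is essentially the paper's own argument: the theorem is obtained by composing Proposition \ref{minimalcomp} (each weight-1 conical action yields a smooth minimal core component) with Proposition \ref{DifntActionsDiffntMinComp} (distinct commuting weight-1 conical actions yield distinct minimal components), with Lemma \ref{CommutingActionsSameCore} ensuring all these are components of one and the same core. Your bookkeeping of the pairwise-commuting hypothesis matches what the paper implicitly relies on.
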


Now let us give an example of how this theorem can be applied in practice.

\begin{ex}\label{DuVal_Weight1actions_TypeA}
	Consider the minimal resolution of Du Val singularity of type $A_{n}$ (Example \ref{DuValResolutionsExampleSHS}). 
	It is given as the minimal resolution
	$$\pi_{\Z/(n+1)}: X_{\Z/(n+1)} \fun \C^2/\Z/(n+1).$$
	The quotient singularity $\C^2/\Z/(n+1)$ is isomorphic to subvariety $XY=Z^{n+1}$ of $\C^3,$ given by the categorical quotient map
	$\C^2 \fun \C^3, (z_1,z_2)\mapsto (z_1^{n+1},z_2^{n+1}, z_1z_2).$
	The standard action\footnote{The one coming from weight-1 dilation in $\C^2.$} in these coordinates is hence equal to $$t\cdot (X,Y,Z)=(t^{n+1} X, t^{n+1} Y, t^2 Z).$$
	However, it is a weight-2 action and we are rather looking for weight-1 actions. It is easy to see that its square root exists iff $n$ is odd.
	Moreover, $n$ different actions
	$$t\cdot_k (X,Y,Z):=(t^{k} X, t^{n+1-k} Y, t Z), \ k=1,\dots,n $$ 
	are all weight-1 and lift to the resolution. They all commute and are different, hence produce exactly $n$ different core components on $X_{\Z/(n+1)}.$
	As we know that the core $\pi_{\Z/(n+1)}^{-1}(0)$ is an $A_n$ Dynkin tree of spheres, we see that this method exhaust \textbf{all} core components in this case.
\end{ex}

At the end of this section, we give an interesting proposition showing how algebraic condition on actions have geometric consequence on their corresponding minimal components.
\begin{prop} Consider an SHS $\M$ with two different commuting conical weight-1 actions $\Fi^1, \Fi^2.$
	If their composition is an even action, minimal components $\Fminn, \Fminnn$ are disjoint.
\end{prop}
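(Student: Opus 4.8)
The plan is to combine the weight decomposition at fixed points (Equation \eqref{PairingByOmC} and the Lagrangian analysis from Proposition \ref{HavingAMinimalCompMeansWeight1Action}) with the structure of $\Bb$ attracting bundles. First I would set up notation: let $\psi:=\Fi^1\circ\Fi^2$, which by hypothesis is even, say $\psi=\Fi^0\circ\Fi^0$ for some conical $\C^*$-action $\Fi^0$; note $\psi$ has weight $2$. Suppose for contradiction that $\Fminn\cap\Fminnn\neq\emptyset$, and pick a point $x$ in the intersection. Since $\Fminn$ is fixed by $\Fi^1$ and $\Fminnn$ is fixed by $\Fi^2$, the point $x$ is fixed by both $\Fi^1$ and $\Fi^2$, hence by $\psi$ and, since the fixed loci of $\psi$ and $\Fi^0$ coincide (as in the last paragraph of the proof of Proposition \ref{HavingAMinimalCompMeansWeight1Action}), also by $\Fi^0$.

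Next I would analyse the tangent space $T_x\M$ under the various actions. Because $\Fminn$ is a minimal component of a weight-1 action $\Fi^1$, the argument in Proposition \ref{HavingAMinimalCompMeansWeight1Action} gives the $\Fi^1$-weight decomposition $T_x\M=H_0^{(1)}\oplus H_1^{(1)}$ with $H_0^{(1)}=T_x\Fminn$; similarly $T_x\M=H_0^{(2)}\oplus H_1^{(2)}$ with $H_0^{(2)}=T_x\Fminnn$ for $\Fi^2$. Since the two actions commute, all these weight spaces are simultaneously invariant, so $T_x\M$ decomposes into joint weight spaces $V_{a,b}$ where $a\in\{0,1\}$ is the $\Fi^1$-weight and $b\in\{0,1\}$ is the $\Fi^2$-weight. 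Minimality of $\Fminn$ forces $\mu_i=0$ for that component, i.e. there are no negative $\Fi^1$-weights — consistent with weights in $\{0,1\}$ — and likewise for $\Fi^2$. Then the $\psi=\Fi^1\Fi^2$-weight on $V_{a,b}$ is $a+b\in\{0,1,2\}$, so the $\Fi^0$-weight on $V_{a,b}$ is $(a+b)/2\in\{0,1/2,1\}$; since $\Fi^0$ is an honest $\C^*$-action this weight must be an integer, which kills $V_{1,0}$ and $V_{0,1}$. Hence $T_x\M=V_{0,0}\oplus V_{1,1}$, so $H_0^{(1)}=V_{0,0}=H_0^{(2)}$, i.e. $T_x\Fminn=T_x\Fminnn$ inside $T_x\M$.

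Finally I would upgrade this infinitesimal coincidence to an actual coincidence of components, which is the main obstacle. The cleanest route: both $\Fminn$ and $\Fminnn$ are smooth irreducible core components through $x$ of the same dimension $\tfrac12\dim\M$, and they are $\om_\C$-Lagrangian by Lemma \ref{CoreIsIsotropic}. By Proposition \ref{HavingAMinimalCompMeansWeight1Action} applied to $\Fi^0$ (whose square is $\psi$, conical of weight $2$), $\Fminn$ is the closure of the $(t\to\infty)$-attracting set $\D$ of $x$'s fixed component under $\Fi^0$ — indeed that proof shows the attracting bundle over the fixed component has full dimension, so its closure is all of $\M$ and the relevant component is recovered as a single fixed component of $\Fi^0$. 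But the $\Fi^0$-fixed component through $x$ is determined by $x$ alone, and since $T_x\Fminn=V_{0,0}=T_x\Fminnn$ equals the $0$-weight space $H_0$ of $\Fi^0$ at $x$ (the joint $0$-weight space), both $\Fminn$ and $\Fminnn$ equal the connected component of $\M^{\Fi^0}$ containing $x$; being smooth connected closed subvarieties with the same tangent space at $x$ and the same dimension, they coincide on a neighbourhood of $x$ and hence, being irreducible, are equal. This contradicts $\Fminn\neq\Fminnn$ (which holds by Proposition \ref{DifntActionsDiffntMinComp}, since $\Fi^1\neq\Fi^2$). The subtle point to be careful about is that a priori $x$ could lie in $\Fi^1$- and $\Fi^2$-fixed components larger than $\Fminn,\Fminnn$; this is handled by noting $\Fminn,\Fminnn$ are themselves entire connected components of $\M^{\Fi^1},\M^{\Fi^2}$ (they are minimal components, hence attain the minimum of the moment map on a whole fixed component), so membership of $x$ pins them down.
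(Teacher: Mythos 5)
Your linearisation step is correct, and it is a genuinely different route from the paper's: the paper constructs an $S^1\times S^1$-invariant K\"ahler form with moment maps $H_1,H_2$, notes that $H_1+H_2$ is a moment map for $\Fi^1\Fi^2$, identifies $\Fminn\cap\Fminnn$ (if non-empty) with the minimal component of the composed action, and then uses that the minimal component of its weight-1 square root is an irreducible component of the core, which cannot be a proper intersection of two half-dimensional irreducible components. Your computation that at $x\in\Fminn\cap\Fminnn$ all $\Fi^1$- and $\Fi^2$-weights lie in $\{0,1\}$ (because $x$ sits in a Lagrangian fixed component of a weight-1 action), that the joint weight spaces $V_{a,b}$ make sense since the actions commute, and that integrality of the weights of the square root $\Fi^0$ kills $V_{1,0}$ and $V_{0,1}$, so $T_x\Fminn=V_{0,0}=T_x\Fminnn$, is sound.

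The gap is in the globalisation. The claim that two smooth connected closed subvarieties of the same dimension with the same tangent space at $x$ ``coincide on a neighbourhood of $x$'' is false: $\{y=0\}$ and $\{y=x^2\}$ in $\C^2$ meet only at the origin yet satisfy all of these conditions there, so tangency at a point proves nothing about local coincidence. Likewise, the assertion that $\Fminn$ (or $\Fminnn$) equals the connected component of $\M^{\Fi^0}$ through $x$ is exactly what needs proof and is not justified: $\Fminn$ is a fixed component of $\Fi^1$, it is merely \emph{invariant} under $\Fi^2$ and $\Fi^0$, not a priori pointwise fixed by them, and Proposition \ref{HavingAMinimalCompMeansWeight1Action} (whose hypothesis is that the action already fixes a Lagrangian) cannot be ``applied to $\Fi^0$'' in the way you indicate. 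Your approach can be repaired without the paper's moment-map step: since $T_x\Fminn=V_{0,0}$ consists entirely of $\Fi^2$-weight-$0$ vectors, the fixed locus of $\Fi^2|_{\Fminn}$ — a smooth subvariety whose tangent space at $x$ is the weight-$0$ part of $T_x\Fminn$ — is full-dimensional at $x$, hence open in $\Fminn$; being also closed and $\Fminn$ connected, $\Fi^2$ fixes $\Fminn$ pointwise, so $\Fminn$ lies in the $\Fi^2$-fixed component through $x$, namely $\Fminnn$, and equality follows since both are irreducible, closed and half-dimensional; Proposition \ref{DifntActionsDiffntMinComp} then contradicts $\Fi^1\neq\Fi^2$. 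As written, however, the final step of your proof does not go through.
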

\begin{proof}
	Given two commuting weight-1 conical actions $\Fi^1$, $\Fi^2,$ the composition $\Fi^{12}_t:=\Fi^1_t \circ \Fi_t^2$ is a weight-2 conical action. 
	Assuming that the intersection $\Fminn \cap \Fminnn$ is non-empty, let us show that  
	$$\Fminn \cap \Fminnn=\F_{\Fi^{12}},$$ where $\F_{\Fi^{12}}$ is the component of the fixed locus of $\Fi^{12}$ on which the minimum is attained, 
	well-defined by the first part of Proposition \ref{minimalcomp}. 
	
	Firstly, notice that the completion Lemma \ref{CompletionLemma} holds for any algebraic group action, as the references used therein apply verbatim. 
	In particular, it holds for the $\C^*\times \C^*$-action given by the product $\Fi^1 \times \Fi^2.$ Then, like in the Lemma \ref{thereisaNonExactstructure}, we conclude that there is 
	a \KH structure on $\M$ with the \KH form that is $S^1\times S^1$-invariant, which has moment maps $H_1$ and $H_2$ with respect to the actions $\Fi_1$ and $\Fi_2.$ 
	As they commute, we also get the moment map $H_{12}= H_1 + H_2$ for the action $\Fi^{12}.$
	%
	%
	Thus, the minimum of $H^{12}$ is attained exactly at the intersection of minima of $H^1$ and $H^2,$ hence $\Fminn \cap \Fminnn=\F_{\Fi^{12}}.$
	
	Now, if the composition $\Fi^{12}$ is an even action, its square root is weight-1 conical, hence the minimal component $\F_{\Fi^{12}}$ is an irreducible component of the core.
	Being the intersection of irreducible components $\F_{\Fi^1}$ and $\F_{\Fi^2},$ this cannot work unless they are all coincide. 
	But equal minimal components imply equal actions, by Proposition \ref{DifntActionsDiffntMinComp}, hence we get the contradiction with our assumption that $\Fi^1$ and $\Fi^2$ are different.
\end{proof}

\begin{rmk}
	The other direction of the last claim of the previous theorem is not true. As an example, consider the resolution of a Du Val singularity $X_{\Z/5}\fun \C^2/\Z/5,$ 
	and compose the weight-1 actions whose minimal components are the ``outer spheres'' in the Dynkin $A_4$-chain of spheres which constitute the core. 
	These actions are explicitly $t\cdot (X,Y,Z)=(tX,t^4Y,tZ)$ and $t\cdot (X,Y,Z)=(t^4 X,t Y, t Z).$
	The composition of these two actions $t \cdot (X,Y,Z)=(t^5 X,t^5 Y, t^2 Z)$ is not even, but their minimal components are disjoint.
\end{rmk}

\subsection{Constructing weight-1 conical actions on CSRs}\label{SubsectionConstructingWeight1ConicalActions}

Theorem \ref{MinimalComponentsTheorem} allows us to find smooth core components of SHS manifolds using weight-1 conical actions that commute. Here we show how one could find a family of such actions in the case of Conical Symplectic Resolutions.

Firstly, CSRs usually come with a natural weight-2 action. Typical examples are Higgs branch of gauge theories $(G,N)$ from Section \ref{HiggsBranch}, where the dilation action on the vector space
$T^*N$ yields a weight-2 action on the reduced space. On the Springer-theoretic side, there is a natural Kazhdan action that acts with weight-2 on Slodowy varieties.

Nevertheless, having a weight-2 CSR $(\M,\phi),$ we will show how one can construct a family of commuting weight-1 actions on it. We will start by the following definition first:

\begin{de}\label{DefinitionConicalSymplectomorpshimsGroup}
	Having a weight-2 CSR $(\M,\phi),$ we define its group of \textbf{conical symplectomorphisms} $Symp_{\phi}(\M,\omC)$ as the group of algebraic $\pi$-compatible (preserving the fibres of $\pi:\M\fun \M_0$) symplectomorphisms that commute with $\phi.$
\end{de}

\begin{lm} The group $Symp_{\phi}(\M,\omC)$ is finite-dimensional.
\end{lm}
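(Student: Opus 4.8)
The plan is to realize $\text{Symp}_{\phi}(\M,\omega_\C)$ as a closed algebraic subgroup of a finite-dimensional algebraic group, namely the automorphism group of the affine variety $\M_0$ (or of a suitable finite-dimensional piece of its coordinate ring). First I would observe that any element $g \in \text{Symp}_{\phi}(\M,\omega_\C)$, being $\pi$-compatible, descends to an automorphism $\bar g$ of $\M_0 = \text{Aff}(\M)$ (using Lemma \ref{diffntdefnsOfCSR}, so $\M_0$ is the affinisation and the descent is canonical and functorial). Moreover $\bar g$ commutes with the induced conical $\C^*$-action on $\M_0$, hence it preserves the finite-dimensional graded pieces $\C[\M_0]_d$ of the coordinate ring. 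Since $\C[\M_0]$ is finitely generated and non-negatively graded with $\C[\M_0]_0 = \C$ (the action contracts $\M_0$ to a point), a graded automorphism is determined by its restriction to the finitely many generators, so the group $\text{Aut}_{\C^*}(\M_0)$ of graded automorphisms is a linear algebraic group — it sits inside $\prod_d GL(\C[\M_0]_d)$ for $d$ up to the top generator degree, cut out by the (polynomial) conditions of respecting multiplication. This already gives finite-dimensionality of the target.

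Next I would show the map $g \mapsto \bar g$ has finite-dimensional fibers, which then forces $\text{Symp}_{\phi}(\M,\omega_\C)$ itself to be finite-dimensional. The key point is that $\pi : \M \to \M_0$ is a resolution — in particular it is an isomorphism over the smooth locus $\M_0^{sm}$, whose preimage $U := \pi^{-1}(\M_0^{sm})$ is a dense open subset of $\M$. An automorphism $g$ of $\M$ lying over $\text{id}_{\M_0}$ therefore restricts to the identity on $U$; being algebraic on the irreducible variety $\M$, it must then be the identity everywhere. Hence $g \mapsto \bar g$ is injective, so $\text{Symp}_{\phi}(\M,\omega_\C)$ injects into $\text{Aut}_{\C^*}(\M_0)$, which is finite-dimensional. (One should check the image is a closed subgroup — the conditions of preserving the Poisson/symplectic structure and commuting with $\phi$ are algebraic — but for mere finite-dimensionality the injection into a finite-dimensional algebraic group already suffices.)

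The main obstacle I anticipate is making the descent map $g \mapsto \bar g$ rigorous and checking it is well-defined as a homomorphism of \emph{algebraic} groups (so that "finite-dimensional" has its intended meaning, i.e. the group is a finite-dimensional variety, not merely that its tangent space or some invariant is finite). Concretely: one needs that a $\pi$-compatible algebraic automorphism of $\M$ genuinely induces an automorphism of $\M_0$ — this uses that $\M_0 = \text{Spec}(\Gamma(\M,\mathcal{O}_\M))$, so any automorphism of $\M$ acts on global functions and hence on $\M_0$, and $\pi$-compatibility guarantees this descended action is compatible with $\pi$ fiberwise. A secondary subtlety is that I am implicitly using Lemma \ref{diffntdefnsOfCSR} to identify $\M_0$ with the affinisation; without normality of $\M_0$ the argument would need the extra step that the affinisation map factors through $\M_0$, which is exactly what that lemma provides. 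An alternative, if one wants to avoid the descent entirely, is to bound $\dim \text{Symp}_{\phi}(\M,\omega_\C)$ by its action on a fixed ample line bundle or on $H^0$ of some power of it, but the affinisation route is cleaner given the tools already assembled in the paper.
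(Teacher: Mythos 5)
Your proposal is correct and follows essentially the same route as the paper: both show injectivity of the action on $\C[\M_0]\iso\Gamma(\M,\O_\M)$ using that $\pi$ is an isomorphism over the regular locus together with irreducibility of $\M$ and closedness of the fixed locus, and then use the grading induced by the conical action and a finite set of homogeneous generators to embed the group into a finite product of $GL$'s of finite-dimensional graded pieces.
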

\begin{proof}
	Firstly, we will check that $Symp_{\phi}(\M,\omC)$ acts faithfully on $\Gamma(\M,\O_\M)\iso \C[\M_0].$\footnote{This isomorphism holds due to Remark \ref{diffntdefnsOfCSR}.} It is enough to prove that if an element $\Fi$ of $Symp_{\phi}(\M,\omC)$ fixes $\Gamma(\M,\O_\M)\iso \C[\M_0],$ then $\Fi=Id.$ Now, if $\Fi$ fixes $\Gamma(\M,\O_\M)\iso \C[\M_0],$ that means that the induced map on $\M_0$ is identity (recall $\M_0$ is affine), hence the map on $\M$ is the identity on the open dense set $\M^{reg}=\pi^{-1}(\M_0^{reg})$. Moreover, the set of points in $\M$ that are fixed by $\Fi$ is closed, so we conclude that it has to be the whole of $\M$ (recall that $\M$ is irreducible).
	
	Hence, the induced action $Symp_{\phi}(\M,\omC)\dejstvo \C[\M_0]$ is faithful. Further, as elements of $Symp_{\phi}(\M,\omC)$ commute with $\phi,$ they preserve the grading on $\C[\M_0]$ induced by it. Thus, fixing some set $(f_i)_i$ of homogeneous generators of $\C[\M_0]$ whose weights we denote by $(w_i)_i,$ we have that the action $Symp_{\phi}(\M,\omC)\dejstvo \C[\M_0]$ is determined by the induced actions on the weight-spaces $\C[\M_0]^{w_i},$ on which this group acts linearly. Thus, we get the induced monomorphism $Symp_{\phi}(\M,\omC)\hookrightarrow \prod_i GL(\C[\M_0]^{w_i})$ into a finite-dimensional
	group, hence $Symp_{\phi}(\M,\omC)$ is finite-dimensional itself.
\end{proof}
Now, we construct a family of weight-2 actions by composing $\phi$ with 1-parameter subgroups $$S_t \leq Z(Symp_{\phi}(\M,\omC))$$ of the centre of $Symp_{\phi}(\M,\omC).$
These subgroups all lie in the identity-component $Z(Symp_{\phi}(\M,\omC))^{\circ}$ of this centre, which is isomorphic to $(\C^*)^n,$ for some $n\in\N.$ Thus, these subgroups 
are labelled by some integer lattice $L_\phi(\M,\omC)\iso \Z^n.$

Then, the 1-parameter subgroups $S_t$ for which the actions $\phi_t S_t$ are \textbf{conical} and \textbf{even}
correspond to some subset of the lattice $\Theta_\phi \subset L_{\phi}(\M,\omC).$ By construction, these actions corresponding to $\Theta_\phi$ commute, thus by Theorem \ref{MinimalComponentsTheorem} they induce different minimal components in the core. As the number of core-components is finite, so is the number of these actions, and the set $\Theta_\phi$ itself. 
%
%

The square roots of the actions that correspond to $\Theta_\phi$ yield weight-1 conical actions, thus minimal components (Moreover, by Remark \ref{AmongWeight2OnlySeekForEven}, these are the only ones in $L_{\phi}(\M,\omC)$ that could yield minimal components). Thus, we have a corollary to Theorem \ref{MinimalComponentsTheorem}. 

\begin{cor}
	Given a weight-2 CSR $(\M,\phi),$ the number of minimal components of its core is at least $|\Theta_\phi|.$ 
\end{cor}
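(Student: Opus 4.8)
The plan is to assemble this corollary directly from the pieces already set up in this subsection, since it is really just a bookkeeping statement. First I would recall the chain of constructions: given the weight-2 CSR $(\M,\phi)$, the group $Symp_{\phi}(\M,\omC)$ is finite-dimensional, so the identity component of its centre is an algebraic torus $(\C^*)^n$, whose one-parameter subgroups are indexed by the lattice $L_\phi(\M,\omC)\iso\Z^n$. For a lattice point $\lambda$, write $S_t^\lambda$ for the corresponding one-parameter subgroup and $\phi_t S_t^\lambda$ for the composed weight-2 action; by definition $\Theta_\phi\subset L_\phi(\M,\omC)$ is the set of those $\lambda$ for which $\phi_t S_t^\lambda$ is conical and even. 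For each $\lambda\in\Theta_\phi$ one then takes the square root, which by Lemma \ref{rootExtends} (applied as in Proposition \ref{HavingAMinimalCompMeansWeight1Action}) extends to a genuine algebraic $\C^*$-action $\psi^\lambda$ on all of $\M$, and since $\phi_t S_t^\lambda$ is conical with the same fixed locus as $\psi^\lambda$, the action $\psi^\lambda$ is itself weight-1 conical.

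The second step is to check commutativity: all the $S_t^\lambda$ lie in the centre of $Symp_{\phi}(\M,\omC)$ and hence commute with each other and with $\phi$, so the weight-2 actions $\{\phi_t S_t^\lambda\}_{\lambda\in\Theta_\phi}$ pairwise commute. Their square roots $\psi^\lambda$ are then also pairwise commuting weight-1 conical actions — here one invokes the uniqueness of holomorphic square roots (the $\psi^\lambda$ agree, on a Zariski-open dense set, with canonically-defined roots of commuting actions, so they commute everywhere, exactly as in the proof of Lemma \ref{rootExtends}). Thus we have produced a family of at least $|\Theta_\phi|$ pairwise commuting weight-1 conical actions on $\M$, provided we know the map $\lambda\mapsto\psi^\lambda$ is injective on $\Theta_\phi$. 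Injectivity is clear since distinct lattice points $\lambda$ give distinct one-parameter subgroups $S_t^\lambda$, hence distinct weight-2 actions $\phi_t S_t^\lambda$, hence distinct square roots.

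The final step is to feed this into Theorem \ref{MinimalComponentsTheorem} together with Proposition \ref{DifntActionsDiffntMinComp}: each $\psi^\lambda$ yields a minimal component $\F_{\psi^\lambda}$ of the core $\L_\phi$ (noting that $\psi^\lambda$ commutes with $\phi$, so the cores agree by Lemma \ref{CommutingActionsSameCore}), and by Proposition \ref{DifntActionsDiffntMinComp} distinct commuting weight-1 conical actions give distinct minimal components, so $\lambda\mapsto\F_{\psi^\lambda}$ is injective on $\Theta_\phi$. Therefore the core has at least $|\Theta_\phi|$ distinct minimal components, which is the claim. (The finiteness of $\Theta_\phi$ itself, already noted in the text, follows because the core has only finitely many irreducible components.)

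The only point requiring genuine care — rather than pure bookkeeping — is making sure the square-root step is legitimate for \emph{every} $\lambda\in\Theta_\phi$ simultaneously and that the resulting actions are honestly algebraic (not merely holomorphic) and honestly commuting; this is precisely what Lemma \ref{rootExtends} and the ``even'' hypothesis built into the definition of $\Theta_\phi$ are there to guarantee, so the main obstacle has essentially been pre-empted by the preceding results, and the corollary reduces to citing them in the right order.
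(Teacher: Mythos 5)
Your proposal is correct and follows essentially the same route as the paper, which treats the corollary as an immediate consequence of the preceding construction: square roots of the conical even actions indexed by $\Theta_\phi$ are commuting weight-1 conical actions, and Theorem \ref{MinimalComponentsTheorem} together with Proposition \ref{DifntActionsDiffntMinComp} then gives distinct minimal components. The only cosmetic difference is that you route the square-root step through Lemma \ref{rootExtends}, whereas in the paper ``even'' already means the action is a square by definition, so your extra commutation and injectivity checks are just making explicit what the paper leaves implicit.
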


%

Passing to the realm of Nakajima quiver varieties 
and Slodowy varieties, 
there exist canonical weight-2 actions $\phi$, and explicit subgroups $Symp_{\phi}(\M,\omC)'$ of $Symp_{\phi}(\M,\omC).$ Thus, the calculation of the convex subsets $\Theta_\phi$ of the lattices $L_{\phi}(\M,\omC)'$ that correspond to $Z(Symp_{\phi}(\M,\omC)')^{\circ}$ becomes rather feasible, which we do in \cite{FZ2}, for type A. 
In addition, we believe that, in the case of Nakajima quiver varieties of type A, these subgroups $Symp_{\phi}(\M,\omC)'$ are actually equal to the whole group $Symp_{\phi}(\M,\omC)^{\circ}.$ 

\section{Symplectic Topology of minimal components}\label{SymplTopMinCompns} 
In this section we will observe the symplectic-topological viewpoint of the previous section. Recall that we have proved that any SHS $(\M,\Fi,\om_\C)$ has a canonical family of isomorphic Liouville structures with symplectic forms $\om_{J,K},$ such that the core $\L$ is its Liouville skeleton. 
Here $\om_J:=\R e(\om_\C)$ and $\om_K:=\mathbb{I} m(\om_\C)$ and $\om_{J,K}$ is the set of their non-zero linear combinations.
Moreover, in the case that $\M$ is a weight-1 SHS, any smooth component of the core is a smooth Lagrangian submanifold of $(\M,\om_{J,K}).$ In particular, the minimal components are non-isotopic Lagrangian submanifolds of $\M.$ 
In this section we show that they are also \textit{exact} Lagrangian submanifolds, therefore 
objects of the compact Fukaya category $\mathcal{F}(\M,\om_{J,K}),$ 
and we compute their mutual Floer cohomology groups as graded vector spaces.

\begin{rmk}\label{OnCoefficients} \textbf{On coefficients.} As the symplectic structure we discuss here is exact, there is no need to use the Novikov field for the coefficients. Thus, we will use coefficients for Lagrangian Floer cohomologies over a field $\K.$ 
Now, as the Lagrangians that we consider in general may not be spin (take $\C P^2$ in $T^* \C P^2$ for instance), we would have to work over characteristic-2 fields in general. 
However, in some particular cases  of SHS manifolds one may bypass this issue and use arbitrary coefficients field $\K$. 
Namely, given a CSR $(\M,\om_\C)$ there is a notion of a \textit{theta characteristic} on it,\footnote{We use the name coined by Maulik-Okounkov \cite{MO12}, where they considered the case of quiver varieties.} 
a class $\kappa \in H^2(\M,\Z/2)$ with a property that $\kappa|L =w_2(L)$ for any any smooth $\om_\C$-Lagrangian subvariety $L \subset \M.$
Given a cotangent bundle $T^*X,$ this condition on the zero section uniquely determines $\kappa.$ 
For quiver varieties it was constructed in \cite[2.2.8]{MO12}, where they used it for twisting the signs in the formulas for quantum multiplication, \cite[1.2.5]{MO12}.
More generaly, for spaces belonging to the Higgs branch of gauge theories (Section \ref{HiggsBranch}), 
it was constructed in \cite[Sec. 2.4]{BLPW16}.
For general CSRs, the existence of such class $\kappa$ is not known. However, the aforementioned cases give almost all known examples, hence let assume that we have one for a moment.
Following Seidel \cite[Rmk. 12.1 and Rmk. 11.16]{SeiBook}, for defining Lagrangian Floer cohomologies in a symplectic manifold $M$ over arbitrary field $\K,$ 
one can consider twisted $\text{Pin}$-structures over Lagrangians instead of ordinary \text{Pin}-structures, where twist is given by an arbitrary class $w\in H^2(M,\Z/2).$ 
Given a Lagrangian $L\subset M,$ the obstruction for existence of such structure is given by $w_2(L)+w|L.$ 
Hence, given a theta characteristic $\kappa$ on a CSR $\M,$ and twisting the Pin structures by $-\kappa,$ smooth $\om_\C$-Lagrangian subvarieties have well defined Lagrangian Floer cohomologies 
over arbitrary field $\K.$ In particular, this applies to all Lagrangians that we are going to consider in this and in the following section. 
Moreover, the twisting will not affect the features of Lagrangian Floer cohomologies that we are going to need.
Having this in mind, throughout this and the following section, given an SHS $\M$: 
\begin{enumerate}
	\item If it is a cotangent bundle $T^*X$ of a projective variety $X,$ we will use an arbitrary field $\K.$
	\item If it belongs to the Higgs branch of gauge theories, in particular if it is a quiver variety or a hypertoric variety, we will use an arbitrary field $\K.$
	\item Otherwise, we will use a field with $\text{char}(\K) = 2.$
\end{enumerate}
\end{rmk}

Now we will show that any smooth core component of a weight-1 SHS is a Bohr--Sommerfeld Lagrangian, by the argument analogous to the main result of \cite{BGL15}, where the authors consider the case of Moduli spaces of Higgs bundles. We first introduce the notion of Bohr--Sommerfeld Lagrangian, which is slightly stronger than the definition\footnote{Which requires that the integration of the symplectic form $\int_\om: H_2(M,L,\Z) \fun \R$ lands in integers. In our case this integration gives the zero map.} that can be found elsewhere.

\begin{de}
	Given an exact symplectic manifold $(M,\om=d \th)$, a {Lagrangian submanifold} is a half-dimensional submanifold $i:L\hookrightarrow M$ 
	satisfying $i^*\om=0,$ or equivalently $d(i^* \th)=0$. In particular, when the form $i^* \th=df$ is exact, the Lagrangian is called \textbf{exact}.
	When $i^* \th$ vanishes, the Lagrangian is called \textbf{Bohr--Sommerfeld}.
\end{de}

\begin{lm}\label{SmoothIsExact} Given a weight-1 SHS $\M,$ with a Liouville 1-form from Proposition \ref{canonicalLiouville}, its compact Bohr--Sommerfeld Lagrangians are exactly smooth components of the core $\L.$ In particular, smooth core components are exact Lagrangian submanifolds of $(\M,\om_{J,K}).$

\end{lm}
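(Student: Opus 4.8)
The plan is to prove the two inclusions separately, the nontrivial content being that a compact Bohr--Sommerfeld Lagrangian is forced to coincide with a smooth irreducible component of the core. For the direction ``smooth core component $\Rightarrow$ compact Bohr--Sommerfeld'', I would start from a smooth $L = \ol{\L_i}$, which by Corollary \ref{CoreDecompositonWeight1} is an irreducible component of $\L$, hence compact (the core is compact, Lemma \ref{CoreIsADefRetr}) and of real dimension $\frac12\dim_\R\M$. Since $\L$ is $\om_\C$-isotropic (Lemma \ref{CoreIsIsotropic}) and $L$ is smooth, $\om_\C|_L = 0$, so $\om_J|_L = \om_K|_L = 0$ and $(a\om_J + b\om_K)|_L = 0$; with half-dimensionality this makes $L$ Lagrangian for every form in $\om_{J,K}$. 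Because $\L$ is $\Fi$-invariant and $\C^*$ is connected, $\C^*$ fixes each of the finitely many irreducible components of $\L$, so $L$ is $\C^*$-invariant, and smoothness lets the action restrict to $L$; in particular the Liouville vector field $Z = \frac1s X_{\R_+} = X_{\R_+}$ (recall $s=1$) is tangent to $L$. Then for $v \in TL$ one computes $(i^*\theta_J)(v) = \theta_J(v) = \om_J(Z,v) = 0$, since $Z$ and $v$ are both tangent to the $\om_J$-isotropic $L$, and likewise $i^*(a\theta_J + b\theta_K) = 0$. Thus $L$ is Bohr--Sommerfeld, and a fortiori $i^*\theta = d(0)$ is exact, which settles the ``in particular'' clause. (One could also deduce exactness more softly from $L_Z\theta_J = \theta_J$, so that the Liouville flow scales $\theta_J|_L$ while acting trivially on $H^1(L;\R)$, forcing $[\theta_J|_L] = 0$; but the isotropy computation gives the stronger Bohr--Sommerfeld statement directly.)

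For the converse, let $L \hookrightarrow \M$ be a compact connected Bohr--Sommerfeld Lagrangian for $\theta_J$ (the argument is identical for the other primitives, and a disconnected $L$ is handled componentwise). From $i^*\theta_J = 0$ and $\theta_J = i_Z\om_J$ we get $\om_J(Z,v)=0$ for all $v\in TL$, i.e. $Z$ lies in the $\om_J$-orthogonal complement of $TL$; as $L$ is Lagrangian this complement is $TL$, so $Z$ is tangent to $L$. Hence $L$ is invariant under the Liouville flow, and being compact none of its points escape a compact set, so $L$ lies in the Liouville skeleton, which is the core $\L$ by Proposition \ref{canonicalLiouville}(\ref{CoreIsSkeleton}). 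Now $L$ is a compact connected smooth submanifold of real dimension $\frac12\dim_\R\M$ contained in $\L$, which by Corollary \ref{CoreDecompositonWeight1} has pure dimension, i.e. the same real dimension as $L$; at any smooth point of $\L$, the variety $\L$ is locally a complex manifold of that dimension and $L$ is an embedded submanifold of equal dimension, so $L$ is locally open in $\L$ there. Since the singular locus of $\L$ together with the pairwise intersections of its irreducible components has real codimension $\ge 2$ in $\L$, a dimension count shows $L$ meets the smooth locus of $\L$ in a dense open set, and combining local openness with connectedness and $\R_+$-invariance of $L$ — intersecting $L$ with the smooth \BB pieces $\L = \bigsqcup_i \L_i$ of Corollary \ref{BBDecompositionCSRs}, each a vector bundle over $\F_i$ by Proposition \ref{LemmaBBAreComplexVectorBundles} — pins $L$ down to an open-and-closed subset of a single $\ol{\L_i}$. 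Hence $L = \ol{\L_i}$ is a smooth irreducible component of $\L$.

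The main obstacle is exactly this last dimension-theoretic step of the converse: showing that a compact $\R_+$-invariant connected Lagrangian inside the pure-dimensional singular variety $\L$ cannot split across several components or bend through the singular locus, but must fill out one (necessarily smooth) component. Everything else — the identity $\theta_J(Z)=\om_J(Z,Z)=0$, tangency of $Z$ to $L$, $\om_\C$-isotropy of the core, and ``skeleton $=$ core'' — is immediate from the results already established above.
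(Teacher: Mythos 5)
Your proof is correct and, for everything the paper's own proof actually establishes, it follows the same route: in the forward direction you obtain tangency of the Liouville field $Z$ to the core component (the paper does this on the dense open piece $\L_i$ and extends by continuity of the vanishing condition, you do it on all of $L$ at once via $\C^*$-invariance of the component --- an inessential variation) and then use isotropy of the core to kill $\th_{J,K}=i_Z\om_{J,K}$; in the converse direction you use $i^*\th_{J,K}=0$ plus Lagrangianity to get $Z\in TL$, hence flow-invariance of $L$, and compactness to force $L$ into the skeleton, which equals $\L$ --- exactly the paper's argument. Where you go beyond the paper is in trying to upgrade ``$L\subset\L$'' to ``$L$ equals a single smooth irreducible component,'' a step the paper's proof silently omits. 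Your sketch of that step is the one soft spot: local openness of $L$ in $\L$ at smooth points of $\L$, together with the real-codimension-$\geq 2$ bound on the singular set, does show that $L$ is a union of irreducible components of $\L$; but ruling out a connected $L$ being a union of two or more components that meet inside $L$ needs one more input beyond ``connectedness and $\R_+$-invariance'' --- for instance the classical fact that a complex-analytic set which is a $C^1$ submanifold is a complex submanifold, hence locally irreducible, which is incompatible with a point of $L$ lying on two distinct components of $\L$. With that remark your argument closes; and in any case the clause the paper actually uses later (smooth core components are Bohr--Sommerfeld, hence exact, Lagrangians) is fully and correctly proved in your first paragraph.
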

\begin{proof}
Recall that the Liouville 1-forms in Proposition \ref{canonicalLiouville} are defined by $$\th_{J,K}=i_Z \om_{J,K},$$ where $Z$ is the vector field of the $\R_+$-part of the $\C^*$-action.
Given a smooth core component $L,$ by Corollary \ref{CoreDecompositonWeight1}, it is equal to $\ol{\L_i},$ where $\L_i \fun \F_i$ is the $(t\fun \infty)$-attracting set of a fixed component $\F_i$
The open dense subset $\L_i \subset L$ is $\C^*$-invariant, which in particular yields $Z\in T\L_i$, hence the form 
$\th_{J,K}(\xi)=\om_{J,K}(Z,\xi)$ must vanish for any $\xi \in T\L_i,$ as $\L_i$ is Lagrangian. As vanishing is a closed condition, when $\overline{\L_i}$ is smooth we deduce that $\th_{J,K}$ vanishes on it.

On the other hand, given a Bohr--Sommerfeld Lagrangian $L,$ the vanishing of any form $\th_{J,K}=\om_{J,K}(Z,\cdot)$ on it implies that $Z\in TL^{\perp_{\om_{J,K}}}=TL,$
hence $L$ is $Z$-invariant. Thus, if there was a point $x\in L$ that is not inside the core, its $Z$-flow in the direction $(t\fun +\infty)$ would go to infinity, preventing $L$ from being compact, contradiction.
\end{proof}

\begin{cor}\label{MinimalComponentsExactLagrTheorem}
	Given a weight-1 SHS $(\M,\Fi),$ there are at least $N$ smooth closed exact non-isotopic 
	Lagrangian submanifolds of $(\M,\om_{J,K}),$ where $N$ is the maximal number of commuting weight-1 conical actions.
\end{cor}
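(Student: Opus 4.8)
The plan is to assemble this corollary out of three results already established: existence of smooth minimal components, their exactness, and their mutual non-isotopy. Since this is a corollary there is no new geometric input required; the work is purely organizational, so I will lay out the steps and flag the one or two points that carry actual content.

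First I would invoke Theorem \ref{MinimalComponentsTheorem}: fix a maximal collection $\phi_1,\dots,\phi_N$ of pairwise commuting weight-$1$ conical actions on $\M$ (we may take $\Fi$ among them, or at least commuting with each of them), and let $\F_{\phi_1},\dots,\F_{\phi_N}$ be the corresponding minimal components of the core $\L$. By Proposition \ref{minimalcomp} each $\F_{\phi_j}$ coincides with a piece $\L_{i(j)}$ of the \BB decomposition of $\L$, with $\overline{\L_{i(j)}}=\L_{i(j)}=\F_{\phi_j}$, so it is an irreducible component of $\L$ and is smooth; being a compact subvariety of the proper variety $\L$ (Lemma \ref{CoreIsADefRetr}), it is a closed submanifold of $\M$. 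By Proposition \ref{DifntActionsDiffntMinComp} the $N$ actions $\phi_j$ being pairwise distinct forces the $N$ components $\F_{\phi_j}$ to be pairwise distinct — this is the step actually responsible for the count $N$, and is the only place one must be a little careful.

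Second I would record exactness. By Lemma \ref{CoreIsIsotropic} the core of a weight-$1$ SHS is $\om_\C$-Lagrangian, and since $\om_\C$ is a complex symplectic form its Lagrangian subvarieties are genuinely middle-dimensional; hence each smooth $\F_{\phi_j}$ is a Lagrangian submanifold for the real and imaginary parts of $\om_\C$, and so for every nonzero combination in $\om_{J,K}$. Exactness then follows from Lemma \ref{SmoothIsExact}: with respect to the Liouville $1$-forms $\th_{J,K}=i_Z\om_{J,K}$ of Proposition \ref{canonicalLiouville} the smooth core components are precisely the compact Bohr--Sommerfeld Lagrangians, so $\th_{J,K}$ restricts to zero on $\F_{\phi_j}$, in particular to an exact form. (One could instead argue directly that $Z$ is tangent to the $\C^*$-invariant set $\L_{i(j)}$, whence $\th_{J,K}(\xi)=\om_{J,K}(Z,\xi)=0$ for $\xi$ tangent to the Lagrangian $\L_{i(j)}$, and conclude by closedness of the vanishing locus.) Thus each $\F_{\phi_j}$ is a closed exact Lagrangian submanifold of $(\M,\om_{J,K})$, hence a genuine object of $\mathcal{F}(\M,\om_{J,K})$.

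Finally, non-isotopy. Using $\M\simeq\L$ (Lemma \ref{CoreIsADefRetr}) together with Lemma \ref{LemaNonIsotopic} — whose proof via Mayer--Vietoris applies verbatim to weight-$1$ SHS manifolds, not only CSRs — the fundamental classes of the irreducible core components form a basis of $H_{top}(\L,\Z)\iso H_{mid}(\M,\Z)$. In particular $[\F_{\phi_1}],\dots,[\F_{\phi_N}]$ are linearly independent, hence pairwise distinct, homology classes, so no smooth isotopy of $\M$ (a fortiori no Hamiltonian isotopy, and no Floer-theoretic equivalence) can carry one of the $\F_{\phi_j}$ onto another. This yields the claimed $N$ smooth closed exact non-isotopic Lagrangians. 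I do not expect any genuine obstacle: the subtle points are the two bookkeeping ones noted above — that distinct commuting actions give distinct submanifolds (Proposition \ref{DifntActionsDiffntMinComp}), and that "non-isotopic" is to be read at the level of middle homology so that it rules out every flavour of isotopy simultaneously.
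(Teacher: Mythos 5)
Your proposal is correct and follows essentially the same route as the paper, whose proof simply cites Theorem \ref{MinimalComponentsTheorem} (existence and distinctness of the $N$ minimal components, via Propositions \ref{minimalcomp} and \ref{DifntActionsDiffntMinComp}), Lemma \ref{SmoothIsExact} for exactness, and Lemma \ref{LemaNonIsotopic} for non-isotopy. Your remark that Lemma \ref{LemaNonIsotopic} (stated for weight-1 CSRs) applies verbatim to weight-1 SHS manifolds is a fair observation, and is implicitly what the paper uses.
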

\begin{proof}
By Theorem \ref{MinimalComponentsTheorem} and Lemma \ref{SmoothIsExact}. The fact that they are non-isotopic follows from Lemma \ref{LemaNonIsotopic}.
\end{proof}


In the next theorem we will compute Lagrangian Floer cohomologies of minimal components, as graded vector spaces. Recall that the Lagrangian Floer cohomology $HF(L_1,L_2)$ of two Lagrangians $L_1$ and $L_2$ in a symplectic manifold $(M,\om)$ is a homology of the chain complex $CF(L_1,L_2)$ whose generators are intersections of two Lagrangians, and the differential counts the pseudoholomorphic strips with ends on these intersections. In particular, when Lagrangian $L$ is exact, it recovers the ordinary cohomology $HF(L,L)\iso H(L).$ 
Following Seidel \cite[Sec. 2]{Sei00}, when the ambient manifold $(M,\om)$ satisfies $2c_1(M,\om)=0,$ 
we can make a notion of graded Lagrangians $L$ 
and for each such Lagrangian there is a $\Z$-worth of choices of its gradings, $\widetilde{L}[k],k \in \Z.$

In particular, by \cite[Ex. 2.9]{Sei00}, having a $J$-complex (where $J$ is an almost complex structure compatible with $\om$) volume form $\Om,$ these gradings can be defined in the following way.
Denote by $\mathcal{L}$ the Lagrangian Grassmannian of $M,$ i.e. the bundle over $M$ whose fibre $\mathcal{L}_x$ above $x\in M$  consists of Lagrangian subspaces of $(T_x M,\om).$
Now, define the \textbf{squared phase map} 
$$\text{det}_\Om^2: \mathcal{L} \fun S^1, \ \text{det}_\Om^2(\Lambda)={\Om(\xi_1,\dots,\xi_n)^2 \over |\Om(\xi_1,\dots,\xi_n)|^2},$$
for any\footnote{The pull-back formula for the top-degree forms shows that this does not depend on the choice of basis.} basis $\xi_1,\dots,\xi_n$ of $\Lambda.$ 
Consider the pull-back of the universal cover $\R\fun S^1$ via squared phase map
\begin{equation} \label{eq:theta-covering}
	\widetilde{\mathcal{L}}= \{ (\Lambda,t) \in \mathcal{L} \times \R \mid \text{det}_\Om^2(\Lambda) = e^{2\pi i t}\}.
\end{equation}
The \textbf{grading} of $L$ is a lift of the natural map $L\fun \mathcal{L}$ to a map $\widetilde{L}: L \fun \widetilde{\mathcal{L}}.$ Thus, for every $x\in L$ we have $\widetilde{L}(x)= (T_x L, t)$ and  $\text{det}_\Om^2(T_x L) = e^{2\pi i t}.$
Obviously, if $\widetilde{L}$ is a grading, so is 
$$\widetilde{L}[k]:L \fun \widetilde{\mathcal{L}}, \ \ \widetilde{L}[k](x):=(T_x L, \pi_\R (\widetilde{L}(x)) - k),$$ 
for every $k\in\Z,$ and every grading is obtained in this way. Here, $\pi_\R: \widetilde{\mathcal{L}} \fun \R$ is the projection. 
Thus, we get an $\Z$-worth of gradings for all Lagrangians that admit a single one.
Moreover, this notion depends on the choice of the homotopy class of $\Om$ in the space of $J$-complex volume forms. 
In particular, when $H^1(M,\Z)=0,$ these are all homotopic and the notion of a grading becomes canonical.

In addition, 
Lagrangians $L$ satisfying $\im(\Om)|_L=0,$ usually called \textbf{special} Lagrangians\footnote{In the case where $J$ is integrable and $\Omega$ is holomorphic, 
but we will use this notion in the possibly non-integrable case as well.}
have a canonical choice of grading, defined by
\begin{equation}\label{GradingForSpecLagrs}
	\widetilde{L}(x):=(T_x L, 0), \ \forall x \in L,
\end{equation}
which is well-defined as $\det_\Om^2(T_x L)\equiv 1.$

Finally, given two graded Lagrangians $\widetilde{L}_1,\widetilde{L}_2,$ one can canonically define the $\Z$-grading on the Floer cohomology $HF^*(\widetilde{L}_1,\widetilde{L}_2).$ Thus, as the grading $\widetilde{L}_i$ of each Lagrangian ${L}_i$ is defined up to a $\Z$-shift, so is $HF^*(\widetilde{L}_1,\widetilde{L}_2),$ as $HF^*(\widetilde{L}_1[k],\widetilde{L}_2[l])=HF^{*-k+l}(\widetilde{L}_1,\widetilde{L}_2).$\footnote{Where by $\widetilde{L}_i[k]$ we denote the grading shifted by $k\in\Z$ from the one previously chosen.} When $L_1=L_2=L,$ these shifts cancel out 
and $HF^*(\widetilde{L}_1,\widetilde{L}_1)$ is canonically graded.\\

Let us apply this in the setup of SHS manifolds. As proved in Lemma \ref{LemmaCSRsAreAlmostHK} any SHS manifold $(\M,\om_\C)$ is an almost \HK manifold $(\M,g,I,J,K)$ such 
that $\om_\C=\om_J+i \om_K$ and $\om_S(\cdot,\cdot):=-g(\cdot,S\cdot),$ for $S=I,J,K.$
Hence, for any almost complex structure $\Theta \in \{aJ+bK \mid a^2+b^2=1\}$ it has a natural $\Theta$-complex volume form
\begin{equation}\label{cplxVolForm}
	\Om_{\Theta}:= {1 \over {(n/2)!}}(\om_I - i \om_{I \Theta})^{n/2},
\end{equation}
which makes a notion of graded $\om_{\Theta}$-Lagrangians. 
In addition, minimal components have canonical choice of gradings, due to the following lemma:

\begin{lm} \label{LemmaSmoothCoreSpecialLagrs}
	Given a weight-1 SHS $(\M,\Fi),$ choosing a compatible almost \HK structure $(g,I,J,K)$, its smooth core components are special Lagrangians with respect to all $\Om_{\Theta}.$
	In particular, they are canonically graded.
\end{lm}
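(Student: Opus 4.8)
The plan is to show that $\mathbb{I}m(\Om_{\Theta})|_L = 0$ for any smooth core component $L$ and any $\Theta \in \{aJ+bK \mid a^2+b^2=1\}$; by the discussion preceding the lemma (equation \eqref{GradingForSpecLagrs}), vanishing of the imaginary part of the complex volume form on $L$ gives the canonical grading, so the second assertion follows immediately from the first. The key point is that $L$ is an $\om_\C$-holomorphic Lagrangian (by Corollary \ref{CoreDecompositonWeight1} and Lemma \ref{CoreIsIsotropic}): it is $I$-holomorphic, being a closed subvariety of the complex variety $\M$, and $\om_\C|_L = 0$. I will first reduce to the case $\Theta = J$, i.e. to the form $\Om_J = \frac{1}{(n/2)!}(\om_I - i\om_K)^{n/2}$ (writing $\dim_\R \M = 2n$, so $\dim_\C L = n/2$); the general $\Theta = aJ+bK$ case follows by applying the $S^1$-rotation in the $(J,K)$-plane, which is an isometry of the almost \HK structure sending $\Om_J$ to $\Om_\Theta$ up to a constant phase and preserving the class of $\om_\C$-holomorphic Lagrangians.

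The main computation is pointwise and linear-algebraic. Fix $x \in L$ and let $V = T_xL \subset T_x\M$. Because $L$ is $I$-complex and $\om_\C$-Lagrangian, $V$ is an $I$-complex subspace of complex dimension $n/2$ on which $\om_\C = \om_J + i\om_K$ vanishes; hence $\om_J|_V = \om_K|_V = 0$. Using the almost \HK relations $\om_I(\cdot,\cdot) = g(I\cdot,\cdot)$ etc. and $IJ = K$, one checks that the restriction of the complex volume form $\Om_J = \frac{1}{(n/2)!}(\om_I - i\,\om_{IJ})^{n/2} = \frac{1}{(n/2)!}(\om_I - i\,\om_K)^{n/2}$ to $V$ equals $\frac{1}{(n/2)!}(\om_I|_V)^{n/2}$, since $\om_K|_V = 0$. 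Now $V$ is an $I$-complex subspace, so $\om_I|_V = -g(\cdot, I\cdot)|_V$ is (a real multiple of) the standard \KH volume form on the Hermitian space $(V, g|_V, I|_V)$, which is a positive real number times the real volume element of $V$ in the given orientation. Therefore $\Om_J(\xi_1,\dots,\xi_{n/2})$ is a positive real multiple of a real volume, in particular real and nonzero for any real basis $\xi_1,\dots,\xi_{n/2}$ of $V$; hence $\mathbb{I}m(\Om_J)|_V = 0$ and $\det^2_{\Om_J}(V) \equiv 1$. Since $L$ is $I$-complex, $V$ is also $I$-complex as a subspace of every tangent space along $L$, so this holds at every point and $L$ is special with respect to $\Om_J$.

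For the general statement: the $S^2$-family of almost complex structures from Lemma \ref{LemmaCSRsAreAlmostHK} is acted on by $SO(3)$ via the obvious rotations of $(I,J,K)$; in particular the $S^1 \subset SO(3)$ fixing $I$ rotates $(J,K)$, and under it $\om_I$ is unchanged while $\om_{I\Theta}$ for $\Theta = aJ+bK$ is just the corresponding rotated combination of $\om_K, \om_J$. Thus $\Om_\Theta$ is obtained from $\Om_J$ by a constant unit-modulus factor together with the identity on $\om_I$, and since $\om_I|_V$ is real-positive and $\om_{I\Theta}|_V = 0$ (because both $\om_J|_V$ and $\om_K|_V$ vanish), exactly the same argument gives $\mathbb{I}m(\Om_\Theta)|_V = 0$. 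The only mild subtlety — and the point I would be most careful about — is bookkeeping in the expansion of $(\om_I - i\om_{I\Theta})^{n/2}$: one must check that every term involving at least one factor of $\om_{I\Theta}$ restricts to zero on $V$, which is immediate since $\om_{I\Theta}|_V = 0$, and that the surviving term $(\om_I|_V)^{n/2}$ is genuinely nonzero, which follows from $I$-invariance of $V$ and positivity of the Hermitian metric $g|_V$. No integrability of $J,K$ is used anywhere, so the argument is valid for a merely almost \HK structure, as claimed in the footnote to the definition of special Lagrangian.
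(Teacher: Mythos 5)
Your proof is correct and follows essentially the same route as the paper: the paper likewise reduces to the fact that smooth core components are $\om_\C$-holomorphic Lagrangians (Lemma \ref{CoreIsIsotropic} together with Corollary \ref{CoreDecompositonWeight1}) and then cites Solomon--Verbitsky (going back to Harvey--Lawson) for precisely the pointwise computation you write out, namely that $\om_J|_L=\om_K|_L=0$ forces $\Om_\Theta|_L=\tfrac{1}{(n/2)!}(\om_I|_L)^{n/2}$, a positive real multiple of the Riemannian volume form, so the squared phase is identically $1$ and the canonical grading \eqref{GradingForSpecLagrs} applies. Your only loose phrase is calling the $(J,K)$-rotation an ``isometry'': it is merely a pointwise relabelling $(g,I,\Theta,I\Theta)$ of the almost hyperk\"{a}hler structure, not a self-map of $\M$, but this is immaterial since your concluding direct check only uses that $\om_{I\Theta}=a\om_K-b\om_J$ vanishes on $L$.
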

\begin{proof}
	By Lemma \ref{CoreIsIsotropic}, smooth core components are holomorphic Lagrangian submanifolds with respect to $\om_\C,$ thus they are special with respect to all $\Om_{\Theta},$ by the
	argument written in \cite[Lem. 3.3]{SoVe19}.\footnote{Which goes back to \cite[p. 154]{HaLa82}.}
	Hence, their canonical grading is given by \eqref{GradingForSpecLagrs}. 
\end{proof}

Now we will compute the mutual Floer cohomologies of smooth core components with respect to these canonical gradings.

\begin{thm}\label{LagrFloerMinComps}
Given a weight-1 SHS manifold $\M,$ its smooth core components are exact Lagrangian submanifolds of $(\M,\om_{J,K}),$ hence $HF^*(L,L)\iso H^*(L),$ 
for each smooth core component $L.$ 
Given a minimal $\F_{\Fi}$ and a smooth core component $L,$ their intersection is clean and connected, hence
\begin{equation}\label{LagrFLoerOfTwoMinima}
	HF^*(\widetilde{\F}_{\Fi},\widetilde{L}) \iso H^{*-\mu}({\F}_{\Fi} \cap  L)
\end{equation}
for certain shift $\mu.$ 
Moreover, when the $S^1$-part of $\Fi$
is isometric with respect to a compatible almost \HK metric we have that
$\mu=\codim_\C({\F}_{\Fi} \cap L \subset {\F}_{\Fi}),$
hence the grading is symmetric, 
$$HF^*(\widetilde{\F}_{\Fi},\widetilde{L})\iso HF^*(\widetilde{L},\widetilde{\F}_{\Fi})$$ as graded vector spaces.
In particular, when $\M$ is an SHK manifold, we have an isomorphism of graded vector spaces
\begin{equation}\label{SHKcaseFloerAlgebra}
	\bigoplus_{L_i,L_j \text{minimal}} HF^*(L_i,L_j) \iso \bigoplus_{L_i,L_j \text{minimal}} H^*(L_i \cap L_j)[-d_{i,j}],
\end{equation}
where $d_{i,j}=codim_\C ((L_i \cap L_j) \subset L_i),$ and the summands correspond to each other.
\end{thm}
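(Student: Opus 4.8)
The plan is to prove the listed assertions in order, treating the case $\F_\Fi\cap L=\varnothing$ as trivial (both sides of \eqref{LagrFLoerOfTwoMinima} then vanish) and writing $C:=\F_\Fi\cap L$ when it is nonempty. The first assertion, $HF^*(L,L)\iso H^*(L)$, is immediate: by Lemma \ref{SmoothIsExact} a smooth core component is an exact Lagrangian of $(\M,\om_{J,K})$, and an exact Lagrangian has Floer cohomology isomorphic to its singular cohomology. For the cleanness of $C$: by Corollary \ref{CoreDecompositonWeight1} and Proposition \ref{minimalcomp} both $\F_\Fi$ and $L$ are irreducible components of the core $\L_\Fi$, and since $\C^*$ is connected the action $\Fi$ preserves each irreducible component of $\L_\Fi$; in particular $L$ is $\Fi$-invariant. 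Hence $L\cap\M^\Fi$ is the fixed locus of a $\C^*$-action on the smooth variety $L$, so it is a smooth submanifold whose tangent space at $p$ is the weight-zero piece $(T_pL)^\Fi=T_pL\cap T_p\F_\Fi$ of the weight decomposition \eqref{EqnWeightDecomp}. Since $\F_\Fi$ is a connected component of $\M^\Fi$, the set $C$ is open and closed in $L\cap\M^\Fi$, hence a union of its components, so $C$ is a smooth submanifold with $T_pC=T_p\F_\Fi\cap T_pL$ for all $p$: this is cleanness. It is moreover an $I$-complex submanifold, being an intersection of $I$-complex submanifolds.

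For connectedness of $C$, I would use the moment map. The form $\om_I$ of Lemma \ref{thereisaNonExactstructure} restricts to a Kähler form on the compact connected $I$-complex manifold $L$, and $H|_L$ is the moment map of the $S^1$-action on $(L,\om_I|_L)$. As $\nabla H=X_{\R_+}$ is tangent to $L$, the critical set of $H|_L$ is $L\cap\M^\Fi$; and by Proposition \ref{minimalcomp} the global minimum of $H$ on $\M$, hence on $L$ when $C\neq\varnothing$, is attained exactly on $\F_\Fi$. Thus $C$ is the minimum locus of $H|_L$, which is connected by Atiyah's connectedness theorem for Hamiltonian circle actions on compact connected symplectic manifolds. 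With $C$ a connected clean intersection of the exact Lagrangians $\F_\Fi$ and $L$, Po\'zniak's clean-intersection theorem identifies the Floer complex, after a Morse--Bott perturbation, with the Morse complex of $C$, giving $HF^*(\widetilde\F_\Fi,\widetilde L)\iso H^{*-\mu}(C;\delta)$ for a constant integer shift $\mu$ (constant because $C$ is connected) and a rank-one local system $\delta$. The local system is trivial here: the canonical special-Lagrangian gradings of Lemma \ref{LemmaSmoothCoreSpecialLagrs} have constant zero phase, so the index bundle along $C$ has trivial orientation system, the remaining orientation subtleties being absorbed by the coefficient conventions of Remark \ref{OnCoefficients}; one may alternatively note that there are no non-constant holomorphic strips to correct the Morse differential, by Proposition \ref{IntroFloerSolnsAreConstant} with $n=2$. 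This proves \eqref{LagrFLoerOfTwoMinima}.

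Next I would compute $\mu$ under the isometry hypothesis. Fix $p\in C$. By the equivariant Darboux/tubular-neighbourhood theorem for an isometric circle action, near $p$ the symplectic manifold is $S^1$-equivariantly modelled on $T_p\M=H_0\oplus H_1$ with $\om_\C$ pairing $H_0$ and $H_1$ nondegenerately, $T_p\F_\Fi=H_0$, and (as shown while proving cleanness, using that $T_pL$ is a weight-graded $\om_\C$-Lagrangian) $T_pL=A\oplus A^\perp$ where $A=T_pC\subset H_0$ and $A^\perp\subset H_1$ is the $\om_\C$-annihilator of $A$. Passing to the normal symplectic space $(T_pC)^{\om_\C}/T_pC=(H_0/A)\oplus A^\perp$, the two Lagrangians become the complementary planes $H_0/A$ and $A^\perp$, both of phase zero, and computing the graded index of this transverse pair with the special-Lagrangian gradings yields $\mu=\dim_\C A^\perp=\codim_\C(C\subset\F_\Fi)$. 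Since $\F_\Fi$ and $L$ are both Lagrangian, hence $\tfrac12\dim_\C\M$-dimensional, $\codim_\C(C\subset\F_\Fi)=\codim_\C(C\subset L)$, so exchanging the roles of the two Lagrangians gives the same $\mu$, whence $HF^*(\widetilde\F_\Fi,\widetilde L)\iso H^{*-\mu}(C)\iso HF^*(\widetilde L,\widetilde\F_\Fi)$. Finally, if $\M$ is SHK then all weight-$1$ conical actions $\phi_i$ defining the minimal components $L_i$ are isometric for the common HK metric, and each $L_j$ is $\phi_i$-invariant (commuting conical actions share the core and preserve it componentwise); applying the above with $\Fi=\phi_i$, $\F_\Fi=L_i$, $L=L_j$ and summing over ordered pairs gives \eqref{SHKcaseFloerAlgebra} with $d_{i,j}=\codim_\C(L_i\cap L_j\subset L_i)$, the summands matching up by construction.

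The main obstacle is the index computation $\mu=\codim_\C(C\subset\F_\Fi)$. This is a Maslov-type index calculation for the transverse pair $(H_0/A,A^\perp)$ with respect to the canonical phase-zero gradings, and it is exactly here that the isometry of the $S^1$-action is essential, since it is what supplies the rigid equivariant normal form making the weight decomposition compatible with the grading data; without it one only obtains an unidentified constant shift $\mu$. A secondary, more routine, technical point is checking the triviality of the Po\'zniak local system against the Pin/theta-characteristic conventions fixed in Remark \ref{OnCoefficients}.
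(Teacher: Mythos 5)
Your treatment of the first parts (exactness via Lemma \ref{SmoothIsExact}, smoothness and cleanness of $\F_\Fi\cap L$ from $\C^*$-invariance of $L$ and the weight decomposition, connectedness via the restricted moment map, and Po\'zniak/Khovanov--Seidel for the clean-intersection isomorphism with some shift $\mu$) follows essentially the same route as the paper, with only cosmetic differences (Atiyah connectedness instead of the Betti-number/Frankel argument, and a slightly different packaging of the cleanness computation). The problem is the ``Moreover'' part, which is the actual content of the theorem, and there your argument has a genuine gap. You reduce to a transverse pair of zero-phase Lagrangian planes, the images of $T_x\F_\Fi$ and $T_xL$ in the symplectic normal space to $T_x(\F_\Fi\cap L)$, and then assert that ``computing the graded index of this transverse pair with the special-Lagrangian gradings yields $\mu=\dim_\C A^{\perp}$.'' This is not a computation, and the asserted value does not follow from the stated hypotheses: the graded index of a transverse pair of zero-phase Lagrangian planes in $\C^N$ is $\tfrac{1}{\pi}\sum_j\theta_j$, where $\theta_j\in(0,\pi)$ are the K\"ahler angles between them, and zero phase only forces this sum to be an integer multiple of $\pi$; in complex dimension $N\geq 3$ it can be any integer between $1$ and $N-1$. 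So knowing only ``transverse and both graded with phase zero'' does not pin down $\mu$, and your appeal to an ``equivariant normal form'' does not by itself identify the relative position of the two planes.

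What is missing is precisely the geometric input the paper extracts from the isometry hypothesis: the lemma that, with respect to a compatible almost hyperk\"ahler metric $g$ for which the $S^1$-part of $\Fi$ is isometric, the $g$-orthogonal complements $U_1\subset T_x\F_\Fi$ and $U_2\subset T_xL$ of $T_x(\F_\Fi\cap L)$ satisfy $JU_1=U_2$. This uses that the weight decomposition $T_x\M=H_0\oplus H_1$ is $g$-orthogonal (isometric $S^1$-action), that $H_0=T_x\F_\Fi$ by minimality, and that $U_2$ lies in $H_1$; it says all K\"ahler angles are $\pi/2$, so the sum above is $(m-c)\pi$ and the index is the complex codimension. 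Even granting this, one still has to verify the value against the chosen gradings: the paper does so by exhibiting an explicit path of Lagrangian planes from $T_x\F_\Fi$ to $T_xL$ that stays at phase zero --- rotating half of the $U_1$-directions by $e^{i\pi t/2}$ and the other half by $e^{-i\pi t/2}$, which is where evenness of the relevant dimensions enters --- and computing its Robbin--Salamon index to be $0$, so that $\mu=\tfrac12(\dim_\C\M-\dim_\R(\F_\Fi\cap L))=\codim_\C(\F_\Fi\cap L\subset\F_\Fi)$. A naive path rotating all normal directions the same way is not phase-preserving and computes a different shift, so this step cannot be waved through. Your final SHK statement and the symmetry of the grading are fine once this computation is in place.
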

\begin{proof} By Lemma \ref{SmoothIsExact}, smooth core components are exact Lagrangian submanifolds, hence $HF^*(L,L)\iso H^*(L)$ for each smooth component $L.$ 
Consider now a minimal $\F_{\Fi}$ and a smooth core component $L.$
First, we prove the following lemma:

\begin{lm} Intersection of $\F_{\Fi}$ and $L$ is clean and connected.
\end{lm}
\begin{proof}
Firstly, if the intersection of $\F_{\Fi}$ and $L$ is empty, we are done. Thus, let us assume that $S:=\F_{\Fi} \cap L \neq \emptyset.$
$L$ is a subvariety of $\M,$\footnote{Being an irreducible component of the core which is a subvariety of $\M.$} hence an $I$-holomorphic submanifold of $\M.$ 
Moreover, it is a \KH submanifold of $\M,$ with respect to a \KH structure from Lemma \ref{thereisaNonExactstructure}. 
Therefore, as the $\C^*$-action $\Fi$ restricts to $L,$ so does its $S^1$-part, and the restriction of the moment map $H:\M \fun \R$ (from the same lemma) 
on $L$ is a moment map of the restriction $S^1\dejstvo L.$ 
As $\F_{\Fi}$ is the absolute minimum of $H$ on $\M,$ so is $S$ on $L.$ 
Hence, as $L$ is connected, so is its minimum locus $S,$ by the same Betti number argument as in the proof of Proposition \ref{minimalcomp}, now applied to $L.$ 
(the Betti number relation for a \KH $S^1$-action on a compact \KH manifold is an older result, which goes back to Frankel, \cite[Sec. 4]{Frankel59}).
Moreover, it is smooth, being a fixed component of the smooth $S^1$-action on smooth $L.$ Indeed, it there was a strictly bigger $S^1$-fixed component containing $S,$ 
it would have the same, hence minimal value of $H,$ and then would be contained in $\F_{\Fi}.$ But that contradicts the definition of $S=\F_{\Fi} \cap L.$
Having $S$ to be smooth and connected, it is left to prove that it is a clean intersection,
$$\forall x \in S, \ T_x S= T_x \F_{\Fi} \cap T_x L.$$
The direction `$\subseteq$' is obvious as $S$ is a submanifold of both $\F_{\Fi}$ and $L$ so the inclusion between tangent spaces follows.
To prove the direction `$\supseteq$' notice that $v\in T_x \F_{\Fi}$ is equivalent to $(\Fi_t)_* v =v,$ for all $t\in \C^*$ and that precisely 
characterises vectors in $T_x L$ that belong to the fixed locus $T_x S.$
\end{proof}

As the two exact Lagrangians $\F_{\Fi}$ and $L$ intersect cleanly, and their intersection $S$ is connected, we have an (ungraded) isomorphism $HF(\F_{\Fi},L)\iso H(S).$
This is a well-known result that goes back to Po\'zniak \cite[Thm. 3.4.11]{Po99}. 
Moreover, picking a compatible almost \HK metric $g,$ by Lemma \ref{LemmaSmoothCoreSpecialLagrs} these Lagrangians can be canonically graded, 
$\widetilde{\F}_{\Fi}(x)=(T_x \F_\Fi,0),\   \widetilde{L}(x)=(T_x L,0).$ 
Then, following \cite[Prop. 5.18]{KhSei02} we have a graded isomorphism 
$$HF^*(\widetilde{\F}_{\Fi},\widetilde{L})\iso H^{*-\mu}(S),$$
where $\mu=\mu(\lambda_1,\lambda_2)+{1\over 2}(\dim_\C \M-\dim_\R S ),$ 
and $\mu(\lambda_1,\lambda_2)$ is the Maslov index for paths of Lagrangians\footnote{Defined in \cite[Sec. 3]{RS93}, where reader can recall its properties which we are going to use below.}
$$\lambda_1,\lambda_2:[0,1]\fun \mathcal{L}_x, \ \lambda_1(0)=T_x \F_\Fi, \ \lambda_1(1)=T_x L,\ \lambda_2(t) \equiv T_x L.$$ 
Here, the path $\lambda_1$ is chosen to be compatible with the choice of gradings $\widetilde{\F}_{\Fi},\widetilde{L},$
i.e. it is the projection of a path 
$\widetilde{\lambda}_1:[0,1] \fun \widetilde{\mathcal{L}}_x$ satisfying  $\widetilde{\lambda}_1(0)=\widetilde{\F}_{\Fi}(x), \ \widetilde{\lambda}_1(1)=\widetilde{L}(x).$  
It is unique, up to homotopy that fixes endpoints.
Therefore, the index $\mu(\lambda_1,\lambda_2)$ neither depends on the choice of $\lambda_1,$ or on the chosen point $x\in S=\F_{\Fi} \cap L,$ as this intersection is clean and connected.

Assuming that the $S^1$-part of $\Fi$ is isometric with respect to a compatible almost \HK metric $g,$
we can compute the shift $\mu$ explicitly. 
For brevity, we will use the form $\om_J,$ thus complex volume form $\Om_J,$ noticing that the same proof goes for any
$\om_\Theta,\Om_\Theta,$ where $\Theta \in \{aJ+bK \mid a^2+b^2=1\}.$
Denote $$V:= T_x \M, \ N:= T_x S, \ L_1:=T_x \F_{\Fi}, \ L_2:=T_x L, \text{ and } U_i := N^{\perp_g} \subset L_i.$$ 
The following lemma is crucial, in it we use that one of the Lagrangians is minimal and the metric we use is almost \HKL.
\begin{lm}\label{JSendsL1toL2}
	$JU_1=U_2.$
\end{lm}
\begin{proof}
	As $L_i \subset V$ are $\om_J$-Lagrangian subspaces, for all $v,w\in L_i$ we have $g(v,J w)=-\om_J(v,w)=0.$	Thus, $JL_i = L_i^{\perp_g},$ and $V= L_i \oplus J L_i $ are $g$-orthogonal decompositions, for $i\in\{1,2\}$.
	Thus, as $L_1=N \oplus U_1$ is a $g$-orthogonal decomposition, so 
	is $V= N \oplus U_1 \oplus JN \oplus J U_1$ (here we are using that $J$ is $g$-orthogonal, $g(J\cdot,J\cdot)=g(\cdot,\cdot)$).
	Thus, as $\dim JU_1 = \dim U_1 = \dim U_2,$ in order to show $JU_1=U_2,$ it is enough to show that $(N \oplus U_1 \oplus JN)  \perp_g  U_2.$ By definition, $U_2\perp_g N,$ 
	and, as $JN\subset J L_2 = L_2^{\perp_g}$ is orthogonal to $L_2\supset U_2,$ it is orthogonal to $U_2$ as well.
	Thus, it is left to show that 
	\begin{equation}\label{orthogonalityofcomplements}
			U_2 \perp_g U_1.
	\end{equation}
	To do this, we will use that $\F_{\Fi^{1}}$ is a minimal component, hence a connected component of the fixed locus of a $\C^*$-action $\Fi^{1}.$
	Then, like in the proof of Proposition \ref{DifntActionsDiffntMinComp},
	we have the $g$-orthogonal 
	weight composition with respect to $\Fi^{1}:$
	$$V= T_x \M =H_0 \oplus H_1.$$
	We have that $H_0=T_x \F_{\Fi^{1}}= L_1 \supset U_1,$
	thus in order to show \eqref{orthogonalityofcomplements}, it is enough to show that $U_2 \subset H_1.$
	This follows from considering the restriction of the $\C^*$-action $\Fi$ to $L.$
	Then, we again have the weight decomposition
	$T_x L= H_0' \oplus H_1',$ where $H_0'=T_x S= N,$ and $H_1'$ is its orthogonal complement,  $U_2.$
	Thus, $U_2$ consists of weight-1 vectors, and we are done.
\end{proof}
Now, pick a $g$-orthogonal basis $(u_1,\dots,u_k)$ of $N$ and extend it to a $g$-orthogonal basis $(u_1,\dots,u_n)$ of $L_1.$ Thus, $U_1=\la u_{k+1},\dots,u_n\ra.$
Denoting by $v_i:=J u_i,$ previous lemma gives $L_2=\la u_1,\dots,u_k, v_{k+1},\dots,v_n \ra.$
Now, consider a path of half-dimensional subspaces in $V$:
\begin{equation}\label{pathOfSpecLagrs}
	\gamma_1(t)=\la u_1,\dots,u_k, e^{ i{\pi \over 2}t} u_{k+1}, \dots, e^{ i{\pi \over 2}t} u_{{n+k} \over 2}, e^{ -i{\pi \over 2}t} u_{{{n+k} \over 2} +1},\dots, e^{ -i{\pi \over 2}t} u_{n}\ra,
\end{equation}
where $e^{i \th} u_j := cos (\th) u_j + sin (\th) J u_j.$ 
The number ${n+k \over 2}$ is an integer, as both $n$ and $k$ are even.\footnote{Being real dimensions of $I-$\KH manifolds $\F_{\Fi^{1}}$ and $\F_{\Fi^{12}}.$}
From the $g$-orthogonality of $(u_i)_{i=1}^n,$ it follows that $\gamma_1 \subset \mathcal{L}_x.$ Clearly, $\gamma_1(0)=L_1$ and $\gamma_1(1)=L_2.$
The next lemma shows the importance of this path:
\begin{lm} $\gamma_1$ is an eligible choice for a path $\lambda_1.$ 
\end{lm}
\begin{proof}
	It is enough to show that 
	\begin{equation}\label{squarephaseisone}
		det_{\Om_J}^2(\gamma_1(t))\equiv 1,
	\end{equation}
	as then $\widetilde{\lambda}_1:=(\gamma_1(t),0)$ is a path in $\widetilde{\mathcal{L}}_x$ satisfying 
	$\widetilde{\lambda}_1(0)=\widetilde{\F}_{\Fi}, \ \widetilde{\lambda}_1(1)=\widetilde{L}$ 
	and $\gamma_1$ is its projection to $\mathcal{L},$ 
	hence an eligible choice for $\lambda_1.$
	In order to show \eqref{squarephaseisone}, consider a $J$-complex volume form 
	$\Om:= (u_1^*+i v_1^*)\wedge (u_2^*+i v_2^*)\wedge \dots \wedge (u_n^*+i v_n^*)$ on $V.$
	Denoting the basis of $\gamma_1(t)$ in \eqref{pathOfSpecLagrs} by $(u_j'(t))_{j=1}^n,$ it is immediate that
	$$\Om(u_1'(t),\dots,u_n'(t))=(e^{ i{\pi \over 2}t})^{{n-k} \over 2} (e^{- i{\pi \over 2}t})^{{n-k} \over 2} \Om(u_1,\dots,u_n)=\Om(u_1,\dots,u_n)=1,$$
	thus, $det_{\Om}^2(\gamma_1(t))\equiv 1.$
	
	On the other hand, $\Om$ and $\Om_J$ are both complex volume forms on $V,$ hence $\Om=\eta \Om_J$ for some $\eta\in\C^*.$
	Then, $det_{\Om}^2={\eta^2 \over |\eta|^2 } det_{\Om_J}^2.$
	As $\F_{\Fi}$ is special, we have $det_{\Om_J}^2(T_x \F_{\Fi})= det_{\Om_J}^2(L_1)=1.$ 
	On the other hand, $det_{\Om}^2(L_1)=det_{\Om}^2(\gamma_1(0))=1,$ thus ${\eta^2 \over |\eta|^2}=1$ and $det_{\Om}^2=det_{\Om_J}^2,$ which yields \eqref{squarephaseisone}.
\end{proof}

Now we can compute the Maslov index $\mu(\lambda_1,\lambda_2),$ which, 
finally gives the claimed shift $\mu ={1\over 2}(\dim_\C \M-\dim_\R S)=\codim_\C(\F_\Fi \cap L \subset \F_\Fi).$ 

\begin{lm}
	 $\mu(\lambda_1,\lambda_2)=0.$
\end{lm}
\begin{proof}

By definition, Maslov index of paths of Lagrangians in a symplectic vector space $V$ is computed using a symplectomorphism $\phi:(V,\om_J) \fun (\R^{2n},\om_{std}),$
$\mu(\lambda_1,\lambda_2):=\mu(\phi_*\lambda_1,\phi_*\lambda_2).$ Here, $\om_{std}$ is the standard symplectic structure on $\R^{2n}.$
By the naturality of the Maslov index for paths, this does not depend on the choice of $\phi.$
We will define the most natural such symplectomorphism:
$$\phi: V \fun \R^{2n}, \ \phi(u_i)=e_i,\ \phi(v_i)=f_i,$$
where $(e_i)_{i=1}^n,(f_i)_{i=1}^n$ are standard bases of $\R^n\times 0$ and $0 \times \R^n,$ respectively.
Denote $\nu_i:=\phi_*\lambda_i,$ for $i\in\{1,2\},$ and $K_i:=\phi_*L_i.$
Thus, $K_1=\la e_1,\dots,e_n \ra$ and $K_2=\la e_1,\dots,e_k,f_{k+1},\dots,f_n \ra.$
It is immediate that $\nu_1(t)=\psi(t)K_1=\psi(1-t)K_2,$ where
$$\psi:[0,1] \fun U(n)\subset Sp(2n), \ \psi(t)= diag (\underbrace{1,\dots,1}_k, \underbrace{e^{ i{\pi \over 2}t}, \dots, e^{ i {\pi \over 2}t}}_{{1\over 2} (n-k)},\underbrace{e^{ -i{\pi \over 2}t}, \dots, e^{ -i {\pi \over 2}t}}_{{1\over 2} (n-k)}).$$
Here, the complex coordinates used on $\R^{2n}\iso \C^n$ are standard, i.e. on the $i$-th place is the span $\la e_i,f_i \ra.$ 
Using the elementary properties (given in \cite{RS93}) of Maslov index for pairs of Lagrangian paths, and for its version $\mu(A):=\mu(A(0\times \R^n),0\times \R^n)$ for a path $A=A(t)$ of symplectic matrices, 
we have a sequence of equalities:
\begin{align*}
	\mu(\lambda_1,\lambda_2)=\mu(\nu_1,\nu_2)&=\mu(\psi(1-t)K_2,K_2) = \mu(\psi(1-t)\Fi(0 \times \R^n),\Fi(0 \times \R^n))\\
	&=\mu(\Fi \psi(1-t) (0 \times \R^n),\Fi(0 \times \R^n)) =\mu(\psi(1-t) 0\times \R^n, 0 \times \R^n )\\ 
	&= \mu(\psi(1-t))= -\mu(\psi(t)) \\
	&= -k \mu(1) - {1\over 2}(n-k)\mu((e^{i{\pi \over 2} t})_{t\in[0,1]})-{1\over 2}(n-k)\mu((e^{-i{\pi \over 2} t})_{t\in[0,1]})\\ 
	&= -{1\over 2}(n-k) ({1 \over 2}) -{1\over 2}(n-k) (-{1 \over 2})=0,
\end{align*}
where 
$\Fi:\R^{2n}\fun \R^{2n}, \ \Fi=diag(\underbrace{i,\dots,i}_k,\underbrace{1,\dots,1}_{n-k})$
is a symplectomorphism which satisfies $\Fi(0 \times \R^n)=(\R^k \times {0}) \times ({0} \times \R^{n-k})=L_2,$ and clearly commutes with $\psi(t).$
The Maslov index $\mu((e^{\pm i{\pi \over 2} t})_{t\in[0,1]})=\mu((e^{\pm i{\pi \over 2} t})_{t\in[0,{1 \over 2}]})+\mu((e^{\pm i{\pi \over 2} t})_{t\in[{1\over 2},1]}) 
=\mu((e^{\pm i{\pi \over 2} t})_{t\in[0,{1 \over 2}]})+0 = {1\over 2} \sign (tg(\pm {\pi \over 4})) - {1 \over 2} \sign (tg(0))= \pm {1 \over 2}$ is computed using the catenation, zero and localisation properties from \cite[Thm. 2.3]{RS93}.
\end{proof}
In particular, given a SHK $\M,$ recall\footnote{Recall Definition \ref{DefinitionHKCSR}.} that for them we consider only actions whose $S^1$-parts are isometric with respect to the \HK metric on $\M.$
Thus, there is a uniform grading for \textit{all} minimal components which gives the graded Floer cohomologies as in \eqref{SHKcaseFloerAlgebra}.
\end{proof}

\begin{rmk} \textbf{Comparison with the literature.}\label{ComparisonFloerAlgebraToGRTLiterature}
We want to remark that the gradings of Floer cohomologies obtained in Theorem \ref{LagrFloerMinComps} compares well with the literature on this subject,
considering the examples when $\M=\M^{m,n}$ is the resolution of a two-row Slodowy variety $\M_0^{m,n}.$ 

On the side of Symplectic Topology, the symmetric gradings for Lagrangian Floer cohomologies were used in the work on Symplectic Khovanov Homology \cite[Sec. 4.4]{AbS19} in order to obtain the purity of the obtained Floer algebra (called Symplectic arc algebra). 
The Lagrangians $L_\rho$ used there are labelled by crossingless matchings $\rho,$ and they are topologically same as the Lagrangians of the core of $\M^{n,n}$ 
(the manifold $\mathcal{Y}_n$ they use is the deformation of a singular Slodowy variety $\M_0^{n,n}$ whereas we consider its resolution; however, these are known to be symplectomorphic). 
Moreover, the only minimal Lagrangian in $\M^{n,n}$ is precisely the Lagrangian which corresponds to their Lagrangian $L_{\rho_{plait}}$ obtained from so-called ``plait'' matching. 
They choose the gradings of other Lagrangians $L_\rho$ such that obtained Floer cohomologies $HF^*(L_{\rho_{plait}},L_{\rho})$ have symmetric gradings, whereas we 
prove this symmetry by using the existing \HK structure and the canonical gradings for special Lagrangians.

On the side of Geometric Representation Theory, in the work of Stroppel--Webster \cite{SW12} they consider two-row Springer fibres $\B^{m,n},$ which are precisely the cores of resolutions of two-row Slodowy varieties $\M^{m,n}.$ Considering the intersections of components of $\B^{m,n},$ they shift \cite[Eqn. 3.2]{SW12} the obtained singular cohomologies in order to get the graded isomorphism to generalised arc algebras.\footnote{Algebras that generalise the ones that Khovanov used construct the Khovanov Homology, \cite{Kh00}.} Their shift is precisely same as ours, when passing from Floer cohomology to the cohomology of intersection by isomorphism \eqref{LagrFLoerOfTwoMinima}.
\end{rmk}

\begin{rmk} We have seen in this section that smooth components of the cores of SHS manifolds are exact Lagrangians, hence well-defined objects in the closed Fukaya category. From that viewpoint, of interest would also be the core components which are not necessarily smooth but immersed, as their Floer theory is well-defined as well \cite{AkJo10}.
	However, it is hard to get the information whether some singular core component is immersed in general. Even the simplest singular core components, in the examples of cores being Springer fibres 
	\cite[Sec. 5]{Va79}, 
	\cite[Prop. 2.1]{FrMe10}, are rather defined by a set of equations than an (immersion) map.
\end{rmk}

%
\subsection{Higher Floer operations on smooth core components}\label{FloerProductsOnSHS}
Following Theorem \ref{LagrFloerMinComps}, an interesting avenue of further Floer-theoretic research on minimal, and more general, smooth core components could be towards computing their Floer product, or more generally, computing the higher operations of the Fukaya category between them. At the moment this is far from reach, but let us mention an encouraging result which says that the operations in the Fukaya category of a SHS $\M$ that involve smooth components of the core come only from considering constant solutions.
We say that a Riemann surface $\Sigma$ is of \textbf{type-$n$} if its boundary $\partial \Sigma$ has punctures, which decompose it into $n$ connected pieces, denoted by $\partial \Sigma_i, i=1,\dots,n.$ 
We use the almost \HK notation above equation \eqref{cplxVolForm}.
\begin{prop}\label{FloerSolnsAreConstant}
\textbf{(Holomorphic maps with boundary in the core are constant)}
Consider a weight-1 SHS $\M$ and pick \textit{any} choice of complex structure $\Theta \in \{aJ+bK \mid a^2+b^2=1\}.$ 
Given any smooth core components $L_1,\dots,L_n,$ and a Riemann surface $\Sigma$ of type-$n,$ any $\Theta$-holomorphic map $u:\Sigma\fun \M,\; u( \partial \Sigma_i) \subset L_i$ is constant.
\end{prop}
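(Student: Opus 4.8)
The plan is to exploit the interplay between the complex structure $\Theta\in\{aJ+bK\mid a^2+b^2=1\}$ in which $u$ is holomorphic, the conical $\C^*$-action $\Fi$, and the Liouville structure from Proposition \ref{canonicalLiouville}. The key observation is that each smooth core component $L_i=\overline{\L_i}$ is a $\Theta$-special Lagrangian (Lemma \ref{LemmaSmoothCoreSpecialLagrs}) which is in fact $\Theta$-complex isotropic — more precisely, it is $\om_{J,K}$-Lagrangian, and since $\Theta$ is a compatible complex structure for the symplectic form $\om_\Theta=\R e(\om_I-i\om_{I\Theta})$ of the shape $a\om_J+b\om_K$ (up to the identification of forms in the family $\om_{J,K}$), each $L_i$ is Lagrangian for a symplectic form taming $\Theta$. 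The strategy then splits into two parts: first show that $u$ must have image in the core $\L$, and second show that any $\Theta$-holomorphic map into the core with Lagrangian boundary conditions has zero energy.

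First I would bound the energy. Recall from Proposition \ref{canonicalLiouville} that the forms $\om_{J,K}$ are exact with primitive $\th_{J,K}=i_Z\om_{J,K}$, $Z=\frac1s X_{\R_+}$, and that each smooth core component $L_i$ is a \emph{Bohr--Sommerfeld} Lagrangian by Lemma \ref{SmoothIsExact}: $\th_{J,K}|_{L_i}=0$ exactly (not just exact). Choosing the relevant $\om_\Theta$ in the family $\om_{J,K}$, Stokes' theorem then gives
\begin{equation*}
	\int_\Sigma u^*\om_\Theta=\sum_i\int_{\partial\Sigma_i}u^*\th_\Theta=0,
\end{equation*}
since $u(\partial\Sigma_i)\subset L_i$ and $\th_\Theta$ vanishes identically on $L_i$. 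But $u$ being $\Theta$-holomorphic and $\om_\Theta$ taming $\Theta$ means $\int_\Sigma u^*\om_\Theta=\frac12\int_\Sigma\|du\|^2\geq 0$ with equality iff $du\equiv 0$. (One must be slightly careful with the punctures: since the $L_i$ are compact and $\Sigma$ has finitely many boundary punctures, the standard removal-of-singularities/finite-energy discussion applies, or one simply notes that the boundary integrals converge and the argument is local near each puncture; alternatively one first establishes that $u(\Sigma)$ lies in the compact core, making $\Sigma$ effectively compact.) Hence $u$ is constant.

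The step I expect to be the main obstacle is making the Stokes argument rigorous in the presence of boundary punctures, i.e.\ justifying that the boundary terms at the punctures vanish or cancel — this is where one needs either that $u$ extends continuously over the punctures (which follows if its energy is a priori finite and the asymptotics are controlled) or that the image of $u$ lies in the compact core $\L$. For the latter, I would argue: the Liouville vector field $Z$ is gradient-like for the function $\log R$ on the convex end, and $L_i\subset\L$ is contained in the skeleton (Proposition \ref{canonicalLiouville}(\ref{CoreIsSkeleton})); a $\Theta$-holomorphic curve with boundary on Lagrangians contained in the skeleton cannot escape to the convex end by the usual maximum-principle / integrated-maximum-principle argument for $\log R$ (or equivalently for the admissible Hamiltonian $\Phi$ of Proposition \ref{CorProofThatPhiIsRegular} in the CSR case), because $\log R\circ u$ would be subharmonic and attain its maximum on the boundary where it is constant. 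This confines $u$ to a compact region, after which the energy identity above is an honest application of Stokes on a compact surface-with-boundary-and-corners, and $\int_\Sigma u^*\om_\Theta=0$ forces $du\equiv 0$, so $u$ is constant.
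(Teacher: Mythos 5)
Your proposal is correct and follows essentially the same route as the paper: the energy identity for a $\Theta$-holomorphic map with respect to $\om_\Theta=a\om_J+b\om_K$, combined with Stokes and the fact that $\th_{J,K}=i_Z\om_{J,K}$ vanishes identically on the core components (Lemma \ref{SmoothIsExact}), forces $\int_\Sigma u^*\om_\Theta=0$ and hence $du\equiv 0$. The only difference is that you spell out the care needed at the boundary punctures (via confinement to the compact core by a maximum principle), a point the paper's proof passes over silently, so your version is if anything slightly more complete.
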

\begin{proof}
 Let us assume that we have a non-constant $S$-holomorphic map $$u:\Sigma\fun \M,\; u( \partial \Sigma) \subset \L,$$ 
By Gromov Lemma we have $\int_{\Sigma} u^*\om_{J,K}= {1\over 2}\int_{\Sigma} ||du||^2>0.$ 
On the other hand, like in the proof of Proposition \ref{canonicalLiouville}, we have $\om_{J,K}=d\th_{J,K}$ where $\th_{J,K}=i_{Z}\om_{J,K}$ and $Z$ is vector field of the $\R_+$-action, up to a scale. Thus, by Stokes theorem
$\int_{\Sigma} u^*\om_{J,K}=\int_{\partial \Sigma} u^*\th_{J,K},$ and the last integral vanishes due to 
$\th_{J,K}(\xi)=0$ for any vector $\xi\in T\L$ (see the proof of Proposition \ref{SmoothIsExact}). Thus we get a contradiction.
\end{proof}

\begin{rmk}
We remark that there is a similar result for holomorphic Lagrangians in \HK manifolds which is the main theorem of a recent paper by Solomon-Verbitsky \cite[Thm. 1.6]{SoVe19}, which gives the slightly weaker conclusion, as they claim the holomorphic maps to be constant for a \textit{generic} choice of symplectic form among $\om_{J,K},$ whereas our works for any.
However, they cover the case of all holomorphic Lagrangians, whereas we choose only the ones landing in the core, or equivalently (due to Lemma \ref{SmoothIsExact}), the Bohr--Sommerfeld ones.
\end{rmk}

So, having Proposition \ref{FloerSolnsAreConstant} in mind, one should expect that the Floer product on three smooth core components $L_1,L_2,L_3,$
under the isomorphisms from Theorem \ref{LagrFloerMinComps}, becomes just the convolution product (with $d$ being the the appropriate shift)
\begin{equation*}
	H^*(L_2 \cap L_3) \times H^*(L_1 \cap L_2) \fun H^{*-d}(L_1 \cap L_3), \ (\a,\b) \mapsto p_{13}! (p_{23}^*\a \cup p_{12}^*\b),
\end{equation*}
possibly twisted by coefficients coming from our twisted Pin-structures on Lagrangians, cf. Remark \ref{OnCoefficients}. 
In cases when core components are spin manifolds there are no needs for these twists, and in particular when $\M=\M^{m,n}$ is a resolution of a Slodowy variety $\M_0^{m,n}$ of two-row type, 
the expected convolution algebra of intersections of core components compares well with two isomorphisms in the existing literature:
\begin{itemize}
 \item One from \cite{SW12}, between convolution algebra of the same intersections and generalised arc algebra $H^{m,n}.$
\item One from \cite[Prop. 7.8]{MaS19}, between a certain Floer algebra and generalised arc algebra $H^{m,n}.$ 
\end{itemize}
In the latter, they considered a deformation $\mathcal{Y}_{n,m}$ of $\M_0^{m,n}$ (which is symplectomorphic to $\M^{m,n}$) 
and a specific set of Lagrangians in $\mathcal{Y}_{n,m},$ which are topologically equivalent to the core components of $\M^{m,n},$ 
and their Floer cohomologies are equivalent (as graded vector spaces) to the ones of core components. 
Thus, having these examples and Proposition \ref{FloerSolnsAreConstant} in mind, it is reasonable to expect that the Floer product of 
smooth core components indeed correspond to (in general twisted) convolution product.
\section{Applications towards symplectic cohomology}\label{MinLagrasAndSH}

In this section we use the existence of minimal components in order to give some information on the symplectic cohomology $SH^*(\M):=SH^*(\M,\om_{J,K})$ of an SHS manifold. 
In general, symplectic cohomology is notoriously hard to compute explicitly, so we usually have to make do with partial information. We will obtain its degree-wise lower bounds.

Recall first by Lemma \ref{LemmaCSRsAreAlmostHK} that, given an SHS $\M,$ one has a compatible almost \HK structure $(g,I,J,K)$ on it. 
Thus, as in \eqref{cplxVolForm} one gets a complex volume form $\Om_{\Theta}$ for any complex structure $\Theta \in \{aJ+bK \mid a^2+b^2=1\},$ 
so this gives us a $\Z$-grading on the symplectic cohomology $SH^*(\M).$ 
In principle, this grading depends on the choice of compatible almost \HK structure, but in the majority of examples (such as CSRs or Moduli space of Higgs bundles) one is already given a \HK structure, so the corresponding grading on symplectic cohomology is canonical. The existence of $\Z$-grading is important, as we will get a degree-wise information on its ranks; getting information on its global rank would be not much of use as symplectic cohomology for these manifolds is expected to be infinite-dimensional (cf. Remark \ref{NoteOnFiniteRAnksInEAchDimension}). 

Regarding the coefficients for Lagrangian Floer cohomologies and symplectic cohomology, we will in general work over field $\K$ of characteristic 2, 
but for most known examples we can actually use arbitrary fields $\K,$ cf. Remark \ref{OnCoefficients}.

We will make a crucial use of the commuting triangle due to Seidel \cite[Sec. (5a)]{Sei08}. 
Namely, given a closed exact Lagrangian submanifold $L\xhookrightarrow{i} M$ inside a Liouville manifold $M,$ there is a commuting triangle
\begin{equation} \label{SeidelTriangle}
	\begin{tikzcd}[column sep=small]
		H^*(M) \arrow{r}{c^*}  \arrow{rd}{i^*} 
		& SH^*(M) \arrow{d}{\mathcal{CO}^0} \\
		& HF^*(L,L)
	\end{tikzcd}
\end{equation}
consisting of ring homomorphisms such that the diagonal map, under the isomorphism $HF^*(L,L)\iso H^*(L),$ becomes the restriction map $i^*:H^*(M)\fun H^*(L)$ for singular cohomology. 

Here, the vertical map is the \textbf{closed-open string map} 
${\mathcal{CO}^0}:SH^*(M,\om) \fun HF^*(L,L).$ As we will not use its definition but merely its properties, we refer to the original paper \cite{Ab15} for a thorough exposition. 
Roughly speaking, this map counts half-cylinders satisfying the Floer equation and having a boundary on $L.$
The horizontal $c^*$ map is a version of the PSS map for symplectic cohomology. 
As Seidel points out, having a closed exact Lagrangian yields the non-vanishing of the symplectic cohomology, as the diagonal map $i^*$ sends unit to the unit, thus cannot vanish through $SH^*(M).$ 

\begin{cor} Given a weight-1 SHS $\M,$ its symplectic cohomology 
	is non-zero, $$SH^*(\M,\om_{J,K})\neq 0.$$
\end{cor}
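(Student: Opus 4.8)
The plan is to apply Seidel's commuting triangle \eqref{SeidelTriangle} to a minimal component of $\M$, which exists precisely because $\M$ is a weight-1 SHS. First I would invoke Theorem \ref{MinimalComponentsTheorem} (or even just Proposition \ref{minimalcomp}, using any weight-1 conical action $\Fi$ on $\M$): this produces at least one smooth irreducible core component $\F_\Fi$, which by Lemma \ref{SmoothIsExact} (equivalently Corollary \ref{MinimalComponentsExactLagrTheorem}) is a closed exact Lagrangian submanifold of the Liouville manifold $(\M,\om_{J,K})$. Thus the hypotheses of the triangle \eqref{SeidelTriangle} are satisfied with $M=\M$ and $L=\F_\Fi$.

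Next I would run Seidel's argument verbatim: the triangle \eqref{SeidelTriangle} consists of ring homomorphisms, the diagonal map $i^*\colon H^*(\M)\fun H^*(\F_\Fi)$ (under the identification $HF^*(L,L)\iso H^*(L)$) sends the unit $1\in H^0(\M)$ to the unit $1\in H^0(\F_\Fi)$, which is nonzero because $\F_\Fi$ is a nonempty closed manifold. Since this nonzero class factors as $\mathcal{CO}^0\circ c^*$ through $SH^*(\M,\om_{J,K})$, the group $SH^*(\M,\om_{J,K})$ cannot be zero; indeed $c^*(1)\neq 0$. Finally, one notes that this is independent of the choices made: all the Liouville structures $\th_{J,K}$ are mutually Liouville isomorphic by Proposition \ref{canonicalLiouville}, so the corresponding symplectic cohomologies are canonically isomorphic, and the conclusion $SH^*(\M,\om_{J,K})\neq 0$ is unambiguous.

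There is essentially no obstacle here: the statement is an immediate corollary of Theorem \ref{MinimalComponentsTheorem}, Lemma \ref{SmoothIsExact}, and the existence of the triangle \eqref{SeidelTriangle}, all of which have already been established. The only point requiring a word of care is the coefficient field: by Remark \ref{OnCoefficients} one works over a field of characteristic $2$ in general (or an arbitrary field in the cotangent-bundle and Higgs-branch cases), and the unit of $H^0$ is nonzero over any such field, so non-vanishing holds regardless. One could also remark that this is strictly weaker than Theorem \ref{boundonSHIntro}, which in particular gives $\rk(SH^{\dim_\C\M}(\M))\geq \#\{\text{minimal components}\}\geq 1$, so the corollary also follows a posteriori from that theorem; but the direct argument via \eqref{SeidelTriangle} is the cleanest.
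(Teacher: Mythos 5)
Your proposal is correct and follows the paper's own argument: the paper likewise produces a smooth core component via Proposition \ref{minimalcomp}, notes it is a closed exact Lagrangian by Lemma \ref{SmoothIsExact}, and concludes by applying Seidel's triangle \eqref{SeidelTriangle}. Your additional remarks on the unit, coefficients, and Liouville-isomorphism independence are harmless elaborations of the same proof.
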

\begin{proof}
By Proposition \ref{minimalcomp} there is at least one smooth core component of $\M,$ and by Lemma \ref{SmoothIsExact} it is a closed exact Lagrangian submanifold. 
The claim follows due to triangle \eqref{SeidelTriangle} applied to it.
\end{proof}

\begin{rmk}\label{WhenExactSHvanishesHiggsBranch}
	It should be noted that for SHS manifolds which do not admit a weight-1 conical action, symplectic cohomology may in fact vanish, which in turn prevents the existence of exact Lagrangians
	(by the argument above using the triangle \eqref{SeidelTriangle}).
	This is true for the case of Higgs branch spaces, as by Corollary \ref{SubcritStein} they are all subcritical Stein manifolds, so by Cieliebak \cite[p. 121]{Cie02} the symplectic cohomology vanishes. In particular, for Nakajima quiver varieties this happens precisely when quiver has a loop edge. 
\end{rmk}

By studying the closed-open string map, we can also obtain lower bounds on the rank of $SH^*(\M).$ For that matter, we will essentially use the results regarding the homology decomposition, obtained in Section \ref{SectionHomologyDecompositionOfCore}.

\begin{prop}\label{SurjectiveCOmap}
The closed-open string map $$\mathcal{CO}^0: SH^*(\M)\fun HF^*(\Fmin,\Fmin)$$ of a minimal component $\Fmin$ is surjective. 
\end{prop}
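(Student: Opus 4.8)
The plan is to combine Seidel's commuting triangle \eqref{SeidelTriangle}, applied to the exact Lagrangian $L = \Fmin$, with the homology decomposition results of Section \ref{SectionHomologyDecompositionOfCore}. First I would recall that by Proposition \ref{minimalcomp} and Lemma \ref{SmoothIsExact}, the minimal component $\Fmin$ is a smooth closed exact Lagrangian submanifold of $(\M, \om_{J,K})$, so the triangle \eqref{SeidelTriangle} is available with $M = \M$ and $i : \Fmin \hookrightarrow \M$. Under the canonical isomorphism $HF^*(\Fmin, \Fmin) \iso H^*(\Fmin)$ (which holds since $\Fmin$ is exact, by Theorem \ref{LagrFloerMinComps}), the diagonal map of the triangle becomes the ordinary restriction map $i^* : H^*(\M) \fun H^*(\Fmin)$ in singular cohomology. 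Since $\mathcal{CO}^0 \circ c^* = i^*$, surjectivity of $\mathcal{CO}^0$ follows immediately once I establish that $i^* : H^*(\M) \fun H^*(\Fmin)$ is surjective.

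So the core of the argument reduces to: \emph{the restriction map $i^* : H^*(\M) \fun H^*(\Fmin)$ is surjective.} This is where I would invoke Proposition \ref{HomologyDecompositionOfTheCore}. Since $\M$ deformation retracts onto its core $\L$ (Lemma \ref{CoreIsADefRetr}), we have $H_*(\M) \iso H_*(\L)$, and the homology decomposition gives a graded isomorphism $\Phi = \oplus_i \eta_i : \bigoplus_i H_*(\F_i)[-\mu_i] \fun H_*(\L)$, where the $\F_i$ are the $\Fi$-fixed components for the weight-1 conical action $\phi$ cutting out $\Fmin$. By the Betti number argument in the proof of Proposition \ref{minimalcomp}, the minimal component $\Fmin$ is the unique fixed component with $\mu_i = 0$, i.e. $\Fmin = \F_{i_0}$ for some index $i_0$ with $\mu_{i_0} = 0$; moreover $\Fmin = \L_{i_0} = \overline{\L_{i_0}}$ is itself a closed irreducible component of $\L$. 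The key point is that the block $\eta_{i_0} : H_*(\Fmin) \fun H_*(\L)$ is (by the formula $\eta_{i_0}([C]) = [\overline{p_{i_0}^{-1}(C)}]$ with $p_{i_0} = \mathrm{id}$) simply the map induced by the inclusion $\Fmin \hookrightarrow \L$, and it is a split injection onto a direct summand of $H_*(\L)$. Dualizing over the field $\K$, the inclusion $\Fmin \hookrightarrow \L \simeq \M$ induces a split \emph{surjection} $H^*(\M) \fun H^*(\Fmin)$, which is exactly $i^*$.

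Thus the steps, in order, are: (1) invoke the triangle \eqref{SeidelTriangle} for $L = \Fmin$ and identify the diagonal map with $i^*$; (2) use Proposition \ref{HomologyDecompositionOfTheCore} together with the $\mu_{i_0}=0$ observation from Proposition \ref{minimalcomp} to see that inclusion of $\Fmin$ realizes a direct summand of $H_*(\L) \iso H_*(\M)$; (3) dualize to conclude $i^*$ is surjective; (4) conclude $\mathcal{CO}^0$ is surjective since it is post-composed onto by $c^*$ to give the surjection $i^*$. The main obstacle — really the only non-formal point — is making sure that the block $\eta_{i_0}$ of the Carrell--Goresky decomposition genuinely coincides with the inclusion-induced map $H_*(\Fmin) \fun H_*(\L)$ (up to the irrelevant shift $\mu_{i_0}=0$), so that its image is a \emph{topological} direct summand and the dual map is onto; this follows from the explicit description of the Thom-type maps $\eta_i$ in \eqref{ThomMaps} once one notes $p_{i_0}$ is the identity and $\overline{\L_{i_0}} = \Fmin$, but it should be stated carefully. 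One should also note the coefficient conventions of Remark \ref{OnCoefficients} are in force, so everything is over the appropriate field $\K$ and no torsion issues arise.
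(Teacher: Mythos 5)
Your proposal is correct and follows essentially the same route as the paper's own proof: Seidel's triangle \eqref{SeidelTriangle} reduces the claim to surjectivity of $i^*:H^*(\M)\fun H^*(\Fmin)$, which the paper likewise obtains from Proposition \ref{HomologyDecompositionOfTheCore} by noting that $\mu_{i_0}=0$ forces $\eta_{i_0}$ to be the inclusion-induced map $(i_\Fi)_*$, injective, and then dualizing over the field $\K$. The only cosmetic difference is that you phrase the injection as split; over a field this is automatic, so nothing further is needed.
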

\begin{proof}

Using diagram (\ref{SeidelTriangle}) for the minimal component $\Fmin$

\begin{equation} \label{SeidelTriangle2}
\begin{tikzcd}[column sep=small]
H^*(\M) \arrow{r}{c^*}  \arrow{rd}{i^*} 
& SH^*(\M) \arrow{d}{\mathcal{CO}^0} \\
& HF^*(\Fmin,\Fmin)
\end{tikzcd}
\end{equation}
we see that the surjectivity of ${\mathcal{CO}^0}$ 
would follow from the surjectivity of the map $H^*(\M)\fun HF^*(\Fmin,\Fmin),$ which under the isomorphism 
$HF^*(\Fmin,\Fmin)\iso H^*(\Fmin)$ becomes the restriction map $H^*(\M)\fun H^*(\Fmin).$ Recalling that the inclusion of the core $\L\subset \M$ is a homotopy equivalence (Lemma \ref{CoreIsADefRetr}), 
it is enough to show that the restriction map $H^*(\L)\fun H^*(\Fmin)$ is surjective.

But this follows directly from Proposition \ref{HomologyDecompositionOfTheCore}. Indeed, it gives us an isomorphism
\begin{equation}\label{sumsumsum}
\Phi_{\Fi}=\oplus_i \eta_i: \oplus_i H_*(\F_i)[-\mu_i] \fja{\iso} H_*(\L)
\end{equation}
where $\F_i$ are connected components of the fixed locus $\M^{\Fi},$ the $\mu_i$ are real dimensions of $(t\fun \infty)$-attracting bundles
$p_i:\L_i\fun \F_i$ and	$$\eta_i:  H_*(\F_i)[-\mu_i] \fun H_*(\L), \ [C]\mapsto [\ol{p_i^{-1}(C)}],$$
for a generic cycle $C.$
Notice that the minimal component $\F_{i_0}:=\F_{\Fi}$ satisfies $$\F_{i_0}=\L_{i_0}=\ol{\L_{i_0}}$$ (equation (\ref{TheEpicenterOfThesis}) in the proof of Proposition \ref{minimalcomp}). 
Thus, $\mu_{i_0}=0$ and $\ol{p_{i_0}^{-1}(C)}=C,$ for a cycle $C,$ thus
$$\eta_{i_0}=(i_{\Fi})_*:H_*(\F_{\Fi})\fun H_*(\L),$$ where $i_{\Fi}:\F_{\Fi}\hookrightarrow \L$ is the inclusion map. Hence by isomorphism (\ref{sumsumsum}), the map $(i_{\Fi})_*$ is injective. As we are working over field coefficients, 
the restriction map on cohomology $(i_{\Fi})^*:H^*(\L)\fun H^*(\F_{\Fi})$ is naturally isomorphic to the dual map $\Hom((i_{\Fi})_*,\K),$ hence is surjective. Thus by the argument above, the proposition is proved.
\end{proof}

As a corollary, we obtain the lower bounds on the ranks of symplectic cohomology by ordinary homology of a minimal component.

\begin{cor} Given an arbitrary minimal component $\Fmin$ of a weight-1 SHS $\M$, for all $k\in \N_0$, $$ {\rk}(SH^k(\M))\geq b_k(\Fmin).$$
\end{cor}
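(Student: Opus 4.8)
The plan is to leverage the commuting triangle \eqref{SeidelTriangle} applied to a minimal component $\Fmin$, together with the surjectivity of the closed–open string map established in Proposition \ref{SurjectiveCOmap}. First I would recall that $\Fmin$ is a closed exact Lagrangian submanifold of $(\M,\om_{J,K})$, by Theorem \ref{MinimalComponentsTheorem} and Lemma \ref{SmoothIsExact}; in particular its Floer cohomology is defined and, by exactness, $HF^*(\Fmin,\Fmin)\iso H^*(\Fmin)$ as graded vector spaces. Thus $\rk HF^k(\Fmin,\Fmin)=b_k(\Fmin)$ for every $k$.

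Next I would invoke Proposition \ref{SurjectiveCOmap}, which says that the closed–open string map $\mathcal{CO}^0:SH^*(\M)\fun HF^*(\Fmin,\Fmin)$ is surjective. Since $\mathcal{CO}^0$ is a map of $\Z$-graded vector spaces (the grading coming from the compatible almost \HK structure, as discussed at the start of this section), its restriction $SH^k(\M)\fun HF^k(\Fmin,\Fmin)$ is surjective in each degree $k$. A surjection of vector spaces forces $\rk(SH^k(\M))\geq \rk(HF^k(\Fmin,\Fmin))$, and combining with the previous paragraph gives $\rk(SH^k(\M))\geq b_k(\Fmin)$ for all $k\in\N_0$, which is the claim.

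The only genuine point to be careful about is the compatibility of the grading conventions: one must check that the isomorphism $HF^*(\Fmin,\Fmin)\iso H^*(\Fmin)$ and the closed–open map $\mathcal{CO}^0$ are both grading-preserving with respect to the $\Z$-grading induced by $\Om_\Theta$ on $SH^*(\M)$. This follows from the canonical grading of $\Fmin$ as a special Lagrangian (Lemma \ref{LemmaSmoothCoreSpecialLagrs}) and the standard fact that $\mathcal{CO}^0$ is a degree-zero ring homomorphism; no genuine obstacle arises here, only bookkeeping. Everything else is a formal consequence of the two cited statements, so the corollary is essentially immediate and there is no hard step — the work was all done in Proposition \ref{SurjectiveCOmap} and the homology decomposition of the core (Proposition \ref{HomologyDecompositionOfTheCore}) that underlies it.
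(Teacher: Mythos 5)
Your proposal is correct and follows the same route as the paper: the corollary is deduced directly from Proposition \ref{SurjectiveCOmap} together with the isomorphism $HF^*(\Fmin,\Fmin)\iso H^*(\Fmin)$ for the exact Lagrangian $\Fmin$, with the degree-wise surjectivity of $\mathcal{CO}^0$ giving the rank inequality. Your remark on grading compatibility is reasonable bookkeeping and does not diverge from the paper's argument.
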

By looking carefully at the proof of Proposition \ref{SurjectiveCOmap}, notice that we have shown that a block of $H^*(\M),$ that is isomorphic to $H^*(\F_{\Fi})$ via the restriction map, injects in $SH^*(\M)$ with the $c^*$-map. Thus, fixing a weight-1 action $\Fi$ with the fixed locus $\F=\sqcup \F_i,$ we can inject the cohomologies of all $\F_i$ which lie in minimal components to $SH^*(\M)$ via the $c^*$-map. Notice that in order to prove this, we have to use both the homology decompositions for the action $\Fi$ and each of those minimal components,
and the homology decomposition for the core,
and to prove the compatibility of those decompositions. We do it in the following theorem.

\begin{thm}\label{boundonSHBest}
	Let $(\M,\Fi)$ be a weight-1 SHS. Denote by $\F=\sqcup_i \F_i$ the fixed locus of $\Fi$ decomposed into connected components $\F_i,$ and 
	$\mu_i$ the real dimensions of corresponding attracting bundles $\L_i \fun \F_i.$ Then
	$$ {\rk}( SH^k(\M))\;\geq \sum_{\{i |\,  \ol{\L_i} \in \mathrm{Min}(\M,\Fi)\}} b_{k-\mu_i}(\F_i),$$
	for all $k\geq 0$. In particular, $\rk (SH^{dim_{\C}\M}(\M))\geq |\emph{Min}(\M,\Fi)|.$
\end{thm}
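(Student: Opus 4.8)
The plan is to refine the argument of Proposition \ref{SurjectiveCOmap} so that it applies not just to a single minimal component, but to \emph{all} fixed components lying inside minimal components simultaneously, and to keep track of the $\Z$-grading. First I would recall the two ingredients: the Seidel commuting triangle \eqref{SeidelTriangle}, which shows that the $c^*$-map $H^*(\M)\fun SH^*(\M)$ is injective on any subspace on which the restriction map $i^*:H^*(\M)\fun H^*(L)$ to a closed exact Lagrangian $L$ is injective; and the homology decomposition of the core (Proposition \ref{HomologyDecompositionOfTheCore}), $\Phi_\Fi=\oplus_i\eta_i:\oplus_i H_*(\F_i)[-\mu_i]\xrightarrow{\iso} H_*(\L)$. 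Dualizing over the field $\K$, this gives a direct sum decomposition $H^*(\L)\iso\bigoplus_i H^{*-\mu_i}(\F_i)$ with the block indexed by $i$ being exactly the image of the dual Gysin map; in particular, for each $i$ the projection $H^*(\L)\fun H^{*-\mu_i}(\F_i)$ onto the $i$-th block is surjective, and it splits the inclusion-type map coming from $\eta_i$.

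The key step is to show that for a fixed component $\F_i$ contained in a minimal component $L=\ol{\L_i}=\F_\phi$ (for some weight-$1$ conical $\phi$ commuting with $\Fi$), the relevant block of $H^*(\M)$ injects into $H^*(L)$ under the restriction map, with a degree shift of $\mu_i$. For this I would use the homology decomposition a \emph{second} time, now for the action $\phi$ restricted to the smooth projective variety $L$ (this is legitimate by \cite{CaSo79}, since $L$ is a closed \KH manifold with a holomorphic $\C^*$-action, as noted in the text). Since $\Fi$ and $\phi$ commute, $\Fi$ preserves $L$, so $\F_i\subset L$ is a $\Fi$-fixed component of $L$; its $(t\fun\infty)$-attracting set inside $L$ is a \BB piece of $L$, with some fibre dimension $\nu_i$. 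The crucial compatibility claim is that the \BB piece of $\F_i$ inside the core $\L$, restricted appropriately, matches the one inside $L$, so that $\mu_i=\nu_i$ — indeed $L$ is itself a union of \BB pieces $\ol{\L_j}$ of the core, and for $\F_i\subset L$ the attracting set $\L_i$ is contained in $L$, making $\L_i$ both the core's \BB piece and $L$'s \BB piece; this gives $\mu_i=\nu_i$. Then the dual of Proposition \ref{HomologyDecompositionOfTheCore} applied to $L$ gives a block $H^{*-\mu_i}(\F_i)\subset H^*(L)$, and one checks (via the compatibility of the Thom/Gysin maps $\eta_i$ for $\L$ and for $L$, both defined by taking closures of preimages of generic cycles) that the restriction map $H^*(\M)\iso H^*(\L)\fun H^*(L)$ carries the $\F_i$-block of $H^*(\L)$ isomorphically onto the $\F_i$-block of $H^*(L)$. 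Composing with $c^*$ and invoking the triangle \eqref{SeidelTriangle} for the exact Lagrangian $L$ then injects $H^{*-\mu_i}(\F_i)$ into $SH^*(\M)$.

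Finally I would assemble the estimate: ranging over all $i$ with $\ol{\L_i}\in\mathrm{Min}(\M,\Fi)$, the blocks $H^{*-\mu_i}(\F_i)$ sit inside $H^*(\L)=H^*(\M)$ as a direct sum (they are distinct summands of the single decomposition $\Phi_\Fi$), and each maps injectively into $SH^*(\M)$ via $c^*$; since $c^*$ is a fixed linear map and these summands are linearly independent in its domain, their images are linearly independent in $SH^*(\M)$, yielding
\[
\rk(SH^k(\M))\;\geq\;\sum_{\{i\mid \ol{\L_i}\in\mathrm{Min}(\M,\Fi)\}} b_{k-\mu_i}(\F_i).
\]
For the last sentence, take $k=\dim_\C\M$: each minimal component $\F_\phi=\F_i$ is a complex Lagrangian of complex dimension $\dim_\C\M$, hence has $b_{\dim_\C\M}(\F_i)\geq 1$ (top Betti number of a closed connected complex manifold), and $\mu_i=0$ for these, so the sum is at least the number of minimal components $|\mathrm{Min}(\M,\Fi)|$.

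The main obstacle is the compatibility step in the middle paragraph: one must verify that the two applications of the Carrell--Goresky / Carrell--Sommese decomposition — one for the core $\L$ inside $\M$, one for a minimal component $L$ inside itself — are compatible under restriction $H^*(\L)\fun H^*(L)$, i.e.\ that the Thom maps $\eta_i$ and the fibre dimensions $\mu_i$ agree, and that restriction respects the block structure. This requires care because $\eta_i$ is only characterized on generic cycles and the closures are taken in different ambient spaces; the resolution is that $\L_i\subset L\subset\L$ as locally closed subvarieties with $L=\ol{\L_i}$, so the closure of $p_i^{-1}(C)$ in $L$ and in $\L$ coincide, and likewise the \BB stratification of $L$ is induced from that of the core. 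One must also double-check that the grading on $SH^*(\M)$ and on $H^*$, both coming from the compatible (almost) \HK structure, are the ones making the shifts $\mu_i$ appear correctly — this is where the special-Lagrangian grading of Lemma \ref{LemmaSmoothCoreSpecialLagrs} and the shift computation in Theorem \ref{LagrFloerMinComps} get reused.
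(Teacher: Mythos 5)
Your overall strategy mirrors the paper's: the homology decomposition of the core for $\Fi$ (Proposition \ref{HomologyDecompositionOfTheCore}), the Carrell--Sommese decomposition of each minimal component $L=\ol{\L_i}$ under the restricted action (note your ``action $\phi$ restricted to $L$'' should read ``$\Fi$ restricted to $L$'', since $\phi$ fixes $L$ pointwise), the compatibility of the two Thom maps coming from $\L_i\subset L=\ol{\L_i}$, and the Seidel triangle \eqref{SeidelTriangle} for each minimal component. The genuine gap is in your final assembly: from ``$c^*$ is injective on each block $B_i$'' together with ``the $B_i$ form a direct sum in $H^*(\M)$'' you cannot conclude that the images $c^*(B_i)$ are linearly independent, nor that $c^*$ is injective on $\bigoplus_i B_i$; a linear map can be injective on two independent subspaces and still kill a diagonal vector (e.g.\ $(a,b)\mapsto a+b$ on $\K^2$). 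Your block-by-block use of the triangle gives no control over where the restriction to a given minimal $L$ sends the blocks $B_{i'}$ attached to the \emph{other} minimal components, so the cancellation you need to exclude is not excluded. This is exactly the point the paper's proof is organized around: one shows that the single combined map $\bigoplus_\phi i_\phi^*:H^*(\L)\fun\bigoplus_\phi H^*(\F_\phi)$ is injective on the sum of all the blocks at once, by first proving the homology-level containment $\mathrm{Im}\big(\oplus_\phi(i_\phi)_*\big)\supseteq \mathrm{Im}\big(\oplus_\phi\eta_{i(\phi)}\big)$ (via the compatibility $(i_\phi)_*\eta_i^{\phi}=\eta_{i(\phi)}$) and then dualizing with the elementary Lemma \ref{LemaGojko}; since the combined diagonal map in the summed triangle factors through $c^*$, injectivity of $c^*$ on the whole sum of blocks follows. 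Some such global argument must replace your last step.

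A secondary error: your justification of the final sentence assumes $\F_i=\F_\phi$ and $\mu_i=0$ for every index $i$ with $\ol{\L_i}$ minimal, which is false in general, since the minimal component is fixed by $\phi$ but not necessarily by $\Fi$ (for the $A_2$ Du Val resolution with $\Fi=t\cdot_1$, the second minimal component contributes a point $\F_i$ with $\mu_i=2$). The correct count at $k=\dim_\C\M$ uses that $\L_i$ is open dense in the Lagrangian $\ol{\L_i}$, so $\dim_\R\F_i+\mu_i=\dim_\C\M$ and hence $b_{k-\mu_i}(\F_i)=b_{\dim_\R\F_i}(\F_i)\geq 1$ for each minimal component, which yields $\mathrm{rk}(SH^{\dim_\C\M}(\M))\geq|\mathrm{Min}(\M,\Fi)|$. (Also, a minimal component has complex dimension $\tfrac12\dim_\C\M$, i.e.\ real dimension $\dim_\C\M$, not complex dimension $\dim_\C\M$.)
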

\begin{proof}

Firstly, for any $\Fi$-fixed component $\F_i$ by Proposition \ref{HomologyDecompositionOfTheCore} applied to action $\Fi$  
we get an injective map
\begin{equation*}\label{etaInTheCore}
	\eta_i: H_*(\F_i)[-\mu_{i}] \fun H_*(\L), \ [C]\mapsto [\ol{p_i^{-1}(C)}], \text{ for a generic cycle } C.
\end{equation*}
where $p_i:\L_i\fun \F_i$ is the $(t\fun \infty)$-attracting bundle.
As $\Fi$ is a weight-1 conical action, by Corollary \ref{CoreDecompositonWeight1} the irreducible components of the core are precisely the closures $\ol{\L_i}$ of the attracting bundles $\L_i \fun \F_i.$
We will focus on the minimal ones. 

So, fixing a weight-1 action $\phi,$ denote by $\F_{i(\phi)}$ the $\Fi$-fixed component satisfying $\ol{\L_{i(\phi)}}=\F_{\phi}.$
Then, $\Fi$ restricts to the minimal component $\F_{\phi},$
\footnote{Action $\Fi$ restrict to the core $\L,$ so as an algebraic action, it acts on the set of irreducible components. 
	Moreover, as $\C^*$ is connected, $\Fi$ acts trivially on the top homology of $\L,$ which is generated by the irreducible 
	components (Lemma \ref{LemaNonIsotopic}), hence each of them is $\Fi$-invariant.}
giving a holomorphic action on a closed \KH manifold (one can use the \KH structure from Lemma \ref{thereisaNonExactstructure}),
and in such setup there is a homology decomposition by Carrell-Sommese \cite[Thm. 1]{CaSo79}. Thus, we get an injective map
\begin{equation*}\label{etaRestricted}
\eta_i^{\phi}: H_*(\F_{i(\phi)})[-\mu_{i}^{\phi}] \fun H_*(\F_{\phi}), \ [C]\mapsto [\ol{p_i^{-1}(C)}^{\phi}], \text{ for a generic cycle } C.
\end{equation*}
	Here the closure $\ol{p_i^{-1}(C)}^{\phi}$ is taken in $\F_\phi,$ and $\mu_i^{\phi}$ is the dimension of the part of the bundle $\L_{i(\phi)} \fun \F_{i(\phi)}$ that lies in $\F_{\phi}.$ But, since $\F_{\phi}=\ol{\L_{i(\phi)}},$ the closures and shifts agree $\ol{p_i^{-1}(C)}^{\phi}=\ol{p_i^{-1}(C)},$ 
	$\mu_{i}^{\phi}=\mu_{i}.$
	Thus, we have the compatibility of maps
	$$(i_\phi)_* \eta_i^{\phi} =\eta_{i(\phi)} ,$$
	where $i_\phi: \F_{\phi} \hookrightarrow \L$ is the inclusion.
	
Thus, 
we get that the image of the inclusion map $H_*(\F_{\phi}) \fja{(i_{\phi})_*} H_*(\L)$ contains $\eta_{i(\phi)}(H_*(\F_{i(\phi)})[-\mu_i]).$ Thus, summing over all $\phi\in \Con1$
we get that the image of the map
	\begin{equation}\label{themapimportant}
	\oplus_{\phi} H_*(\F_{\phi}) \fja{\oplus (i_{\phi})_*} H_*(\L)
	\end{equation}
contains the image of the map 
	$$\oplus_{\phi} \eta_{i(\phi)}: \oplus_{\phi} H_*(\F_{i(\phi)})[-\mu_{i(\phi)}]\fun H_*(\L),$$ 
which is injective,
by isomorphism (\ref{sumsumsum}). Denote by $U:=Im(\oplus_{\phi} \eta_{i(\phi)})$ and by $U'$ its arbitrary graded complement in $H_*(\L).$ 
	
Applying the functor $\Hom(\cdot,\Z/2)$ on the map (\ref{themapimportant}), we get the map
	\begin{equation}\label{equnequn}
	\Hom(H_*(\L),\Z/2) \fja{\oplus_\phi Hom((i_{\phi})_*,\Z/2)} \oplus_{\phi} \Hom(H_*(\F_{\phi}),\Z/2)
	\end{equation}
which we will show to be injective on the dual of $U,$ by applying the following linear-algebraic lemma:
\begin{lm}\label{LemaGojko} Given vector spaces $V$ and $W=U\oplus U'$ and a linear map $L:V\fun W$ such that $U\leq L(V),$
		the dual map $L^*:W^*\fun V^*$ is injective when restricted to $U^*:=\{\xi\in W^* \mid \xi|_{U'}=0\}.$
\end{lm}
\begin{proof}
We just have to show that if $L^*(\xi)=0$ for $\xi\in U^*$ then $\xi=0.$
By assumption, $0=L^*(\xi)(v)=\xi(L(v))$ for all $v\in V,$ so $\xi|_{L(V)}=0,$ so $\xi|_U=0.$ As $\xi\in U^*,$ 
we also have $\xi|_{U'}=0,$ hence $\xi=0$.
\end{proof}
Thus, by this lemma the map ${\oplus_\phi Hom((i_{\phi})_*,\Z/2)}$ injects $U^*$ into $\oplus_{\phi} \Hom(H_*(\F_{\phi}),\Z/2).$ As $U'$ was chosen to be a graded complement, $U^*$ is isomorphic to $U$ as a graded vector space. Now, using the Kronecker isomorphisms 
$\kappa: H^*(\L) \fun Hom(H_*(\L),\Z/2), \ \kappa:H^*(\F_{\phi})\fun \Hom(H_*(\F_{\phi}),\Z/2)$
we pass from (\ref{equnequn}) to the map
$$H^*(\L) \fja{\oplus_\phi i_{\phi}^*} \oplus_{\phi} H^*(\F_{\phi}),$$
which then injects  $\kappa^{-1}(U^*)$ into $\oplus_{\phi} H^*(\F_{\phi}).$ 
	
Finally, we connect this with symplectic cohomology. Considering the diagram (\ref{SeidelTriangle2}) for a minimal component $\F_{\phi},$ and using the isomorphism $r: H^*(\M) \fja{\iso} H^*(\L)$ (given by the restriction map), and $S_\phi: HF^*(\F_{\phi},\F_{\phi})\fja{\iso} H^*(\F_{\phi})$ we get the diagram
\begin{equation*} 
\begin{tikzcd}[column sep=small]
	H^*(\L) \arrow{r}{c}  \arrow{rd}{i_{\phi}^*} 
	& SH^*(\M) \arrow{d}{co_{\phi}} \\
	&  H^*(\F_{\phi}),
\end{tikzcd}
\end{equation*}
where $c=c^* \circ r^{-1}$ and $co_{\phi}:=S_\phi \circ \mathcal{CO}^0.$ Summing over all $\phi\in \Con1$ we get the diagram 
\begin{equation*} 
\begin{tikzcd}[column sep=small]
	H^*(\L) \arrow{r}{c}  \arrow{rd}{\oplus_\phi{i_{\phi}^*}} 
	& SH^*(\M) \arrow{d}{\oplus_{\phi} co_{\phi}} \\
	& \oplus_{\phi} H^*(\F_{\phi})
\end{tikzcd}
\end{equation*}	
Observe that, as $\kappa^{-1}(U^*)$ injects by the diagonal map $\oplus_\phi i_{\phi}^*$ of this diagram, it injects by the horizontal map $c$ as well.
The claim of the proposition then follows as
$\kappa^{-1}(U^*) \iso \oplus_{\phi} (H_*(\F_{i(\phi)})[-\mu_{i(\phi)}])$ (graded isomorphism),
and the summing goes through all $\phi\in\Con1,$ thus $\ol{\L_{i(\phi)}}$ goes through the set of all minimal components $\mathrm{Min}(\M,\Fi)$.
\end{proof}
We get an immediate corollary of this proposition:

\begin{cor}\label{AllMinimalEmbedHtoSH}
	If all components of $\L$ of an SHS $\M$ are minimal, the singular cohomology $H^*(\M)$ embeds into $SH^*(\M)$ via the $c^*$ map, hence for all $k\in \N_0$, $$\rk(SH^k(\M))\geq \rk(H^k(\M)).$$ 
\end{cor}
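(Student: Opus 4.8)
The plan is to read the corollary off the proof of Theorem~\ref{boundonSHBest}; the only point to check is that the hypothesis ``all components of $\L$ are minimal'' makes the indexing set appearing there the \emph{full} set of fixed components of a chosen weight-1 conical action.

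First I would fix a weight-1 conical action $\Fi$ on $\M$ with $\L=\L_\Fi$ and $\M^{\Fi}=\sqcup_i \F_i$; such an action exists, since a minimal component is by definition fixed by one, so the hypothesis presupposes that $\M$ is a weight-1 SHS. By Corollary~\ref{CoreDecompositonWeight1} the irreducible components of the core are exactly the closures $\ol{\L_i}$, indexed by the connected components $\F_i$ of $\M^{\Fi}$. The hypothesis says every $\ol{\L_i}$ lies in $\mathrm{Min}(\M,\Fi)$, so the sum in Theorem~\ref{boundonSHBest} runs over all $i$, whence its right-hand side equals $\sum_i b_{k-\mu_i}(\F_i)$. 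By the Betti number relation \eqref{BettiNumbers} of Proposition~\ref{HomologyDecompositionOfTheCore}, together with the deformation retraction $\M\simeq \L$ of Lemma~\ref{CoreIsADefRetr}, this is $b_k(\L)=b_k(\M)=\rk H^k(\M)$, which already yields the rank inequality.

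For the stronger statement that $c^{*}$ is injective I would revisit the proof of Theorem~\ref{boundonSHBest} itself rather than its statement. There one sets $U:=\mathrm{Im}\big(\oplus_{\phi}\eta_{i(\phi)}\big)\subseteq H_*(\L)$, where $\phi$ ranges over $\Con1$ and $\ol{\L_{i(\phi)}}=\F_{\phi}$. By hypothesis the indices $i(\phi)$ exhaust $\{i\}$, and since $\oplus_i \eta_i$ is an isomorphism onto $H_*(\L)$ by \eqref{sumsumsum}, we get $U=H_*(\L)$; hence a graded complement $U'$ vanishes and $U^{*}=\Hom(H_*(\L),\Z/2)$. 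The argument in that proof shows $\kappa^{-1}(U^{*})$ injects into $SH^{*}(\M)$ via the $c^{*}$ map --- through injectivity of the diagonal of the summed Seidel triangle \eqref{SeidelTriangle2} and the commutativity of that triangle. Here $\kappa^{-1}(U^{*})=H^{*}(\L)\cong H^{*}(\M)$ (restriction along the homotopy equivalence $\L\hookrightarrow\M$), so $c^{*}\colon H^{*}(\M)\to SH^{*}(\M)$ is injective, and over field coefficients this gives $\rk SH^{k}(\M)\geq \rk H^{k}(\M)$ degree-wise.

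There is essentially no obstacle here: the content is entirely contained in Theorem~\ref{boundonSHBest} and its proof, and the corollary is the observation that ``all core components minimal'' forces $U$ to be all of $H_*(\L)$. The only mild care needed is the bookkeeping via Corollary~\ref{CoreDecompositonWeight1} that, for a weight-1 action $\Fi$, the pieces $\ol{\L_i}$ are in bijection with the irreducible components of the core, so that the hypothesis is equivalent to every $\F_i$ contributing a summand in Theorem~\ref{boundonSHBest}.
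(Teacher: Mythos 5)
Your proposal is correct and follows essentially the same route as the paper: the corollary is stated there as an immediate consequence of Theorem~\ref{boundonSHBest}, and your unpacking (the hypothesis forces the indexing set to be all of $\{i\}$, so $U=H_*(\L)$ in that proof and $c^*$ injects all of $H^*(\M)\cong H^*(\L)$, with the rank inequality then following from \eqref{BettiNumbers} and the retraction $\M\simeq\L$) is exactly the intended argument. No gaps.
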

Let us see an instance of this corollary in the following example:

\begin{ex}
	Given the minimal resolution $X_{\Z/(n+1)}$ of a Du Val singularity of type $A_n,$ its core is topologically an $n$-wedge of 2-spheres, and as we have seen in Example \ref{DuVal_Weight1actions_TypeA}, these are all minimal components. 
	Hence, by Corollary \ref{AllMinimalEmbedHtoSH} we get that $$\text{rk}(SH^2(X_{\Z/(n+1)}))\geq H^2(X_{\Z/(n+1)})=n.$$
	It is known by \cite[Cor. 42]{EL17} that $\text{rk}(SH^2(X_{\Z/(n+1)}))=n$ and that it vanishes in higher degrees, 
	so in this example Theorem \ref{boundonSHBest} gives the actual rank on the highest non-zero degree of symplectic cohomology.
\end{ex}

%
%

\begin{rmk}\label{NoteOnFiniteRAnksInEAchDimension}
	Note that, in principle, symplectic cohomology may be infinite-dimensional even degree-wise.\footnote{The simplest example is $T^*S^1,$ whose symplectic cohomology is supported in degrees 0 and 1 and has an infinite rank in both.} However, it is indeed finite-dimensional in each degree for certain examples of SHS manifolds:
	\begin{enumerate}[(1)]
		\item Minimal resolutions of Du Val singularities $X_\Gamma,$ due to \cite{EL17}.
		\item Cotangent bundles of generalised flag varieties $T^*(G/P),$ where $G$ is complex semisimple group and $P$ a parabolic subgroup. By Viterbo isomorphism \cite{Vi96, Ab15}, $SH^*(T^*(G/P))$ is isomorphic to the singular homology of the free loop space $\mathcal{L}(G/P)$ of flag variety, so we can use a general result \cite[Prop. 9, Ch. IV]{Ser51} by Serre that the homology of the free loop space of a simply connected space is finite-dimensional degree-wise. The flag variety $G/P$ is simply connected due to the decomposition into Schubert cells which are isomorphic to affine spaces \cite{Bo54}.
	\end{enumerate}
	Thus, we believe that for general SHS $\M$ the degree-wise ranks $rk(SH^k(\M)), \ k \in \Z$ should be finite-dimensional, so the lower bounds obtained in previous statements actually provide some non-trivial information.
\end{rmk}





\bibliography{FZ}
\bibliographystyle{amsalpha}
\end{document}